\documentclass[reqno]{amsart}

\expandafter\ifx\csname ProvidesPackage\endcsname\relax\toks0=\expandafter{%
\fi\ProvidesPackage{diagrams}[2024/11/20 v3.97 Paul Taylor's commutative
diagrams]
\toks0=\bgroup}
\ifx\diagram\isundefined\else\expandafter\endinput
\fi

\edef\cdrestoreat{
\noexpand\catcode`\noexpand\@=\the\catcode`\@
\noexpand\catcode`\noexpand\#=\the\catcode`\#
\noexpand\catcode`\noexpand\$=\the\catcode`\$
\noexpand\catcode`\noexpand\<=\the\catcode`\<
\noexpand\catcode`\noexpand\>=\the\catcode`\>
\noexpand\catcode`\noexpand\:=\the\catcode`\:
\noexpand\catcode`\noexpand\;=\the\catcode`\;
\noexpand\catcode`\noexpand\!=\the\catcode`\!
\noexpand\catcode`\noexpand\?=\the\catcode`\?
\noexpand\catcode`\noexpand\+=\the\catcode'53
}\catcode`\@=11 \catcode`\#=6 \catcode`\<=12 \catcode`\>=12 \catcode'53=12
\catcode`\:=12 \catcode`\;=12 \catcode`\!=12 \catcode`\?=12

\ifx\diagram@help@messages\CD@qK\let\diagram@help@messages y\fi

\def\cdps@Rokicki#1{\special{ps:#1}}\let\cdps@dvips\cdps@Rokicki\let
\cdps@RadicalEye\cdps@Rokicki\let\CD@HB\cdps@Rokicki\let\CD@IK\cdps@Rokicki
\let\CD@HB\cdps@Rokicki
\def\cdps@Bechtolsheim#1{\special{dvitps: Literal "#1"}}%
\let\cdps@dvitps\cdps@Bechtolsheim\let\cdps@IntegratedComputerSystems
\cdps@Bechtolsheim
\def\cdps@Clark#1{\special{dvitops: inline #1}}
\let\cdps@dvitops\cdps@Clark
\let\cdps@OzTeX\empty\let\cdps@oztex\empty\let\cdps@Trevorrow\empty
\def\cdps@Coombes#1{\special{ps-string #1}}

\count@=\year\multiply\count@12 \advance\count@\month
\ifnum\count@>24382 
\ifnum\count@>24385 
\errhelp{You may press RETURN and carry on for the time being.}\errmessage{! remain
compatible with your files. Please get it NOW.}\fi\fi

\def\CD@DE{\global\let}\def\CD@RH{\outer\def}

{\escapechar\m@ne\xdef\CD@o{\string\{}\xdef\CD@yC{\string\}}
\catcode`\&=4 \CD@DE\CD@Q=&\xdef\CD@S{\string\&}
\catcode`\$=3 \CD@DE\CD@k=$\CD@DE\CD@ND=$
\xdef\CD@nC{\string\$}\gdef\CD@LG{$$}
\catcode`\_=8 \CD@DE\CD@lJ=_
\obeylines\catcode`\^=7 \CD@DE\@super=^
\ifnum\newlinechar=10 \gdef\CD@uG{^^J}
\else\ifnum\newlinechar=13 \gdef\CD@uG{^^M}
\else\ifnum\newlinechar=-1 \gdef\CD@uG{^^J}
\else\CD@DE\CD@uG\space\expandafter\message{! input error: \noexpand
\newlinechar\space is ASCII \the\newlinechar, not LF=10 or CR=13.}
\fi\fi\fi}

\mathchardef\lessthan='30474 \mathchardef\greaterthan='30476

\ifx\tenln\CD@qK
\font\tenln=line10\relax
\fi\ifx\tenlnw\CD@qK\ifx\tenln\nullfont\let\tenlnw\nullfont\else
\font\tenlnw=linew10\relax
\fi\fi

\ifx\inputlineno\CD@qK\csname newcount\endcsname\inputlineno\inputlineno\m@ne
\fi

\def\cd@shouldnt#1{\CD@KB{* THIS (#1) SHOULD NEVER HAPPEN! *}}

\def\get@round@pair#1(#2,#3){#1{#2}{#3}}
\def\get@square@arg#1[#2]{#1{#2}}
\def\CD@AE#1{\CD@PK\let\CD@DH\CD@@E\CD@@E#1,],}
\def\CD@m{[}\def\CD@RD{]}\def\commdiag#1{{\let\enddiagram\relax\diagram[]#1%
\enddiagram}}

\def\CD@BF{{\ifx\CD@EH[\aftergroup\get@square@arg\aftergroup\CD@YH\else
\aftergroup\CD@JH\fi}}
\def\CD@CF#1#2{\def\CD@YH{#1}\def\CD@JH{#2}\futurelet\CD@EH\CD@BF}

\def\CD@KK{|}

\def\CD@PB{
\tokcase\CD@DD:\CD@y\break@args;\catcase\@super:\upper@label;\catcase\CD@lJ:%
\lower@label;\tokcase{~}:\middle@label;
\tokcase<:\CD@iF;
\tokcase>:\CD@iI;
\tokcase(:\CD@BC;
\tokcase[:\optional@;
\tokcase.:\CD@JJ;
\catcase\space:\eat@space;\catcase\bgroup:\positional@;\default:\CD@@A
\break@args;\endswitch}

\def\switch@arg{
\catcase\@super:\upper@label;\catcase\CD@lJ:\lower@label;\tokcase[:\optional@
;
\tokcase.:\CD@JJ;
\catcase\space:\eat@space;\catcase\bgroup:\positional@;\tokcase{~}:%
\middle@label;
\default:\CD@y\break@args;\endswitch}

\let\CD@tJ\relax\ifx\protect\CD@qK\let\protect\relax\fi\ifx\AtEndDocument
\CD@qK\def\CD@PG{\CD@gB}\def\CD@GF#1#2{}\else\def\CD@PG#1{\edef\CD@CH{#1}%
\expandafter\CD@oC\CD@CH\CD@OD}\def\CD@oC#1\CD@OD{\AtEndDocument{\typeout{%
\CD@tA: #1}}}\def\CD@GF#1#2{\gdef#1{#2}\AtEndDocument{#1}}\fi\def\CD@ZA#1#2{%
\def#1{\CD@PG{#2\CD@mD\CD@W}\CD@DE#1\relax}}\def\CD@uF#1\repeat{\def\CD@p{#1}%
\CD@OF}\def\CD@OF{\CD@p\relax\expandafter\CD@OF\fi}\def\CD@sF#1\repeat{\def
\CD@q{#1}\CD@PF}\def\CD@PF{\CD@q\relax\expandafter\CD@PF\fi}\def\CD@tF#1%
\repeat{\def\CD@r{#1}\CD@QF}\def\CD@QF{\CD@r\relax\expandafter\CD@QF\fi}\def
\CD@tG#1#2#3{\def#2{\let#1\iftrue}\def#3{\let#1\iffalse}#3}\if y%
\diagram@help@messages\def\CD@rG#1#2{\csname newtoks\endcsname#1#1=%
\expandafter{\csname#2\endcsname}}\else\csname newtoks\endcsname\no@cd@help
\no@cd@help{See the manual}\def\CD@rG#1#2{\let#1\no@cd@help}\fi\chardef\CD@lF
=1 \chardef\CD@lI=2 \chardef\CD@MH=5 \chardef\CD@tH=6 \chardef\CD@sH=7
\chardef\CD@PC=9 \dimendef\CD@hI=2 \dimendef\CD@hF=3 \dimendef\CD@mF=4
\dimendef\CD@mI=5 \dimendef\CD@wJ=6 \dimendef\CD@tI=8 \dimendef\CD@sI=9
\skipdef\CD@uB=1 \skipdef\CD@NF=2 \skipdef\CD@tB=3 \skipdef\CD@ZE=4 \skipdef
\countdef\CD@JC=9 \countdef\CD@gD=8 \countdef\CD@A=7 \def\sdef#1#2{\def#1{#2}%
}\def\CD@L#1{\expandafter\aftergroup\csname#1\endcsname}\def\CD@RC#1{%
\expandafter\def\csname#1\endcsname}\def\CD@sD#1{\expandafter\gdef\csname#1%
\endcsname}\def\CD@vC#1{\expandafter\edef\csname#1\endcsname}\def\CD@nF#1#2{%
\expandafter\let\csname#1\expandafter\endcsname\csname#2\endcsname}\def\CD@EE
#1#2{\expandafter\CD@DE\csname#1\expandafter\endcsname\csname#2\endcsname}%
\def\CD@AK#1{\csname#1\endcsname}\def\CD@XJ#1{\expandafter\show\csname#1%
\endcsname}\def\CD@ZJ#1{\expandafter\showthe\csname#1\endcsname}\def\CD@WJ#1{%
\expandafter\showbox\csname#1\endcsname}\def\CD@tA{Commutative Diagram}\edef
\CD@kH{\string\par}\edef\CD@dC{\string\diagram}\edef\CD@HD{\string\enddiagram
}\edef\CD@EC{\string\\}\def\CD@eF{LaTeX}\ifx\@ignoretrue\CD@qK\expandafter
\CD@tG\csname if@ignore\endcsname\ignore@true\ignore@false\def\@ignoretrue{%
\global\ignore@true}\def\@ignorefalse{\global\ignore@false}\fi

\def\CD@g{{\ifnum0=`}\fi}\def\CD@wC{\ifnum0=`{\fi}}\def\catcase#1:{\ifcat
\noexpand\CD@EH#1\CD@tJ\expandafter\CD@kC\else\expandafter\CD@dJ\fi}\def
\tokcase#1:{\ifx\CD@EH#1\CD@tJ\expandafter\CD@kC\else\expandafter\CD@dJ\fi}%
\def\CD@kC#1;#2\endswitch{#1}\def\CD@dJ#1;{}\let\endswitch\relax\def\default:%
#1;#2\endswitch{#1}\ifx\at@\CD@qK\def\at@{@}\fi\edef\CD@P{\CD@o pt\CD@yC}%
\CD@RC{\CD@P>}#1>#2>{\CD@z\rTo\sp{#1}\sb{#2}\CD@z}\CD@RC{\CD@P<}#1<#2<{\CD@z
\lTo\sp{#1}\sb{#2}\CD@z}\CD@RC{\CD@P)}#1)#2){\CD@z\rTo\sp{#1}\sb{#2}\CD@z}%
\CD@RC{\CD@P(}#1(#2({\CD@z\lTo\sp{#1}\sb{#2}\CD@z}
\def\CD@O{\def\endCD{\enddiagram}\CD@RC{\CD@P A}##1A##2A{\uTo<{##1}>{##2}%
\CD@z\CD@z}\CD@RC{\CD@P V}##1V##2V{\dTo<{##1}>{##2}\CD@z\CD@z}\CD@RC{\CD@P=}{%
\CD@z\hEq\CD@z}\CD@RC{\CD@P\CD@KK}{\vEq\CD@z\CD@z}\CD@RC{\CD@P\string\vert}{%
\vEq\CD@z\CD@z}\CD@RC{\CD@P.}{\CD@z\CD@z}\let\CD@z\CD@Q}\def\CD@IE{\let\tmp
\CD@JE\ifcat A\noexpand\CD@CH\else\ifcat=\noexpand\CD@CH\else\ifcat\relax
\noexpand\CD@CH\else\let\tmp\at@\fi\fi\fi\tmp}\def\CD@JE#1{\CD@nF{tmp}{\CD@P
\string#1}\ifx\tmp\relax\def\tmp{\at@#1}\fi\tmp}\def\CD@z{}\begingroup
\aftergroup\def\aftergroup\CD@T\aftergroup{\aftergroup\def\catcode`\@\active
\aftergroup @\endgroup{\futurelet\CD@CH\CD@IE}}\def\CD@uK#1{\CD@nF{x}{tex_#1:%
D}\ifx\x\relax\else\CD@nF{#1}{x}\fi} \def\CD@vK{\CD@uK{par}\CD@uK{everypar}%
\CD@uK{noindent}\CD@uK{parskip}\CD@uK{unskip}\CD@uK{hskip}\CD@uK{indent}}

\newcount\CD@uA\newcount\CD@vA\newcount\CD@wA\newcount\CD@xA\newdimen\CD@OA
\newdimen\CD@PA\CD@tG\CD@gE\CD@@A\CD@y\CD@tG\CD@hE\CD@EA\CD@BA\newdimen\CD@RA
\newdimen\CD@SA\newcount\CD@yA\newcount\CD@zA\newdimen\CD@QA\newbox\CD@DA
\CD@tG\CD@lE\CD@dA\CD@bA\newcount\CD@LH\newcount\CD@TC\def\CD@V#1#2{\ifdim#1<%
#2\relax#1=#2\relax\fi}\def\CD@X#1#2{\ifdim#1>#2\relax#1=#2\relax\fi}%
\newdimen\CD@XH\CD@XH=1sp \newdimen\CD@zC\CD@zC\z@\def\CD@cJ{\ifdim\CD@zC=1em%
\else\CD@nJ\fi}\def\CD@nJ{\CD@zC1em\def\CD@NC{\fontdimen8\textfont3 }\CD@@J
\CD@NJ\setbox0=\vbox{\CD@t\noindent\CD@k\null\penalty-9993\null\CD@ND\null
\endgraf\setbox0=\lastbox\unskip\unpenalty\setbox1=\lastbox\global\setbox
\CD@IG=\hbox{\unhbox0\unskip\unskip\unpenalty\setbox0=\lastbox}\global\setbox
\CD@KG=\hbox{\unhbox1\unskip\unpenalty\setbox1=\lastbox}}}\newdimen\CD@@I
\CD@@I=1true in \divide\CD@@I300 \def\CD@zH#1{\multiply#1\tw@\advance#1\ifnum
#1<\z@-\else+\fi\CD@@I\divide#1\tw@\divide#1\CD@@I\multiply#1\CD@@I}\def
\MapBreadth{\afterassignment\CD@gI\CD@LF}\newdimen\CD@LF\newdimen\CD@oI\def
\CD@gI{\CD@oI\CD@LF\CD@V\CD@@I{4\CD@XH}\CD@X\CD@@I\p@\CD@zH\CD@oI\ifdim\CD@LF
>\z@\CD@V\CD@oI\CD@@I\fi\CD@cJ}\def\CD@RJ#1{\CD@zD\count@\CD@@I#1\ifnum
\count@>\z@\divide\CD@@I\count@\fi\CD@gI\CD@NJ}\def\CD@NJ{\dimen@\CD@QC
\count@\dimen@\divide\count@5\divide\count@\CD@@I\edef\CD@OC{\the\count@}}%
\def\CD@AJ{\CD@QJ\z@}\def\CD@QJ#1{\CD@tI\axisheight\advance\CD@tI#1\relax
\advance\CD@tI-.5\CD@oI\CD@zH\CD@tI\CD@sI-\CD@tI\advance\CD@tI\CD@LF}%
\newdimen\CD@DC\CD@DC\z@\newdimen\CD@eJ\CD@eJ\z@\def\CD@CJ#1{\CD@sI#1\relax
\CD@tI\CD@sI\advance\CD@tI\CD@LF\relax}\def\horizhtdp{height\CD@tI depth%
\CD@sI}\def\axisheight{\fontdimen22\the\textfont\tw@}\def\script@axisheight{%
\fontdimen22\the\scriptfont\tw@}\def\ss@axisheight{\fontdimen22\the
\scriptscriptfont\tw@}\def\CD@NC{0.4pt}\def\CD@VK{\fontdimen3\textfont\z@}%
\def\CD@UK{\fontdimen3\textfont\z@}\newdimen\PileSpacing\newdimen\CD@nA\CD@nA
\z@\def\CD@RG{\ifincommdiag1.3em\else2em\fi}\newdimen\CD@YB\def\CellSize{%
\afterassignment\CD@kB\DiagramCellHeight}\newdimen\DiagramCellHeight
\DiagramCellHeight-\maxdimen\newdimen\DiagramCellWidth\DiagramCellWidth-%
\maxdimen\def\CD@kB{\DiagramCellWidth\DiagramCellHeight}\def\CD@QC{3em}%
\newdimen\MapShortFall\def\MapsAbut{\MapShortFall\z@\objectheight\z@
\objectwidth\z@}\newdimen\CD@iA\CD@iA\z@\CD@tG\CD@vE\CD@aB\CD@ZB\expandafter
\ifx\expandafter\iftrue\csname ifUglyObsoleteDiagrams\endcsname\CD@ZB\else
\CD@aB\fi\CD@nF{ifUglyObsoleteDiagrams}{relax}\newif\ifUglyObsoleteDiagrams
\def\CD@nK{\CD@aB\UglyObsoleteDiagramsfalse}\def\CD@oK{\CD@ZB
\UglyObsoleteDiagramstrue}\CD@vE\CD@nK\else\CD@oK\fi\CD@tG\CD@hK\CD@dK\CD@cK
\CD@cK\def\CD@sK{\ifx\pdfoutput\CD@qK\else\ifx\pdfoutput\relax\else\ifnum
\pdfoutput>\z@\CD@pK\fi\fi\fi} \def\CD@pK{\global\CD@dK\global\CD@aB\global
\UglyObsoleteDiagramsfalse\global\let\CD@n\empty\global\let\CD@oK\relax
\global\let\CD@pK\relax\global\let\CD@sK\relax}\def\CD@tK#1{}\ifx\pdfliteral\CD@qK\else\ifx\pdfliteral\relax\else\let\CD@tK
\pdfliteral\fi\fi\ifx\XeTeXrevision\CD@qK\else\ifx\XeTeXrevision\relax\else
\ifdim\XeTeXrevision pt<.996pt \expandafter\message{! XeTeX version
\XeTeXrevision\space does not support PDF literals, so diagonals will not work%
!}\else\expandafter\message{RUNNING UNDER XETEX \XeTeXrevision}\CD@pK\fi\fi
\fi\CD@sK\def\newarrowhead{\CD@mG h\CD@BG\CD@GG>}\def\newarrowtail{\CD@mG t%
\CD@BG\CD@GG>}\def\newarrowmiddle{\CD@mG m\CD@BG\hbox@maths\empty}\def
\newarrowfiller{\CD@mG f\CD@bE\CD@MK-}\def\CD@mG#1#2#3#4#5#6#7#8#9{\CD@RC{r#1%
:#5}{#2{#6}}\CD@RC{l#1:#5}{#2{#7}}\CD@RC{d#1:#5}{#3{#8}}\CD@RC{u#1:#5}{#3{#9}%
}\CD@vC{-#1:#5}{\expandafter\noexpand\csname-#1:#4\endcsname\noexpand\CD@MC}%
\CD@vC{+#1:#5}{\expandafter\noexpand\csname+#1:#4\endcsname\noexpand\CD@MC}}%
\CD@ZA\CD@MC{\CD@eF\space diagonals are used unless PostScript is set}\def
\defaultarrowhead#1{\edef\CD@sJ{#1}\CD@@J}\def\CD@@J{\CD@IJ\CD@sJ<>ht\CD@IJ
\CD@sJ<>th}\def\CD@IJ#1#2#3#4#5{\CD@HJ{r#4}{#3}{l#5}{#2}{r#4:#1}\CD@HJ{r#5}{#%
2}{l#4}{#3}{l#4:#1}\CD@HJ{d#4}{#3}{u#5}{#2}{d#4:#1}\CD@HJ{d#5}{#2}{u#4}{#3}{u%
#4:#1}}\def\CD@HJ#1#2#3#4#5{\begingroup\aftergroup\CD@GJ\CD@L{#1+:#2}\CD@L{#1%
:#2}\CD@L{#3:#4}\CD@L{#5}\endgroup}\def\CD@GJ#1#2#3#4{\csname newbox%
\endcsname#1\def#2{\copy#1}\def#3{\copy#1}\setbox#1=\box\voidb@x}\def\CD@sJ{}%
\CD@@J\def\CD@GJ#1#2#3#4{\setbox#1=#4}\ifx\tenln\nullfont\def\CD@sJ{vee}\else
\let\CD@sJ\CD@eF\fi\def\CD@xF#1#2#3{\begingroup\aftergroup\CD@wF\CD@L{#1#2:#3%
#3}\CD@L{#1#2:#3}\aftergroup\CD@yF\CD@L{#1#2:#3-#3}\CD@L{#1#2:#3}\endgroup}%
\def\CD@wF#1#2{\def#1{\hbox{\rlap{#2}\kern.4\CD@zC#2}}}\def\CD@yF#1#2{\def#1{%
\hbox{\rlap{#2}\kern.4\CD@zC#2\kern-.4\CD@zC}}}\CD@xF lh>\CD@xF rt>\CD@xF rh<%
\CD@xF rt<\def\CD@yF#1#2{\def#1{\hbox{\kern-.4\CD@zC\rlap{#2}\kern.4\CD@zC#2}%
}}\CD@xF rh>\CD@xF lh<\CD@xF lt>\CD@xF lt<\def\CD@wF#1#2{\def#1{\vbox{\vbox to%
\z@{#2\vss}\nointerlineskip\kern.4\CD@zC#2}}}\def\CD@yF#1#2{\def#1{\vbox{%
\vbox to\z@{#2\vss}\nointerlineskip\kern.4\CD@zC#2\kern-.4\CD@zC}}}\CD@xF uh>%
\CD@xF dt>\CD@xF dh<\CD@xF dt<\def\CD@yF#1#2{\def#1{\vbox{\kern-.4\CD@zC\vbox
to\z@{#2\vss}\nointerlineskip\kern.4\CD@zC#2}}}\CD@xF dh>\CD@xF ut>\CD@xF uh<%
\CD@xF ut<\def\CD@BG#1{\hbox{\mathsurround\z@\offinterlineskip\CD@k\mkern-1.5%
mu{#1}\mkern-1.5mu\CD@ND}}\def\hbox@maths#1{\hbox{\CD@k#1\CD@ND}}\def\CD@GG#1%
{\hbox to\CD@LF{\setbox0=\hbox{\offinterlineskip\mathsurround\z@\CD@k{#1}%
\CD@ND}\dimen0.5\wd0\advance\dimen0-.5\CD@oI\CD@zH{\dimen0}\kern-\dimen0%
\unhbox0\hss}}\def\CD@sB#1{\hbox to2\CD@LF{\hss\offinterlineskip\mathsurround
\z@\CD@k{#1}\CD@ND\hss}}\def\CD@vF#1{\hbox{\mathsurround\z@\CD@k{#1}\CD@ND}}%
\def\CD@bE#1{\hbox{\kern-.15\CD@zC\CD@k{#1}\CD@ND\kern-.15\CD@zC}}\def\CD@MK#%
1{\vbox{\offinterlineskip\kern-.2ex\CD@GG{#1}\kern-.2ex}}\def\@fillh{%
\xleaders\vrule\horizhtdp}\def\@fillv{\xleaders\hrule width\CD@LF}\CD@nF{rf:-%
}{@fillh}\CD@nF{lf:-}{@fillh}\CD@nF{df:-}{@fillv}\CD@nF{uf:-}{@fillv}\CD@nF{%
rh:}{null}\CD@nF{rm:}{null}\CD@nF{rt:}{null}\CD@nF{lh:}{null}\CD@nF{lm:}{null%
}\CD@nF{lt:}{null}\CD@nF{dh:}{null}\CD@nF{dm:}{null}\CD@nF{dt:}{null}\CD@nF{%
uh:}{null}\CD@nF{um:}{null}\CD@nF{ut:}{null}\CD@nF{+h:}{null}\CD@nF{+m:}{null%
}\CD@nF{+t:}{null}\CD@nF{-h:}{null}\CD@nF{-m:}{null}\CD@nF{-t:}{null}\def
\CD@@D{\hbox{\vrule height 1pt depth-1pt width 1pt}}\CD@RC{rf:}{\CD@@D}\CD@nF
{lf:}{rf:}\CD@nF{+f:}{rf:}\CD@RC{df:}{\CD@@D}\CD@nF{uf:}{df:}\CD@nF{-f:}{df:}%
\def\CD@BD{\CD@U\null\CD@@D\null\CD@@D\null}\edef\CD@lG{\string\newarrow}\def
\newarrow#1#2#3#4#5#6{\begingroup\edef\@name{#1}\edef\CD@oJ{#2}\edef\CD@iD{#3%
}\edef\CD@QG{#4}\edef\CD@jD{#5}\edef\CD@LE{#6}\let\CD@HE\CD@sG\let\CD@FK
\CD@BH\let\@x\CD@AH\ifx\CD@oJ\CD@iD\let\CD@oJ\empty\fi\ifx\CD@LE\CD@jD\let
\CD@LE\empty\fi\def\CD@LI{r}\def\CD@SF{l}\def\CD@IC{d}\def\CD@yJ{u}\def\CD@gH
{+}\def\@m{-}\ifx\CD@iD\CD@jD\ifx\CD@QG\CD@iD\let\CD@QG\empty\fi\ifx\CD@LE
\empty\ifx\CD@iD\CD@aE\let\@x\CD@yG\else\let\@x\CD@zG\fi\fi\else\edef\CD@a{%
\CD@iD\CD@oJ}\ifx\CD@a\empty\ifx\CD@QG\CD@jD\let\CD@QG\empty\fi\fi\fi\ifmmode
\aftergroup\CD@kG\else\CD@@A\CD@oB rh{head\space\space}\CD@LE\CD@oB rf{filler%
}\CD@iD\CD@oB rm{middle}\CD@QG\ifx\CD@jD\CD@iD\else\CD@oB rf{filler}\CD@jD\fi
\CD@oB rt{tail\space\space}\CD@oJ\CD@gE\CD@HE\CD@FK\@x\CD@nG l-2+2{lu}{nw}%
\NorthWest\CD@nG r+2+2{ru}{ne}\NorthEast\CD@nG l-2-2{ld}{sw}\SouthWest\CD@nG r%
+2-2{rd}{se}\SouthEast\else\aftergroup\CD@b\CD@L{r\@name}\fi\fi\endgroup}\def
\CD@sG{\CD@vG\CD@LI\CD@SF rl\Horizontal@Map}\def\CD@BH{\CD@vG\CD@IC\CD@yJ du%
\Vertical@Map}\def\CD@AH{\CD@vG\CD@gH\@m+-\Vector@Map}\def\CD@yG{\CD@vG\CD@gH
\@m+-\Slant@Map}\def\CD@zG{\CD@vG\CD@gH\@m+-\Slope@Map}\catcode`\/=\active
\def\CD@vG#1#2#3#4#5{\CD@jG#1#3#5t:\CD@oJ/f:\CD@iD/m:\CD@QG/f:\CD@jD/h:\CD@LE
//\CD@jG#2#4#5h:\CD@LE/f:\CD@jD/m:\CD@QG/f:\CD@iD/t:\CD@oJ//}\def\CD@jG#1#2#3%
#4//{\edef\CD@fG{#2}\aftergroup\sdef\CD@L{#1\@name}\aftergroup{\aftergroup#3%
\CD@M#4//\aftergroup}}\def\CD@M#1/{\edef\CD@EH{#1}\ifx\CD@EH\empty\else\CD@L{%
\CD@fG#1}\expandafter\CD@M\fi}\catcode`\/=12 \def\CD@nG#1#2#3#4#5#6#7#8{%
\aftergroup\sdef\CD@L{#6\@name}\aftergroup{\CD@L{#2\@name}\if#2#4\aftergroup
\CD@CI\else\aftergroup\CD@BI\fi\CD@L{#1\@name}%
\aftergroup(\aftergroup#3\aftergroup,\aftergroup#5\aftergroup)\aftergroup}}%
\def\CD@oB#1#2#3#4{\expandafter\ifx\csname#1#2:#4\endcsname\relax\CD@y\CD@gB{%
arrow#3 "#4" undefined}\fi}\CD@rG\CD@VE{All five components must be defined
before an arrow.}\CD@rG\CD@SE{\CD@lG, unlike \string\HorizontalMap, is a
declaration.}\def\CD@b#1{\CD@YA{Arrows \string#1 etc could not be defined}%
\CD@VE}\def\CD@kG{\CD@YA{misplaced \CD@lG}\CD@SE}\def\newdiagramgrid#1#2#3{%
\CD@RC{cdgh@#1}{#2,],}
\CD@RC{cdgv@#1}{#3,],}}
\CD@tG\ifincommdiag\incommdiagtrue\incommdiagfalse\CD@tG\CD@@F\CD@IF\CD@HF
\newcount\CD@VA\CD@VA=0 \def\CD@yH{\CD@VA6 }\def\CD@OB{\CD@VA1 \global\CD@yA1
\CD@DE\CD@YF\empty}\def\CD@YF{}\def\CD@nB#1{\relax\CD@MD\edef\CD@vJ{#1}%
\begingroup\CD@rE\else\ifcase\CD@VA\ifmmode\else\CD@YG\CD@E0\fi\or\CD@cE5\or
\CD@YG\CD@F5\or\CD@YG\CD@B5\or\CD@YG\CD@B5\or\CD@YG\CD@C5\or\CD@cE7\or\CD@YG
\CD@D7\fi\fi\endgroup\xdef\CD@YF{#1}}\def\CD@pB#1#2#3#4#5{\relax\CD@MD\xdef
\CD@vJ{#4}\begingroup\ifnum\CD@VA<#1 \expandafter\CD@cE\ifcase\CD@VA0\or#2\or
#3\else#2\fi\else\ifnum\CD@VA<6 \CD@tJ\CD@YG\CD@B#2\else\CD@YG\CD@G#2\fi\fi
\endgroup\CD@DE\CD@YF\CD@vJ\ifincommdiag\let\CD@ZD#5\else\let\CD@ZD\CD@LK\fi}%
\def\CD@yI{\global\CD@yA=\ifnum\CD@VA<5 1\else2\fi\relax}\def\CD@OI{\CD@VA
\CD@yA}\def\CD@cE#1{\aftergroup\CD@VA\aftergroup#1\aftergroup\relax}\def
\CD@HH{\def\CD@nB##1{\relax}\let\CD@pB\CD@FH\let\CD@yH\relax\let\CD@OB\relax
\let\CD@yI\relax\let\CD@OI\relax}\def\CD@FH#1#2#3#4#5{\ifincommdiag\let\CD@ZD
#5\else\xdef\CD@vJ{#4}\let\CD@ZD\CD@LK\fi}\def\CD@YG#1{\aftergroup#1%
\aftergroup\relax\CD@cE}\def\CD@B{\CD@YE\CD@S\CD@ME\CD@Q}\def\CD@G{\CD@YE{%
\CD@yC\CD@S}\CD@XE\CD@QD\CD@Q}\def\CD@F{\CD@YE{*\CD@S}\CD@RE\clubsuit\CD@Q}%
\def\CD@C{\CD@YE{\CD@S*\CD@S}\CD@RE\CD@Q\clubsuit\CD@Q}\def\CD@D{\CD@YE\CD@EC
\CD@TE\\}\def\CD@E{\CD@YE\CD@nC\CD@QE\CD@k}\def\CD@LK{\CD@YA{\CD@vJ\space
ignored \CD@dH}\CD@WE}\def\CD@FE{}\def\CD@d{\CD@YA{maps must never be enclosed
in braces}\CD@OE}\def\CD@dH{outside diagram}\def\CD@FC{\string\HonV, \string
\VonH\space and \string\HmeetV}\CD@rG\CD@ME{The way that horizontal and
vertical arrows are terminated implicitly means\CD@uG that they cannot be
mixed with each other or with \CD@FC.}\CD@rG\CD@XE{\string\pile\space is for
parallel horizontal arrows; verticals can just be put together in\CD@uG a cell%
. \CD@FC\space are not meaningful in a \string\pile.}\CD@rG\CD@RE{The
horizontal maps must point to an object, not each other (I've put in\CD@uG one
which you're unlikely to want). Use \string\pile\space if you want them
parallel.}\CD@rG\CD@TE{Parallel horizontal arrows must be in separate layers
of a \string\pile.}\CD@rG\CD@QE{Horizontal arrows may be used \CD@dH s, but
must still be in maths.}\CD@rG\CD@WE{Vertical arrows, \CD@FC\space\CD@dH s don%
't know where\CD@uG where to terminate.}\CD@rG\CD@OE{This prevents them from
stretching correctly.}\def\CD@YE#1{\CD@YA{"#1" inserted \ifx\CD@YF\empty
before \CD@vJ\else between \CD@YF\ifx\CD@YF\CD@vJ s\else\space and \CD@vJ\fi
\fi}}\count@=\year\multiply\count@12 \advance\count@\month\ifnum\count@>24391
\expandafter\endinput\fi\def
\Horizontal@Map{\CD@nB{horizontal map}\CD@LC\CD@TJ\CD@qD}\def\CD@TJ{\CD@GB-%
9999 \let\CD@ZD\CD@XD\ifincommdiag\else\CD@cJ\ifinpile\else\skip2\z@ plus 1.5%
\CD@VK minus .5\CD@UK\skip4\skip2 \fi\fi\let\CD@kD\@fillh\CD@nF{fill@dot}{rf:%
.}}\def\Vector@Map{\CD@HK4}\def\Slant@Map{\CD@HK{\CD@EF255\else6\fi}}\def
\Slope@Map{\CD@HK\CD@OC}\def\CD@HK#1#2#3#4#5#6{\CD@LC\def\CD@WK{2}\def\CD@aK{%
2}\def\CD@ZK{1}\def\CD@bK{1}\let\Horizontal@Map\CD@nI\def\CD@OG{#1}\def\CD@NI
{\CD@U#2#3#4#5#6}}\def\CD@nI{\CD@TJ\CD@JB\let\CD@ZD\CD@TD\CD@qD}\CD@tG\CD@pE
\CD@rA\CD@qA\CD@rA\def\cds@missives{\CD@rA}\def\CD@TD{\CD@vE\let\CD@OG\CD@OC
\CD@x\CD@zE\CD@WF\fi\setbox0\hbox{\incommdiagfalse\CD@HI}\CD@pE\CD@aD\else
\global\CD@YC\CD@bD\fi\ifvoid6 \ifvoid7 \CD@eE\fi\fi\CD@zE\else\CD@BD\global
\CD@YC\let\CD@CG\CD@IH\CD@YD\fi\else\CD@NI\CD@MI\global\CD@YC\CD@YD\fi}\def
\CD@LC{\begingroup\dimen1=\MapShortFall\dimen2=\CD@RG\dimen5=\MapShortFall
\setbox3=\box\voidb@x\setbox6=\box\voidb@x\setbox7=\box\voidb@x\CD@pD
\mathsurround\z@\skip2\z@ plus1fill minus 1000pt\skip4\skip2 \CD@TB}\CD@tG
\CD@tE\CD@UB\CD@TB\def\CD@U#1#2#3#4#5{\let\CD@oJ#1\let\CD@iD#2\let\CD@QG#3%
\let\CD@jD#4\let\CD@LE#5\CD@TB\ifx\CD@iD\CD@jD\CD@UB\fi}\def\CD@qD#1#2#3#4#5{%
\CD@U#1#2#3#4#5\CD@tD}\def\Vertical@Map{\CD@pB433{vertical map}\CD@cD\CD@LC
\CD@GB-9995 \let\CD@kD\@fillv\CD@nF{fill@dot}{df:.}\CD@qD}\def\break@args{%
\def\CD@tD{\CD@ZD}\CD@ZD\endgroup\aftergroup\CD@FE}\def\CD@MJ{\setbox1=\CD@oJ
\setbox5=\CD@LE\ifvoid3 \ifx\CD@QG\null\else\setbox3=\CD@QG\fi\fi\CD@@G2%
\CD@iD\CD@@G4\CD@jD}\def\CD@pF#1{\ifvoid1\else\CD@oF1#1\fi\ifvoid2\else\CD@oF
2#1\fi\ifvoid3\else\CD@oF3#1\fi\ifvoid4\else\CD@oF4#1\fi\ifvoid5\else\CD@oF5#%
1\fi} \def\CD@oF#1#2{\setbox#1\vbox{\offinterlineskip\box#1\dimen@\prevdepth
\advance\dimen@-#2\relax\setbox0\null\dp0\dimen@\ht0-\dimen@\box0}}\def\CD@@G
#1#2{\ifx#2\CD@kD\setbox#1=\box\voidb@x\else\setbox#1=#2\def#2{\xleaders\box#%
1}\fi}\CD@ZA\CD@BK{\string\HorizontalMap, \string\VerticalMap\space and
\string\DiagonalMap\CD@uG are obsolete - use \CD@lG\space to pre-define maps}%
\def\HorizontalMap#1#2#3#4#5{\CD@BK\CD@nB{old horizontal map}\CD@LC\CD@TJ\def
\CD@oJ{\CD@UH{#1}}\CD@SH\CD@iD{#2}\def\CD@QG{\CD@UH{#3}}\CD@SH\CD@jD{#4}\def
\CD@LE{\CD@UH{#5}}\CD@tD}\def\VerticalMap#1#2#3#4#5{\CD@BK\CD@pB433{vertical
map}\CD@cD\CD@LC\CD@GB-9995 \let\CD@kD\@fillv\def\CD@oJ{\CD@GG{#1}}\CD@VH
\CD@iD{#2}\def\CD@QG{\CD@GG{#3}}\CD@VH\CD@jD{#4}\def\CD@LE{\CD@GG{#5}}\CD@tD}%
\def\DiagonalMap#1#2#3#4#5{\CD@BK\CD@LC\def\CD@OG{4}\let\CD@kD\CD@qK\let
\CD@ZD\CD@YD\def\CD@WK{2}\def\CD@aK{2}\def\CD@ZK{1}\def\CD@bK{1}\def\CD@QG{%
\CD@vF{#3}}\ifPositiveGradient\let\mv\raise\def\CD@oJ{\CD@vF{#5}}\def\CD@iD{%
\CD@vF{#4}}\def\CD@jD{\CD@vF{#2}}\def\CD@LE{\CD@vF{#1}}\else\let\mv\lower\def
\CD@oJ{\CD@vF{#1}}\def\CD@iD{\CD@vF{#2}}\def\CD@jD{\CD@vF{#4}}\def\CD@LE{%
\CD@vF{#5}}\fi\CD@tD}\def\CD@aE{-}\def\CD@AD{\empty}\def\CD@SH{\CD@EG\CD@bE
\CD@aE\@fillh}\def\CD@VH{\CD@EG\CD@MK\CD@KK\@fillv}\def\CD@EG#1#2#3#4#5{\def
\CD@CH{#5}\ifx\CD@CH#2\let#4#3\else\let#4\null\ifx\CD@CH\empty\else\ifx\CD@CH
\CD@AD\else\let#4\CD@CH\fi\fi\fi}\def\CD@UH#1{\hbox{\mathsurround\z@
\offinterlineskip\def\CD@CH{#1}\ifx\CD@CH\empty\else\ifx\CD@CH\CD@AD\else
\CD@k\mkern-1.5mu{\CD@CH}\mkern-1.5mu\CD@ND\fi\fi}}\def\CD@yD#1#2{\setbox#1=%
\hbox\bgroup\setbox0=\hbox{\CD@k\labelstyle()\CD@ND}
\setbox1=\null\ht1\ht0\dp1\dp0\box1 \kern.1\CD@zC\CD@k\bgroup\labelstyle
\aftergroup\CD@LD\CD@xD}\def\CD@LD{\CD@ND\kern.1\CD@zC\egroup\CD@tD}\def
\CD@xD{\futurelet\CD@EH\CD@mJ}\def\CD@mJ{
\catcase\bgroup:\CD@v;\catcase\egroup:\missing@label;\catcase\space:\CD@TF;%
\tokcase[:\CD@XF;
\default:\CD@zJ;\endswitch}\def\CD@v{\let\CD@MD\CD@c\let\CD@CH}\def\CD@zJ#1{%
\let\CD@UF\egroup{\let\actually@braces@missing@around@macro@in@label\CD@ZH
\let\CD@MD\CD@xC\let\CD@UF\CD@VF#1%
\actually@braces@missing@around@macro@in@label}\CD@UF}\def
\actually@braces@missing@around@macro@in@label{\let\CD@CH=}\def\missing@label
{\egroup\CD@YA{missing label}\CD@PE}\def\CD@xC{\egroup\missing@label}\outer
\def\CD@ZH{}\def\CD@UF{}\def\CD@VF{\CD@wC\CD@UF}\def\CD@MD{}\def\CD@XF{\let
\CD@N\CD@xD\get@square@arg\CD@AE}\CD@rG\CD@PE{The text which has just been
read is not allowed within map labels.}\def\CD@c{\egroup\CD@YA{missing \CD@yC
\space inserted after label}\CD@PE}\def\upper@label{\CD@oD\CD@yD6}\def
\lower@label{\def\positional@{\CD@@A\break@args}\CD@yD7}\def\middle@label{%
\CD@yD3}\CD@tG\CD@yE\CD@pD\CD@oD\def\CD@iF{\ifPositiveGradient\CD@tJ
\expandafter\upper@label\else\expandafter\lower@label\fi}\def\CD@iI{%
\ifPositiveGradient\CD@tJ\expandafter\lower@label\else\expandafter
\upper@label\fi}\def\positional@{\CD@gB{labels as positional arguments are
obsolete}\CD@yE\CD@tJ\expandafter\upper@label\else\expandafter\lower@label\fi
-}\def\CD@tD{\futurelet\CD@EH\switch@arg}\def\eat@space{\afterassignment
\CD@tD\let\CD@EH= }\def\CD@TF{\afterassignment\CD@xD\let\CD@EH= }\def\CD@BC{%
\get@round@pair\CD@uD}\def\CD@uD#1#2{\def\CD@WK{#1}\def\CD@aK{#2}\CD@tD}\def
\optional@{\let\CD@N\CD@tD\get@square@arg\CD@AE}\def\CD@JJ.{\CD@sC\CD@tD}\def
\CD@sC{\let\CD@iD\fill@dot\let\CD@jD\fill@dot\def\CD@MI{\let\CD@iD\dfdot\let
\CD@jD\dfdot}}\def\CD@MI{}\def\CD@@E#1,{\CD@nH#1,\begingroup\ifx\@name\CD@RD
\CD@FF\aftergroup\CD@e\fi\aftergroup\CD@jC\else\expandafter\def\expandafter
\CD@RF\expandafter{\csname\@name\endcsname}\expandafter\CD@vD\CD@RF\CD@KD\ifx
\CD@RF\empty\aftergroup\CD@pC\expandafter\aftergroup\csname\CD@FB\@name
\endcsname\expandafter\aftergroup\csname\CD@FB @\@name\endcsname\else\gdef
\CD@GE{#1}\CD@gB{\string\relax\space inserted before `[\CD@GE'}\message{(I was
trying to read this as a \CD@tA\ option.)}\aftergroup\CD@H\fi\fi\endgroup}%
\def\CD@vD#1#2\CD@KD{\def\CD@RF{#2}}\def\CD@jC{\let\CD@CH\CD@N\let\CD@N\relax
\CD@CH}\def\CD@H#1],{
\CD@jC\relax\def\CD@RF{#1}\ifx\CD@RF\empty\def\CD@RF{[\CD@GE]}%
\else\def\CD@RF{[\CD@GE,#1]}
\fi\CD@RF}\def\CD@pC#1#2{\ifx#2\CD@qK\ifx#1\CD@qK\CD@gB{option `\@name'
undefined}\else#1\fi\else\CD@FF\expandafter#2\CD@GK\CD@PK\else\CD@QK\fi\fi
\CD@DH}\CD@tG\CD@FF\CD@QK\CD@PK\def\CD@nH#1,{\CD@FF\ifx\CD@GK\CD@qK\CD@e\else
\expandafter\CD@oH\CD@GK,#1,(,),(,)[]%
\fi\fi\CD@FF\else\CD@mH#1==,\fi}\def\CD@e{\CD@gB{option `\@name' needs (x,y)
value}\CD@PK\let\@name\empty}\def\CD@mH#1=#2=#3,{\def\@name{#1}\def\CD@GK{#2}%
\def\CD@RF{#3}\ifx\CD@RF\empty\let\CD@GK\CD@qK\fi}%
\def\CD@oH#1(#2,#3)#4,(#5,#6)#7[]{\def\CD@GK{{#2}{#3}}\def\CD@RF{#1#4#5#6}%
\ifx\CD@RF\empty\def\CD@RF{#7}\ifx\CD@RF\empty\CD@e\fi\else\CD@e\fi}\def
\CD@FB{cds@}\let\CD@N\relax\def\CD@zD#1{\ifx\CD@GK\CD@qK\CD@gB{option `\@name
' needs a value}\else#1\CD@GK\relax\fi}\def\CD@BE#1#2{\ifx\CD@GK\CD@qK#1#2%
\relax\else#1\CD@GK\relax\fi}\def\cds@@showpair#1#2{\message{x=#1,y=#2}}\def
\cds@@diagonalbase#1#2{\edef\CD@ZK{#1}\edef\CD@bK{#2}}\def\CD@DI#1{\def\CD@CH
{#1}\CD@nF{@x}{cdps@#1}\ifx\CD@CH\empty\CD@f\CD@CH{cannot be used}\else\ifx
\CD@CH\relax\CD@f\CD@CH{unknown}\else\let\CD@IK\@x\fi\fi}\def\CD@f#1#2{\CD@gB
{PostScript translator `#1' #2}}\def\CD@PH{}\def\CD@PJ{\CD@fA\edef\CD@PH{%
\noexpand\CD@KB{\@name\space ignored within maths}}}\def\diagramstyle{\CD@cJ
\let\CD@N\relax\CD@CF\CD@AE\CD@AE}\CD@tG\CD@sE
\CD@SB\CD@RB\CD@tG\CD@qE\CD@EB\CD@DB\CD@tG\CD@oE\CD@pA\CD@oA\CD@tG\CD@iE
\CD@HA\CD@GA\CD@HA\CD@tG\CD@jE\CD@JA\CD@IA\CD@tG\CD@kE\CD@LA\CD@KA\CD@tG
\CD@EF\CD@DK\CD@CK\CD@tG\CD@rE\CD@JB\CD@IB\CD@tG\CD@mE\CD@gA\CD@fA\CD@tG
\CD@nE\CD@kA\CD@jA\CD@tG\CD@AF\CD@iG\CD@hG\CD@RC{cds@ }{}\CD@RC{cds@}{}\CD@RC
{cds@1em}{\CellSize1\CD@zC}\CD@RC{cds@1.5em}{\CellSize1.5\CD@zC}\CD@RC{cds@2%
em}{\CellSize2\CD@zC}\CD@RC{cds@2.5em}{\CellSize2.5\CD@zC}\CD@RC{cds@3em}{%
\CellSize3\CD@zC}\CD@RC{cds@3.5em}{\CellSize3.5\CD@zC}\CD@RC{cds@4em}{%
\CellSize4\CD@zC}\CD@RC{cds@4.5em}{\CellSize4.5\CD@zC}\CD@RC{cds@5em}{%
\CellSize5\CD@zC}\CD@RC{cds@6em}{\CellSize6\CD@zC}\CD@RC{cds@7em}{\CellSize7%
\CD@zC}\CD@RC{cds@8em}{\CellSize8\CD@zC}\def\cds@abut{\MapsAbut\dimen1\z@
\dimen5\z@}\def\cds@alignlabels{\CD@IA\CD@KA}\def\cds@amstex{\ifincommdiag
\CD@O\else\def\CD{\diagram[amstex]}
\fi\CD@T\catcode`\@\active}\def\cds@b{\let\CD@dB\CD@bB}\def\cds@balance{\let
\CD@hA\CD@AA}\let\cds@bottom\cds@b\def\cds@center{\cds@vcentre\cds@nobalance}%
\let\cds@centre\cds@center\def\cds@centerdisplay{\CD@HA\CD@PJ\cds@balance}%
\let\cds@centredisplay\cds@centerdisplay\def\cds@crab{\CD@BE\CD@DC{.5%
\PileSpacing}}\CD@RC{cds@crab-}{\CD@DC-.5\PileSpacing}\CD@RC{cds@crab+}{%
\CD@DC.5\PileSpacing}\CD@RC{cds@crab++}{\CD@DC1.5\PileSpacing}\CD@RC{cds@crab%
--}{\CD@DC-1.5\PileSpacing}\def\cds@defaultsize{\CD@BE{\let\CD@QC}{3em}\CD@NJ
}\def\cds@displayoneliner{\CD@DB}\let\cds@dotted\CD@sC\def\cds@dpi{\CD@RJ{1%
truein}}\def\cds@dpm{\CD@RJ{100truecm}}\let\CD@XA\CD@qK\def\cds@eqno{\let
\CD@XA\CD@GK\let\CD@EJ\empty}\def\cds@fixed{\CD@qA}\CD@tG\CD@fE\CD@J\CD@I\def
\cds@flushleft{\CD@I\CD@GA\CD@PJ\cds@nobalance\CD@BE\CD@nA\CD@nA}\def\cds@gap
{\CD@AJ\setbox3=\null\ht3=\CD@tI\dp3=\CD@sI\CD@BE{\wd3=}\MapShortFall} \def
\cds@grid{\ifx\CD@GK\CD@qK\let\h@grid\relax\let\v@grid\relax\else\CD@nF{%
h@grid}{cdgh@\CD@GK}\CD@nF{v@grid}{cdgv@\CD@GK}\ifx\h@grid\relax\CD@gB{%
unknown grid `\CD@GK'}\else\CD@WB\fi\fi}\let\h@grid\relax\let\v@grid\relax
\def\cds@gridx{\ifx\CD@GK\CD@qK\else\cds@grid\fi\let\CD@CH\h@grid\let\h@grid
\v@grid\let\v@grid\CD@CH}\def\cds@h{\CD@zD\DiagramCellHeight}\def\cds@hcenter
{\let\CD@hA\CD@aA}\let\cds@hcentre\cds@hcenter\def\cds@heads{\CD@BE{\let
\CD@sJ}\CD@sJ\CD@@J\CD@vE\else\ifx\CD@sJ\CD@eF\else\CD@MC\fi\fi}\let
\cds@height\cds@h\let\cds@hmiddle\cds@balance\def\cds@htriangleheight{\CD@BE
\DiagramCellHeight\DiagramCellHeight\DiagramCellWidth1.73205%
\DiagramCellHeight}\def\cds@htrianglewidth{\CD@BE\DiagramCellWidth
\DiagramCellWidth\DiagramCellHeight.57735\DiagramCellWidth}\CD@tG\CD@zE\CD@eE
\CD@dE\CD@eE\def\cds@hug{\CD@eE} \def\cds@inline{\CD@gA\let\CD@PH\empty}\def
\cds@inlineoneliner{\CD@EB}\CD@RC{cds@l>}{\CD@zD{\let\CD@RG}\dimen2=\CD@RG}%
\def\cds@labelstyle{\CD@zD{\let\labelstyle}}\def\cds@landscape{\CD@kA}\def
\cds@large{\CellSize5\CD@zC}\let\CD@EJ\empty\def\CD@FJ{\refstepcounter{%
equation}\def\CD@XA{\hbox{\@eqnnum}}}\def\cds@LaTeXeqno{\let\CD@EJ\CD@FJ}\def
\cds@lefteqno{\CD@pA}\def\cds@leftflush{\cds@flushleft\CD@J}\def
\cds@leftshortfall{\CD@zD{\dimen1 }}\def\cds@lowershortfall{%
\ifPositiveGradient\cds@leftshortfall\else\cds@rightshortfall\fi}\def
\cds@loose{\CD@VB}\def\cds@midhshaft{\CD@JA}\def\cds@midshaft{\CD@JA}\def
\cds@midvshaft{\CD@LA}\def\cds@moreoptions{\CD@@A}\let\cds@nobalance
\cds@hcenter\def\cds@nohcheck{\CD@HH}\def\cds@nohug{\CD@dE} \def
\cds@nooptions{\def\CD@aC{\CD@WD}}\let\cds@noorigin\cds@nobalance\def
\cds@nopixel{\CD@@I4\CD@XH\CD@cJ}\def\cds@UO{\CD@oK\global\let\CD@n\empty}%
\def\cds@UglyObsolete{\cds@UO\let\cds@PS\empty}\def\CD@rK#1{\CD@gB{option `#1%
' renamed as `UglyObsolete'}}\def\cds@noPostScript{\CD@rK{noPostScript}}\def
\cds@noPS{\CD@rK{noPostScript}}\def\cds@notextflow{\CD@RB}\def\cds@noTPIC{%
\CD@CK}\def\cds@objectstyle{\CD@zD{\let\objectstyle}}\def\cds@origin{\let
\CD@hA\CD@iB}\def\cds@p{\CD@zD\PileSpacing}\let\cds@pilespacing\cds@p\def
\cds@pixelsize{\CD@zD\CD@@I\CD@gI}\def\cds@portrait{\CD@jA}\def
\cds@PostScript{\CD@nK\global\let\CD@n\empty\CD@BE\CD@DI\empty}\def\cds@PS{%
\CD@nK\global\let\CD@n\empty}\CD@GF\CD@n{\typeout{\CD@tA: try the PostScript
option for better results}}\def\cds@repositionpullbacks{\let\make@pbk\CD@fH
\let\CD@qH\CD@pH}\def\cds@righteqno{\CD@oA}\def\cds@rightshortfall{\CD@zD{%
\dimen5 }}\def\cds@ruleaxis{\CD@zD{\let\axisheight}}\def\cds@cmex{\let\CD@GG
\CD@sB\let\CD@QJ\CD@CJ}\def\cds@s{\cds@height\DiagramCellWidth
\DiagramCellHeight}\def\cds@scriptlabels{\let\labelstyle\scriptstyle}\def
\cds@shortfall{\CD@zD\MapShortFall\dimen1\MapShortFall\dimen5\MapShortFall}%
\def\cds@showfirstpass{\CD@BE{\let\CD@nD}\z@}\def\cds@silent{\def\CD@KB##1{}%
\def\CD@gB##1{}}\let\cds@size\cds@s\def\cds@small{\CellSize2\CD@zC}\def
\cds@snake{\CD@BE\CD@eJ\z@}\def\cds@t{\let\CD@dB\CD@fB}\def\cds@textflow{%
\CD@SB\CD@PJ}\def\cds@thick{\let\CD@rF\tenlnw\CD@LF\CD@NC\CD@BE\MapBreadth{2%
\CD@LF}\CD@@J}\def\cds@thin{\let\CD@rF\tenln\CD@BE\MapBreadth{\CD@NC}\CD@@J}%
\def\cds@tight{\CD@WB}\let\cds@top\cds@t\def\cds@TPIC{\CD@DK}\def
\cds@uppershortfall{\ifPositiveGradient\cds@rightshortfall\else
\cds@leftshortfall\fi}\def\cds@vcenter{\let\CD@dB\CD@cB}\let\cds@vcentre
\cds@vcenter\def\cds@vtriangleheight{\CD@BE\DiagramCellHeight
\DiagramCellHeight\DiagramCellWidth.577035\DiagramCellHeight}\def
\cds@vtrianglewidth{\CD@BE\DiagramCellWidth\DiagramCellWidth
\DiagramCellHeight1.73205\DiagramCellWidth}\def\cds@vmiddle{\let\CD@dB\CD@eB}%
\def\cds@w{\CD@zD\DiagramCellWidth}\let\cds@width\cds@w\def\diagram{\relax
\protect\CD@bC}\def\enddiagram{\protect\CD@SG}\def\CD@bC{\CD@g\CD@uI
\incommdiagtrue\edef\CD@wI{\the\CD@NB}\global\CD@NB\z@\boxmaxdepth\maxdimen
\everycr{}\CD@sK\everymath{}\everyhbox{}\ifx\pdfsyncstop\CD@qK\else
\pdfsyncstop\fi\CD@aC}\def\CD@aC{\CD@y\let\CD@N\CD@ZC\CD@CF\CD@AE\CD@WD}\def
\CD@ZC{\CD@gE\expandafter\CD@aC\else\expandafter\CD@WD\fi}\def\CD@WD{\let
\CD@EH\relax\CD@nE\CD@vE\else\CD@hK\else\CD@KB{landscape ignored without
PostScript}\CD@jA\fi\fi\fi\CD@EJ\setbox2=\vbox\bgroup\CD@JF\CD@VD}\def\CD@cH{%
\CD@nE\CD@fB\else\CD@dB\fi\CD@hA\nointerlineskip\setbox0=\null\ht0-\CD@pI\dp0%
\CD@pI\wd0\CD@kI\box0 \global\CD@QA\CD@kF\global\CD@yA\CD@XB\ifx\CD@NK\CD@qK
\global\CD@RA\CD@kF\else\global\CD@RA\CD@NK\fi\egroup\CD@zF\CD@nE\setbox2=%
\hbox to\dp2{\vrule height\wd2 depth\CD@QA width\z@\global\CD@QA\ht2\ht2\z@
\dp2\z@\wd2\z@\CD@hK\CD@tK{q 0 1 -1 0 0 0 cm}\else\global\CD@iG\CD@IK{0 1
bturn}\fi\box2\CD@gK\hss}\CD@DB\fi\ifnum\CD@yA=1 \else\CD@DB\fi\global
\@ignorefalse\CD@mE\leavevmode\fi\ifvmode\CD@TA\else\ifmmode\CD@PH\CD@GI\else
\CD@qE\CD@gA\fi\ifinner\CD@gA\fi\CD@mE\CD@GI\else\CD@sE\CD@QB\else\CD@TA\fi
\fi\fi\fi\CD@dD}\def\CD@dD{\global\CD@NB\CD@wI\relax\CD@xE\global\CD@ID\else
\aftergroup\CD@mC\fi\if@ignore\aftergroup\ignorespaces\fi\CD@wC\ignorespaces}%
\def\CD@fB{\advance\CD@pI\dimen1\relax}\def\CD@eB{\advance\CD@pI.5\dimen1%
\relax}\def\CD@bB{}\def\CD@cB{\CD@fB\advance\CD@pI\CD@YB\divide\CD@pI2
\advance\CD@pI-\axisheight\relax}\def\CD@aA{}\def\CD@iB{\CD@kF\z@}\def\CD@AA{%
\ifdim\dimen2>\CD@kF\CD@kF\dimen2 \else\dimen2\CD@kF\CD@kI\dimen0 \advance
\CD@kI\dimen2 \fi}\def\CD@QB{\skip0\z@\relax\loop\skip1\lastskip\ifdim\skip1>%
\z@\unskip\advance\skip0\skip1 \repeat\vadjust{\prevdepth\dp\strutbox\penalty
\predisplaypenalty\vskip\abovedisplayskip\CD@UA\penalty\postdisplaypenalty
\vskip\belowdisplayskip}\ifdim\skip0=\z@\else\hskip\skip0 \global\@ignoretrue
\fi}\def\CD@TA{\CD@LG\kern-\displayindent\CD@UA\CD@LG\global\@ignoretrue}\def
\CD@UA{\hbox to\hsize{\CD@fE\ifdim\CD@RA=\z@\else\advance\CD@QA-\CD@RA\setbox
2=\hbox{\kern\CD@RA\box2}\fi\fi\setbox1=\hbox{\ifx\CD@XA\CD@qK\else\CD@k
\CD@XA\CD@ND\fi}\CD@oE\CD@iE\else\advance\CD@QA\wd1 \fi\wd1\z@\box1 \fi\dimen
0\wd2 \advance\dimen0\wd1 \advance\dimen0-\hsize\ifdim\dimen0>-\CD@nA\CD@HA
\fi\advance\dimen0\CD@QA\ifdim\dimen0>\z@\CD@KB{wider than the page by \the
\dimen0 }\CD@HA\fi\CD@iE\hss\else\CD@V\CD@QA\CD@nA\fi\CD@GI\hss\kern-\wd1\box
1 }}\def\CD@GI{\CD@AF\CD@@F\else\CD@SC\global\CD@hG\fi\fi\kern\CD@QA\box2 }%
\CD@tG\CD@wE\CD@YC\CD@XC\def\CD@JF{\CD@cJ\ifdim\DiagramCellHeight=-\maxdimen
\DiagramCellHeight\CD@QC\fi\ifdim\DiagramCellWidth=-\maxdimen
\DiagramCellWidth\CD@QC\fi\global\CD@XC\CD@IF\let\CD@FE\empty\let\CD@z\CD@Q
\let\overprint\CD@eH\let\CD@s\CD@rJ\let\enddiagram\CD@ED\let\\\CD@cC\let\par
\CD@jH\let\CD@MD\empty\let\switch@arg\CD@PB\let\shift\CD@iA\baselineskip
\DiagramCellHeight\lineskip\z@\lineskiplimit\z@\mathsurround\z@\tabskip\z@
\CD@OB}\def\CD@VD{\penalty-123 \begingroup\CD@jA\aftergroup\CD@K\halign
\bgroup\global\advance\CD@NB1 \vadjust{\penalty1}\global\CD@FA\z@\CD@OB\CD@j#%
#\CD@DD\CD@Q\CD@Q\CD@OI\CD@j##\CD@DD\cr}\def\CD@ED{\CD@MD\CD@GD\crcr\egroup
\global\CD@JD\endgroup}\def\CD@j{\global\advance\CD@FA1 \futurelet\CD@EH\CD@i
}\def\CD@i{\ifx\CD@EH\CD@DD\CD@tJ\hskip1sp plus 1fil \relax\let\CD@DD\relax
\CD@vI\else\hfil\CD@k\objectstyle\let\CD@FE\CD@d\fi}\def\CD@DD{\CD@MD\relax
\CD@yI\CD@vI\global\CD@QA\CD@iA\penalty-9993 \CD@ND\hfil\null\kern-2\CD@QA
\null}\def\CD@cC{\cr}\def\across#1{\span\omit\mscount=#1 \global\advance
\CD@FA\mscount\global\advance\CD@FA\m@ne\CD@sF\ifnum\mscount>2 \CD@fJ\repeat
\ignorespaces}\def\CD@fJ{\relax\span\omit\advance\mscount\m@ne}\def\CD@qJ{%
\ifincommdiag\ifx\CD@iD\@fillh\ifx\CD@jD\@fillh\ifdim\dimen3>\z@\else\ifdim
\dimen2>93\CD@@I\ifdim\dimen2>18\p@\ifdim\CD@LF>\z@\count@\CD@bJ\advance
\count@\m@ne\ifnum\count@<\z@\count@20\let\CD@aJ\CD@uJ\fi\xdef\CD@bJ{\the
\count@}\fi\fi\fi\fi\fi\fi\fi}\def\CD@cG#1{\vrule\horizhtdp width#1\dimen@
\kern2\dimen@}\def\CD@uJ{\rlap{\dimen@\CD@@I\CD@V\dimen@{.182\p@}\CD@zH
\dimen@\advance\CD@tI\dimen@\CD@cG0\CD@cG0\CD@cG2\CD@cG6\CD@cG6\CD@cG2\CD@cG0%
\CD@cG0\CD@cG2\CD@cG6\CD@cG0\CD@cG0\CD@cG2\CD@cG2\CD@cG6\CD@cG0\CD@cG0\CD@cG2%
\CD@cG6\CD@cG2\CD@cG2\CD@cG0\CD@cG0}}\def\CD@bJ{10}\def\CD@aJ{}\def\CD@XD{%
\CD@gE\CD@TB\fi\CD@x\CD@WF\CD@HI}\def\CD@x{\CD@QJ\CD@DC\CD@MJ\ifdim\CD@DC=\z@
\else\CD@pF\CD@DC\fi\ifvoid3 \setbox3=\null\ht3\CD@tI\dp3\CD@sI\else\CD@V{\ht
3}\CD@tI\CD@V{\dp3}\CD@sI\fi\dimen3=.5\wd3 \ifdim\dimen3=\z@\CD@tE\else\dimen
3-\CD@XH\fi\else\CD@TB\fi\CD@V{\dimen2}{\wd7}\CD@V{\dimen2}{\wd6}\CD@qJ
\advance\dimen2-2\dimen3 \dimen4.5\dimen2 \dimen2\dimen4 \advance\dimen2%
\CD@eJ\advance\dimen4-\CD@eJ\advance\dimen2-\wd1 \advance\dimen4-\wd5 \ifvoid
2 \else\CD@V{\ht3}{\ht2}\CD@V{\dp3}{\dp2}\CD@V{\dimen2}{\wd2}\fi\ifvoid4 \else
\CD@V{\ht3}{\ht4}\CD@V{\dp3}{\dp4}\CD@V{\dimen4}{\wd4}\fi\advance\skip2\dimen
2 \advance\skip4\dimen4 \CD@tE\advance\skip2\skip4 \dimen0\dimen5 \advance
\dimen0\wd5 \skip3-\skip4 \advance\skip3-\dimen0 \let\CD@jD\empty\else\skip3%
\z@\relax\dimen0\z@\fi}\def\CD@WF{\offinterlineskip\lineskip.2\CD@zC\ifvoid6
\else\setbox3=\vbox{\hbox to2\dimen3{\hss\box6\hss}\box3}\fi\ifvoid7 \else
\setbox3=\vtop{\box3 \hbox to2\dimen3{\hss\box7\hss}}\fi}\def\CD@HI{\kern
\dimen1 \box1 \CD@aJ\CD@iD\hskip\skip2 \kern\dimen0 \ifincommdiag\CD@jE
\penalty1\fi\kern\dimen3 \penalty\CD@GB\hskip\skip3 \null\kern-\dimen3 \else
\hskip\skip3 \fi\box3 \CD@jD\hskip\skip4 \box5 \kern\dimen5}\def\CD@MF{\ifnum
\CD@LH>\CD@TC\CD@V{\dimen1}\objectheight\CD@V{\dimen5}\objectheight\else\CD@V
{\dimen1}\objectwidth\CD@V{\dimen5}\objectwidth\fi}\def\CD@Y{\begingroup
\ifdim\dimen7=\z@\kern\dimen8 \else\ifdim\dimen6=\z@\kern\dimen9 \else\dimen5%
\dimen6 \dimen6\dimen9 \CD@KJ\dimen4\dimen2 \CD@dG{\dimen4}\dimen6\dimen5
\dimen7\dimen8 \CD@KJ\CD@iC{\dimen2}\ifdim\dimen2<\dimen4 \kern\dimen2 \else
\kern\dimen4 \fi\fi\fi\endgroup}\def\CD@jJ{\CD@JI\setbox\z@\hbox{\lower
\axisheight\hbox to\dimen2{\CD@DF\ifPositiveGradient\dimen8\ht\CD@MH\dimen9%
\CD@mI\else\dimen8\dp3 \dimen9\dimen1 \fi\else\dimen8 \ifPositiveGradient
\objectheight\else\z@\fi\dimen9\objectwidth\fi\advance\dimen8
\ifPositiveGradient-\fi\axisheight\CD@Y\unhbox\z@\CD@DF\ifPositiveGradient
\dimen8\dp3 \dimen9\dimen0 \else\dimen8\ht\CD@MH\dimen9\CD@mF\fi\else\dimen8
\ifPositiveGradient\z@\else\objectheight\fi\dimen9\objectwidth\fi\advance
\dimen8 \ifPositiveGradient\else-\fi\axisheight\CD@Y}}}\def\CD@bD{\dimen6
\CD@aK\DiagramCellHeight\dimen7 \CD@WK\DiagramCellWidth\CD@jJ
\ifPositiveGradient\advance\dimen7-\CD@ZK\DiagramCellWidth\else\dimen7 \CD@ZK
\DiagramCellWidth\dimen6\z@\fi\advance\dimen6-\CD@bK\DiagramCellHeight\CD@mK
\setbox0=\rlap{\kern-\dimen7 \lower\dimen6\box\z@}\ht0\z@\dp0\z@\raise
\axisheight\box0 }\def\CD@mK{\setbox0\hbox{\ht\z@\z@\dp\z@\z@\wd\z@\z@\CD@hK
\expandafter\CD@tK{q \CD@eK\space\CD@lK\space\CD@kK\space\CD@eK\space0 0 cm}%
\else\global\CD@iG\CD@eD{\the\CD@TC\space\ifPositiveGradient\else-\fi\the
\CD@LH\space bturn}\fi\box\z@\CD@gK}}\def\CD@vB{\advance\CD@hF-\CD@mI\CD@wJ
\CD@hF\advance\CD@wJ\CD@hI\ifvoid\CD@sH\ifdim\CD@wJ<.1em\ifnum\CD@gD=\@m\else
\CD@aG h\CD@wJ<.1em:objects overprint:\CD@FA\CD@gD\fi\fi\else\ifhbox\CD@sH
\CD@SK\else\CD@TK\fi\advance\CD@wJ\CD@mI\CD@bH{-\CD@mI}{\box\CD@sH}{\CD@wJ}%
\z@\fi\CD@hF-\CD@mF\CD@gD\CD@FA\CD@hI\z@}\def\CD@SK{\setbox\CD@sH=\hbox{%
\unhbox\CD@sH\unskip\unpenalty}\setbox\CD@tH=\hbox{\unhbox\CD@tH\unskip
\unpenalty}\setbox\CD@sH=\hbox to\CD@wJ{\CD@OA\wd\CD@sH\unhbox\CD@sH\CD@PA
\lastkern\unkern\ifdim\CD@PA=\z@\CD@UB\advance\CD@OA-\wd\CD@tH\else\CD@TB\fi
\ifnum\lastpenalty=\z@\else\CD@JA\unpenalty\fi\kern\CD@PA\ifdim\CD@hF<\CD@OA
\CD@JA\fi\ifdim\CD@hI<\wd\CD@tH\CD@JA\fi\CD@jE\CD@hI\CD@wJ\advance\CD@hI-%
\CD@OA\advance\CD@hI\wd\CD@tH\ifdim\CD@hI<2\wd\CD@tH\CD@aG h\CD@hI<2\wd\CD@tH
:arrow too short:\CD@FA\CD@gD\fi\divide\CD@hI\tw@\CD@hF\CD@wJ\advance\CD@hF-%
\CD@hI\fi\CD@tE\kern-\CD@hI\fi\hbox to\CD@hI{\unhbox\CD@tH}\CD@HG}}\CD@tG
\ifinpile\inpiletrue\inpilefalse\inpilefalse\def\pile{\protect\CD@UJ\protect
\CD@uH}\def\CD@uH#1{\CD@l#1\CD@QD}\def\CD@UJ{\CD@nB{pile}\setbox0=\vtop
\bgroup\aftergroup\CD@lD\inpiletrue\let\CD@FE\empty\let\pile\CD@KF\let\CD@QD
\CD@PD\let\CD@GD\CD@FD\CD@yH\baselineskip.5\PileSpacing\lineskip.1\CD@zC
\relax\lineskiplimit\lineskip\mathsurround\z@\tabskip\z@\let\\\CD@wH}\def
\CD@l{\CD@DE\CD@YF\empty\halign\bgroup\hfil\CD@k\let\CD@FE\CD@d\let\\\CD@vH##%
\CD@MD\CD@ND\hfil\CD@Q\CD@R##\cr}\CD@rG\CD@NE{pile only allows one column.}%
\CD@rG\CD@UE{you left it out!}\def\CD@R{\CD@QD\CD@Q\relax\CD@YA{missing \CD@yC
\space inserted after \string\pile}\CD@NE}\def\CD@PD{\CD@MD\crcr\egroup
\egroup}\def\CD@GD{\CD@MD}\def\CD@FD{\CD@MD\relax\CD@QD\CD@YA{missing \CD@yC
\space inserted between \string\pile\space and \CD@HD}\CD@UE}\def\CD@QD{%
\CD@MD}\def\CD@lD{\vbox{\dimen1\dp0 \unvbox0 \setbox0=\lastbox\advance\dimen1%
\dp0 \nointerlineskip\box0 \nointerlineskip\setbox0=\null\dp0.5\dimen1\ht0-%
\dp0 \box0}\ifincommdiag\CD@tJ\penalty-9998 \fi\xdef\CD@YF{pile}}\def\CD@vH{%
\cr}\def\CD@wH{\noalign{\skip@\prevdepth\advance\skip@-\baselineskip
\prevdepth\skip@}}\def\CD@KF#1{#1}\def\CD@TK{\setbox\CD@sH=\vbox{\unvbox
\CD@sH\setbox1=\lastbox\setbox0=\box\voidb@x\CD@tF\setbox\CD@sH=\lastbox
\ifhbox\CD@sH\CD@rC\repeat\unvbox0 \global\CD@QA\CD@ZE}\CD@ZE\CD@QA}\def
\CD@rC{\CD@jE\setbox\CD@sH=\hbox{\unhbox\CD@sH\unskip\setbox\CD@sH=\lastbox
\unskip\unhbox\CD@sH}\ifdim\CD@wJ<\wd\CD@sH\CD@aG h\CD@wJ<\wd\CD@sH:arrow in
pile too short:\CD@FA\CD@gD\else\setbox\CD@sH=\hbox to\CD@wJ{\unhbox\CD@sH}%
\fi\else\CD@gJ\fi\setbox0=\vbox{\box\CD@sH\nointerlineskip\ifvoid0 \CD@tJ\box
1 \else\vskip\skip0 \unvbox0 \fi}\skip0=\lastskip\unskip}\def\CD@gJ{\penalty7
\noindent\unhbox\CD@sH\unskip\setbox\CD@sH=\lastbox\unskip\unhbox\CD@sH
\endgraf\setbox\CD@tH=\lastbox\unskip\setbox\CD@tH=\hbox{\CD@JG\unhbox\CD@tH
\unskip\unskip\unpenalty}\ifcase\prevgraf\cd@shouldnt P\or\ifdim\CD@wJ<\wd
\CD@tH\CD@aG h\CD@wJ<\wd\CD@sH:object in pile too wide:\CD@FA\CD@gD\setbox
\CD@sH=\hbox to\CD@wJ{\hss\unhbox\CD@tH\hss}\else\setbox\CD@sH=\hbox to\CD@wJ
{\hss\kern\CD@hF\unhbox\CD@tH\kern\CD@hI\hss}\fi\or\setbox\CD@sH=\lastbox
\unskip\CD@SK\else\cd@shouldnt Q\fi\unskip\unpenalty}\def\CD@cD{\CD@MJ\ifvoid
3 \setbox3=\null\ht3\axisheight\dp3-\ht3 \dimen3.5\CD@LF\else\dimen4\dp3
\dimen3.5\wd3 \setbox3=\CD@GG{\box3}\dp3\dimen4 \ifdim\ht3=-\dp3 \else\CD@TB
\fi\fi\dimen0\dimen3 \advance\dimen0-.5\CD@LF\setbox0\null\ht0\ht3\dp0\dp3\wd
0\wd3 \ifvoid6\else\setbox6\hbox{\unhbox6\kern\dimen0\kern2pt}\dimen0\wd6 \fi
\ifvoid7\else\setbox7\hbox{\kern2pt\kern\dimen3\unhbox7}\dimen3\wd7 \fi
\setbox3\hbox{\ifvoid6\else\kern-\dimen0\unhbox6\fi\unhbox3 \ifvoid7\else
\unhbox7\kern-\dimen3\fi}\ht3\ht0\dp3\dp0\wd3\wd0 \CD@tE\dimen4=\ht\CD@MH
\advance\dimen4\dp5 \advance\dimen4\dimen1 \let\CD@jD\empty\else\dimen4\ht3
\fi\setbox0\null\ht0\dimen4 \offinterlineskip\setbox8=\vbox spread2ex{\kern
\dimen5 \box1 \CD@iD\vfill\CD@tE\else\kern\CD@eJ\fi\box0}\ht8=\z@\setbox9=%
\vtop spread2ex{\kern-\ht3 \kern-\CD@eJ\box3 \CD@jD\vfill\box5 \kern\dimen1}%
\dp9=\z@\hskip\dimen0plus.0001fil \box9 \kern-\CD@LF\box8 \CD@kE\penalty2 \fi
\CD@tE\penalty1 \fi\kern\PileSpacing\kern-\PileSpacing\kern-.5\CD@LF\penalty
\CD@GB\null\kern\dimen3}\def\CD@cI{\ifhbox\CD@VA\CD@KB{clashing verticals}\ht
\CD@MH.5\dp\CD@VA\dp\CD@MH-\ht5 \CD@yB\ht\CD@MH\z@\dp\CD@MH\z@\fi\dimen1\dp
\CD@VA\CD@xA\prevgraf\unvbox\CD@VA\CD@wA\lastpenalty\unpenalty\setbox\CD@VA=%
\null\setbox\CD@lI=\hbox{\CD@JG\unhbox\CD@lI\unskip\unpenalty\dimen0\lastkern
\unkern\unkern\unkern\kern\dimen0 \CD@HG}\setbox\CD@lF=\hbox{\unhbox\CD@lF
\dimen0\lastkern\unkern\unkern\global\CD@QA\lastkern\unkern\kern\dimen0 }%
\CD@tF\ifnum\CD@xA>4 \CD@zI\repeat\unskip\unskip\advance\CD@mF.5\wd\CD@VA
\advance\CD@mF\wd\CD@lF\advance\CD@mI.5\wd\CD@VA\advance\CD@mI\wd\CD@lI\ifnum
\CD@FA=\CD@lA\CD@OA.5\wd\CD@VA\edef\CD@NK{\the\CD@OA}\fi\setbox\CD@VA=\hbox{%
\kern-\CD@mF\box\CD@lF\unhbox\CD@VA\box\CD@lI\kern-\CD@mI\penalty\CD@wA
\penalty\CD@NB}\ht\CD@VA\dimen1 \dp\CD@VA\z@\wd\CD@VA\CD@tB\CD@vB}\def\CD@zI{%
\ifdim\wd\CD@lF<\CD@QA\setbox\CD@lF=\hbox to\CD@QA{\CD@JG\unhbox\CD@lF}\fi
\advance\CD@xA\m@ne\setbox\CD@VA=\hbox{\box\CD@lF\unhbox\CD@VA}\unskip\setbox
\CD@lF=\lastbox\setbox\CD@lF=\hbox{\unhbox\CD@lF\unskip\unpenalty\dimen0%
\lastkern\unkern\unkern\global\CD@QA\lastkern\unkern\kern\dimen0 }}\def\CD@yB
{\dimen1\dp\CD@VA\ifhbox\CD@VA\CD@xB\else\CD@zB\fi\setbox\CD@VA=\vbox{%
\penalty\CD@NB}\dp\CD@VA-\dp\CD@MH\wd\CD@VA\CD@tB}\def\CD@zB{\unvbox\CD@VA
\CD@wA\lastpenalty\unpenalty\ifdim\dimen1<\ht\CD@MH\CD@aG v\dimen1<\ht\CD@MH:%
rows overprint:\CD@NB\CD@wA\fi}\def\CD@xB{\dimen0=\ht\CD@VA\setbox\CD@VA=%
\hbox\bgroup\advance\dimen1-\ht\CD@MH\unhbox\CD@VA\CD@xA\lastpenalty
\unpenalty\CD@wA\lastpenalty\unpenalty\global\CD@RA-\lastkern\unkern\setbox0=%
\lastbox\CD@tF\setbox\CD@VA=\hbox{\box0\unhbox\CD@VA}\setbox0=\lastbox\ifhbox
0 \CD@kJ\repeat\global\CD@SA-\lastkern\unkern\global\CD@QA\CD@JK\unhbox\CD@VA
\egroup\CD@JK\CD@QA\CD@bH{\CD@SA}{\box\CD@VA}{\CD@RA}{\dimen1}}\def\CD@kJ{%
\setbox0=\hbox to\wd0\bgroup\unhbox0 \unskip\unpenalty\dimen7\lastkern\unkern
\ifnum\lastpenalty=1 \unpenalty\CD@UB\else\CD@TB\fi\ifnum\lastpenalty=2
\unpenalty\dimen2.5\dimen0\advance\dimen2-.5\dimen1\advance\dimen2-%
\axisheight\else\dimen2\z@\fi\setbox0=\lastbox\dimen6\lastkern\unkern\setbox1%
=\lastbox\setbox0=\vbox{\unvbox0 \CD@tE\kern-\dimen1 \else\ifdim\dimen2=\z@
\else\kern\dimen2 \fi\fi}\ifdim\dimen0<\ht0 \CD@aG v\dimen0<\ht0:upper part of
vertical too short:{\CD@tE\CD@NB\else\CD@wA\fi}\CD@xA\else\setbox0=\vbox to%
\dimen0{\unvbox0}\fi\setbox1=\vtop{\unvbox1}\ifdim\dimen1<\dp1 \CD@aG v\dimen
1<\dp1:lower part of vertical too short:\CD@NB\CD@wA\else\setbox1=\vtop to%
\dimen1{\ifdim\dimen2=\z@\else\kern-\dimen2 \fi\unvbox1 }\fi\box1 \kern\dimen
6 \box0 \kern\dimen7 \CD@HG\global\CD@QA\CD@JK\egroup\CD@JK\CD@QA\relax}%
\countdef\CD@u=14 \newcount\CD@CA\newcount\CD@XB\newcount\CD@NB\let\CD@LB
\insc@unt\newcount\CD@FA\newcount\CD@lA\let\CD@mA\CD@XB\newcount\CD@MB\CD@tG
\CD@DF\CD@bI\CD@aI\CD@aI\def\CD@nD{-1}\def\CD@K{\CD@t\ifnum\CD@nD<\z@\else
\begingroup\scrollmode\showboxdepth\CD@nD\showboxbreadth\maxdimen\showlists
\endgroup\fi\CD@bI\CD@zF\CD@CA=\CD@u\advance\CD@CA1 \CD@XB=\CD@CA\ifnum\CD@NB
=1 \CD@JA\fi\advance\CD@XB\CD@NB\dimen1\z@\skip0\z@\count@=\insc@unt\advance
\count@\CD@u\divide\count@2 \ifnum\CD@XB>\count@\CD@KB{The diagram has too
many rows! It can't be reformatted.}\else\CD@NG\CD@WI\fi\CD@cH}\def\CD@NG{%
\CD@NB\CD@CA\CD@uF\ifnum\CD@NB<\CD@XB\setbox\CD@NB\box\voidb@x\advance\CD@NB1%
\relax\repeat\CD@NB\CD@CA\skip\z@\z@\CD@uF\CD@GB\lastpenalty\unpenalty\ifnum
\CD@GB>\z@\CD@KE\repeat\ifnum\CD@GB=-123 \CD@tJ\unpenalty\else\cd@shouldnt D%
\fi\ifx\v@grid\relax\else\CD@NB\CD@XB\advance\CD@NB\m@ne\expandafter\CD@VJ
\v@grid\fi\CD@MB\CD@mA\CD@tB\z@\CD@XG\ifx\h@grid\relax\else\expandafter\CD@LJ
\h@grid\fi\count@\CD@XB\advance\count@\m@ne\CD@YB\ht\count@}\def\CD@KE{%
\ifcase\CD@GB\or\CD@MG\else\CD@uA-\lastpenalty\unpenalty\CD@vA\lastpenalty
\unpenalty\setbox0=\lastbox\CD@WG\fi\CD@wD}\def\CD@wD{\skip1\lastskip\unskip
\advance\skip0\skip1 \ifdim\skip1=\z@\else\expandafter\CD@wD\fi}\def\CD@MG{%
\setbox0=\lastbox\CD@pI\dp0 \advance\CD@pI\skip\z@\skip\z@\z@\advance\CD@NF
\CD@pI\CD@uE\ifnum\CD@NB>\CD@CA\CD@NF\DiagramCellHeight\CD@pI\CD@NF\advance
\CD@pI-\CD@qI\fi\fi\CD@qI\ht0 \CD@NF\CD@qI\setbox\CD@NB\hbox{\unhbox\CD@NB
\unhbox0}\dp\CD@NB\CD@pI\ht\CD@NB\CD@qI\advance\CD@NB1 }\def\CD@WG{\ifnum
\CD@uA<\z@\advance\CD@uA\CD@XB\ifnum\CD@uA<\CD@CA\CD@UG\else\CD@OA\dp\CD@uA
\CD@PA\ht\CD@uA\setbox\CD@uA\hbox{\box\z@\penalty\CD@vA\penalty\CD@GB\unhbox
\CD@uA}\dp\CD@uA\CD@OA\ht\CD@uA\CD@PA\fi\else\CD@UG\fi}\def\CD@UG{\CD@KB{%
diagonal goes outside diagram (lost)}}\def\CD@fI{\advance\CD@uA\CD@XB\ifnum
\CD@uA<\CD@CA\CD@UG\else\ifnum\CD@uA=\CD@NB\CD@VG\else\ifnum\CD@uA>\CD@NB
\cd@shouldnt M\else\CD@OA\dp\CD@uA\CD@PA\ht\CD@uA\setbox\CD@uA\hbox{\box\z@
\penalty\CD@vA\penalty\CD@GB\unhbox\CD@uA}\dp\CD@uA\CD@OA\ht\CD@uA\CD@PA\fi
\fi\fi}\def\CD@WI{\CD@AJ\setbox\CD@PC=\hbox{\CD@k A\@super f\CD@lJ f\CD@ND}%
\CD@ZE\z@\CD@JK\z@\CD@kI\z@\CD@kF\z@\CD@NB=\CD@XB\CD@NF\z@\CD@uB\z@\CD@uF
\ifnum\CD@NB>\CD@CA\advance\CD@NB\m@ne\CD@qI\ht\CD@NB\CD@pI\dp\CD@NB\advance
\CD@NF\CD@qI\CD@rI\advance\CD@uB\CD@NF\CD@KC\CD@ZI\CD@w\ht\CD@NB\CD@qI\dp
\CD@NB\CD@pI\nointerlineskip\box\CD@NB\CD@NF\CD@pI\setbox\CD@NB\null\ht\CD@NB
\CD@uB\repeat\CD@wB\nointerlineskip\box\CD@NB\CD@gG\CD@ZE\DiagramCellWidth{%
width}\CD@gG\CD@JK\DiagramCellHeight{height}\CD@VA\CD@LB\advance\CD@VA-\CD@lA
\advance\CD@VA\m@ne\advance\CD@VA\CD@mA\dimen0\wd\CD@VA\CD@tI\axisheight
\dimen1\CD@uB\advance\dimen1-\CD@YB\dimen2\CD@kI\advance\dimen2-\dimen0
\advance\CD@XB-\CD@CA\advance\CD@LB-\CD@lA}\count@\year\multiply\count@12
\advance\count@\month\ifnum\count@>24398 \loop\iftrue\repeat\fi\def\CD@wB{\CD@qI-\CD@NF\CD@pI\CD@NF\setbox\CD@MH=\null\dp
\CD@MH\CD@NF\ht\CD@MH-\CD@NF\CD@mF\z@\CD@mI\z@\CD@lA\CD@LB\advance\CD@lA-%
\CD@MB\advance\CD@lA\CD@mA\CD@FA\CD@LB\CD@VA\CD@MB\CD@sF\ifnum\CD@FA>\CD@lA
\advance\CD@FA\m@ne\advance\CD@VA\m@ne\CD@tB\wd\CD@VA\setbox\CD@FA=\box
\voidb@x\CD@yB\repeat\CD@w\ht\CD@NB\CD@qI\dp\CD@NB\CD@pI}\def\CD@gG#1#2#3{%
\ifdim#1>.01\CD@zC\CD@PA#2\relax\advance\CD@PA#1\relax\advance\CD@PA.99\CD@zC
\count@\CD@PA\divide\count@\CD@zC\CD@KB{increase cell #3 to \the\count@ em}%
\fi}\def\CD@rI{\CD@FA=\CD@LB\penalty4 \noindent\unhbox\CD@NB\CD@sF\unskip
\setbox0=\lastbox\ifhbox0 \advance\CD@FA\m@ne\setbox\CD@FA\hbox to\wd0{\null
\penalty-9990\null\unhbox0}\repeat\CD@lA\CD@FA\advance\CD@FA\CD@MB\advance
\CD@FA-\CD@mA\ifnum\CD@FA<\CD@LB\count@\CD@FA\advance\count@\m@ne\dimen0=\wd
\count@\count@\CD@MB\advance\count@\m@ne\CD@tB\wd\count@\CD@sF\ifnum\CD@FA<%
\CD@LB\CD@DJ\CD@XG\dimen0\wd\CD@FA\advance\CD@FA1 \repeat\fi\CD@sF\CD@GB
\lastpenalty\unpenalty\ifnum\CD@GB>\z@\CD@vA\lastpenalty\unpenalty\CD@VG
\repeat\endgraf\unskip\ifnum\lastpenalty=4 \unpenalty\else\cd@shouldnt S\fi}%
\def\CD@VG{\advance\CD@vA\CD@lA\advance\CD@vA\m@ne\setbox0=\lastbox\ifnum
\CD@vA<\CD@LB\setbox\CD@vA\hbox{\box0\penalty\CD@GB\unhbox\CD@vA}\else\CD@UG
\fi}\def\CD@bG{}\CD@tG\CD@uE\CD@WB\CD@VB\def\CD@DJ{\advance\dimen0\wd\CD@FA
\divide\dimen0\tw@\CD@uE\dimen0\DiagramCellWidth\else\CD@V{\dimen0}%
\DiagramCellWidth\CD@pJ\fi\advance\CD@tB\dimen0 }\def\CD@XG{\setbox\CD@MB=%
\vbox{}\dp\CD@MB=\CD@uB\wd\CD@MB\CD@tB\advance\CD@MB1 }\def\CD@LJ#1,{\def
\CD@GK{#1}\ifx\CD@GK\CD@RD\else\advance\CD@tB\CD@GK\DiagramCellWidth\CD@XG
\expandafter\CD@LJ\fi}\def\CD@VJ#1,{\def\CD@GK{#1}\ifx\CD@GK\CD@RD\else\ifnum
\CD@NB>\CD@CA\CD@NF\CD@GK\DiagramCellHeight\advance\CD@NF-\dp\CD@NB\advance
\CD@NB\m@ne\ht\CD@NB\CD@NF\fi\expandafter\CD@VJ\fi}\def\CD@pJ{\CD@wE\CD@OA
\dimen0 \advance\CD@OA-\DiagramCellWidth\ifdim\CD@OA>2\MapShortFall\CD@KB{%
badly drawn diagonals (see manual)}\let\CD@pJ\empty\fi\else\let\CD@pJ\empty
\fi}\def\CD@KC{\CD@VA\CD@mA\CD@sF\ifnum\CD@VA<\CD@MB\dimen0\dp\CD@VA\advance
\dimen0\CD@NF\dp\CD@VA\dimen0 \advance\CD@VA1 \repeat}\def\CD@bH#1#2#3#4{%
\ifnum\CD@FA<\CD@LB\CD@OA=#1\relax\setbox\CD@FA=\hbox{\setbox0=#2\dimen7=#4%
\relax\dimen8=#3\relax\ifhbox\CD@FA\unhbox\CD@FA\advance\CD@OA-\lastkern
\unkern\fi\ifdim\CD@OA=\z@\else\kern-\CD@OA\fi\raise\dimen7\box0 \kern-\dimen
8 }\ifnum\CD@FA=\CD@lA\CD@V\CD@kF\CD@OA\fi\else\cd@shouldnt O\fi}\def\CD@w{%
\setbox\CD@NB=\hbox{\CD@FA\CD@lA\CD@VA\CD@mA\CD@PA\z@\relax\CD@sF\ifnum\CD@FA
<\CD@LB\CD@tB\wd\CD@VA\relax\CD@eI\advance\CD@FA1 \advance\CD@VA1 \repeat}%
\CD@V\CD@kI{\wd\CD@NB}\wd\CD@NB\z@}\def\CD@eI{\ifhbox\CD@FA\CD@OA\CD@tB\relax
\advance\CD@OA-\CD@PA\relax\ifdim\CD@OA=\z@\else\kern\CD@OA\fi\CD@PA\CD@tB
\advance\CD@PA\wd\CD@FA\relax\unhbox\CD@FA\advance\CD@PA-\lastkern\unkern\fi}%
\def\CD@ZI{\setbox\CD@sH=\box\voidb@x\CD@VA=\CD@MB\CD@FA\CD@LB\CD@VA\CD@mA
\advance\CD@VA\CD@FA\advance\CD@VA-\CD@lA\advance\CD@VA\m@ne\CD@tB\wd\CD@VA
\count@\CD@LB\advance\count@\m@ne\CD@hF.5\wd\count@\advance\CD@hF\CD@tB\CD@A
\m@ne\CD@gD\@m\CD@sF\ifnum\CD@FA>\CD@lA\advance\CD@FA\m@ne\advance\CD@hF-%
\CD@tB\CD@PI\wd\CD@VA\CD@tB\advance\CD@hF\CD@tB\advance\CD@VA\m@ne\CD@tB\wd
\CD@VA\repeat\CD@mF\CD@kF\CD@mI-\CD@mF\CD@vB}\newcount\CD@GB\def\CD@s{}\def
\CD@t{\CD@vK\mathsurround\z@\hsize\z@\rightskip\z@ plus1fil minus\maxdimen
\parfillskip\z@\linepenalty9000 \looseness0 \hfuzz\maxdimen\hbadness10000
\clubpenalty0 \widowpenalty0 \displaywidowpenalty0 \interlinepenalty0
\predisplaypenalty0 \postdisplaypenalty0 \interdisplaylinepenalty0
\interfootnotelinepenalty0 \floatingpenalty0 \brokenpenalty0 \everypar{}%
\leftskip\z@\parskip\z@\parindent\z@\pretolerance10000 \tolerance10000
\hyphenpenalty10000 \exhyphenpenalty10000 \binoppenalty10000 \relpenalty10000
\adjdemerits0 \doublehyphendemerits0 \finalhyphendemerits0 \baselineskip\z@
\CD@IA\def\parskip{\cd@shouldnt{PS}}\prevdepth\z@}\newbox\CD@KG\newbox\CD@IG
\def\CD@JG{\unhcopy\CD@KG}\def\CD@HG{\unhcopy\CD@IG}\def\CD@iJ{\hbox{}%
\penalty1\nointerlineskip}\def\CD@PI{\penalty5 \noindent\setbox\CD@MH=\null
\CD@mF\z@\CD@mI\z@\ifnum\CD@FA<\CD@LB\ht\CD@MH\ht\CD@FA\dp\CD@MH\dp\CD@FA
\unhbox\CD@FA\skip0=\lastskip\unskip\else\CD@OK\skip0=\z@\fi\endgraf\ifcase
\prevgraf\cd@shouldnt Y \or\cd@shouldnt Z \or\CD@RI\or\CD@XI\else\CD@QI\fi
\unskip\setbox0=\lastbox\unskip\unskip\unpenalty\noindent\unhbox0\setbox0%
\lastbox\unpenalty\unskip\unskip\unpenalty\setbox0\lastbox\CD@tF\CD@GB
\lastpenalty\unpenalty\ifnum\CD@GB>\z@\setbox\z@\lastbox\CD@lB\repeat\endgraf
\unskip\unskip\unpenalty}\def\CD@YJ{\CD@uA\CD@XB\advance\CD@uA-\CD@NB\CD@vA
\CD@FA\advance\CD@vA-\CD@lA\advance\CD@vA1 \expandafter\message{prevgraf=\the
\prevgraf at (\the\CD@uA,\the\CD@vA)}}\def\CD@XI{\CD@CE\setbox\CD@lI=\lastbox
\setbox\CD@lI=\hbox{\unhbox\CD@lI\unskip\unpenalty}\unskip\ifdim\ht\CD@lI>\ht
\CD@PC\setbox\CD@MH=\copy\CD@lI\else\ifdim\dp\CD@lI>\dp\CD@PC\setbox\CD@MH=%
\copy\CD@lI\else\CD@FG\CD@lI\fi\fi\advance\CD@mF.5\wd\CD@lI\advance\CD@mI.5%
\wd\CD@lI\setbox\CD@lI=\hbox{\unhbox\CD@lI\CD@HG}\CD@bH\CD@mF{\box\CD@lI}%
\CD@mI\z@\CD@yB\CD@vB}\def\CD@CE{\ifnum\CD@A>0 \advance\dimen0-\CD@tB\CD@iA-.%
5\dimen0 \CD@A-\CD@A\else\CD@A0 \CD@iA\z@\fi\setbox\CD@MH=\lastbox\setbox
\CD@MH=\hbox{\unhbox\CD@MH\unskip\unskip\unpenalty\setbox0=\lastbox\global
\CD@QA\lastkern\unkern}\advance\CD@iA-.5\CD@QA\unskip\setbox\CD@MH=\null
\CD@mI\CD@iA\CD@mF-\CD@iA}\def\CD@Z{\ht\CD@MH\CD@tI\dp\CD@MH\CD@sI}\def\CD@FG
#1{\setbox\CD@MH=\hbox{\CD@V{\ht\CD@MH}{\ht#1}\CD@V{\dp\CD@MH}{\dp#1}\CD@V{%
\wd\CD@MH}{\wd#1}\vrule height\ht\CD@MH depth\dp\CD@MH width\wd\CD@MH}}\def
\CD@QI{\CD@CE\CD@Z\setbox\CD@lI=\lastbox\unskip\setbox\CD@lF=\lastbox\unskip
\setbox\CD@lF=\hbox{\unhbox\CD@lF\unskip\global\CD@yA\lastpenalty\unpenalty}%
\advance\CD@yA9999 \ifcase\CD@yA\CD@VI\or\CD@YI\or\CD@TI\or\CD@dI\or\CD@cI\or
\CD@SI\else\cd@shouldnt9\fi}\def\CD@VI{\CD@FG\CD@lI\CD@UI\setbox\CD@sH=\box
\CD@lF\setbox\CD@tH=\box\CD@lI}\def\CD@YI{\CD@FG\CD@lF\setbox\CD@lI\hbox{%
\penalty8 \unhbox\CD@lI\unskip\unpenalty\ifnum\lastpenalty=8 \else\CD@xH\fi}%
\CD@UI\setbox\CD@lF=\hbox{\unhbox\CD@lF\unskip\unpenalty\global\setbox\CD@DA=%
\lastbox}\ifdim\wd\CD@lF=\z@\else\CD@xH\fi\setbox\CD@sH=\box\CD@DA}\def\CD@xH
{\CD@KB{extra material in \string\pile\space cell (lost)}}\def\CD@UI{\CD@yB
\ifvoid\CD@sH\else\CD@KB{Clashing horizontal arrows}\CD@mI.5\CD@hF\CD@mF-%
\CD@mI\CD@vB\CD@mI\z@\CD@mF\z@\fi\CD@hI\CD@hF\advance\CD@hI-\CD@mI\CD@hF-%
\CD@mF\CD@JC\CD@FA}\def\CD@RI{\setbox0\lastbox\unskip\CD@iA\z@\CD@Z\ifdim
\skip0>\z@\CD@tJ\CD@A0 \else\ifnum\CD@A<1 \CD@A0 \dimen0\CD@tB\fi\advance
\CD@A1 \fi}\def\VonH{\CD@MA46\VonH{.5\CD@LF}}\def\HonV{\CD@MA57\HonV{.5\CD@LF
}}\def\HmeetV{\CD@MA44\HmeetV{-\MapShortFall}}\def\CD@MA#1#2#3#4{\CD@pB34#1{%
\string#3}\CD@SD\CD@GB-999#2 \dimen0=#4\CD@tI\dimen0\advance\CD@tI\axisheight
\CD@sI\dimen0\advance\CD@sI-\axisheight\CD@CF\CD@HC\CD@ZD}\def\CD@HC#1{%
\setbox0=\hbox{\CD@k#1\CD@ND}\dimen0.5\wd0 \CD@tI\ht0 \CD@sI\dp0 \CD@ZD}\def
\CD@SD{\setbox0=\null\ht0=\CD@tI\dp0=\CD@sI\wd0=\dimen0 \copy0\penalty\CD@GB
\box0 }\def\CD@TI{\CD@GC\CD@yB}\def\CD@dI{\CD@GC\CD@vB}\def\CD@SI{\CD@GC
\CD@yB\CD@vB}\def\CD@GC{\setbox\CD@lI=\hbox{\unhbox\CD@lI}\setbox\CD@lF=\hbox
{\unhbox\CD@lF\global\setbox\CD@DA=\lastbox}\ht\CD@MH\ht\CD@DA\dp\CD@MH\dp
\CD@DA\advance\CD@mF\wd\CD@DA\advance\CD@mI\wd\CD@lI}\CD@tG
\ifPositiveGradient\CD@CI\CD@BI\CD@CI\CD@tG\ifClimbing\CD@rB\CD@qB\CD@rB
\newcount\DiagonalChoice\DiagonalChoice\m@ne\ifx\tenln\nullfont\CD@tJ\def
\CD@qF{\CD@KH\ifPositiveGradient/\else\CD@k\backslash\CD@ND\fi}\else\def
\CD@qF{\CD@rF\char\count@}\fi\let\CD@rF\tenln\def\Use@line@char#1{\hbox{#1%
\CD@rF\ifPositiveGradient\else\advance\count@64 \fi\char\count@}}\def\CD@cF{%
\Use@line@char{\count@\CD@TC\multiply\count@8\advance\count@-9\advance\count@
\CD@LH}}\def\CD@ZF{\Use@line@char{\ifcase\DiagonalChoice\CD@gF\or\CD@fF\or
\CD@fF\else\CD@gF\fi}}\def\CD@gF{\ifnum\CD@TC=\z@\count@'33 \else\count@
\CD@TC\multiply\count@\sixt@@n\advance\count@-9\advance\count@\CD@LH\advance
\count@\CD@LH\fi}\def\CD@fF{\count@'\ifcase\CD@LH55\or\ifcase\CD@TC66\or22\or
52\or61\or72\fi\or\ifcase\CD@TC66\or25\or22\or63\or52\fi\or\ifcase\CD@TC66\or
16\or36\or22\or76\fi\or\ifcase\CD@TC66\or27\or25\or67\or22\fi\fi\relax}\def
\CD@uC#1{\hbox{#1\setbox0=\Use@line@char{#1}\ifPositiveGradient\else\raise.3%
\ht0\fi\copy0 \kern-.7\wd0 \ifPositiveGradient\raise.3\ht0\fi\box0}}\def
\CD@jF#1{\hbox{\setbox0=#1\kern-.75\wd0 \vbox to.25\ht0{\ifPositiveGradient
\else\vss\fi\box0 \ifPositiveGradient\vss\fi}}}\def\CD@jI#1{\hbox{\setbox0=#1%
\dimen0=\wd0 \vbox to.25\ht0{\ifPositiveGradient\vss\fi\box0
\ifPositiveGradient\else\vss\fi}\kern-.75\dimen0 }}\CD@RC{+h:>}{%
\Use@line@char\CD@fF}\CD@RC{-h:>}{\Use@line@char\CD@gF}\CD@nF{+t:<}{-h:>}%
\CD@nF{-t:<}{+h:>}\CD@RC{+t:>}{\CD@jF{\Use@line@char\CD@fF}}\CD@RC{-t:>}{%
\CD@jI{\Use@line@char\CD@gF}}\CD@nF{+h:<}{-t:>}\CD@nF{-h:<}{+t:>}\CD@RC{+h:>>%
}{\CD@uC\CD@fF}\CD@RC{-h:>>}{\CD@uC\CD@gF}\CD@nF{+t:<<}{-h:>>}\CD@nF{-t:<<}{+%
h:>>}\CD@nF{+h:>->}{+h:>>}\CD@nF{-h:>->}{-h:>>}\CD@nF{+t:<-<}{-h:>>}\CD@nF{-t%
:<-<}{+h:>>}\CD@RC{+t:>>}{\CD@jF{\CD@uC\CD@fF}}\CD@RC{-t:>>}{\CD@jI{\CD@uC
\CD@gF}}\CD@nF{+h:<<}{-t:>>}\CD@nF{-h:<<}{+t:>>}\CD@nF{+t:>->}{+t:>>}\CD@nF{-%
t:>->}{-t:>>}\CD@nF{+h:<-<}{-t:>>}\CD@nF{-h:<-<}{+t:>>}\CD@RC{+f:-}{\CD@EF
\null\else\CD@cF\fi}\CD@nF{-f:-}{+f:-}\def\CD@tC#1#2{\vbox to#1{\vss\hbox to#%
2{\hss.\hss}\vss}}\def\hfdot{\CD@tC{2\axisheight}{.5em}}%
\def\vfdot{\CD@tC{1ex}\z@}
\def\CD@bF{\hbox{\dimen0=.3\CD@zC\dimen1\dimen0 \ifnum\CD@LH>\CD@TC\CD@iC{%
\dimen1}\else\CD@dG{\dimen0}\fi\CD@tC{\dimen0}{\dimen1}}}\newarrowfiller{.}%
\hfdot\hfdot\vfdot\vfdot\def\dfdot{\CD@bF\CD@CK}\CD@RC{+f:.}{\dfdot}\CD@RC{-f%
:.}{\dfdot}\def\CD@@K#1{\hbox\bgroup\def\CD@CH{#1\egroup}\afterassignment
\CD@CH
\count@='}\def\lnchar{\CD@@K\CD@qF}\def\CD@dF#1{\setbox#1=\hbox{\dimen5\dimen
#1 \setbox8=\box#1 \dimen1\wd8 \count@\dimen5 \divide\count@\dimen1 \ifnum
\count@=0 \box8 \ifdim\dimen5<.95\dimen1 \CD@gB{diagonal line too short}\fi
\else\dimen3=\dimen5 \advance\dimen3-\dimen1 \divide\dimen3\count@\dimen4%
\dimen3 \CD@dG{\dimen4}\ifPositiveGradient\multiply\dimen4\m@ne\fi\dimen6%
\dimen1 \advance\dimen6-\dimen3 \loop\raise\count@\dimen4\copy8 \ifnum\count@
>0 \kern-\dimen6 \advance\count@\m@ne\repeat\fi}}\def\CD@CG#1{\CD@EF\CD@xJ{#1%
}\else\CD@dF{#1}\fi}\def\CD@IH#1{}\newdimen\objectheight\objectheight1.8ex
\newdimen\objectwidth\objectwidth1em \def\CD@YD{\dimen6=\CD@aK
\DiagramCellHeight\dimen7=\CD@WK\DiagramCellWidth\CD@KJ\ifnum\CD@LH>0 \ifnum
\CD@TC>0 \CD@aF\else\aftergroup\CD@VC\fi\else\aftergroup\CD@UC\fi}\def\CD@VC{%
\CD@YA{diagonal map is nearly vertical}\CD@NA}\def\CD@UC{\CD@YA{diagonal map
is nearly horizontal}\CD@NA}\CD@rG\CD@NA{Use an orthogonal map instead}\def
\CD@aF{\CD@MJ\dimen3\dimen7\dimen7\dimen6\CD@iC{\dimen7}\advance\dimen3-%
\dimen7 \CD@MF\ifnum\CD@LH>\CD@TC\advance\dimen6-\dimen1\advance\dimen6-%
\dimen5 \CD@iC{\dimen1}\CD@iC{\dimen5}\else\dimen0\dimen1\advance\dimen0%
\dimen5\CD@dG{\dimen0}\advance\dimen6-\dimen0 \fi\dimen2.5\dimen7\advance
\dimen2-\dimen1 \dimen4.5\dimen7\advance\dimen4-\dimen5 \ifPositiveGradient
\dimen0\dimen5 \advance\dimen1-\CD@WK\DiagramCellWidth\advance\dimen1 \CD@ZK
\DiagramCellWidth\setbox6=\llap{\unhbox6\kern.1\ht2}\setbox7=\rlap{\kern.1\ht
2\unhbox7}\else\dimen0\dimen1 \advance\dimen1-\CD@ZK\DiagramCellWidth\setbox7%
=\llap{\unhbox7\kern.1\ht2}\setbox6=\rlap{\kern.1\ht2\unhbox6}\fi\setbox6=%
\vbox{\box6\kern.1\wd2}\setbox7=\vtop{\kern.1\wd2\box7}\CD@dG{\dimen0}%
\advance\dimen0-\axisheight\advance\dimen0-\CD@bK\DiagramCellHeight\dimen5-%
\dimen0 \advance\dimen0\dimen6 \advance\dimen1.5\dimen3 \ifdim\wd3>\z@\ifdim
\ht3>-\dp3\CD@TB\fi\fi\dimen3\dimen2 \dimen7\dimen2\advance\dimen7\dimen4
\ifvoid3 \else\CD@tE\else\dimen8\ht3\advance\dimen8-\axisheight\CD@iC{\dimen8%
}\CD@X{\dimen8}{.5\wd3}\dimen9\dp3\advance\dimen9\axisheight\CD@iC{\dimen9}%
\CD@X{\dimen9}{.5\wd3}\ifPositiveGradient\advance\dimen2-\dimen9\advance
\dimen4-\dimen8 \else\advance\dimen4-\dimen9\advance\dimen2-\dimen8 \fi\fi
\advance\dimen3-.5\wd3 \fi\dimen9=\CD@aK\DiagramCellHeight\advance\dimen9-2%
\DiagramCellHeight\CD@tE\advance\dimen2\dimen4 \CD@CG{2}\dimen2-\dimen0%
\advance\dimen2\dp2 \else\CD@CG{2}\CD@CG{4}\ifPositiveGradient\dimen2-\dimen0%
\advance\dimen2\dp2 \dimen4\dimen5\advance\dimen4-\ht4 \else\dimen4-\dimen0%
\advance\dimen4\dp4 \dimen2\dimen5\advance\dimen2-\ht2 \fi\fi\setbox0=\hbox to%
\z@{\kern\dimen1 \ifvoid1 \else\ifPositiveGradient\advance\dimen0-\dp1 \lower
\dimen0 \else\advance\dimen5-\ht1 \raise\dimen5 \fi\rlap{\unhbox1}\fi\raise
\dimen2\rlap{\unhbox2}\ifvoid3 \else\lower.5\dimen9\rlap{\kern\dimen3\unhbox3%
}\fi\kern.5\dimen7 \lower.5\dimen9\box6 \lower.5\dimen9\box7 \kern.5\dimen7
\CD@tE\else\raise\dimen4\llap{\unhbox4}\fi\ifvoid5 \else\ifPositiveGradient
\advance\dimen5-\ht5 \raise\dimen5 \else\advance\dimen0-\dp5 \lower\dimen0 \fi
\llap{\unhbox5}\fi\hss}\ht0=\axisheight\dp0=-\ht0\box0 }\def\NorthWest{\CD@BI
\CD@rB\DiagonalChoice0 }\def\NorthEast{\CD@CI\CD@rB\DiagonalChoice1 }\def
\SouthWest{\CD@CI\CD@qB\DiagonalChoice3 }\def\SouthEast{\CD@BI\CD@qB
\DiagonalChoice2 }\def\CD@aD{\vadjust{\CD@uA\CD@FA\advance\CD@uA
\ifPositiveGradient\else-\fi\CD@ZK\relax\CD@vA\CD@NB\advance\CD@vA-\CD@bK
\relax\hbox{\advance\CD@uA\ifPositiveGradient-\fi\CD@WK\advance\CD@vA\CD@aK
\hbox{\box6 \kern\CD@DC\kern\CD@eJ\penalty1 \box7 \box\z@}\penalty\CD@uA
\penalty\CD@vA}\penalty\CD@uA\penalty\CD@vA\penalty104}}\def\CD@eH#1{\relax
\vadjust{\hbox@maths{#1}\penalty\CD@FA\penalty\CD@NB\penalty\tw@}}\def\CD@lB{%
\ifcase\CD@GB\or\or\CD@bH{.5\wd0}{\box0}{.5\wd0}\z@\or\unhbox\z@\setbox\z@
\lastbox\CD@bH{.5\wd0}{\box0}{.5\wd0}\z@\unpenalty\unpenalty\setbox\z@
\lastbox\or\CD@TG\else\advance\CD@GB-100 \ifnum\CD@GB<\z@\cd@shouldnt B\fi
\setbox\z@\hbox{\kern\CD@mF\copy\CD@MH\kern\CD@mI\CD@uA\CD@XB\advance\CD@uA-%
\CD@NB\penalty\CD@uA\CD@uA\CD@FA\advance\CD@uA-\CD@lA\penalty\CD@uA\unhbox\z@
\global\CD@yA\lastpenalty\unpenalty\global\CD@zA\lastpenalty\unpenalty}\CD@uA
-\CD@yA\CD@vA\CD@zA\CD@fI\fi}\def\CD@TG{\unhbox\z@\setbox\z@\lastbox\CD@uA
\lastpenalty\unpenalty\advance\CD@uA\CD@mA\CD@vA\CD@XB\advance\CD@vA-%
\lastpenalty\unpenalty\dimen1\lastkern\unkern\setbox3\lastbox\dimen0\lastkern
\unkern\setbox0=\hbox to\z@{\unhbox0\setbox0\lastbox\setbox7\lastbox
\unpenalty\CD@eJ\lastkern\unkern\CD@DC\lastkern\unkern\setbox6\lastbox\dimen7%
\CD@tB\advance\dimen7-\wd\CD@uA\ifdim\dimen7<\z@\CD@CI\multiply\dimen7\m@ne
\let\mv\empty\else\CD@BI\def\mv{\raise\ht1}\kern-\dimen7 \fi\ifnum\CD@vA>%
\CD@NB\dimen6\CD@uB\advance\dimen6-\ht\CD@vA\else\dimen6\z@\fi\CD@jJ\CD@mK
\setbox1\null\ht1\dimen6\wd1\dimen7 \dimen7\dimen2 \dimen6\wd1 \CD@KJ\CD@uA
\CD@LH\CD@vA\CD@TC\dimen6\ht1 \CD@KJ\setbox2\null\divide\dimen2\tw@\advance
\dimen2\CD@eJ\CD@eG{\dimen2}\wd2\dimen2 \dimen0.5\dimen7 \advance\dimen0%
\ifPositiveGradient\else-\fi\CD@eJ\CD@dG{\dimen0}\advance\dimen0-\axisheight
\ht2\dimen0 \dimen0\CD@DC\CD@eG{\dimen0}\advance\dimen0\ht2\ht2\dimen0 \dimen
0\ifPositiveGradient-\fi\CD@DC\CD@dG{\dimen0}\advance\dimen0\wd2\wd2\dimen0
\setbox4\null\dimen0 .6\CD@zC\CD@eG{\dimen0}\ht4\dimen0 \dimen0 .2\CD@zC
\CD@dG{\dimen0}\wd4\dimen0 \dimen0\wd2 \ifvoid6\else\dimen1\ht4 \advance
\dimen1\ht2 \CD@CC6+-\raise\dimen1\rlap{\ifPositiveGradient\advance\dimen0-%
\wd6\advance\dimen0-\wd4 \else\advance\dimen0\wd4 \fi\kern\dimen0\box6}\fi
\dimen0\wd2 \ifvoid7\else\dimen1\ht4 \advance\dimen1-\ht2 \CD@CC7-+\lower
\dimen1\rlap{\ifPositiveGradient\advance\dimen0\wd4 \else\advance\dimen0-\wd7%
\advance\dimen0-\wd4 \fi\kern\dimen0\box7}\fi\mv\box0\hss}\ht0\z@\dp0\z@
\CD@bH{\z@}{\box\z@}{\z@}{\axisheight}}\def\CD@CC#1#2#3{\dimen4.5\wd#1 \ifdim
\dimen4>.25\dimen7\dimen4=.25\dimen7\fi\ifdim\dimen4>\CD@zC\dimen4.4\dimen4
\advance\dimen4.6\CD@zC\fi\CD@eG{\dimen4}\dimen5\axisheight\CD@dG{\dimen5}%
\advance\dimen4-\dimen5 \dimen5\dimen4\CD@eG{\dimen5}\advance\dimen0%
\ifPositiveGradient#2\else#3\fi\dimen5 \CD@dG{\dimen4}\advance\dimen1\dimen4 }
\def\CD@eD#1{\expandafter\CD@IK{#1}}\CD@ZA\CD@EK{output is PostScript
dependent}\def\CD@SC{\CD@IK{/bturn {gsave currentpoint currentpoint translate
4 2 roll neg exch atan rotate neg exch neg exch translate } def /eturn {%
currentpoint grestore moveto} def}}\def\CD@gK{\relax\CD@hK\CD@tK{Q}\else
\CD@IK{eturn}\fi} \def\CD@OJ#1{\count@#1\relax\multiply\count@7\advance
\count@16577\divide\count@33154 }\def\CD@fD#1{\expandafter\special{#1}} \def
\CD@xJ#1{\setbox#1=\hbox{\dimen0\dimen#1\CD@dG{\dimen0}\CD@OJ{\dimen0}\setbox
0=\null\ifPositiveGradient\count@-\count@\ht0\dimen0 \else\dp0\dimen0 \fi\box
0 \CD@uA\count@\CD@OJ\CD@LF\CD@fD{pn \the\count@}\CD@fD{pa 0 0}\CD@OJ{\dimen#%
1}\CD@fD{pa \the\count@\space\the\CD@uA}\CD@fD{fp}\kern\dimen#1}}\def\CD@JI{%
\CD@KJ\begingroup\ifdim\dimen7<\dimen6 \dimen2=\dimen6 \dimen6=\dimen7 \dimen
7=\dimen2 \count@\CD@LH\CD@LH\CD@TC\CD@TC\count@\else\dimen2=\dimen7 \fi
\ifdim\dimen6>.01\p@\CD@KI\global\CD@QA\dimen0 \else\global\CD@QA\dimen7 \fi
\endgroup\dimen2\CD@QA\CD@iK\CD@lK{\ifPositiveGradient\else-\fi\dimen6}\CD@iK
\CD@kK{\ifPositiveGradient-\fi\dimen6}\CD@iK\CD@eK{\dimen7}}\def\CD@KI{\CD@hJ
\ifdim\dimen7>1.73\dimen6 \divide\dimen2 4 \multiply\CD@TC2 \else\dimen2=0.%
353553\dimen2 \advance\CD@LH-\CD@TC\multiply\CD@TC4 \fi\dimen0=4\dimen2 \CD@ZG
4\CD@ZG{-2}\CD@ZG2\CD@ZG{-2.5}}\def\CD@AI{\begingroup\count@\dimen0 \dimen2 45%
pt \divide\count@\dimen2 \ifdim\dimen0<\z@\advance\count@\m@ne\fi\ifodd
\count@\advance\count@1\CD@@A\else\CD@y\fi\advance\dimen0-\count@\dimen2
\CD@gE\multiply\dimen0\m@ne\fi\ifnum\count@<0 \multiply\count@-7 \fi\dimen3%
\dimen1 \dimen6\dimen0 \dimen7 3754936sp \ifdim\dimen0<6\p@\def\CD@OG{4000}%
\fi\CD@KJ\dimen2\dimen3\CD@dG{\dimen2}\CD@hJ\multiply\CD@TC-6 \dimen0\dimen2
\CD@ZG1\CD@ZG{0.3}\dimen1\dimen0 \dimen2\dimen3 \dimen0\dimen3 \CD@ZG3\CD@ZG{%
1.5}\CD@ZG{0.3}\divide\count@2 \CD@gE\multiply\dimen1\m@ne\fi\ifodd\count@
\dimen2\dimen1\dimen1\dimen0\dimen0-\dimen2 \fi\divide\count@2 \ifodd\count@
\multiply\dimen0\m@ne\multiply\dimen1\m@ne\fi\global\CD@QA\dimen0\global
\CD@RA\dimen1\endgroup\dimen6\CD@QA\dimen7\CD@RA}\def\CD@OC{255}\let\CD@OG
\CD@OC\def\CD@KJ{\begingroup\ifdim\dimen7<\dimen6 \dimen9\dimen7\dimen7\dimen
6\dimen6\dimen9\CD@@A\else\CD@y\fi\dimen2\z@\dimen3\CD@XH\dimen4\CD@XH\dimen0%
\z@\dimen8=\CD@OG\CD@XH\CD@lC\global\CD@yA\dimen\CD@gE0\else3\fi\global\CD@zA
\dimen\CD@gE3\else0\fi\endgroup\CD@LH\CD@yA\CD@TC\CD@zA}\def\CD@lC{\count@
\dimen6 \divide\count@\dimen7 \advance\dimen6-\count@\dimen7 \dimen9\dimen4
\advance\dimen9\count@\dimen0 \ifdim\dimen9>\dimen8 \CD@@C\else\CD@AC\ifdim
\dimen6>\z@\dimen9\dimen6 \dimen6\dimen7 \dimen7\dimen9 \expandafter
\expandafter\expandafter\CD@lC\fi\fi}\def\CD@@C{\ifdim\dimen0=\z@\ifdim\dimen
9<2\dimen8 \dimen0\dimen8 \fi\else\advance\dimen8-\dimen4 \divide\dimen8%
\dimen0 \ifdim\count@\CD@XH<2\dimen8 \count@\dimen8 \dimen9\dimen4 \advance
\dimen9\count@\dimen0 \CD@AC\fi\fi}\def\CD@AC{\dimen4\dimen0 \dimen0\dimen9
\advance\dimen2\count@\dimen3 \dimen9\dimen2 \dimen2\dimen3 \dimen3\dimen9 }%
\def\CD@ZG#1{\CD@dG{\dimen2}\advance\dimen0 #1\dimen2 }\def\CD@dG#1{\divide#1%
\CD@TC\multiply#1\CD@LH}\def\CD@eG#1{\divide#1\CD@vA\multiply#1\CD@uA}\def
\CD@iC#1{\divide#1\CD@LH\multiply#1\CD@TC}\def\CD@hJ{\dimen6\CD@LH\CD@XH
\multiply\dimen6\CD@LH\dimen7\CD@TC\CD@XH\multiply\dimen7\CD@TC\CD@KJ}\def
\CD@iK#1#2{\begingroup\dimen@#2\relax\loop\ifdim\dimen2<.4\maxdimen\multiply
\dimen2\tw@\multiply\dimen@\tw@\repeat\divide\dimen2\@cclvi\divide\dimen@
\dimen2\relax\multiply\dimen@\@cclvi\expandafter\CD@jK\the\dimen@\endgroup
\let#1\CD@fK}{\catcode`p=12 \catcode`0=12 \catcode`.=12 \catcode`t=12 \gdef
\CD@jK#1pt{\gdef\CD@fK{#1}}}\ifx\errorcontextlines\CD@qK\CD@tJ\let\CD@GH
\relax\else\def\CD@GH{\errorcontextlines\m@ne}\fi\ifnum\inputlineno<0 \let
\CD@CD\empty\let\CD@W\empty\let\CD@mD\relax\let\CD@uI\relax\let\CD@vI\relax
\let\CD@zF\relax\else\def\CD@W{ at line \number\inputlineno}\def\CD@mD{ - first occurred%
}\def\CD@uI{\edef\CD@h{\the\inputlineno}\global\let\CD@jB\CD@h}\def\CD@h{9999%
}\def\CD@vI{\xdef\CD@jB{\the\inputlineno}}\def\CD@jB{\CD@h}\def\CD@zF{\ifnum
\CD@h<\inputlineno\edef\CD@CD{\space at lines \CD@h--\the\inputlineno}\else
\edef\CD@CD{\CD@W}\fi}\fi\let\CD@CD\empty\def\CD@YA#1#2{\CD@GH\errhelp=#2%
\expandafter\errmessage{\CD@tA: #1}}\def\CD@KB#1{\begingroup\expandafter
\message{! \CD@tA: #1\CD@CD}\ifnum\CD@XB>\CD@NB\ifnum\CD@CA>\CD@NB\else\ifnum
\CD@lA>\CD@FA\else\ifnum\CD@LB>\CD@FA\advance\CD@XB-\CD@NB\advance\CD@FA-%
\CD@lA\advance\CD@FA1\relax\expandafter\message{! (error detected at row \the
\CD@XB, column \the\CD@FA, but probably caused elsewhere)}\fi\fi\fi\fi
\endgroup}\def\CD@gB#1{{\expandafter\message{\CD@tA\space Warning: #1\CD@W}}}%
\def\CD@CB#1#2{\CD@gB{#1 \string#2 is obsolete\CD@mD}}\def\CD@AB#1{\CD@CB{%
Dimension}{#1}\CD@DE#1\CD@BB\CD@BB}\def\CD@BB{\CD@OA=}\def\CD@@B#1{\CD@CB{%
Count}{#1}\CD@DE#1\CD@OH\CD@OH}\def\CD@OH{\count@=}\def\HorizontalMapLength{%
\CD@AB\HorizontalMapLength}\def\VerticalMapHeight{\CD@AB\VerticalMapHeight}%
\def\VerticalMapDepth{\CD@AB\VerticalMapDepth}\def\VerticalMapExtraHeight{%
\CD@AB\VerticalMapExtraHeight}\def\VerticalMapExtraDepth{\CD@AB
\VerticalMapExtraDepth}\def\DiagonalLineSegments{\CD@@B\DiagonalLineSegments}%
\ifx\tenln\nullfont\CD@ZA\CD@KH{\CD@eF\space diagonal line and arrow font not
available}\else\let\CD@KH\relax\fi\def\CD@aG#1#2<#3:#4:#5#6{\begingroup\CD@PA
#3\relax\advance\CD@PA-#2\relax\ifdim.1em<\CD@PA\CD@uA#5\relax\CD@vA#6\relax
\ifnum\CD@uA<\CD@vA\count@\CD@vA\advance\count@-\CD@uA\CD@KB{#4 by \the\CD@PA
}\if#1v\let\CD@CH\CD@JK\edef\tmp{\the\CD@uA--\the\CD@vA,\the\CD@FA}\else
\advance\count@\count@\if#1l\advance\count@-\CD@A\else\if#1r\advance\count@
\CD@A\fi\fi\advance\CD@PA\CD@PA\let\CD@CH\CD@ZE\edef\tmp{\the\CD@NB,\the
\CD@uA--\the\CD@vA}\fi\divide\CD@PA\count@\ifdim\CD@CH<\CD@PA\global\CD@CH
\CD@PA\fi\fi\fi\endgroup}\CD@tG\CD@xE\CD@JD\CD@ID\CD@rG\CD@xI{See the message
above.}\CD@rG\CD@lH{Perhaps you've forgotten to end the diagram before
resuming the text, in\CD@uG which case some garbage may be added to the
diagram, but we should be ok now.\CD@uG Alternatively you've left a blank line
in the middle - TeX will now complain\CD@uG that the remaining \CD@S s are
misplaced - so please use comments for layout.}\CD@rG\CD@hD{You have already
closed too many brace pairs or environments; an \CD@HD\CD@uG command was (%
over)due.}\CD@rG\CD@hH{\CD@dC\space and \CD@HD\space commands must match.}%
\def\CD@jH{\ifnum\inputlineno=0 \else\expandafter\CD@iH\fi}\def\CD@iH{\CD@MD
\CD@GD\crcr\CD@YA{missing \CD@HD\space inserted before \CD@kH- type "h"}%
\CD@lH\enddiagram\CD@AG\CD@kH\par}\def\CD@AG#1{\edef\enddiagram{\noexpand
\CD@rD{#1\CD@W}}}\def\CD@rD#1{\CD@YA{\CD@HD\space(anticipated by #1) ignored}%
\CD@xI\let\enddiagram\CD@SG}\def\CD@SG{\CD@YA{misplaced \CD@HD\space ignored}%
\CD@hH}\def\CD@mC{\CD@YA{missing \CD@HD\space inserted.}\CD@hD\CD@AG{closing
group}}\ifx\ifx
\fi\fi

\catcode`\$=3 
\def\vboxtoz{\vbox to\z@}

\def\scriptaxis#1{\@scriptaxis{$\scriptstyle#1$}}
\def\ssaxis#1{\@ssaxis{$\scriptscriptstyle#1$}}
\def\@scriptaxis#1{\dimen0\axisheight\advance\dimen0-\ss@axisheight\raise
\dimen0\hbox{#1}}\def\@ssaxis#1{\dimen0\axisheight\advance\dimen0-%
\ss@axisheight\raise\dimen0\hbox{#1}}

\ifx\boldmath\CD@qK
\let\boldscriptaxis\scriptaxis
\def\boldscript#1{\hbox{$\scriptstyle#1$}}
\else\def\boldscriptaxis#1{\@scriptaxis{\boldmath$\scriptstyle#1$}}
\def\boldscript#1{\hbox{\boldmath$\scriptstyle#1$}}
\fi

\def\raisehook#1#2#3{\hbox{\setbox3=\hbox{#1$\scriptscriptstyle#3$}%
\dimen0\ss@axisheight
\dimen1\axisheight\advance\dimen1-\dimen0
\dimen2\ht3\advance\dimen2-\dimen0%
\advance\dimen2-0.021em\advance\dimen1 #2\dimen2%
\raise\dimen1\box3}}
\def\shifthook#1#2#3{\setbox1=\hbox{#1$\scriptscriptstyle#3$}\dimen0\wd1%
\divide\dimen0 12\CD@zH{\dimen0}
\dimen1\wd1\advance\dimen1-2\dimen0 \advance\dimen1-2\CD@oI\CD@zH{\dimen1}%
\kern#2\dimen1\box1}

\def\@cmex{\mathchar"03}

\def\make@pbk#1{\setbox\tw@\hbox to\z@{#1}\ht\tw@\z@\dp\tw@\z@\box\tw@}\def
\CD@fH#1{\overprint{\hbox to\z@{#1}}}\def\CD@qH{\kern0.11em}\def\CD@pH{\kern0%
.35em}

\def\dblvert{\def\CD@rH{\kern.5\PileSpacing}}\def\CD@rH{}

\def\SEpbk{\make@pbk{\CD@qH\CD@rH\vrule depth 2.87ex height -2.75ex width 0.%
95em \vrule height -0.66ex depth 2.87ex width 0.05em \hss}}

\def\SWpbk{\make@pbk{\hss\vrule height -0.66ex depth 2.87ex width 0.05em
\vrule depth 2.87ex height -2.75ex width 0.95em \CD@qH\CD@rH}}

\def\NEpbk{\make@pbk{\CD@qH\CD@rH\vrule depth -3.81ex height 4.00ex width 0.%
95em \vrule height 4.00ex depth -1.72ex width 0.05em \hss}}

\def\NWpbk{\make@pbk{\hss\vrule height 4.00ex depth -1.72ex width 0.05em
\vrule depth -3.81ex height 4.00ex width 0.95em \CD@qH\CD@rH}}

\def\puncture{{\setbox0\hbox{A}\vrule height.53\ht0 depth-.47\ht0 width.35\ht
0 \kern.12\ht0 \vrule height\ht0 depth-.65\ht0 width.06\ht0 \kern-.06\ht0
\vrule height.35\ht0 depth0pt width.06\ht0 \kern.12\ht0 \vrule height.53\ht0
depth-.47\ht0 width.35\ht0 }}

\def\NEclck{\overprint{\raise2.5ex\rlap{ \CD@rH$\scriptstyle\searrow$}}}
\def\NEanti{\overprint{\raise2.5ex\rlap{ \CD@rH$\scriptstyle\nwarrow$}}}
\def\NWclck{\overprint{\raise2.5ex\llap{$\scriptstyle\nearrow$ \CD@rH}}}
\def\NWanti{\overprint{\raise2.5ex\llap{$\scriptstyle\swarrow$ \CD@rH}}}
\def\SEclck{\overprint{\lower1ex\rlap{ \CD@rH$\scriptstyle\swarrow$}}}
\def\SEanti{\overprint{\lower1ex\rlap{ \CD@rH$\scriptstyle\nearrow$}}}
\def\SWclck{\overprint{\lower1ex\llap{$\scriptstyle\nwarrow$ \CD@rH}}}
\def\SWanti{\overprint{\lower1ex\llap{$\scriptstyle\searrow$ \CD@rH}}}

\def\rhvee{\mkern-10mu\greaterthan}
\def\lhvee{\lessthan\mkern-10mu}
\def\dhvee{\vboxtoz{\vss\hbox{$\vee$}\kern0pt}}
\def\uhvee{\vboxtoz{\hbox{$\wedge$}\vss}}
\newarrowhead{vee}\rhvee\lhvee\dhvee\uhvee

\def\dhlvee{\vboxtoz{\vss\hbox{$\scriptstyle\vee$}\kern0pt}}
\def\uhlvee{\vboxtoz{\hbox{$\scriptstyle\wedge$}\vss}}
\newarrowhead{littlevee}{\mkern1mu\scriptaxis\rhvee}{\scriptaxis\lhvee}%
\dhlvee\uhlvee\ifx\boldmath\CD@qK
\newarrowhead{boldlittlevee}{\mkern1mu\scriptaxis\rhvee}{\scriptaxis\lhvee}%
\dhlvee\uhlvee\else
\def\dhblvee{\vboxtoz{\vss\boldscript\vee\kern0pt}}
\def\uhblvee{\vboxtoz{\boldscript\wedge\vss}}
\newarrowhead{boldlittlevee}{\mkern1mu\boldscriptaxis\rhvee}{\boldscriptaxis
\lhvee}\dhblvee\uhblvee
\fi

\def\rhcvee{\mkern-10mu\succ}
\def\lhcvee{\prec\mkern-10mu}
\def\dhcvee{\vboxtoz{\vss\hbox{$\curlyvee$}\kern0pt}}
\def\uhcvee{\vboxtoz{\hbox{$\curlywedge$}\vss}}
\newarrowhead{curlyvee}\rhcvee\lhcvee\dhcvee\uhcvee

\def\rhvvee{\mkern-13mu\gg}
\def\lhvvee{\ll\mkern-13mu}
\def\dhvvee{\vboxtoz{\vss\hbox{$\vee$}\kern-.6ex\hbox{$\vee$}\kern0pt}}
\def\uhvvee{\vboxtoz{\hbox{$\wedge$}\kern-.6ex \hbox{$\wedge$}\vss}}
\newarrowhead{doublevee}\rhvvee\lhvvee\dhvvee\uhvvee

\def\rhtriangle{\triangleright\mkern1.2mu}
\def\lhtriangle{\triangleleft\mkern.8mu}
\def\uhtriangle{\vbox{\kern-.2ex \hbox{$\scriptscriptstyle\bigtriangleup$}%
\kern-.25ex}}
\def\dhtriangle{\vbox{\kern-.28ex \hbox{$\scriptscriptstyle\bigtriangledown$}%
\kern-.1ex}}
\def\dhblack{\vbox{\kern-.25ex\nointerlineskip\hbox{$\blacktriangledown$}}}%
\def\uhblack{\vbox{\kern-.25ex\nointerlineskip\hbox{$\blacktriangle$}}}%
\def\dhlblack{\vbox{\kern-.25ex\nointerlineskip\hbox{$\scriptstyle
\blacktriangledown$}}}
\def\uhlblack{\vbox{\kern-.25ex\nointerlineskip\hbox{$\scriptstyle
\blacktriangle$}}}
\newarrowhead{triangle}\rhtriangle\lhtriangle\dhtriangle\uhtriangle
\newarrowhead{blacktriangle}{\mkern-1mu\blacktriangleright\mkern.4mu}{%
\blacktriangleleft}\dhblack\uhblack\newarrowhead{littleblack}{\mkern-1mu%
\scriptaxis\blacktriangleright}{\scriptaxis\blacktriangleleft\mkern-2mu}%
\dhlblack\uhlblack

\def\rhla{\hbox{\setbox0=\lnchar55\dimen0=\wd0\kern-.6\dimen0\ht0\z@\raise
\axisheight\box0\kern.1\dimen0}}
\def\lhla{\hbox{\setbox0=\lnchar33\dimen0=\wd0\kern.05\dimen0\ht0\z@\raise
\axisheight\box0\kern-.5\dimen0}}
\def\dhla{\vboxtoz{\vss\rlap{\lnchar77}}}
\def\uhla{\vboxtoz{\setbox0=\lnchar66 \wd0\z@\kern-.15\ht0\box0\vss}}
\newarrowhead{LaTeX}\rhla\lhla\dhla\uhla

\def\lhlala{\lhla\kern.3em\lhla}
\def\rhlala{\rhla\kern.3em\rhla}
\def\uhlala{\hbox{\uhla\raise-.6ex\uhla}}
\def\dhlala{\hbox{\dhla\lower-.6ex\dhla}}
\newarrowhead{doubleLaTeX}\rhlala\lhlala\dhlala\uhlala

\def\hhO{\scriptaxis\bigcirc\mkern.4mu} \def\hho{{\circ}\mkern1.2mu}%
\newarrowhead{o}\hho\hho\circ\circ
\newarrowhead{O}\hhO\hhO{\scriptstyle\bigcirc}{\scriptstyle\bigcirc}

\def\rhtimes{\mkern-5mu{\times}\mkern-.8mu}\def\lhtimes{\mkern-.8mu{\times}%
\mkern-5mu}\def\uhtimes{\setbox0=\hbox{$\times$}\ht0\axisheight\dp0-\ht0%
\lower\ht0\box0 }\def\dhtimes{\setbox0=\hbox{$\times$}\ht0\axisheight\box0 }%
\newarrowhead{X}\rhtimes\lhtimes\dhtimes\uhtimes\newarrowhead+++++

\newarrowhead{Y}{\mkern-3mu\Yright}{\Yleft\mkern-3mu}\Ydown\Yup

\newarrowhead{->}\rightarrow\leftarrow\downarrow\uparrow

\newarrowhead{=>}\Rightarrow\Leftarrow{\@cmex7F}{\@cmex7E}

\newarrowhead{harpoon}\rightharpoonup\leftharpoonup\downharpoonleft
\upharpoonleft

\def\twoheaddownarrow{\rlap{$\downarrow$}\raise-.5ex\hbox{$\downarrow$}}
\def\twoheaduparrow{\rlap{$\uparrow$}\raise.5ex\hbox{$\uparrow$}}
\newarrowhead{->>}\twoheadrightarrow\twoheadleftarrow\twoheaddownarrow
\twoheaduparrow

\def\ltvee{\mkern-1mu{\lessthan}\mkern.4mu}
\newarrowtail{vee}\greaterthan\ltvee\vee\wedge

\newarrowtail{littlevee}{\scriptaxis\greaterthan}{\mkern-1mu\scriptaxis
\lessthan}{\scriptstyle\vee}{\scriptstyle\wedge}\ifx\boldmath\CD@qK
\newarrowtail{boldlittlevee}{\scriptaxis\greaterthan}{\mkern-1mu\scriptaxis
\lessthan}{\scriptstyle\vee}{\scriptstyle\wedge}\else\newarrowtail{%
boldlittlevee}{\boldscriptaxis\greaterthan}{\mkern-1mu\boldscriptaxis
\lessthan}{\boldscript\vee}{\boldscript\wedge}\fi

\newarrowtail{curlyvee}\succ{\mkern-1mu{\prec}\mkern.4mu}\curlyvee\curlywedge

\def\rttriangle{\mkern1.2mu\triangleright}
\newarrowtail{triangle}\rttriangle\lhtriangle\dhtriangle\uhtriangle
\newarrowtail{blacktriangle}\blacktriangleright{\mkern-1mu\blacktriangleleft
\mkern.4mu}\dhblack\uhblack\newarrowtail{littleblack}{\scriptaxis
\blacktriangleright\mkern-2mu}{\mkern-1mu\scriptaxis\blacktriangleleft}%
\dhlblack\uhlblack

\def\rtla{\hbox{\setbox0=\lnchar55\dimen0=\wd0\kern-.5\dimen0\ht0\z@\raise
\axisheight\box0\kern-.2\dimen0}}
\def\ltla{\hbox{\setbox0=\lnchar33\dimen0=\wd0\kern-.15\dimen0\ht0\z@\raise
\axisheight\box0\kern-.5\dimen0}}
\def\dtla{\vbox{\setbox0=\rlap{\lnchar77}\dimen0=\ht0\kern-.7\dimen0\box0%
\kern-.1\dimen0}}
\def\utla{\vbox{\setbox0=\rlap{\lnchar66}\dimen0=\ht0\kern-.1\dimen0\box0%
\kern-.6\dimen0}}
\newarrowtail{LaTeX}\rtla\ltla\dtla\utla

\def\rtvvee{\gg\mkern-3mu}
\def\ltvvee{\mkern-3mu\ll}
\def\dtvvee{\vbox{\hbox{$\vee$}\kern-.6ex \hbox{$\vee$}\vss}}
\def\utvvee{\vbox{\vss\hbox{$\wedge$}\kern-.6ex \hbox{$\wedge$}\kern\z@}}
\newarrowtail{doublevee}\rtvvee\ltvvee\dtvvee\utvvee

\def\ltlala{\ltla\kern.3em\ltla}
\def\rtlala{\rtla\kern.3em\rtla}
\def\utlala{\hbox{\utla\raise-.6ex\utla}}
\def\dtlala{\hbox{\dtla\lower-.6ex\dtla}}
\newarrowtail{doubleLaTeX}\rtlala\ltlala\dtlala\utlala

\def\utbar{\vrule height 0.093ex depth0pt width 0.4em}
\let\dtbar\utbar
\def\rtbar{\mkern1.5mu\vrule height 1.1ex depth.06ex width .04em\mkern1.5mu}%
\let\ltbar\rtbar
\newarrowtail{mapsto}\rtbar\ltbar\dtbar\utbar
\newarrowtail{|}\rtbar\ltbar\dtbar\utbar

\def\rthooka{\raisehook{}+\subset\mkern-1mu}
\def\lthooka{\mkern-1mu\raisehook{}+\supset}
\def\rthookb{\raisehook{}-\subset\mkern-2mu}
\def\lthookb{\mkern-1mu\raisehook{}-\supset}

\def\dthooka{\shifthook{}+\cap}
\def\dthookb{\shifthook{}-\cap}
\def\uthooka{\shifthook{}+\cup}
\def\uthookb{\shifthook{}-\cup}

\newarrowtail{hooka}\rthooka\lthooka\dthooka\uthooka\newarrowtail{hookb}%
\rthookb\lthookb\dthookb\uthookb

\ifx\boldmath\CD@qK\newarrowtail{boldhooka}\rthooka\lthooka\dthooka\uthooka
\newarrowtail{boldhookb}\rthookb\lthookb\dthookb\uthookb\newarrowtail{%
boldhook}\rthooka\lthooka\dthookb\uthooka\else\def\rtbhooka{\raisehook
\boldmath+\subset\mkern-1mu}
\def\ltbhooka{\mkern-1mu\raisehook\boldmath+\supset}
\def\rtbhookb{\raisehook\boldmath-\subset\mkern-2mu}
\def\ltbhookb{\mkern-1mu\raisehook\boldmath-\supset}
\def\dtbhooka{\shifthook\boldmath+\cap}
\def\dtbhookb{\shifthook\boldmath-\cap}
\def\utbhooka{\shifthook\boldmath+\cup}
\def\utbhookb{\shifthook\boldmath-\cup}
\newarrowtail{boldhooka}\rtbhooka\ltbhooka\dtbhooka\utbhooka\newarrowtail{%
boldhookb}\rtbhookb\ltbhookb\dtbhookb\utbhookb\newarrowtail{boldhook}%
\rtbhooka\ltbhooka\dtbhooka\utbhooka\fi

\def\dtsqhooka{\shifthook{}+\sqcap}
\def\ltsqhooka{\mkern-1mu\raisehook{}+\sqsupset}
\def\rtsqhooka{\raisehook{}+\sqsubset\mkern-1mu}
\def\utsqhooka{\shifthook{}+\sqcup}
\newarrowtail{sqhook}\rtsqhooka\ltsqhooka\dtsqhooka\utsqhooka

\newarrowtail{hook}\rthooka\lthookb\dthooka\uthooka\newarrowtail{C}\rthooka
\lthookb\dthooka\uthooka

\newarrowtail{o}\hho\hho\circ\circ
\newarrowtail{O}\hhO\hhO{\scriptstyle\bigcirc}{\scriptstyle\bigcirc}

\newarrowtail{X}\lhtimes\rhtimes\uhtimes\dhtimes\newarrowtail+++++

\newarrowtail{Y}\Yright\Yleft\Ydown\Yup

\newarrowtail{harpoon}\leftharpoondown\rightharpoondown\upharpoonright
\downharpoonright

\newarrowtail{<=}\Leftarrow\Rightarrow{\@cmex7E}{\@cmex7F}

\newarrowfiller{=}=={\@cmex77}{\@cmex77}
\def\vfthree{\mid\!\!\!\mid\!\!\!\mid}
\newarrowfiller{3}\equiv\equiv\vfthree\vfthree

\def\vfdashstrut{\vrule width0pt height1.3ex depth0.7ex}
\def\vfthedash{\vrule width\CD@LF height0.6ex depth 0pt}
\def\hfthedash{\CD@AJ\vrule\horizhtdp width 0.26em}
\def\hfdash{\mkern5.5mu\hfthedash\mkern5.5mu}
\def\vfdash{\vfdashstrut\vfthedash}
\newarrowfiller{dash}\hfdash\hfdash\vfdash\vfdash

\newarrowmiddle+++++

\iffalse
\newarrow{To}----{vee}
\newarrow{Arr}----{LaTeX}
\newarrow{Dotsto}....{vee}
\newarrow{Dotsarr}....{LaTeX}
\newarrow{Dashto}{}{dash}{}{dash}{vee}
\newarrow{Dasharr}{}{dash}{}{dash}{LaTeX}
\newarrow{Mapsto}{mapsto}---{vee}
\newarrow{Mapsarr}{mapsto}---{LaTeX}
\newarrow{IntoA}{hooka}---{vee}
\newarrow{IntoB}{hookb}---{vee}
\newarrow{Embed}{vee}---{vee}
\newarrow{Emarr}{LaTeX}---{LaTeX}
\newarrow{Onto}----{doublevee}
\newarrow{Dotsonarr}....{doubleLaTeX}
\newarrow{Dotsonto}....{doublevee}
\newarrow{Dotsonarr}....{doubleLaTeX}
\else
\newarrow{To}---->
\newarrow{Arr}---->
\newarrow{Dotsto}....>
\newarrow{Dotsarr}....>
\newarrow{Dashto}{}{dash}{}{dash}>
\newarrow{Dasharr}{}{dash}{}{dash}>
\newarrow{Mapsto}{mapsto}--->
\newarrow{Mapsarr}{mapsto}--->
\newarrow{IntoA}{hooka}--->
\newarrow{IntoB}{hookb}--->
\newarrow{Embed}>--->
\newarrow{Emarr}>--->
\newarrow{Onto}----{>>}
\newarrow{Dotsonarr}....{>>}
\newarrow{Dotsonto}....{>>}
\newarrow{Dotsonarr}....{>>}
\fi

\newarrow{Implies}===={=>}
\newarrow{Project}----{triangle}
\newarrow{Pto}----{harpoon}
\newarrow{Relto}{harpoon}---{harpoon}

\newarrow{Eq}=====
\newarrow{Line}-----
\newarrow{Dots}.....
\newarrow{Dashes}{}{dash}{}{dash}{}

\newarrow{SquareInto}{sqhook}--->

\newarrowhead{cmexbra}{\@cmex7B}{\@cmex7C}{\@cmex3B}{\@cmex38}
\newarrowtail{cmexbra}{\@cmex7A}{\@cmex7D}{\@cmex39}{\@cmex3A}
\newarrowmiddle{cmexbra}{\braceru\bracelu}{\bracerd\braceld}{\vcenter{%
\hbox@maths{\@cmex3D\mkern-2mu}}}
{\vcenter{\hbox@maths{\mkern2mu\@cmex3C}}}
\newarrow{@brace}{cmexbra}-{cmexbra}-{cmexbra}
\newarrow{@parenth}{cmexbra}---{cmexbra}
\def\rightBrace{\d@brace[thick,cmex]}
\def\leftBrace{\u@brace[thick,cmex]}
\def\upperBrace{\r@brace[thick,cmex]}
\def\lowerBrace{\l@brace[thick,cmex]}
\def\rightParenth{\d@parenth[thick,cmex]}
\def\leftParenth{\u@parenth[thick,cmex]}
\def\upperParenth{\r@parenth[thick,cmex]}
\def\lowerParenth{\l@parenth[thick,cmex]}

\let\hEq\rEq
\let\vEq\uEq

\def\labelstyle{
\ifincommdiag
\textstyle
\else
\scriptstyle
\fi}
\let\objectstyle\displaystyle

\newdiagramgrid{pentagon}{0.618034,0.618034,1,1,1,1,0.618034,0.618034}{1.%
17557,1.17557,1.902113,1.902113}

\newdiagramgrid{perspective}{0.75,0.75,1.1,1.1,0.9,0.9,0.95,0.95,0.75,0.75}{0%
.75,0.75,1.1,1.1,0.9,0.9}

\diagramstyle[
dpi=300,
vmiddle,nobalance,
loose,
thin,
pilespacing=10pt,%
shortfall=4pt,
]

\ifx\CD@qK\else\CD@PK\relax\CD@FF\CD@e\fi\fi

\CD@vE\CD@hK\else\fi\else\fi\cdrestoreat
\usepackage{amssymb}
\usepackage{mathrsfs}
\usepackage{cite}
\usepackage{tikz-cd}
\usepackage{MnSymbol}

\DeclareMathOperator\C{\mathbb C}
\DeclareMathOperator\Z{\mathbb Z}
\DeclareMathOperator\R{\mathbb R}

\newtheorem{theorem}{Theorem}[section]
\newtheorem{lemma}[theorem]{Lemma}
\newtheorem{cor}[theorem]{Corollary}
\newtheorem{prop}[theorem]{Proposition}
\theoremstyle{definition}
\newtheorem{definition}[theorem]{Definition}
\newtheorem{example}[theorem]{Example}

\theoremstyle{remark}
\newtheorem{remark}[theorem]{Remark}

\newcommand{\dontprint}[1]\relax

\newcommand{\Ind}{\operatorname{Ind}}
\newcommand{\Der}{\operatorname{Der}}
\newcommand{\Lie}{\operatorname{Lie}}

\newcommand{\Om}{\Omega}
\renewcommand{\th}{\theta}
\newcommand{\hra}{\hookrightarrow}
\newcommand{\we}{\wedge}
\renewcommand{\P}{{\mathbb P}}
\newcommand{\A}{{\mathbb A}}
\newcommand{\wt}{\widetilde}
\newcommand{\ot}{\otimes}
\newcommand{\codim}{\operatorname{codim}}

\newcommand{\gr}{\operatorname{gr}}
\newcommand{\und}{\underline}
\newcommand{\Hom}{\operatorname{Hom}}

\newcommand{\Tor}{\operatorname{Tor}}

\newcommand{\TT}{{\mathcal T}}
\newcommand{\JJ}{{\mathcal J}}
\newcommand{\VV}{{\mathcal V}}
\newcommand{\DD}{{\mathcal D}}

\newcommand{\FF}{{\mathcal F}}
\newcommand{\GG}{{\mathcal G}}
\newcommand{\II}{{\mathcal I}}
\newcommand{\KK}{{\mathcal K}}
\newcommand{\LL}{{\mathcal L}}
\newcommand{\NN}{{\mathcal N}}
\newcommand{\OO}{{\mathcal O}}

\newcommand{\si}{\sigma}
\newcommand{\de}{\delta}
\newcommand{\sub}{\subset}

\newcommand{\ov}{\overline}

\newcommand{\om}{\omega}
\newcommand{\la}{\lambda}
\renewcommand{\a}{\alpha}

\newcommand{\id}{\operatorname{id}}

\newcommand{\End}{\operatorname{End}}
\newcommand{\lan}{\langle}
\newcommand{\ran}{\rangle}
\newcommand{\MM}{{\mathcal M}}
\renewcommand{\SS}{{\mathcal S}}
\renewcommand{\AA}{{\mathcal A}}
\newcommand{\HH}{{\mathcal H}}
\newcommand{\WW}{{\mathcal W}}

\newcommand{\De}{\Delta}

\newcommand{\ga}{\gamma}

\newcommand{\Pf}{\operatorname{Pf}}

\newcommand{\eps}{\epsilon}
\newcommand{\bbL}{{\mathbb L}}
\newcommand{\Ber}{\operatorname{Ber}}
\newcommand{\Sp}{\operatorname{Sp}}
\newcommand{\ad}{\operatorname{ad}}
\newcommand{\fg}{{\mathfrak g}}
\newcommand{\fc}{{\mathfrak c}}

\newcommand{\fp}{{\mathfrak p}}
\renewcommand{\sp}{{\mathfrak sp}}
\newcommand{\Ga}{\Gamma}

\newcommand{\pa}{\partial}
\newcommand{\bos}{\operatorname{bos}}

\numberwithin{equation}{section}

\title{Projective connections on super Heisenberg coinvariants. I}

\author{Giovanni Felder}
\address{Department of mathematics,
ETH Zurich, 8092 Zurich, Switzerland}
\email{giovanni.felder@math.ethz.ch}
\author{David Kazhdan}
\address{Einstein Institute of Mathematics,
The Hebrew University of Jerusalem,
Jerusalem 91904, Israel}
\email{kazhdan@math.huji.ac.il}
\author{Alexander Polishchuk}
\address{
    Department of Mathematics, 
    University of Oregon, 
    Eugene, OR 97403, USA; National Research University Higher School of Economics, Moscow, Russia
  }
  \email{apolish@uoregon.edu}

\begin{document}

\begin{abstract}
We study derived coinvariants of isotropic subbundles on modules over super Heisenberg algebras and construct certain natural transitive Lie algebroids
acting on them.
\end{abstract}

\maketitle

\section{Introduction}

This paper arose from our attempt to understand projective connections on 
super Heisenberg conformal blocks over
the moduli spaces of SUSY supercurves, which led us to revisit similar structures for classical (even) Heisenberg coinvariants.
The traditional approach for defining projective connections on coinvariants sheaves is via vertex algebras. 
We discovered that there is a simple and natural construction of these projective connections, which uses only finite-dimensional
symplectic and Heisenberg bundles (which come from local data near the punctures).
The goal of this paper is to present the underlying abstract construction of projective connections.
In the sequel we will apply this theory to sheaves of (super) Heisenberg coinvariants on the moduli spaces of (SUSY) curves,
and to $bc/\beta\gamma$-systems.

The central objects of our study are sheaves of derived coinvariants $C_\II(M)$, 
for a module $M$ over a bundle of Heisenberg Lie (super) algebras $\HH$ and an isotropic subbundle $\II\sub \HH$ (over a superscheme).
Among the general results about these coinvariants is their compatibility with the isotropic reduction, 
as well as the relation with the Berezinian bundles under certain transversality assumptions
(see Theorem \ref{coinv-red-thm} and Theorem \ref{Ber-coinv-thm}). For example, in the case when $\II=\LL$ is a Lagrangian subbundle,
and $M=M(\LL')$ is the Fock module of the Heisenberg algebra associated with another Lagrangian subbundle $\LL'$, such that $\LL\cap \LL'$ is locally free, 
then the derived coinvariants are concentrated in one degree (equal to the even rank of $\LL\cap \LL'$) and are isomorphic to $\Ber(\LL\cap \LL')$.

Given a family of isotropic subspaces $\II$ in a constant Heisenberg (super) algebra $H$, parametrized by a superscheme $S$, we construct a transitive Lie algebroid
$\AA_\II$ acting on all sheaves of (derived) coinvariants $C_\II(M)$, where $M$ is a module of weight one over $H$. 
We give two construction of $\AA_\II$: one as a Lie subalgebroid in the Lie algebroid of infinitesimal symmetries of 
$M^r(\II)$, the right Fock module over $H$ associated with $\II$ (see Sec.\ \ref{constr-main-sec}), 
and another as a pullback of a universal Lie algebroid over the isotropic Grassmannian (see Sec.\ \ref{another-constr-sec}).
More generally, we construct $\AA_\II$ in the case when the Heisenberg bundle $\HH$ is not constant
but is equipped with a flat connection (see Sec.\ \ref{flat-Heis-sec}).

As shown in \cite[Lem.\ 2.3.2]{BB}, one can view a flat projective connection as 
a structure of a module over a Picard algebroid, i.e., a transitive Lie algebroid such that the kernel of the anchor map is identified with $\OO$. 
In the case when $\II=\LL$ is Lagrangian, $\AA_\LL$ is a Picard algebroid, so for any Heisenberg module $M$, the derived coinvariants $C_\LL(M)$ get equipped with projective connections. 
For a general isotropic subbundle $\II$, the kernel of the anchor map is $U_{\le 2}(\ov{\HH}_\II)$, where $\ov{\HH}_\II$ is the sheaf of reduced Heisenberg 
algebras associated with $\II$. We show that one can define a Picard subalgebroid $\AA^{Pic}_\II\sub \AA_\II$ in the case when 
$\ov{\HH}_\II$ is equipped with a flat connection 
(see Sec.\ \ref{isotropic-PL-sec}). Such a structure is indeed present in the example corresponding to Heisenberg conformal blocks on the moduli spaces of curves (or SUSY supercurves), where 
$\ov{\HH}_\II$ can be identified with the Heisenberg algebra of topological origin. 

Among the properties of the Lie algebroids $\AA_\II$ we establish the compatibility with isotropic reduction (see Sec.\ \ref{alg-isotr-red-sec}),
as well as a connection with the Atiyah algebroid of the square root of the Berezinian in the Lagrangian case (see Theorem \ref{LG-alg-Ber-thm}). Note that the latter result
uses derived coinvariants in a crucial way.
We also give a natural construction of local flat connections on $\AA_\II$, and develop a setup for constructing equivariant structures (see Sec.\ \ref{equiv-sec}), 
which will
be used in the case of the moduli of (super-)curves to get rid of a choice of formal parameters at the marked points.

As an application of our Picard algebroids, we prove a general criterion for vanishing of higher derived coinvariants $C_{\LL_1}(M(\LL_2))$,
for a pair of Lagrangian subbundles $\LL_1$ and $\LL_2$, under a certain condition on codimensions of loci with given dimensions of intersection of $\LL_1|_s\cap \LL_2|_s$ (see Theorem \ref{codim-thm}).

The paper is organized as follows. In Sec.\ \ref{der-coinv-sec} we discuss some general results concerning (derived) coinvariants of modules $C_I(M(J))$
over bundles of Heisenberg algebras (where $I$ and $J$ are isotropic subbundles in the Heisenberg algebra).
In particular, in Sec.\ \ref{coinv-basic-sec}--\ref{redII-sec} we discuss the behavior under isotropic reduction and 
calculate coinvariants in the case of a good relative position of $I$ and $J$. In Sec.\ \ref{odd-rk-Pf-sec} we study what happens in the case of the purely odd symplectic bundle,
and show how this leads to a generalization of the Pfaffian (see Proposition \ref{gen-Pf-prop}). Then in Sec.\ \ref{odd-intersection-sec} we study the case when the even parts
of $I$ and $J$ do not intersect (which is relevant for the Heisenberg coinvariants over the moduli of SUSY supercurves). We prove a general result (Theorem \ref{Pf-thm}) on
how the corresponding coinvariants look locally. In Sec.\ \ref{odd-int-sec} we specialize to the case of coinvariants $C_{L_1}(M(L_2))$ where $L_1$ and $L_2$ are Lagrangians whose
even parts have trivial intersection. We show that in this case the line bundle $C_{L_1}(M(L_2))$ with its natural global section is the square root of a certain Berezinian line bundle
and its global section studied in \cite[Thm.\ 4.5]{FKP-reg}.

In Sec.\ \ref{Lie-algebroid-sec} we study transitive Lie algebroids $\AA_\II$ associated with isotropic subbundles $\II$, which act on (derived) $I$-coinvariants of all Heisenberg modules.
In Sec.\ \ref{constr-main-sec} and \ref{another-constr-sec}, we consider isotropic
subbundles in a constant Heisenberg algebra $\HH$, while in Sec.\ \ref{flat-Heis-sec} we generalize the construction to the case when $\HH$ is only locally constant (i.e., is
equipped with a flat connection). In Sec.\ \ref{alg-isotr-red-sec} we check that in some situations our Lie algebroid does not change under the isotropic reduction.
In Sec.\ \ref{Ber-sec} we prove that in the case of a split Heisenberg extension $\HH=\OO\oplus \VV$, where $\VV$ is equipped with a flat (symplectic) connection,
and of a Lagrangian subbundle $\LL\sub \VV$, our Lie algebroid is naturally isomorphic to the Lie algebroid associated with the square root of the Berezinian $\Ber(\LL)$.
In Sec.\ \ref{isotropic-PL-sec}, assuming that the reduced Heisenberg algebra $\ov{\HH}_\II$ is equipped with a flat connection, we give a construction of a (transitive)
Lie subalgebroid in $\AA_\II$, which is a Picard algebroid. In Sec.\ \ref{local-flat-conn-sec} we construct a flat connection on $\AA_\II$ associated with a (locally) constant
isotropic subspace $J$ such that $J^\perp$ is complementary to $\II$. In Sec.\ \ref{holonomicity-sec}, as an application of our Lie algebroid's action on coinvariants,
we prove a general criterion for vanishing of the higher derived coinvariants (Theorem \ref{codim-thm}).

In Sec. \ref{equiv-sec} we discuss the natural way in which our data of Lie algebroids acting on coinvariants can be equipped with equivariant data, with respect to an action
of a (super) group.

In appendix \ref{P1-example-sec} we compute explicitly the class of our Picard algebroid over the Lagrangian Grassmannian of subspaces in the (even) 3-dimensional
Heisenberg algebra. In appendix \ref{Pf-app} we prove Prop.\ \ref{gen-Pf-prop}(iv) from Sec.\ \ref{odd-rk-Pf-sec} giving a formula for the generalized Pfaffian.

\bigskip

\noindent
{\it Acknowledgments}. A.P. is grateful to Alexander Braverman, Pavel Etingof and Mikhail Kapranov for useful discussions.

\section{Heisenberg derived coinvariants}\label{der-coinv-sec}

\subsection{Super Weyl algebras}

Let $\VV$ be a symplectic super vector bundle over a superscheme $S$, i.e., a (finite rank) supervector bundle over $S$, equipped with an (even) nondegenerate skew-symmetric $\OO$-bilinear form $(\cdot,\cdot)$ (we say that a pairing is nondegenerate if it induces an isomorphism $\VV\to \VV^\vee$).
We denote by $\WW(\VV)$ the corresponding super Weyl algebra, which is a quasicoherent sheaf of $\OO$-algebras on $S$, given by the defining relations
$$a\cdot b- (-1)^{\bar{a}\bar{b}} b\cdot a=(a,b),$$
where $a,b\in \VV$. Note that the Weyl algebra $\WW(\VV)$ has a natural $\Z/2$-grading, in which elements of $\VV$ have odd degree.

More generally, we can consider a {\it super Heisenberg $\OO_S$-Lie algebra} $\HH$ over $S$, i.e., a super vector bundle fitting into an extension
$$0\to \OO_S\to \HH\rTo{\pi} \VV\to 0,$$
equipped with an $\OO$-linear super Lie bracket, such that $\OO_S$ is in the center, the induced bracket on $\HH/\OO_S=\VV$ is abelian, and the (super) commutator
pairing $\VV\times \VV\to \OO_S$ is nondegenerate. We denote by $1_\HH\in \HH$ the generator of the center $\OO_S\sub \HH$.
The commutator pairing equips $\VV$ with a structure of a symplectic super vector bundle.

We denote by $U(\HH)$ the quotient of of the universal enveloping algebra of $\HH$ by $1_\HH-1$.
A choice of an $\OO_S$-linear splitting $\VV\to \HH$ gives identifications 
$$\WW(\VV)\simeq U(\HH), \ \ \WW_{\le 1}(\VV)\simeq \HH.$$
Note that there is a natural isomorphism
$$U(\HH)^{op}\simeq U(\HH^{op}),$$
where $\HH^{op}$ is $\HH$ with the bracket $-[\cdot,\cdot]$. 
 
An {\it isotropic subbundle} $I\sub \VV$ is a subbundle such that $(\cdot,\cdot)$ is zero on $I\times I$.
A {\it Lagrangian subbundle} $L\sub \VV$ is an isotropic subbundle such that the form induces an isomorphism $\VV/L\simeq L^\vee$.

We say that a subbundle $I\sub \HH$
is isotropic (resp., Lagrangian), if $\pi|_{I}$ is an isomorphism onto an isotropic (resp., Lagrangian) subbundle of $\VV$. Note that a subbundle $I\sub \HH$, transversal to $\OO_S$, 
is isotropic if and only if it is an abelian subalgebra of $\HH$.

With an isotropic subbundle $I\sub \HH$
we associate sheaves of left and right 
$U(\HH)$-modules on $S$,
$$M_\HH(I)=M(I):=U(\HH)/U(\HH)\cdot I, \ \ M^r_\HH(I)=M^r(I):=U(\HH)/I\cdot U(\HH).$$
In the case $\HH=\OO_S\oplus \VV$ and $I\sub \VV$, we also write $M_{\VV}(I)$ and $M^r_{\VV}(I)$ for these.
The module $M(I)$ (resp., $M^r(I)$) has a canonical {\it vacuum vector} $1\in M(I)$, the image of $1\in U(\HH)$.
In the case when $\HH=\OO_S\oplus \VV$ and $I\sub \VV$, $M(I)$ (resp., $M^r(I)$) inherits the $\Z/2$-grading.

In the case $I=L$ is a Lagrangian subbundle, one usually calls $M_{\HH}(L)$ the {\it Fock module associated with } $L$.

With an isotropic subbundle $I\sub \HH$, we associate the reduced super Heisenberg $\OO_S$-Lie algebra,
$$\ov{\HH}_I=\ov{\HH}:=\fc(I)/I,$$
where $\fc(I)\sub\HH$ is the centralizer of $I$ in $\HH$. Note that $\fc(I)=\pi^{-1}(\pi(I)^\perp)$, where $\pi(I)^\perp\sub\VV$
is the orthogonal of $\pi(I)$ with respect to the symplectic pairing, so $\ov{\HH}$ fits into an exact sequence,
$$0\to\OO_S\to \ov{\HH}\to \pi(I)^\perp/\pi(I)\to 0.$$

In the case when $\HH=\OO_S\oplus \VV$ and $I\sub \VV$, we have
$\ov{\HH}=\OO_S\oplus I^\perp/I$.

\subsection{Derived coinvariants: statements of basic results}\label{coinv-basic-sec}

Until the end of Section \ref{der-coinv-sec}, we fix a Heisenberg
$\OO_S$-Lie algebra $\HH$ over a superscheme $S$.

For an isotropic subbundle $I\sub \HH$, we define the derived $I$-coinvariants of a left $U(\HH)$-module $M$ by 
$$C_I(M):=M^r(I)\otimes^{\bbL}_{U(\HH)} M$$
(this is an object in the derived category of quasicoherent sheaves of $\OO_S$-modules).



\begin{lemma} $C_I(M)$ is represented by the complex
\begin{equation}\label{C-I-M-complex}
K(I,M):{\bigwedge}^\bullet(I)\ot M: \ldots\to {\bigwedge}^2(I)\ot M\to I\ot M\to M
\end{equation}
with the differential 
$$d(x_1\we\ldots\we x_n\ot m)=\sum_i(-1)^{i+\ov{x}_i(\ov{x}_{i+1}+\ldots+\ov{x}_n)} x_1\we\ldots\widehat{x_i}\ldots \we x_n\ot x_i(m).$$
\end{lemma}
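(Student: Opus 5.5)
We resolve the right module $M^r(I)$ by flat right $U(\HH)$-modules and then tensor with $M$. Since $I\sub\HH$ is isotropic, it is an abelian (super) Lie subalgebra, so $U(I)=S(I)$, the free supercommutative $\OO_S$-algebra on $I$ (symmetric in the even part, exterior in the odd part). We use the Chevalley--Eilenberg (super Koszul) resolution of the trivial right $U(I)$-module $\OO_S$:
$$\ldots\to{\bigwedge}^2(I)\ot U(I)\to I\ot U(I)\to U(I)\to \OO_S\to 0,$$
where ${\bigwedge}^\bullet$ is understood in the super sense. It is exact because locally $I$ is free, so $U(I)$ is a polynomial$\ot$exterior algebra and this is the tensor product of Koszul complexes for the generators. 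This holds over any superscheme, since the argument is base change from $\Z$ on a free module.

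Next we induce up to $U(\HH)$. By PBW for the Heisenberg algebra (or $U(\HH)\simeq\WW(\VV)$), $U(\HH)$ is locally free, hence flat, as a left $U(I)$-module; one picks a complement of $I$ in $\HH$ locally and uses PBW. Also $\OO_S\ot_{U(I)}U(\HH)=U(\HH)/I\cdot U(\HH)=M^r(I)$. Applying $-\ot_{U(I)}U(\HH)$ therefore gives a resolution of $M^r(I)$ by the right $U(\HH)$-modules ${\bigwedge}^n(I)\ot U(\HH)$. These are locally free, hence flat, since $U(\HH)$ is locally free over $\OO_S$. So we may compute
$$M^r(I)\ot^{\bbL}_{U(\HH)}M\simeq \bigl({\bigwedge}^\bullet(I)\ot U(\HH)\bigr)\ot_{U(\HH)}M={\bigwedge}^\bullet(I)\ot M.$$

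Finally we identify the differential. The Chevalley--Eilenberg differential for the abelian algebra $I$ has no bracket terms, so it is
$$x_1\we\ldots\we x_n\ot u\mapsto \sum_i \pm\, x_1\we\ldots\widehat{x_i}\ldots\we x_n\ot x_i u.$$
After tensoring with $M$ this becomes the formula in the statement. The sign $(-1)^{i+\ov{x}_i(\ov{x}_{i+1}+\ldots+\ov{x}_n)}$ comes from moving $x_i$ past $x_{i+1},\dots,x_n$ (Koszul sign rule) together with the usual alternating sign. The main obstacle is to check that this sign convention gives $d^2=0$ and exactness in the super setting. Exactness is local, so we would reduce to a homogeneous basis of $I$. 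There the complex factors as a tensor product of the Koszul complex $\OO[t]\xrightarrow{t}\OO[t]$ for each even generator and the complex $\ldots\to\OO\xrightarrow{\xi}\OO\xrightarrow{\xi}\OO$, with divided powers $\xi^{(k)}$ in the wedge factor, for each odd generator. Each of these is acyclic except for $\OO$ in degree $0$.
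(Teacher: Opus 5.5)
Your argument is correct and is essentially the paper's proof. The paper likewise reduces to showing that $K(I,U(\HH))$ is acyclic in nonzero degrees, i.e.\ that it is a locally free resolution of $M^r(I)$. It checks this locally by writing $U(\HH)\simeq S(I)\ot_{\OO}S(I^\vee)\ot_{\OO}U(\VV')$ as an $S(I)$-module, which is your PBW flatness step, so that everything reduces to the Koszul complex ${\bigwedge}^\bullet(I)\ot S(I)$.

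One small quibble: your aside about base change from $\Z$ only works with divided powers for the odd generators, since the undivided complex has torsion homology over $\Z$. The paper works over $\C$, so this is harmless.
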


\begin{proof}
It is enough to check that for $M=U(\HH)$ this complex has no cohomology in degree $\neq 0$.
This is a local statement, so we can assume $\HH$ is split: $\HH=\OO_S\oplus \VV$, where $I\sub \VV$, and
$\VV=I\oplus I^\vee\oplus \VV'$, where the symplectic structure is the sum of the standard symplectic structure on $I\oplus I^\vee$
and of a symplectic structure on $\VV'$. Then $U(\HH)\simeq U(I\oplus I^\vee)\ot_\OO U(\VV')$, so it is enough to
prove the assertion in the case $\VV=I\oplus I^\vee$. Since $U(I\oplus I^\vee)\simeq S(I)\ot_\OO S(I^\vee)$ as
an $S(I)$-module, the assertion reduces to the exactness of the Koszul complex $\bigwedge^\bullet(I)\ot S(I)$ in degrees $\neq 0$.
\end{proof}


In the case when $S$ is a point, for a pair of Lagrangian subbundles $L,L'\sub \VV$, $C_L(M(L'))$ is $1$-dimensional and lives in degree $-\dim(L_-\cap L'_-)$.
We will prove the following analog of this result over superschemes.

\begin{theorem}\label{Ber-coinv-thm}
Let $J,J'\sub \HH$ be isotropic subbundles 
such that $\pi(J\cap J')=\pi(J)\cap \pi(J')$ is a subbundle in $\VV$, and $\pi(J\cap J')^\perp=\pi(J)^\perp+\pi(J')$. 
Then one has an isomorphism in the derived category of $\ov{\HH}_J$-modules,
$$C_J(M(J'))\simeq M_{\ov{\HH}_J}((\fc(J)\cap J')/(J\cap J'))\ot \Ber(J\cap J')[n],$$
where the rank of $J\cap J'$ is $n|m$ (and $(\fc(J)\cap J')/(J\cap J')$ is viewed as an isotropic subbundle in $\ov{\HH}_J=\fc(J)/J$).
\end{theorem}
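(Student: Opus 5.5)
The plan is to compute $C_J(M(J'))$ through the Koszul complex $K(J,M(J'))$ from the preceding Lemma. The computation proceeds in two stages: first with respect to $K:=J\cap J'$, then with respect to a complement of $K$ in $J$. To do this I first need local normal forms.

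\emph{Local normal form.} The statement is local up to the canonicity issue discussed at the end, so I first work Zariski-locally on $S$. I assume $\HH=\OO_S\oplus\VV$ with $J,J'\subset\VV$, adjusting the splitting so that $J'\subset \VV$ and, since $K\subset J'$, also $K\subset\VV$; putting $J$ into $\VV$ as well may need a separate argument. Since $\pi(K)$ is an isotropic subbundle, I choose a decomposition
\[
\VV=K\oplus K^\vee\oplus\VV'',\qquad \VV''\simeq K^\perp/K,
\]
with $J=K\oplus A$ and $J'=K\oplus B$, where $A,B\subset\VV''$ are isotropic with $A\cap B=0$. Arranging that $A$ and $B$ both lie in a common complement $\VV''$ uses that $J,J'\subset K^\perp$. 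Under this identification $\ov\HH_J=\OO\oplus A^\perp/A$, with $A^\perp$ taken in $\VV''$. The transversality hypothesis $\pi(K)^\perp=\pi(J)^\perp+\pi(J')$ becomes $A^\perp+B=\VV''$, equivalently $A\cap B^\perp=0$. Hence the pairing $A\to B^\vee$ is injective, and I can split $B=(B\cap A^\perp)\oplus B_1$ with $B_1\simeq A^\vee$ via the form.

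\emph{Reduction to $K$ and to $A$.} In this normal form, $M(J')\simeq S(K^\vee)\otimes M_{\VV''}(B)$. The Koszul complex $\bigwedge^\bullet(J)\otimes M(J')$ factors as the tensor product of $\bigwedge^\bullet(K)\otimes S(K^\vee)$ and $\bigwedge^\bullet(A)\otimes M_{\VV''}(B)$. In the first factor, $K$ acts on $S(K^\vee)$ by derivations (contraction). Its Koszul complex is therefore the super de Rham-type Koszul complex, whose cohomology is concentrated in the single degree $-n$ and is canonically $\Ber(K)$. In the second factor, since $B_1\simeq A^\vee$ and $M_{\VV''}(B)\simeq S(A)\otimes M(B\cap A^\perp\,\text{part})$ as an $A$-module, it is a free $S(A)$-module. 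Its $A$-coinvariants are therefore concentrated in degree $0$. They equal $M_{A^\perp/A}$ of the image of $B\cap A^\perp$, which is exactly $M_{\ov\HH_J}((\fc(J)\cap J')/(J\cap J'))$, and the $\ov\HH_J$-action is visibly the induced one. Combining the two factors gives the claimed isomorphism locally, with the shift $[n]$.

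\emph{Globalization (main obstacle).} The hard part is making the isomorphism independent of the chosen splittings, so that it glues and is $\ov\HH_J$-linear globally. My approach is to construct a canonical global morphism first and only afterwards check locally that it is a quasi-isomorphism. I would use two canonical ingredients. The first is a natural map $C_K(M(J'))\to$ (top cohomology) built from the class of $\bigwedge^{top}$ in the Koszul complex; this is where $\Ber(J\cap J')$ arises canonically, and it should be viewed as $\Ber$ of the Koszul/Chevalley complex of the abelian Lie algebra $K$. The second is the natural map $C_{J/K}(-)$ of the resulting $\fc(J)/K$-module to the reduced Fock module. Alternatively, one can express $C_J=C_{J/K}\circ C_K$ (coinvariants in stages, using that $J$ is abelian) and verify that every local choice changes the isomorphism only by the transition functions of $\Ber(K)$. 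This second check is a Berezinian computation for automorphisms of $K\oplus K^\vee$ preserving $K$. I expect it to be the most delicate step in the super setting, because of sign conventions and odd directions.
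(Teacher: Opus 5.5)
Your local computation is correct. Its two-stage shape matches the paper's proof: first $K=J\cap J'$, which produces $\Ber(K)[n]$, then the transversal pair $A,B$, which produces the reduced Fock module. The paper applies Theorem \ref{Ber-isotropic-thm} with $I=J\cap J'$, then Proposition \ref{redII-prop} to $J/I$, $J'/I$ inside $\ov{\HH}_I$.

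Your worry about putting $J$ and $J'$ into one splitting is harmless. The hypothesis $\pi(J\cap J')=\pi(J)\cap\pi(J')$ says exactly that the two lifting functionals agree on $\pi(J)\cap\pi(J')$. Transversality makes $\pi(J)+\pi(J')$ a subbundle, so a common extension exists locally. Also, what transversality gives is that $A\to B^\vee$ is fiberwise injective, which is what you need.

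The genuine gap is the globalization. You flag it as the main obstacle but do not carry it out, and it is where the paper does its real work. Once the cohomology is known to sit in one degree, the claim is an isomorphism of sheaves. Local isomorphisms glue only if they are independent of the choices made. This is not a formality: for Lagrangians, $C_L(M(L'))\simeq\Ber(L\cap L')[n]$ has no local content, since any two line bundles are locally isomorphic.

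Your suggested check aims at the wrong thing. The $K$-part of your identification depends on the isotropic complement $K^\vee$ to $\fc(K)$ and on the splitting. Another choice is the graph of some $\phi=(\phi_1,\phi_2)\colon K^\vee\to\fc(K)=\ov{\HH}_K\oplus K$.
\begin{itemize}
\item The $\phi_1$-changes tilt $K^\vee$ into $\VV''$ and into the center. They are not automorphisms of $K\oplus K^\vee$ at all, and they destroy the tensor factorization of the Koszul complex on which your computation rests.
\item The $\phi_2$-changes have Berezinian $1$, yet act at chain level by the nontrivial operator $\exp(c)$ with $c\in S^2(K)$.
\end{itemize}
So a Berezinian computation is vacuous where it applies and inapplicable elsewhere. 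Your other suggestion, a map built from $\bigwedge^{top}$, breaks down for $m>0$: then $\bigwedge^\bullet(K)$ is unbounded, and $\Ber(K)$ only appears as $H^{-n}$ of $\bigwedge^\bullet(K)\ot S(K^\vee)$.

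The missing idea is homotopy invariance, as in Lemma \ref{C-I-M-I-lem}(i). Changing the complement alters the chain isomorphism $\bigwedge^\bullet(K)\ot S(K^\vee)\ot U(\ov{\HH}_K)\to\bigwedge^\bullet(K)\ot M(K)$ by $\exp(c)$ or by $\exp(\alpha)$. Here $\alpha$ is the Casimir element of $K\ot K^\vee$, with $K$ acting on $S(K^\vee)$ and $K^\vee$ acting on $U(\ov{\HH}_K)$ through $\phi_1$. Both $\exp(\alpha)-1$ and $\exp(c)-1$ lie in the ideal generated by the action of $K$. Each $x\in K$ acts null-homotopically on the Koszul complex, with homotopy $x\wedge(\cdot)$. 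So these changes are homotopic to the identity, and the induced isomorphism on cohomology is canonical.

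To use this, organize the argument as the paper does, so that all choice-dependence sits in $C_K(M(K))\simeq U(\ov{\HH}_K)\ot\Ber(K)[n]$:
\begin{itemize}
\item write $C_J\simeq C_{J/K}\circ C_K$ (Proposition \ref{trans-coinv-prop});
\item write $M(J')\simeq M(K)\ot^{\bbL}_{U(\ov{\HH}_K)}M(J')^K$ with $M(J')^K\simeq M_{\ov{\HH}_K}(J'/K)$ (Proposition \ref{cat-eq-prop}, Corollary \ref{cat-eq-cor});
\item note that the second stage is then canonical, because it is induced by the vacuum vector.
\end{itemize}
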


Let us point out two particular cases for modules associated with Lagrangians.

\begin{cor}\label{JL-cor}
(i) Let $J\sub \HH$ be an isotropic subbundle, $L\sub \HH$ be a Lagrangian subbundle such that $\fc(J)+L=\HH$ (equivalently, $\pi(J)+\pi(L)=\VV$). Then
$$C_J(M(L))\simeq M_{\ov{\HH}_J}(L\cap \fc(J)).$$

\noindent
(ii) If $L,L'\sub \HH$ are Lagrangian subbundles 
such that $\pi(L\cap L')=\pi(L)\cap \pi(L')$, and $\pi(L)+\pi(L')$ is a subbundle in $\VV$, then 
$$C_L(M(L'))\simeq \Ber(L\cap L')[n],$$
where the rank of $L\cap L'$ is $n|m$.
\end{cor}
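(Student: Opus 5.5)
Both parts are direct specializations of Theorem \ref{Ber-coinv-thm}. The plan is to verify its two hypotheses in each situation and then identify the right-hand side. Throughout I use that for a Lagrangian $L$ one has $\pi(L)^\perp=\pi(L)$, that $\fc(J)=\pi^{-1}(\pi(J)^\perp)$, and that $\perp$ reverses inclusions and turns sums into intersections (a subbundle $A$ satisfies $A^{\perp\perp}=A$).

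For (i), apply Theorem \ref{Ber-coinv-thm} with $J'=L$.

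\emph{Equivalence of the two conditions.} The statement $\fc(J)+L=\HH$ holds iff $\pi(J)^\perp+\pi(L)=\VV$. Taking orthogonals, this is $\pi(J)\cap\pi(L)=0$. Conversely, $\pi(J)\cap\pi(L)=0$ gives $(\pi(J)+\pi(L))^\perp=\pi(J)^\perp\cap\pi(L)$. I read the parenthetical ``equivalently'' as really meaning that $\pi(J)\cap\pi(L)=0$ fiberwise. I will check this, using fiberwise ranks to obtain the subbundle statements.

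\emph{Hypotheses of the theorem.} Then $J\cap L=0$, and $\pi(J\cap L)=0=\pi(J)\cap\pi(L)$ is trivially a subbundle. The second hypothesis becomes $0^\perp=\VV=\pi(J)^\perp+\pi(L)$, which is exactly the assumption.

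\emph{Conclusion.} Since $J\cap L=0$, we get $\Ber(J\cap L)=\OO$ and $n=0$. The theorem then yields $M_{\ov{\HH}_J}(\fc(J)\cap L)$, where $\fc(J)\cap L$ embeds into $\fc(J)/J$ because $J\cap L=0$. This is the claim.

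For (ii), apply the theorem with $J=L$ and $J'=L'$.

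\emph{First hypothesis.} We need $\pi(L\cap L')=\pi(L)\cap\pi(L')$ to be a subbundle. This follows because it is the orthogonal of the subbundle $\pi(L)+\pi(L')$: indeed $(\pi(L)+\pi(L'))^\perp=\pi(L)^\perp\cap\pi(L')^\perp=\pi(L)\cap\pi(L')$.

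\emph{Second hypothesis.} We have $(\pi(L)\cap\pi(L'))^\perp=(\pi(L)+\pi(L'))^{\perp\perp}=\pi(L)+\pi(L')=\pi(L)^\perp+\pi(L')$, as required.

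\emph{Module factor.} Here $\fc(L)=\pi^{-1}(\pi(L))=L+\OO_S$. Hence $\fc(L)\cap L'$ consists of the elements of $L'$ whose projection lies in $\pi(L)$. An element $x\in L'$ with $\pi(x)=\pi(y)$ for some $y\in L$ satisfies $x-y\in\OO_S$. The key step is to show that $x\in L\cap L'$. The hypothesis $\pi(L\cap L')=\pi(L)\cap\pi(L')$ says precisely that some $y\in L\cap L'$ has $\pi(y)=\pi(x)$. Then $x-y\in L'\cap\OO_S=0$, because $L'$ is transversal to the center. So $(\fc(L)\cap L')/(L\cap L')=0$.

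\emph{Identifying $M_{\ov{\HH}_L}(0)$.} Since $\ov{\HH}_L=\fc(L)/L\simeq\OO_S$ is the trivial Heisenberg algebra (its symplectic part $\pi(L)^\perp/\pi(L)=0$), we get $M_{\ov{\HH}_L}(0)=U(\OO_S)/(1_\HH-1)=\OO_S$. The theorem therefore gives $\Ber(L\cap L')[n]$.

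The main obstacle is the linear-algebra bookkeeping over a superscheme: making sure that the orthogonal of a sum of subbundles is the intersection, and that double orthogonals return the subbundle. I would handle this by checking it locally, with the ranks of $\pi(L)+\pi(L')$ and its orthogonal complementary in $\VV$.
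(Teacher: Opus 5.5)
Your proposal is correct and follows the paper's route: the paper presents the corollary without proof, as two particular cases of Theorem \ref{Ber-coinv-thm}, and your hypothesis checks are exactly what is needed ($J\cap L=0$ in (i); in (ii), $\pi(L)\cap\pi(L')=(\pi(L)+\pi(L'))^\perp$, $\fc(L)\cap L'=L\cap L'$ and $\ov{\HH}_L\simeq\OO_S$). You are also right that the parenthetical in (i) should read $\pi(J)^\perp+\pi(L)=\VV$, i.e.\ $\pi(J)\cap\pi(L)=0$ fiberwise, and since $J\cap L=0$ there, part (i) is really Proposition \ref{redII-prop}(i); your ``Conversely'' sentence states an identity that always holds and proves nothing, but the argument never uses it.
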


The following isotropic reduction result is also very useful.

\begin{theorem}\label{coinv-red-thm}
Let $I\sub \HH$ be isotropic subbundles, and let $J\sub \HH$ be an isotropic subbundle such that 
$\fc(I)+J=\HH$.
Then $\ov{J}:=J\cap \fc(I)$ is an isotropic subbundle in $\ov{\HH}_I=\fc(I)/I$, and for any $U(\HH)$-module on which
$I$ acts locally nilpotently,
there is a natural isomorphism
\begin{equation}\label{redII-conv-isom}
C_{\ov{J}}(M^I)\rTo{\sim} C_{J}(M)
\end{equation}
given by the chain map of complexes
$${\bigwedge}^\bullet(J\cap \fc(I))\ot M^I\to {\bigwedge}^\bullet(J)\ot M.$$
induced by the natural embeddings $J\cap \fc(I)\to J$, $M^I\to M$.
\end{theorem}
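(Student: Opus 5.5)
Throughout, $M^I$ denotes the $I$-invariants of $M$ and $\ov{J}=J\cap\fc(I)$. The plan is to filter the Koszul complex $K(J,M)={\bigwedge}^\bullet(J)\ot M$ by splitting $J$ locally into $\ov{J}$ and a complement that pairs nondegenerately with $I$. The statement is local on $S$ (the map of complexes is global), so we may assume everything is split.

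First, $\ov{J}$ is a subbundle and is isotropic in $\ov{\HH}_I$. The hypothesis $\fc(I)+J=\HH$, i.e.\ $\pi(I)^\perp+\pi(J)=\VV$, implies that $\pi(J)\to \VV/\pi(I)^\perp\simeq \pi(I)^\vee$ is surjective. Its kernel $\pi(J)\cap\pi(I)^\perp$ is therefore a subbundle, and it maps isomorphically to $\ov{J}$. Note also that $\ov{J}\cap I$ may be nonzero, but $\ov{J}$ is still isotropic in $\fc(I)/I$ because $\ov{J}\cap I$ acts by zero there. Locally choose a subbundle $J_1\sub J$ complementary to $\ov{J}$. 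Then $\pi(J_1)\simeq\pi(I)^\vee$ via the pairing, so the pairing $I\times J_1\to\OO$ is perfect and $J=\ov{J}\oplus J_1$. Since $J$ is abelian, $\bigwedge^\bullet(J)=\bigwedge^\bullet(\ov{J})\ot\bigwedge^\bullet(J_1)$, and $K(J,M)$ is the total complex of the double complex
$$
{\bigwedge}^\bullet(\ov{J})\ot K(J_1,M).
$$
Here $\ov{J}$ commutes with $J_1$, so the two Koszul differentials supercommute.

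The key step is to show that $K(J_1,M)$ is quasi-isomorphic to $M^I$ placed in degree $0$, compatibly with the action of $\fc(I)\supset \ov{J}$. This reduces to a statement about $\WW(I\oplus J_1)$. Because $I\oplus J_1\sub\HH$ has nondegenerate pairing, it generates a Heisenberg subalgebra $\HH_1$, and $\fc(I\oplus J_1)$ commutes with it. A module $M$ on which $I$ acts locally nilpotently is then, by the super Stone--von Neumann/Kashiwara-type lemma for Weyl algebras, isomorphic to $M_{\HH_1}(I)\ot M^I$, with $M_{\HH_1}(I)\simeq S(J_1)$ (super-symmetric algebra) as a $J_1$-module. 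Hence
$$
K(J_1,M)\simeq K(J_1,S(J_1))\ot M^I.
$$
The Koszul complex $K(J_1,S(J_1))$ is a resolution of $\OO$, exactly as in the proof of the Lemma above. It is quasi-isomorphic to $\OO$ via the vacuum, and the vacuum corresponds to $M^I\hookrightarrow M$.

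The identification must be compatible with the $\ov{J}$-action. The decomposition $M\simeq M_{\HH_1}(I)\ot M^I$ is equivariant for $\fc(I\oplus J_1)$, but $\ov{J}$ lies only in $\fc(I)$. I would fix this by replacing $\ov{J}$ with its projection into $\fc(I\oplus J_1)$ along $J_1$ twisted by elements of $I$. Concretely, for $x\in\ov{J}$ set $x'=x-\sum_k (x,j_k)\,i_k$, with $i_k,j_k$ dual bases of $I,J_1$. Since $x\in\fc(I)$ and $x'-x\in I$, the elements $x$ and $x'$ act identically on $M^I$ and on $\fc(I)/I$.

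On $K(J_1,M)$, however, the element $x$ itself is used. To avoid the discrepancy, I would instead argue with the spectral sequence of the double complex filtered by $\bigwedge^\bullet(\ov{J})$-degree. The inclusion of double complexes
$$
{\bigwedge}^\bullet(\ov{J})\ot M^I\to {\bigwedge}^\bullet(\ov{J})\ot K(J_1,M)
$$
is a map of filtered complexes. On $E_1$ it is an isomorphism by the key step, which is applied columnwise and needs no compatibility with $\ov{J}$. The filtration is finite in each total degree, since $\ov{J}$ has finite rank in the odd-exterior sense; if $\ov{J}$ has odd directions then $\bigwedge^\bullet$ is unbounded, but each total degree still involves only finitely many columns of fixed polynomial degree, so convergence is fine. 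Hence the map is a quasi-isomorphism, and it is exactly the stated chain map.

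The main obstacle is the structure lemma $M\simeq M_{\HH_1}(I)\ot M^I$ for locally nilpotent $I$ in the super, relative setting. I would prove it by an explicit inverse: the projector $\sum_{n}\frac{(-1)^n}{n!}\sum j^{(n)}i^{(n)}$ onto $M^I$, together with $S(J_1)\ot M^I\to M$. Alternatively I would prove directly that $K(J_1,M)$ has cohomology $M^I$, by inducting on the nilpotence filtration $M_{\le k}=\ker(I^{k+1})$, which $J_1$ shifts in a controlled way.
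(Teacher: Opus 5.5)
Your argument is correct, but it takes a more hands-on route than the paper. The paper never opens up the Koszul complexes. It rewrites $C_J(M)=M^r(J)\ot^{\bbL}_{U(\HH)}M$ as $M^r(J)\ot^{\bbL}_{U(\fc(I))}M^I$, using Corollary~\ref{cat-eq-cor} and the flatness of $U(\HH)$ over $U(\fc(I))$. It then applies Proposition~\ref{redII-prop}(i), namely $M^r(J)\ot^{\bbL}_{U(\fc(I))}U(\ov{\HH}_I)\simeq M^r_{\ov{\HH}_I}(\ov{J})$. Only afterwards does it remark that the composite is induced by the natural chain map.

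The ingredients are the same as yours:
\begin{itemize}
\item the Kashiwara-type decomposition, which is exactly Corollary~\ref{cat-eq-cor}, so you can cite it rather than reprove it;
\item the local splitting $J=J_1\oplus\ov{J}$ with $J_1$ dual to $I$, which is the decomposition $J=I'\oplus J'$ of Lemma~\ref{module-redII-lem};
\item Koszul acyclicity: the paper's $S(I)\ot^{\bbL}_{S(I)}\OO\simeq\OO$ against your $K(J_1,S(J_1))\simeq\OO$.
\end{itemize}
Your version directly proves that the specific chain map is a quasi-isomorphism, which the paper only calls ``easy to see''. The paper's derived-tensor formulation is more structural and is what gets reused later, for instance to lift the isomorphism to Lie algebroid modules in Proposition~\ref{isotr-red-Lie-alg-prop}.

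Two small corrections.

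First, $\ov{J}\cap I$ is never nonzero. Any element of $I\cap J$ pairs trivially with $J\supset J_1$, while $I\times J_1\to\OO$ is perfect, so $I\cap J=0$. You need this for $\ov{J}$ to embed in $\fc(I)/I$ as a subbundle. That in turn is what makes ${\bigwedge}^\bullet(\ov{J})\ot M^I$ compute $C_{\ov{J}}(M^I)$.

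Second, your worry about compatibility with $\ov{J}$ is unfounded. Since $J$ is abelian, $\ov{J}\sub\fc(I)\cap\fc(J_1)=\fc(I\oplus J_1)$; your twisted $x'$ equals $x$, because $(x,j_k)=0$. So $M\simeq S(J_1)\ot M^I$ is already $\ov{J}$-equivariant, and $K(J,M)\simeq K(\ov{J},M^I)\ot K(J_1,S(J_1))$ holds directly. The spectral sequence argument is still valid, since the double complex is first-quadrant and each total degree has finitely many terms. It is just unnecessary.
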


\subsection{Some general observations}

Recall that for an isotropic subbundle $I\sub \HH$, we consider the centralizer $\fc(I)\sub \HH$ and the reduced Heisenberg algebra $\ov{\HH}=\fc(I)/I$.
We can also view $\fc(I)$ as a degenerate Heisenberg algebra and consider the corresponding subalgebra $U(\fc(I))\sub U(\HH)$, which is
equipped with a homomorphism $U(\fc(I))\to U(\ov{\HH})$.

For any $U(\HH)$-module, we set 
$$M^I:=\{ x\in M \ | \ I\cdot x=0\},$$
which is equipped with a natural $U(\ov{\HH})$-structure.

\begin{lemma}\label{module-red-lem}
(i) One has an isomorphism of algebras 
\begin{equation}\label{U-red-isom}
U(\ov{\HH})\simeq U(\fc(I))/I\cdot U(\fc(I)), 
\end{equation}
where $I$ is central in $U(\fc(I))$.
This induces an isomorphism of left (rep., right) $U(\HH)$-modules,
\begin{equation}\label{M-I-main-isom}
M(I)\simeq U(\HH)\ot^{\bbL}_{U(\fc(I))} U(\ov{\HH}), \ \ (\text{resp.,} \ M^r(I)\simeq U(\ov{\HH})\ot^{\bbL}_{U(\fc(I))} U(\HH) \ ),
\end{equation}
which equips $M(I)$ (resp., $M^r(I)$) with a structure of a 
$U(\HH)$--$U(\ov{\HH})$-bimodule (resp., $U(\ov{\HH})$--$U(\HH)$-bimodule).

\noindent
(ii) If $I\sub J$, where $J\sub \HH$ is isotropic, then there are canonical isomorphisms
\begin{equation}\label{M-J-red-isom}
M(J)\simeq U(\HH)\ot^{\bbL}_{U(\fc(I))} M_{\ov{\HH}}(\ov{J})\simeq M(I)\ot^{\bbL}_{U(\ov{\HH})} M_{\ov{\HH}}(\ov{J}),
\end{equation}
where $\ov{J}:=J/I$ is the reduced isotropic subbundle in $\ov{\HH}$, and $M(\ov{J})$ is the corresponding $U(\ov{\HH})$-module.
\end{lemma}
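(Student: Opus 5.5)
Since every assertion is local on $S$, we work locally and fix a splitting $\HH=\OO_S\oplus \VV$ with $I\sub \VV$, together with a symplectic decomposition
$$\VV=I\oplus I^\vee\oplus \VV', \qquad \VV'\simeq I^\perp/I.$$
In this splitting $\fc(I)=\OO_S\oplus I\oplus \VV'$ and $\ov{\HH}=\OO_S\oplus \VV'$.

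For (i), the PBW filtration identifies $U(\fc(I))$ with $S(I)\ot_\OO U(\OO\oplus\VV')$. The factor $S(I)$ is central, because $I$ commutes with $I$ and with $\VV'$. Quotienting by the two-sided ideal generated by $I$ leaves $U(\ov{\HH})$. Globally, the map $U(\fc(I))\to U(\ov{\HH})$ is surjective with kernel $I\cdot U(\fc(I))$, and this can be checked on associated graded pieces, so the isomorphism is canonical even though we computed it in a splitting.

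For the module statement, observe that $U(\ov{\HH})=U(\fc(I))/I\cdot U(\fc(I))$ is the Koszul quotient of the free module $S(I)\ot U(\ov{\HH})$ by the regular sequence coming from $I$. Its Koszul resolution over $U(\fc(I))$ is therefore $\bigwedge^\bullet(I)\ot U(\fc(I))$. By PBW, $U(\HH)\simeq S(I^\vee)\ot U(\fc(I))$, so $U(\HH)$ is free, hence flat, as a right $U(\fc(I))$-module. Tensoring the Koszul resolution with $U(\HH)$ gives
$$U(\HH)\ot^{\bbL}_{U(\fc(I))}U(\ov{\HH})\simeq \bigwedge^\bullet(I)\ot U(\HH),$$
which is the Koszul complex of $U(\HH)$ with respect to right multiplication by $I$. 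As in the proof of the previous lemma, this complex is exact off degree $0$ and has $H^0=U(\HH)/U(\HH)I=M(I)$. The right $U(\ov{\HH})$-action then comes from the right action on the second tensor factor, and the right-module case for $M^r(I)$ is the same argument with sides swapped. The only point needing care is that the identification is independent of the splitting. This holds because $U(\HH)\ot_{U(\fc(I))}U(\ov{\HH})=U(\HH)/U(\HH)I$ canonically at the underived level, and the higher Tor sheaves vanish by the local computation.

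For (ii), flatness of $U(\HH)$ over $U(\fc(I))$ again lets us write
$$U(\HH)\ot^{\bbL}_{U(\fc(I))}M_{\ov{\HH}}(\ov{J})=U(\HH)\ot_{U(\fc(I))}U(\ov{\HH})/U(\ov{\HH})\ov{J}.$$
Here $\ov{J}=J/I$ with $J\sub \fc(I)$, since $J$ is isotropic and contains $I$. Using right exactness, this equals $U(\HH)/(U(\HH)I+U(\HH)J)=M(J)$. The second isomorphism follows from associativity of derived tensor products:
$$\bigl(U(\HH)\ot^{\bbL}_{U(\fc(I))}U(\ov{\HH})\bigr)\ot^{\bbL}_{U(\ov{\HH})}M(\ov{J})\simeq U(\HH)\ot^{\bbL}_{U(\fc(I))}M(\ov{J}),$$
combined with (i). The main obstacle I expect is the flatness and regularity claims. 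They require the PBW statements to hold over a superscheme, where odd elements of $I$ square to zero, so the relevant symmetric and exterior algebras must be read in the super sense. Koszul exactness still holds in that setting because, as used in the earlier lemma, it holds for super symmetric algebras of locally free modules.
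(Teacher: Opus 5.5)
Your proof is correct and follows essentially the same approach as the paper. The algebra isomorphism comes from $\fc(I)/I\simeq\ov{\HH}$, and local freeness of $U(\HH)$ over $U(\fc(I))$ kills the higher $\Tor$'s; the paper then obtains (ii) via $M(J)\simeq M(I)/M(I)\ov{J}$, which is your right-exactness plus associativity argument in different words. Your Koszul-resolution detour in (i) is harmless but unnecessary, since flatness alone already identifies $U(\HH)\ot^{\bbL}_{U(\fc(I))}U(\ov{\HH})$ with $U(\HH)/U(\HH)I$.
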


\begin{proof} (i) The isomorphism \eqref{U-red-isom} follows immediately from the Lie algebra isomorphism
$\fc(I)/I\simeq \ov{\HH}$. Since $U(\HH)$ is locally free as left (resp., right) $U(\fc(I))$-module, there are no $\Tor$'s in \eqref{M-I-main-isom},
so \eqref{M-I-main-isom} follows immediately from \eqref{U-red-isom}.

\noindent
(ii) The second isomorphism in \eqref{M-J-red-isom} follows immediately from \eqref{M-I-main-isom}. Since $U(\HH)$ is locally free as a right $U(\fc(I))$-module,
there are no higher $\Tor$'s, while the isomorphism
$$M(J)\simeq M(I)/M(I)\ov{J}\simeq M(I)\otimes_{U(\ov{H})}M(\ov{J})$$
follows easily from the definition of the right $U(\ov{H})$-action on $M(I)$.
\end{proof}

Using the above lemma, we can view $C_I(M)$ as an object in the derived category of left $U(\ov{\HH})$-modules, and we
get an identification of functors,
\begin{equation}\label{coinv-main-formula}
C_I(M)\simeq U(\ov{\HH})\otimes^{\bbL}_{U(\fc(I))}M.
\end{equation}

We have the following transitivity for the derived coinvariants functors.

\begin{prop}\label{trans-coinv-prop}
Let $J\sub \HH$ be an isotropic subbundle, and let $I\sub J$ be a subbundle. 
Let $\ov{J}:=J/I\sub \fc(I)/I=\ov{\HH}_I$ denote the reduced isotropic subbundle. Then under the natural isomorphism 
$$\fc_{\ov{\HH}_I}(\ov{J})/\ov{J}\simeq \fc_{\HH}(J)/J=\ov{\HH}_J,$$
we have a natural isomorphism in the derived category of $U(\ov{\HH}_J)-U(\HH)$-bimodules,
$$M^r_{\HH}(J)\simeq M^r_{\ov{\HH}_I}(\ov{J})\ot^{\bbL}_{U(\ov{\HH}_I)}M^r_{\HH}(I).$$
Hence, we have an isomorphism of functors from the derived category of $U(\HH)$-modules to that of $U(\ov{\HH}_J)$-modules,
$$C_J(M)\simeq C_{\ov{J}}(C_I(M)),$$
which is compatible with the canonical maps from $M$.
\end{prop}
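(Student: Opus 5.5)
The plan is to derive the bimodule isomorphism from Lemma~\ref{module-red-lem}, applied twice: once to $I\sub\HH$, and once to $J$ viewed as an isotropic subbundle containing $I$. The statement about coinvariants then follows by associativity of derived tensor products.

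First I will identify the algebras. Since $I\sub J$, we have $\fc(J)\sub\fc(I)$, and the image of $\fc(J)$ in $\ov{\HH}_I=\fc(I)/I$ is exactly $\fc_{\ov{\HH}_I}(\ov{J})$. This holds because $x\in\fc(I)$ commutes with $J$ if and only if its class commutes with $J/I$. Dividing by $\ov J=J/I$ gives the isomorphism $\fc_{\ov{\HH}_I}(\ov{J})/\ov{J}\simeq\fc(J)/J$. This is a direct check on $\pi(J)^\perp\sub\pi(I)^\perp$.

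Next, the right-module version of \eqref{M-J-red-isom} (proved the same way with left and right interchanged, or by passing to $\HH^{op}$ using $U(\HH)^{op}\simeq U(\HH^{op})$) gives
$$M^r_\HH(J)\simeq M^r_{\ov\HH_I}(\ov J)\ot^{\bbL}_{U(\ov\HH_I)}M^r_\HH(I).$$
Concretely, $M^r(J)=U(\HH)/J\cdot U(\HH)$, while $M^r(I)=U(\HH)/I\cdot U(\HH)$ is a $U(\ov\HH_I)$--$U(\HH)$-bimodule. Tensoring with $M^r(\ov J)=U(\ov\HH_I)/\ov J\, U(\ov\HH_I)$ quotients further by $\ov J$ acting on the left, which yields $U(\HH)/J\cdot U(\HH)$.

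The remaining points are the vanishing of the higher Tor and compatibility with the left $U(\ov\HH_J)$-action. For the Tor vanishing I will work locally and choose a splitting as in the proof of the lemma on $K(I,M)$. There $M^r(I)\simeq U(\ov\HH_I)\ot_{U(\fc(I))}U(\HH)$, and $U(\HH)$ is flat (locally free) over $U(\fc(I))$, so $M^r(I)$ is locally free as a left $U(\ov\HH_I)$-module and the derived tensor product is an honest one. For the left action, I expect the main obstacle to be checking that the $U(\ov\HH_J)$-structure on $M^r_\HH(J)$ coming from \eqref{M-I-main-isom} for $J$ agrees with the one induced from $M^r_{\ov\HH_I}(\ov J)$. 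Both are given by left multiplication by lifts of elements of $\fc(J)$, so they coincide once the algebra identification above is fixed. I would verify this on the generators $\fc(J)$.

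Finally, for any left $U(\HH)$-module $M$ I will compute
$$C_J(M)=M^r(J)\ot^{\bbL}_{U(\HH)}M\simeq M^r(\ov J)\ot^{\bbL}_{U(\ov\HH_I)}\bigl(M^r(I)\ot^{\bbL}_{U(\HH)}M\bigr)=C_{\ov J}(C_I(M)),$$
using associativity of derived tensor products of bimodules. The canonical maps from $M$ are induced by the vacuum vectors. The vacuum $1\in M^r(J)$ corresponds to $1\ot1$, so the two maps $M\to C_J(M)$ and $M\to C_I(M)\to C_{\ov J}(C_I(M))$ agree.
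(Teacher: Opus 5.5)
Your proof is correct, but it is organized differently from the paper's. You get the bimodule isomorphism directly as the right-handed version of \eqref{M-J-red-isom} in Lemma \ref{module-red-lem}(ii). You then observe that the derived tensor product is underived, because $M^r(I)\simeq U(\ov{\HH}_I)\ot_{U(\fc(I))}U(\HH)$ is locally free as a left $U(\ov{\HH}_I)$-module. After checking the $U(\ov{\HH}_J)$-actions on generators, you deduce $C_J(M)\simeq C_{\ov{J}}(C_I(M))$ by associativity of derived tensor products.

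The paper never invokes Lemma \ref{module-red-lem}(ii) for this. It first proves an auxiliary algebra-level base change, Lemma \ref{trans-bimod-lem},
$$U(\fc_{\ov{\HH}_I}(\ov{J}))\ot^{\bbL}_{U(\fc(J))}U(\fc(I))\simeq U(\ov{\HH}_I),$$
by an explicit local symplectic decomposition. It then applies \eqref{coinv-main-formula}, $C_I(M)\simeq U(\ov{\HH}_I)\ot^{\bbL}_{U(\fc(I))}M$, and uses the lemma to collapse $C_{\ov{J}}(C_I(M))$ to $U(\fc(J)/J)\ot^{\bbL}_{U(\fc(J))}M$.

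What each route buys:
\begin{itemize}
\item Your route is shorter and reuses an already established lemma.
\item It also proves the bimodule statement of the proposition explicitly. The paper's chain is written only for $C_{\ov{J}}(C_I(M))$, with the bimodule version implicit by taking $M=U(\HH)$.
\item The paper's route never has to compare two left $U(\ov{\HH}_J)$-structures, since the $U(\fc(J)/J)$-action is visible at every step. The cost is the separate Lemma \ref{trans-bimod-lem}.
\end{itemize}
Both arguments ultimately rest on the same PBW-type freeness of $U(\HH)$ over the enveloping algebras of centralizers.

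One small point in your centralizer identification: when you say $x\in\fc(I)$ commutes with $J$ iff its class commutes with $J/I$, you implicitly use that $\OO_S\to\ov{\HH}_I$ is injective, i.e.\ $I\cap\OO_S=0$. This holds by the definition of an isotropic subbundle, but it is worth stating.
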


\begin{lemma}\label{trans-bimod-lem}
In the situation of Proposition \ref{trans-coinv-prop}, the natural maps $\fc_{\ov{\HH}_I}(\ov{J})\to \ov{\HH}_I$ and $\fc_{\HH}(I)\to \ov{\HH}_I$, induce an isomorphism
$$U(\fc_{\ov{\HH}_I}(\ov{J}))\otimes_{U(\fc_{\HH}(J))}^{\bbL}U(\fc_{\HH}(I))\rTo{\sim} U(\ov{\HH}_I).$$
\end{lemma}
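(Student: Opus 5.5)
Set $A:=U(\fc_\HH(J))$, $B:=U(\fc_\HH(I))$, $C:=U(\fc_{\ov{\HH}_I}(\ov{J}))$, $D:=U(\ov{\HH}_I)$. Since $I\sub J$, we have $\fc(J)\sub\fc(I)$, and $\fc_{\ov{\HH}_I}(\ov{J})=\fc(J)/I$. The algebra $C$ is therefore the quotient of $A$ by the central ideal generated by $I$: the analogue of \eqref{U-red-isom}, with $I$ central in $A$ because $I\sub J$ is isotropic. The map in the statement is $c\ot b\mapsto \bar c\,\bar b$, and it is well defined because the images of $A$ in $C$ and in $D$ agree inside $D$.

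First I would get rid of the derived tensor product. The map $\fc(J)\to\fc(I)$ is an inclusion of Lie algebras (degenerate Heisenberg algebras) with locally free cokernel, since $\pi(J)^\perp\sub\pi(I)^\perp$ are subbundles. By PBW, $B$ is then locally free as a left $A$-module, exactly as in the proof of Lemma \ref{module-red-lem}, where $U(\HH)$ is free over $U(\fc(I))$. So there are no higher Tor's, and it suffices to show that the underived map
$$C\ot_A B=B/(I\cdot B)\to D$$
is an isomorphism. Here I use that $C\ot_A B=(A/IA)\ot_A B=B/IB$, with $I$ central in $A$. Now $B/IB=U(\fc(I))/I\cdot U(\fc(I))$, and by \eqref{U-red-isom} this is exactly $U(\ov{\HH}_I)=D$. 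It remains to check that this identification agrees with the given map, which is immediate on generators: an element $b$ maps to its image in $D$.

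The main subtlety is the flatness step, since one has to make sure the PBW argument holds for the degenerate Heisenberg algebras over a superscheme. I would handle it locally by choosing splittings. Put $\HH=\OO\oplus\VV$ with $\pi(J)^\perp=\pi(I)^\perp\cap\pi(J)^\perp$, and pick a complement $E$ with $\pi(I)^\perp=\pi(J)^\perp\oplus E$. Then $B\simeq A\ot_\OO S(E)$ as a left $A$-module, using the PBW filtration together with the fact that the associated graded of a super enveloping algebra is the supersymmetric algebra. Alternatively, the derived statement can be kept: since $I$ is central in $A$, one can resolve $C$ over $A$ by a Koszul complex $A\ot\bigwedge^\bullet(I)$. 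Tensoring it with $B$ gives the Koszul complex of $I$ acting on $B$, which is exact in nonzero degrees because $B$ is free over $S(I)$. This is the same argument as in the proof of the lemma that $C_I(M)$ is represented by the complex \eqref{C-I-M-complex}.
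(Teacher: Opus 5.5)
Your proof is correct. It reaches the result by a somewhat more conceptual route than the paper, though the first step is the same.

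\textbf{Common step.} Both arguments kill the higher $\Tor$'s by PBW freeness of $U(\fc(I))$ over $U(\fc(J))$. In the paper this appears as the isomorphism $\WW(J^\perp)\ot S((I')^*)\simeq \WW(I^\perp)$.

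\textbf{Where you differ.} The difference is in how the underived tensor product is identified.
\begin{itemize}
\item The paper passes to a local normal form $\VV=I\oplus I'\oplus I^*\oplus (I')^*$, after discarding $\ov{\VV}$. It computes $S(I')\ot_{\WW(J^\perp)}\WW(I^\perp)\simeq S(I')\ot_{\OO} S((I')^*)$ and matches this with $\WW(I'\oplus (I')^*)$ by PBW once more.
\item You instead note that $C=A/IA$ with $I$ central, so $C\ot_A B=B/IB$. This is $U(\ov{\HH}_I)$ directly by \eqref{U-red-isom}.
\end{itemize}

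\textbf{What each buys.} Your version needs no symplectic normal form; local choices enter only through PBW. It also makes compatibility with the given map immediate, since the isomorphism is just the one induced by $\fc(I)\to \ov{\HH}_I$. The paper's version has the merit of making the resulting module structure completely explicit.

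\textbf{Minor points.}
\begin{itemize}
\item Your Koszul-complex alternative is also sound. However, centrality of $I$ in $A$ alone does not make $\bigwedge^\bullet(I)\ot A$ a resolution of $A/IA$. You also need $A$ to be flat over $S(I)$, which again follows from PBW because $I\sub \fc(J)$ is a subbundle.
\item The condition $\pi(J)^\perp=\pi(I)^\perp\cap\pi(J)^\perp$ in your last paragraph is automatic from $I\sub J$, so nothing needs to be arranged there.
\end{itemize}
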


\begin{proof} The statement is local, so we can assume that $\HH=\OO_S\oplus\VV$, $J\sub \VV$, 
$\VV=J\oplus J^*\oplus \ov{\VV}$, and $J=I\oplus I'$. The statement immediately
reduces to the case $\ov{\VV}=0$, so we can assume $\VV=I\oplus I'\oplus I^*\oplus (I')^*$, with the standard form. Then
$$J^\perp=J=I\oplus I', \ \ I^\perp=I\oplus I'\oplus (I')^*, \ \ \ov{J}^\perp=I'.$$
It follows that $\WW(I^\perp)$ is a free module over $\WW(J^\perp)$, more precisely, the natural map of $\WW(J^\perp)$-modules,
$$\WW(J^\perp)\ot S((I')^*)\to \WW(I^\perp)$$
is an isomorphism. Hence, there are no higher derived functors, and the embedding $S((I')^*)\sub \WW(I^\perp)$ induces an isomorphism
$$S(I')\ot_{\OO} S((I')^*)\rTo{\sim} S(I')\ot_{\WW(J^\perp)} \WW(I^\perp).$$
Now the assertion follows from the fact that the natural embeddings induce an isomorphism
$$S(I')\ot_{\OO} S((I')^*)\rTo{\sim}\WW(I'\oplus (I')^*).$$
\end{proof}

\begin{proof}[Proof of Proposition \ref{trans-coinv-prop}]
Applying \eqref{coinv-main-formula}, we can rewrite 
$$C_{\ov{J}}(C_I(M))\simeq U(\fc_{\ov{\HH}_I}(\ov{J})/\ov{J})\ot^{\bbL}_{U(\fc_{\ov{\HH}_I}(\ov{J}))} U(\fc(I)/I)\ot^{\bbL}_{U(\fc(I))}M.$$
Now we can apply Lemma \ref{trans-bimod-lem} to rewrite $U(\ov{\HH}_I)$ and get
\begin{align*}
&C_{\ov{J}}(C_I(M))\simeq U(\fc_{\ov{\HH}_I}(\ov{J})/\ov{J})\ot^{\bbL}_{U(\fc_{\ov{\HH}_I}(\ov{J}))}U(\fc_{\ov{\HH}_I}(\ov{J}))\otimes_{U(\fc(J))}^{\bbL}U(\fc(I))\ot^{\bbL}_{U(\fc(I))}M\\
&\simeq U(\fc_{\ov{\HH}_I}(\ov{J})/\ov{J})\ot^{\bbL}_{U(\fc(J))}M\simeq U(\fc(J)/J)\ot^{\bbL}_{U(\fc(J))}M,
\end{align*}
which is isomorphic to $C_J(M)$.
\end{proof}

\subsection{An equivalence of categories}

Let $I\sub \HH$ be an isotropic subbundle, and let $\ov{\HH}=\ov{\HH}_I$ be the corresponding reduced Heisenberg Lie algebra.

\begin{prop}\label{cat-eq-prop} 
Let $U(\HH)-\mod^I$ denote the category of $U(\HH)$-modules on which $I$ acts locally nilpotently. Then 
the functors 
$$U(\HH)-\mod^I\to U(\ov{\HH})-\mod:M\mapsto M^I,  \text{ and}$$ 
$$U(\ov{\HH})-\mod\to U(\HH)-\mod^I:\ov{M}\mapsto \Ind_{\ov{\HH}}^{\HH}(\ov{M}):=U(\HH)\ot_{U(\fc(I))} \ov{M}$$ 
are mutually inverse equivalences of categories.
Under this equivalence, for an isotropic $J\sub \HH$, such that $I\sub J$, the module 
$M_{\HH}(J)\in U(\HH)-\mod^I$ corresponds to $M_{\ov{H}}(J/I)$. 
\end{prop}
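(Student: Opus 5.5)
The plan is to prove that the unit and counit of the adjunction between $\Ind_{\ov{\HH}}^{\HH}$ and $(-)^I$ are isomorphisms, working locally on $S$. First I check the adjunction itself. For a $U(\ov{\HH})$-module $\ov{M}$, regarded as a $U(\fc(I))$-module via $U(\fc(I))\to U(\ov{\HH})$, the image of $1\ot\ov{M}$ in $\Ind(\ov{M})$ is killed by $I$, because $I$ acts on $\ov{M}$ through $\fc(I)\to \ov{\HH}$ and $I$ maps to zero in $\ov{\HH}$. Likewise, $\Hom_{U(\HH)}(\Ind\ov{M},M)=\Hom_{U(\fc(I))}(\ov{M},M)$, and any $U(\fc(I))$-map from a module on which $I$ acts by zero lands in $M^I$. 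Hence $\Ind$ is left adjoint to $(-)^I$. Also, $I$ acts locally nilpotently on $\Ind(\ov{M})$, so the functor lands in $U(\HH)-\mod^I$. This last point follows from the local description below.

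Both unit and counit are local statements, so I split as in the proof of the first lemma: $\HH=\OO\oplus\VV$ with $I\sub\VV$ and $\VV=I\oplus I^\vee\oplus\VV'$, so that $\fc(I)=\OO\oplus I\oplus\VV'$. Then $U(\HH)\simeq S(I^\vee)\ot U(\fc(I))$ as a right $U(\fc(I))$-module, which gives
$$\Ind(\ov{M})\simeq S(I^\vee)\ot\ov{M}.$$
Elements of $I$ act on this as derivations (contractions) $\pa$ on $S(I^\vee)$, because $I$ kills $\ov{M}$ and $[x,y]=(x,y)$. This shows local nilpotence. It also shows the unit $\ov{M}\to\Ind(\ov{M})^I$ is an isomorphism, since the joint kernel of the contractions on $S(I^\vee)\ot\ov{M}$ is $S^0\ot\ov{M}$. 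Here one needs some care over a base superscheme and with odd parts of $I$: the odd directions give an exterior algebra, but the kernel of all contractions is still the constants. This can be checked on a local basis by a standard polarization/Euler-operator argument, using $\mathbb Q\sub\OO$.

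The main step is the counit $\Ind(M^I)\to M$ for $M$ in $U(\HH)-\mod^I$, i.e.\ the map $S(I^\vee)\ot M^I\to M$. I treat this as a Weyl-algebra statement for $U(I\oplus I^\vee)$: it is the Stone--von Neumann/Kashiwara-type fact that a $\WW(I\oplus I^\vee)$-module with locally nilpotent $I$-action is generated freely by its vacuum space. The $\VV'$ factor commutes with everything and rides along. Injectivity follows from the unit result: the kernel $K$ is a submodule of $\Ind(M^I)$, so $I$ acts locally nilpotently on $K$. If $K\neq0$, then $K^I\neq 0$, but $K^I\sub\Ind(M^I)^I=M^I$, which maps injectively to $M$; this is a contradiction. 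For surjectivity, I induct on the nilpotence degree. Let $M_k=\{m: I^{k+1}m=0\}$, meaning products of $k+1$ elements of $I$ kill $m$; these exhaust $M$ locally. Choose a local basis $x_i$ of $I$ with dual basis $y_i$ of $I^\vee$. The operator
$$P=\sum_{n}\frac{(-1)^n}{n!}\sum y_{i_1}\cdots y_{i_n}x_{i_n}\cdots x_{i_1}$$
(signs adjusted for parity) is well defined on $M$ by local nilpotence and projects $M$ onto $M^I$. I expect verifying that $m-Pm$ lies in the image of $I^\vee\cdot M$ with lower nilpotence degree to be the main obstacle, especially with the odd signs. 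Failing a direct computation, I would reduce by induction on $\operatorname{rk} I$ to a single generator $x$ (even or odd), using the transitivity of Proposition \ref{trans-coinv-prop}-style reduction $I=I_1\oplus I_2$. For one generator the decomposition $m=\sum_n y^n m_n$ with $m_n\in\ker x$ follows by the explicit formula above.

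Finally, for the statement about $M_\HH(J)$ with $I\sub J$: $M(J)$ is in $U(\HH)-\mod^I$, and by \eqref{M-J-red-isom} it equals $U(\HH)\ot_{U(\fc(I))}M_{\ov{\HH}}(J/I)=\Ind(M_{\ov{\HH}}(J/I))$, with no higher Tor since $U(\HH)$ is locally free over $U(\fc(I))$. So under the equivalence $M(J)$ corresponds to $M_{\ov{\HH}}(J/I)$, equivalently $M(J)^I\simeq M_{\ov{\HH}}(J/I)$.
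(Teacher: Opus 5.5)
Your proposal is correct and follows the paper's proof. The common steps are: the adjunction; the local splitting giving $\Ind(\ov{M})\simeq S(I^\vee)\ot\ov{M}$; the unit from $S(I^\vee)^I=\OO$; injectivity of the counit from $K^I=0$ together with local nilpotence; and the last claim from \eqref{M-J-red-isom}.

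The only difference is that you prove the Kashiwara-type surjectivity $M=S(I^\vee)\cdot M^I$ by hand where the paper cites it. Your rank-one fallback already settles it, so the multi-index sign bookkeeping is unnecessary:
\begin{itemize}
\item for even $x$, use the projector $\sum_n\frac{(-1)^n}{n!}y^nx^n$ with the degree-lowering induction;
\item for odd $x$, use $m=xy\,m+y\,(xm)$ with $x^2=0$;
\item then induct on the rank, since $x_i,y_i$ for $i\ge 2$ preserve $M^{x_1}$.
\end{itemize}
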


\begin{proof}
Since $M^I=\Hom_{U(\fc(I))}(U(\ov{\HH}),M)$, it is clear that these two functors are adjoint. We have to check that the natural maps
\begin{equation}\label{adj-unit-map}
\ov{M}\to \Ind_{\ov{\VV}}^{\VV}(\ov{M})^I,
\end{equation}
\begin{equation}\label{adj-counit-map}
\Ind_{\ov{\VV}}^{\VV}(M^I)\to M
\end{equation}
are isomorphisms. The question is local, so we can assume that $\HH=\OO_S\oplus\VV$, $I\sub \VV$, and there exists an isotropic subbundle $I'\sub \VV$ complementary to $I^\perp$.
Then we have a natural identification
$$\Ind_{\ov{\VV}}^{\VV}(\ov{M})\simeq S(I')\ot_{\OO} \ov{M},$$
induced by the embedding $S(I')\sub W(\VV)$. Since $I$ acts trivially on $\ov{M}$, the identification $S(I^{\vee})^I=\OO$ implies that \eqref{adj-unit-map}
is an isomorphism.

We claim that the map \eqref{adj-counit-map} is surjective (provided $I$ acts locally nilpotently on $M$). Indeed, this follows from the proof of the (super version of) Kashiwara
theorem about modules over the Weyl algebra supported on a closed subvariety.
Indeed, locally we can choose sections $(x_1,\ldots,x_n,y_1,\ldots,y_n)$, such that $x_1,\ldots,x_n$ is a basis of $I$, $(y_i,y_j)=0$ and $(y_i,x_j)=\de_{ij}$.
Then we can view $M$ as a module over the super Weyl algebra in $n$ variables (where some of the variables are even and some are odd), such that
$x_1,\ldots,x_n$ act locally nilpotently. By Kashiwara's theorem, $M=\C[y]\cdot M^I$, which implies the surjectivity we want.

Let $K\sub \Ind_{\ov{\VV}}^{\VV}(M^I)$ be the kernel of \eqref{adj-counit-map}. Then $K^I=0$ and $I$ acts locally nilpotently on $K$, hence $K=0$.
Thus, the map \eqref{adj-counit-map} is injective.

The last assertion follows immediately from the isomorphism \eqref{M-J-red-isom}.
\end{proof}

\begin{cor}\label{cat-eq-cor} 
For any $M\in U(\HH)-\mod^I$, the natural morphism
$$U(\HH)\ot_{U(\fc(I))} M^I\to M$$ 
is an isomorphism. For an isotropic $J\sub \HH$, containing $I$, one has
$$M(J)^I\simeq M_{\ov{\HH}_I}(J/I).$$
\end{cor}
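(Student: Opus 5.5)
The first assertion is just the counit statement of Proposition \ref{cat-eq-prop}. For $M\in U(\HH)-\mod^I$, the map $U(\HH)\ot_{U(\fc(I))} M^I\to M$ is exactly the counit \eqref{adj-counit-map}, namely $\Ind_{\ov{\HH}}^{\HH}(M^I)\to M$. Here $U(\fc(I))$ acts on $M^I$ through the quotient $U(\fc(I))\to U(\ov{\HH})$ of \eqref{U-red-isom}. That proposition shows the counit is an isomorphism, so we simply quote it.

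For the second assertion, we first check that $M(J)$ lies in $U(\HH)-\mod^I$, i.e.\ that $I$ acts locally nilpotently on it. This is local. Choose a splitting $\HH=\OO_S\oplus\VV$ with $J\sub\VV$ and a complement $J'$ to $J^\perp$ in $\VV$ with $J'$ isotropic. Then $M(J)\simeq S(J')\ot S(\ov{V})$, where $\ov V$ is a complement of $J$ in $J^\perp$. Elements of $I\sub J$ act on $S(J')$ by contraction derivations via the pairing, and they commute with $S(\ov V)$. Contraction lowers the polynomial degree in $J'$, so the action is locally nilpotent. (In the odd directions the relevant factors are exterior powers, and the same argument applies.)

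Next, Proposition \ref{cat-eq-prop} already states that $M_\HH(J)$ corresponds to $M_{\ov{\HH}}(J/I)$ under the equivalence. Concretely, \eqref{M-J-red-isom} gives $M(J)\simeq U(\HH)\ot_{U(\fc(I))}M_{\ov{\HH}}(J/I)=\Ind_{\ov{\HH}}^{\HH}(M_{\ov{\HH}}(J/I))$, where we use that there are no higher Tor's since $U(\HH)$ is locally free over $U(\fc(I))$. Taking $I$-invariants and applying the unit isomorphism \eqref{adj-unit-map} then yields $M(J)^I\simeq M_{\ov{\HH}_I}(J/I)$.

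The only step needing any care is naturality: the isomorphism should be the canonical one, sending the vacuum of $M_{\ov{\HH}}(J/I)$ to the vacuum of $M(J)$, which is $I$-invariant since $I\sub J$. This follows because the maps above are all induced by $1\mapsto 1$. We also note that the identification $\Ind(\ov M)^I\simeq \ov M$ proved locally glues, since the unit map is globally defined.
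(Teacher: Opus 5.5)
Your proposal is correct and follows the paper's route: the corollary is read off from Proposition \ref{cat-eq-prop}, with the first claim being the counit isomorphism \eqref{adj-counit-map}, and the second obtained by identifying $M(J)$ with $\Ind_{\ov{\HH}}^{\HH}(M_{\ov{\HH}}(J/I))$ via \eqref{M-J-red-isom} and then applying the unit isomorphism \eqref{adj-unit-map}. Your separate check that $I$ acts locally nilpotently on $M(J)$ is harmless but not needed, since the unit map is an isomorphism for every $U(\ov{\HH})$-module.
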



In the case when $I$ has purely odd rank, it automatically acts nilpotently on every $U(\HH)$-module, so we deduce the following result. 

\begin{cor}\label{odd-rk-cat-eq-cor}
Let $I\sub \HH$ be an isotropic subbundle of rank $(0|n)$, and let $\ov{\HH}=\ov{\HH}_I$. Then the functors
$M\mapsto M^I$ and $\ov{M}\mapsto U(\HH)\ot_{U(\fc(I))} \ov{M}$ 
are mutually inverse equivalences between the categories $U(\HH)-\mod$ and $U(\ov{\HH})-\mod$.
\end{cor}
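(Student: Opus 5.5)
The plan is to derive this from Proposition \ref{cat-eq-prop} by showing that the local nilpotence hypothesis is automatic when $I$ has rank $(0|n)$. Once that is done, $U(\HH)-\mod^I=U(\HH)-\mod$, and the two functors in the statement are the ones from Proposition \ref{cat-eq-prop}. Hence they are mutually inverse equivalences between $U(\HH)-\mod$ and $U(\ov{\HH})-\mod$.

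The key step is local nilpotence. Work locally on $S$ and choose a basis $\xi_1,\ldots,\xi_n$ of $I$. All $\xi_i$ are odd, and since $I$ is isotropic (equivalently, an abelian subalgebra of $\HH$), the super commutator gives $[\xi_i,\xi_j]=\xi_i\xi_j+\xi_j\xi_i=0$ in $U(\HH)$ for all $i,j$. In particular $\xi_i^2=0$ (from $2\xi_i^2=0$, or directly from the super Lie algebra axiom $[x,x]=0$ for odd $x$ in an abelian superalgebra). Therefore the image of the subalgebra generated by $I$ in $U(\HH)$ is a quotient of the exterior algebra $\bigwedge^\bullet(I)$, and every product of more than $n$ elements of $I$ vanishes. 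So for any module $M$ and any $m\in M$, we get $I^{n+1}\cdot m=0$, which means $I$ acts (locally) nilpotently on $M$.

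The only point needing care is $\xi^2=0$ for odd $\xi\in I$ when $2$ may not be invertible. If the base is over a field of characteristic zero (as the use of $\C$ in Kashiwara's theorem suggests), this is immediate from $2\xi^2=[\xi,\xi]=0$. Otherwise I would invoke the convention that odd elements of an abelian Lie superalgebra square to zero in the enveloping algebra. Beyond this there is no real obstacle, since the statement is a direct specialization of Proposition \ref{cat-eq-prop}.
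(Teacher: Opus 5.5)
Your proposal is correct and matches the paper's argument: the paper also deduces the corollary directly from Proposition~\ref{cat-eq-prop}, noting only that a purely odd isotropic $I$ automatically acts nilpotently on every $U(\HH)$-module, so that the category of $U(\HH)$-modules on which $I$ acts locally nilpotently is the category of all $U(\HH)$-modules. You additionally spell out why this holds, which the paper leaves implicit: odd sections of the abelian subalgebra $I$ satisfy $2\xi^2=[\xi,\xi]=0$, so $\xi^2=0$ since we are over $\mathbb{C}$, and hence locally $I^{n+1}=0$ in $U(\HH)$.
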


\subsection{Isotropic reduction I}\label{redI-sec}

We will deduce Theorem \ref{Ber-coinv-thm} from the following more general statement involving isotropic reduction.

\begin{theorem}\label{Ber-isotropic-thm}
Let $I\sub \HH$
be an isotropic subbundle of rank $n|m$, and let $\ov{\HH}=\ov{\HH}_I$ be the corresponding reduced Heisenberg algebra.
Then for a $U(\HH)$-module $M$ on which $I$ acts locally nilpotently
we get an isomorphism in the derived category of $U(\ov{\HH})$-modules,
$$C_I(M)\simeq M^I\ot_{\OO_S} \Ber(I)[n].$$
Furthermore, if $I\sub J$, where $J\sub \HH$ is an isotropic subbundle, then we get a canonical isomorphism
\begin{equation}\label{der-coinv-red-eq}
C_J(M)\simeq C_{\ov{J}}(M^I)\ot_{\OO_S} \Ber(I)[n],
\end{equation}
where $\ov{J}:=J/I\sub \ov{\HH}$.
\end{theorem}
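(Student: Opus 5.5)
The plan is to compute $C_I(M)$ through the equivalence of Proposition \ref{cat-eq-prop} and the Koszul complex $K(I,M)$ of \eqref{C-I-M-complex}. I then get the second statement from transitivity, Proposition \ref{trans-coinv-prop}.

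First I replace $M$ by $U(\HH)\ot_{U(\fc(I))} M^I$, which is legitimate by Corollary \ref{cat-eq-cor}. Work locally, with $\HH=\OO_S\oplus\VV$, $I\sub\VV$, and an isotropic $I'\sub\VV$ complementary to $I^\perp$, so that $I'\simeq I^\vee$ via the pairing. Then, as in the proof of Proposition \ref{cat-eq-prop}, $M\simeq S(I')\ot_\OO M^I$ as a module over $\WW(I\oplus I')\ot U(\ov\HH)$. Here $I$ acts on $S(I')$ by derivations (contraction with the pairing), and $\ov\HH$ acts on the second factor. The Koszul complex therefore becomes
$$K(I,M)\simeq \big({\bigwedge}^\bullet(I)\ot S(I^\vee)\big)\ot_\OO M^I,$$
where the differential on the first factor is the de Rham-type (dual Koszul) differential $x\ot f\mapsto x\cdot\pa_x f$. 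This is the Koszul complex of the polynomial $D$-module $S(I^\vee)$ with respect to the action of constant vector fields. In the super setting its cohomology is concentrated in a single degree: the even part contributes top-degree cohomology $\det(I_0)[n]$, and the odd part $S(I_1^\vee)$, being an exterior algebra paired with a divided/symmetric power of odd $I_1$, contributes $\det(I_1)^{-1}$ in degree $0$. The total cohomology is $\Ber(I)[n]$, sitting in degree $-n$, and it is spanned by the class of the top form $x_1\we\dots\we x_n\ot 1$ twisted by the odd part. This gives $C_I(M)\simeq M^I\ot\Ber(I)[n]$ locally. Tensoring with $M^I$ is harmless because the complex is locally free over $\OO$ and the differential acts only on the first factor.

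The main obstacle is making this isomorphism canonical and $U(\ov\HH)$-linear, independent of the choices of $I'$ and of the splitting, so that the local isomorphisms glue. For this I would build a global map. The natural inclusion $M^I\to M$ together with a canonical cocycle in $\bigwedge^\bullet(I)\ot\OO$ can fail in the odd direction, so instead I would construct the map at the level of $M^r(I)$. Using \eqref{coinv-main-formula}, $C_I(M)\simeq U(\ov\HH)\ot^{\bbL}_{U(\fc(I))}M$, and by the equivalence $M=U(\HH)\ot_{U(\fc(I))}M^I$. It then suffices to prove
$$U(\ov\HH)\ot^{\bbL}_{U(\fc(I))}U(\HH)\ot_{U(\fc(I))}U(\ov\HH)\simeq U(\ov\HH)\ot\Ber(I)[n]$$
as $U(\ov\HH)$-bimodules. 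The left-hand side reduces to $\Tor^{S(I)}_\bullet(\OO,S(I^\vee))$ for the dual action, which is the computation above. I expect the canonicity to follow because a change of $I'$ acts by automorphisms unipotent with respect to the degree filtration, which act trivially on the one-dimensional cohomology. The identification with $\Ber$ rather than merely a line bundle is checked by how $GL(I)$ acts on the extreme class.

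For \eqref{der-coinv-red-eq}, apply Proposition \ref{trans-coinv-prop} to get $C_J(M)\simeq C_{\ov J}(C_I(M))$, then substitute $C_I(M)\simeq M^I\ot\Ber(I)[n]$ as $U(\ov\HH)$-modules. Since $\Ber(I)$ is a line bundle on which $\ov\HH$ acts only through $\OO$, it factors out of $C_{\ov J}$, giving the claim.
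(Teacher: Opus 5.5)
Your proposal follows the paper's proof: reduce via Proposition \ref{cat-eq-prop} to $C_I(M)\simeq C_I(M(I))\ot^{\bbL}_{U(\ov{\HH})}M^I$, compute $C_I(M(I))\simeq U(\ov{\HH})\ot\Ber(I)[n]$ locally through the Koszul complex ${\bigwedge}^\bullet(I)\ot S(I^\vee)\ot U(\ov{\HH})$ attached to a complement $I'$ (Lemma \ref{C-I-M-I-lem}), and deduce \eqref{der-coinv-red-eq} from Proposition \ref{trans-coinv-prop}. The only difference is how you show independence of $I'$: the paper writes explicit homotopies, and your unipotence argument also works, but not because the cohomology is ``one-dimensional'' (it is a rank-one free $U(\ov{\HH})$-module); the correct reason is that the transition automorphism is the identity plus terms strictly lowering the weight $j-k$ on ${\bigwedge}^k(I)\ot S^j(I^\vee)$, and every weight other than $m-n$ gives an acyclic subcomplex.
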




\begin{lemma}\label{C-I-M-I-lem}
(i) In the situation of Theorem \ref{Ber-isotropic-thm}, locally there is a canonical isomorphism
in the homotopy category of complexes
$$K(I,M(I))\simeq {\bigwedge}^\bullet(I)\ot S(I^\vee)\ot U(\ov{\HH}),$$
(see \eqref{C-I-M-complex}), where the differential on the right is $d_I\ot \id$, with
$$d_I:{\bigwedge}^m(I)\ot S(I^\vee)\to {\bigwedge}^{m-1}(I)\ot S(I^\vee)$$
being the differential induced by the canonical map $I\to \Der_{\OO}(S(I^\vee),S(I^\vee))$.

\noindent
(ii) There is a canonical isomorphism
$$C_I(M(I))\simeq U(\ov{\HH})\ot_{\OO}\Ber(I)[n].$$
\end{lemma}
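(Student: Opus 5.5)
Locally I would split everything: $\HH=\OO_S\oplus\VV$ with $I\sub\VV$, and choose an isotropic complement $I'\sub\VV$ to $I^\perp$, paired with $I$ by the symplectic form. The form gives $I'\simeq I^\vee$, and $\VV=I\oplus I^\vee\oplus\ov{\VV}$ with $\ov{\VV}\simeq I^\perp/I$. Then $U(\HH)\simeq \WW(I\oplus I^\vee)\ot_\OO U(\ov{\HH})$. For part (i) the key observation is that
$$M(I)=U(\HH)/U(\HH)I\simeq S(I^\vee)\ot_\OO U(\ov{\HH}),$$
induced by $S(I^\vee)\ot\WW(\ov{\VV})\sub U(\HH)$; this is the PBW decomposition with the $I$-generators moved to the right. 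Under this identification, an element $x\in I$ acts on the first factor by the derivation of $S(I^\vee)$ given by contraction via the pairing. Indeed, $[x,y]=(x,y)$ for $y\in I^\vee$ and $x$ kills the vacuum, while $x$ commutes with $\WW(\ov{\VV})$. Hence the action of $I$ on $M(I)$ is $\Der$-action on $S(I^\vee)$ tensored with the identity, and $K(I,M(I))$ from \eqref{C-I-M-complex} is literally ${\bigwedge}^\bullet(I)\ot S(I^\vee)\ot U(\ov{\HH})$ with differential $d_I\ot\id$. The only things to verify are the signs in the super setting and the canonicity in the homotopy category, meaning independence of the choice of $I'$ up to homotopy. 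For the latter I would note that changing $I'$ changes the isomorphism by an automorphism that is filtered-unipotent for the degree filtration on $S(I^\vee)$, so it becomes the identity on cohomology.

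For (ii) I would compute the cohomology of the complex $({\bigwedge}^\bullet(I)\ot S(I^\vee),d_I)$. This is the de Rham-type (super) Koszul complex of $I$ acting on the polynomial algebra of $I^\vee$ by constant-coefficient derivations: a Koszul complex for the "Weyl pair", that is, a Spencer complex. Since it is a tensor product over a local basis, I would reduce to rank $1|0$ and rank $0|1$ by a Künneth argument.

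In rank $1|0$, the complex is $\pa:\OO[y]\to\OO[y]$ placed in degrees $-1,0$. This map is surjective over $\OO\supset\mathbb{Q}$, with kernel the constants $\OO\cdot x$ in degree $-1$.

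In rank $0|1$, $x$ is odd, $S(I^\vee)=\OO[\eta]$ is an exterior algebra, and ${\bigwedge}^\bullet(I)=S(x)$ is polynomial in infinitely many degrees. The complex is ${\ldots}\to x^k\ot\OO[\eta]\to\ldots$ with differential $x^k\ot f\mapsto k\,x^{k-1}\ot\pa_\eta f$ (up to sign), and its cohomology is $\OO\cdot\eta$ in degree $0$. So in total the cohomology is free of rank one, sitting in degree $-n$ and spanned by $x_1\cdots x_n\ot\eta_1\cdots\eta_m$.

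To identify this line canonically with $\Ber(I)$, I would check its transformation law under $GL(I)$. It lies in ${\bigwedge}^{n}(I_0)\ot{\bigwedge}^m(I_1^\vee)$, which is $\Ber(I)$, and for general (non-block-diagonal) base changes the cohomology class transforms by the Berezinian. This last check, that the generator transforms by $\Ber$ rather than merely by $\det(I_0)\det(I_1)^{-1}$ for block-diagonal changes, is where I expect the main obstacle, together with the sign bookkeeping. I would handle it either by working with unipotent off-diagonal changes and showing that they act trivially on cohomology, or by invoking the standard identification of the Koszul/Spencer cohomology with $\Ber$. Combining the two parts gives $C_I(M(I))\simeq U(\ov{\HH})\ot\Ber(I)[n]$.
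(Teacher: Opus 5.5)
Your overall route is the paper's. You choose a local isotropic complement $I'$ to $\fc(I)$ and use the identification $S(I^\vee)\ot U(\ov{\HH})\simeq M(I)$, under which $K(I,M(I))$ is literally $({\bigwedge}^\bullet(I)\ot S(I^\vee),d_I)\ot U(\ov{\HH})$. You then use that this Koszul complex has cohomology $\Ber(I)$ in degree $-n$. Your K\"unneth computation is fine; the paper simply cites Manin for this and for the $\Ber$ transformation law.

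\textbf{First gap: the ``filtered-unipotent'' inference.} The gap is in the independence of the choice of $I'$. The principle ``filtered-unipotent, hence the identity on cohomology'' is false in general: a chain automorphism $1+N$, with $N$ strictly lowering a filtration, can send a class $a$ to $a+b$ with $b$ a nonzero class of lower filtration. It does hold here, but only because of three facts you did not use:
\begin{enumerate}
\item $d_I\ot\id$ is bihomogeneous of bidegree $(-1,-1)$ for (wedge degree, $S(I^\vee)$-degree).
\item The change-of-choice map $g=\alpha_{I'}^{-1}\circ\alpha_{\wt{I'}}$ preserves wedge degree, and $g-1$ strictly lowers $S(I^\vee)$-degree.
\item The cohomology sits in the single bidegree $(n,m)$, spanned by $x_1\cdots x_n\ot\eta_1\cdots\eta_m$; this persists after tensoring with the locally free $U(\ov{\HH})$.
\end{enumerate}
Given these, for a cocycle $z$ the element $(g-1)z$ is a sum of cocycles in bidegrees $(n,k)$ with $k<m$, hence exact. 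So your canonicity argument in (i) depends on the computation you do in (ii).

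You should also compute $g$ rather than assert its form. Write $\wt{I'}$ as the graph of $\phi=(\phi_1,\phi_2)\colon I'\to \ov{\HH}\oplus I=\fc(I)$ and treat the two components separately, as the paper does. For $\phi_1$ one finds $g=\id\ot\exp(\alpha)$, where $\alpha\in I\ot I'$ is the Casimir element, acting by derivations on $S(I^\vee)$ and through $\phi_1$ on $U(\ov{\HH})$. For the self-dual $\phi_2$ one finds $g=\id\ot\exp(c)$, where $c\in S^2(I)$ is the corresponding element. Both are indeed unipotent in the required sense.

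\textbf{Second gap: cohomology versus homotopy.} Even once repaired, your argument gives independence only on cohomology, whereas (i) asserts canonicity in the homotopy category. The paper obtains this directly by writing explicit null-homotopies for $\id\ot(\exp(\alpha)-1)$ and $\id\ot(\exp(c)-1)$.

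To reach the same conclusion your way, add the following observation. Locally the complex is a bounded-above complex of free modules (over $\OO$, or as right $U(\ov{\HH})$-modules) whose cohomology is free and concentrated in one degree. Such a complex is homotopy equivalent to its cohomology, so its endomorphisms up to homotopy are detected on cohomology, and $g\simeq\id$ follows.

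With these two additions your argument proves (i). For (ii), the cohomology-level independence is all that is needed.
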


\begin{proof}
(i) Locally we can choose an isotropic subbundle $I'\sub \HH$, complementary to $\fc(I)$, so that the pairing induces an isomorphism $I'\simeq I^\vee$.
Then the embedding $S(I')\sub U(\HH)$ induces an isomorphism $S(I')\ot_\OO U(\fc(I))\simeq U(\HH)$, and hence by \eqref{M-I-main-isom}, an isomorphism
$$S(I^\vee)\ot_\OO U(\ov{\HH})\rTo{\sim} M(I),$$
which leads to an isomorphism of complexes
$${\bigwedge}^\bullet(I)\ot S(I^\vee)\ot U(\ov{\HH})\rTo{\a_{I'}} K(I,M(I)).$$

We claim that for a different choice of an isotropic complement to $\fc(I)$ this isomorphism will change to a homotopic one.
Indeed, let $\wt{I'}\sub \HH$ be such a complement. Then $\wt{I'}$ is the graph of a morphism $\phi:I'\to \fc(I)$.
Note that we have a canonical decomposition $\fc(I)=\ov{\HH}\oplus I$, where $\ov{\HH}$ is lifted as to $\fc(I)$ as $\fc(I')\cap \fc(I)=\fc(I+I')$.
Thus, we can write $\phi=(\phi_1,\phi_2)$, for $\phi_1:I'\to \ov{\HH}$, $\phi_2:I'\to I$. The condition that $\wt{I'}$ is isotropic means
that $\phi_1$ has isotropic image and $\phi_2$ is self-dual with respect to the natural duality between $I$ and $I'$.

It is enough to consider separately two cases: $\phi=(\phi_1,0)$ and $\phi=(0,\phi_2)$. Indeed, let $I''\sub I+I'$ denote the graph of $\phi_2$. Then $I+I''=I+I'$,
so the decomposition $\fc(I)=\ov{\HH}\oplus I$ associated with $I''$ is the same as the one associated with $I'$. Furthermore, $\wt{I'}$ is the graph of the map
$\phi'_1:I''\to \ov{\HH}$ given as the composition of the projection $I'\to I$ with $\phi_1$.

\noindent
{\bf Case $\phi=(\phi_1,0)$}. We view $U(\ov{\HH})$ as an $S(I')$-module via the isotropic embedding $I'\hra \fc(I)/I=\ov{\HH}$. 
Thus, $N:=S(I^\vee)\ot U(\ov{\HH})$ can be viewed as a module over $S(I)\ot S(I')$, where $S(I)$ acts on $S(I^\vee)$ and $S(I')$ acts on $U(\ov{\HH})$.
Let $\a\in I\ot I'$ denote the Casimir element corresponding to the duality between $I$ and $I'$. Then $\a$ acts on $N$ via the above $S(I)\ot S(I')$-module structure, and we have
$$\a_{\wt{I'}}-\a_{I'}=\a_{I'}\circ (\id\ot (\exp(\a)-1)):{\bigwedge}^\bullet(I)\ot N\to K(I,M(I)).$$
Note that the action of $\exp(\a)$ on $N$ is well defined since the action of $\a$ on $N$ is locally nilpotent.
 
Thus, it is enough to construct a homotopy $h_m:{\bigwedge}^m(I)\ot N\to {\bigwedge}^{m+1}(I)\ot N$ for $\id\ot (\exp(\a)-1)$.
Let us denote $\tau(\a)=(\exp(\a)-1)/\a=1+\a/2+\ldots$, and let $(x_i)$ and $(y_i)$ be the dual bases of $I$ and $I'$ (defined locally).   
Then we can define the required homotopy by
$$h_m(a\ot n)=\sum_i x_ia\ot ((1\ot y_i)\a_m)\cdot n,$$
where we view $1\ot y_i$ as an element of $S(I)\ot S(I')$ and 
$$\a_m=\begin{cases} \pm\tau(\a), & m \text{ is even},\\ 0, & m \text{ is odd}.\end{cases}$$

\noindent
{\bf Case $\phi=(0,\phi_2)$}. Since $\phi_2$ is self-dual, it corresponds to an element $c\in S^2(I)\sub S(I)$. Then
$$\a_{\wt{I'}}-\a_{I'}=\a_{I'}\circ (\id\ot (\exp(1\ot c)-1)).$$
We set
$$h_m(a\ot n)=\frac{1}{2}\sum_i x_ia\ot c(y_i)\cdot c_m\cdot n\pm \frac{1}{2}\sum_i c(y_i)a\ot x_i\cdot c_m\cdot n,$$
where $c_m=(\exp(c)-1)/c$ for even $m$ and $c_m=0$ for odd $m$. 

\noindent
(ii) This follows from (i) together with the fact that the complex ${\bigwedge}^\bullet(I)\ot S(I^\vee)$ has cohomology only in degree $-n$ and this
cohomology is isomorphic to $\Ber(I)$ (see \cite[ch.\ 3, Prop.\ 4.8]{Manin}). Note that the choice of an isomorphism in (i) is canonical up to homotopy,
hence, the induced isomorphism on cohomology is independent of choices.
\end{proof}

\medskip

\begin{proof}[Proof of Theorem \ref{Ber-isotropic-thm}]
Using Proposition \ref{cat-eq-prop} and isomorphism \eqref{M-I-main-isom}, we get 
$$M\simeq U(\HH)\ot_{U(\fc(I))} M^I\simeq M(I)\ot^{\bbL}_{U(\ov{\HH})} M^I.$$  
This easily leads to
$$C_I(M)\simeq C_I(M(I))\ot^{\bbL}_{U(\ov{\HH})} M^I.$$
It remains to use Lemma \ref{C-I-M-I-lem}.

The isomorphism \eqref{der-coinv-red-eq} is now derived using the isomorphism $C_J(M)\simeq C_{\ov{J}}(C_I(M))$ (see Proposition \ref{trans-coinv-prop}).
\end{proof}

\subsection{Isotropic reduction II}\label{redII-sec}

Let $I\sub \HH$ be an isotropic subbundle. 
In Sec.\ \ref{redI-sec} we considered isotropic subbundles containing $I$. Another case of reduction 
deals with isotropic subbundles $J\sub \HH$ such that $\fc(I)+J=\HH$
(or equivalently, $\pi(I)^\perp+\pi(J)=\VV$).

\begin{lemma}\label{module-redII-lem}
(i) Let $J\sub \HH$ be an isotropic subbundle such that $\fc(I)+J=\HH$.
Then locally there is a splitting $\HH=\OO_S\oplus \VV$, such that $I,J\sub \VV$, and 
there exists a decomposition $\VV=I\oplus I'\oplus \VV'$, where $I'$ is isotropic,
$\VV'$ is orthogonal to $I\oplus I'$, such that $J=I'\oplus J'$, with $J'\sub \VV'$ an isotropic subbundle in $\VV'$. 

\noindent
(ii) For $J$ as in (i), let $\ov{J}$ denote the image of the embedding $J\cap \fc(I)\to \fc(I)/I=\ov{\HH}$.
Then $\ov{J}$ is an isotropic subbundle in $\ov{\HH}$, and we have natural isomorphisms
$$\fc_{\HH}(I)\cap \fc_{\HH}(J)\rTo{\sim} \fc_{\HH}(J)/J, \ \  \fc_{\HH}(I)\cap \fc_{\HH}(J)\rTo{\sim}\fc_{\ov{\HH}}(\ov{J})/\ov{J}.$$
\end{lemma}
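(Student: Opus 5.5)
The plan is to prove (i) by a local linear-algebra construction, and then to read off (ii) from the normal form in (i). For (ii), I check that the two maps out of $\fc_{\HH}(I)\cap \fc_{\HH}(J)$ are surjective, and that they have the same kernel $J\cap\fc(I)$. Hence they identify the two targets canonically. This gives the isomorphism $\fc_{\HH}(J)/J\simeq \fc_{\ov{\HH}}(\ov{J})/\ov{J}$, which is what is used later. In the normal form the kernel $J\cap\fc(I)=J'$ is nonzero whenever $J'\neq 0$. So I read the displayed arrows as isomorphisms \emph{after} passing to the quotient of $\fc_{\HH}(I)\cap\fc_{\HH}(J)$ by this common kernel, i.e.\ as maps induced from that intersection. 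This differs from the literal wording of (ii), where the arrows start from the intersection itself.

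For (i), work locally and choose a splitting $\HH=\OO_S\oplus\VV$. Because $I$ and $J$ are abelian subalgebras transversal to $\OO_S$, the splitting can be chosen so that $I,J\sub\VV$. One does this by first choosing a lift on $\pi(I)+\pi(J)$ compatible with both, which is possible locally once $\pi(I)\cap\pi(J)$ is a subbundle. That holds here: $\fc(I)+J=\HH$ means $\pi(I)^\perp+\pi(J)=\VV$, and taking orthogonals gives $\pi(I)\cap\pi(J)^\perp=0$, in particular $\pi(I)\cap\pi(J)=0$.

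Next, the condition $\VV=I^\perp+J$ means that the pairing $J\to I^\vee$, $j\mapsto (j,\cdot)|_I$, is surjective. Locally choose a subbundle $I'\sub J$ mapping isomorphically onto $I^\vee$. It is isotropic, being inside $J$, and $I\oplus I'$ is a symplectic subbundle. Set $\VV'=(I\oplus I')^\perp$ and $J'=J\cap\VV'$. Every $j\in J$ can be corrected by an element of $I'$ to become orthogonal to $I$. It is automatically orthogonal to $I'$, since $J$ is isotropic. This gives $J=I'\oplus J'$, with $J'$ an isotropic subbundle of $\VV'$.

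For (ii), compute in the normal form. We have $I^\perp=I\oplus\VV'$, and
\[
J^\perp=I'\oplus J'^{\perp_{\VV'}}, \qquad J\cap\fc(I)=J',
\]
so $\ov J$ is the image of $J'$ in $\fc(I)/I=\OO_S\oplus\VV'$. It is isotropic because $J'$ is, and it is a subbundle. The same normal form gives the identifications
\[
\fc(I)\cap\fc(J)=\OO_S\oplus J'^{\perp_{\VV'}}, \qquad \fc(J)/J=\OO_S\oplus J'^{\perp_{\VV'}}/J', \qquad \fc_{\ov{\HH}}(\ov J)/\ov J=\OO_S\oplus J'^{\perp_{\VV'}}/J'.
\]
Under these, both natural maps become the quotient by $J'$, and they commute with the brackets.

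The main obstacle is naturality. I must check that the induced identification $\fc(J)/J\simeq\fc_{\ov{\HH}}(\ov J)/\ov J$ does not depend on the local choices of splitting and of $I'$, so that it glues. I would handle this by defining both maps intrinsically first: one is the inclusion $\fc(I)\cap\fc(J)\sub\fc(J)$ followed by the quotient map. The other sends $\fc(I)\cap\fc(J)$ into $\fc(I)/I$, where it lands in the centralizer of $\ov J$. Their kernels, $J\cap\fc(I)$ and $(I+(J\cap\fc(I)))\cap\fc(J)$, can also be described intrinsically. Then the local normal form is used only to verify surjectivity and that both kernels equal $J\cap\fc(I)$, which are local properties.
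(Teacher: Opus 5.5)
Your proof is correct and follows the paper's route. You choose $I'\sub J$ complementary to $\fc(I)$ (equivalently, mapping isomorphically onto $I^\vee$), split off the symplectic piece $I\oplus I'$ orthogonally so that $J=I'\oplus J'$, and read (ii) off this normal form. The paper instead decomposes $\HH=(I\oplus I')\oplus\ov{\HH}$ first and chooses the splitting of $\ov{\HH}$ last, which is only a cosmetic difference from your fixing $\HH=\OO_S\oplus\VV$ with $I,J\sub\VV$ at the outset.

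You are also right that, as literally written, the arrows in (ii) are surjections with common kernel $J\cap\fc(I)$, not isomorphisms. What the paper actually uses later is the induced identification $\fc_{\HH}(J)/J\simeq\fc_{\ov{\HH}}(\ov{J})/\ov{J}$, which is exactly what you establish.
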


\begin{proof}
(i) Since $J\to \HH/\fc(I)$ is a surjection, locally we can choose a subbundle $I'\sub J$ such that $\HH=I'\oplus \fc(I)$.
This gives an orthogonal decomposition
$$\HH=(I\oplus I')\oplus \ov{\HH},$$
where $\ov{\HH}=(\fc(I)\cap\fc(I'))$ is a Heisenberg extension of $\ov{\VV}=\pi(I)^\perp\cap \pi(I')^\perp$, 
and $I$ and $I'$ are dual to each other via the commutator pairing. 
Then $J\sub \fc(J)=I'\oplus \ov{\HH}$, so with respect to the above decomposition,
$$J=0\oplus I'\oplus J',$$
for some isotropic subbundle $J'\sub \ov{\HH}$.
Furthermore, $\fc(I)=I\oplus 0\oplus \ov{\HH}$, $\fc(I)/I\simeq \ov{\HH}$, and
$$J\cap \fc(I)=0\oplus 0\oplus J',$$
so $\ov{J}$ gets identified with $J'\sub \ov{\HH}$.
It remains to choose a splitting $\ov{\HH}=\OO_S\oplus \ov{\VV}$, so that $J'\sub \ov{\VV}$.
Then $I\oplus I'\oplus \ov{\VV}\sub\HH$ will be the required lifting of $\VV$.

\noindent
(ii) Note that $I\cap J=0$ (since it is contained in $\fc((\fc(I)+J))\cap J=\OO\cap J$). Hence, $\ov{J}$ fits into an exact sequence
$$0\to \ov{J}\to \ov{\HH}\to \HH/J\to 0,$$
which shows that $\ov{J}$ is a subbundle. It is isotropic since $J$ is isotropic.
The last assertion can be checked locally using (i). 
\end{proof}

\begin{prop}\label{redII-prop} 
Let $I,J\sub \HH$ be isotropic subbundles such that $\fc(I)+J=\HH$.
Let $\ov{J}=J\cap \fc(I)$ be the corresponding isotropic subbundle in $\ov{\HH}_I=\fc(I)/I$.

\noindent
(i) One has natural isomorphisms 
\begin{equation}\label{isotr-red-main-isom-1}
C_J(M(I))\simeq M^r(J)\otimes^{\bbL}_{U(\fc(I))} U(\ov{\HH}_I)\simeq M^r_{\ov{\HH}_I}(\ov{J}),
\end{equation}
\begin{equation}\label{isotr-red-main-isom-2}
C_I(M(J))\simeq U(\ov{\HH}_I)\ot^{\bbL}_{U(\fc(I))} M(J)\simeq M_{\ov{\HH}_I}(\ov{J}).
\end{equation}
compatible with the vacuum vectors on both sides.

\noindent
(ii) If, in addition, $\HH=J\oplus \fc(I)$ then $C_J(M)\simeq M^I$ for $M\in U(\HH)-\mod^I$, while
$$C_I(M(J))\simeq U(\ov{\HH}_I),$$
compatibly with the vacuum vectors.
In particular, if $L,L'\sub \HH$ are Lagrangians such that $\VV=L\oplus L'$ then there is a canonical identification
$$C_L(M(L'))\simeq \OO_S.$$
\end{prop}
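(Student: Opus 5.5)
The plan is to use the local normal form of Lemma \ref{module-redII-lem}(i) to fix the objects involved, and the base-change formula \eqref{coinv-main-formula} to compute.

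For \eqref{isotr-red-main-isom-2}, I start from $C_I(M(J))\simeq U(\ov{\HH}_I)\ot^{\bbL}_{U(\fc(I))}M(J)$, which is just \eqref{coinv-main-formula}. So the task is to understand $M(J)$ as a left $U(\fc(I))$-module. Locally write $\HH=(I\oplus I')\oplus\ov{\HH}$ and $J=I'\oplus J'$ as in Lemma \ref{module-redII-lem}(i). Since $\HH=I'\oplus\fc(I)$, PBW gives $U(\HH)\simeq U(\fc(I))\ot_\OO S(I')$, with $S(I')$ on the right. Because $J=I'\oplus J'$ with $J'\sub\fc(I)$, quotienting by $U(\HH)J$ gives
$$M(J)\simeq U(\fc(I))/U(\fc(I))J',$$
and $U(\fc(I))=S(I)\ot U(\ov{\HH})$ with $I$ central. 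Hence, as a $U(\fc(I))$-module, $M(J)\simeq S(I)\ot M_{\ov{\HH}}(J')$, where $I$ acts freely through $S(I)$. The derived tensor product with $U(\ov{\HH})=U(\fc(I))/I\cdot U(\fc(I))$ is computed by the Koszul resolution in the $I$-direction. Since $M(J)$ is free over $S(I)$, the higher Tor's vanish and we get $M_{\ov{\HH}}(J')=M_{\ov{\HH}_I}(\ov{J})$, because $\ov{J}$ is identified with $J'$.

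To make this canonical rather than only local, I would write down the map directly. There is a natural map $M(J)\to M_{\ov{\HH}_I}(\ov{J})$: it sends $U(\fc(I))$ to $U(\ov{\HH})$ and kills $J\cap\fc(I)$. It takes vacuum to vacuum, and is well defined because it comes from $U(\ov{\HH})\ot_{U(\fc(I))}U(\HH)/U(\HH)J$. The local computation then shows this map is an isomorphism, so the isomorphisms glue. The main subtlety is that $U(\ov{\HH})\ot_{U(\fc(I))}U(\HH)$ is not itself a quotient by $J$. One checks that right multiplication by $I'\sub J$ is absorbed using $\HH=I'\oplus\fc(I)$, and compatibility with vacua is immediate.

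For \eqref{isotr-red-main-isom-1}, I argue symmetrically with right modules. Here $C_J(M(I))=M^r(J)\ot^{\bbL}_{U(\HH)}M(I)$, and $M(I)\simeq U(\HH)\ot^{\bbL}_{U(\fc(I))}U(\ov{\HH})$ by \eqref{M-I-main-isom}, which gives the first isomorphism. For the second, $M^r(J)\simeq U(\fc(I))/J'U(\fc(I))$ as a right $U(\fc(I))$-module, via $U(\HH)=S(I')\ot U(\fc(I))$. This is free over $S(I)$, so the same argument applies, or one can apply \eqref{isotr-red-main-isom-2} to $\HH^{op}$.

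For (ii), the hypothesis $\HH=J\oplus\fc(I)$ means $J'=0$, so $J=I'$ and $\ov{J}=0$. Then \eqref{isotr-red-main-isom-2} gives $C_I(M(J))\simeq U(\ov{\HH}_I)$. For $C_J(M)$ with $M\in U(\HH)\text{-}\mod^I$, Corollary \ref{cat-eq-cor} gives $M\simeq U(\HH)\ot_{U(\fc(I))}M^I\simeq M(I)\ot^{\bbL}_{U(\ov{\HH})}M^I$. Therefore
$$C_J(M)\simeq C_J(M(I))\ot^{\bbL}_{U(\ov{\HH})}M^I\simeq M^r_{\ov{\HH}}(0)\ot_{U(\ov{\HH})}M^I=M^I.$$
For Lagrangians $L,L'$ with $\VV=L\oplus L'$, take $I=L$ and $J=L'$. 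Then $\fc(L)=\OO\oplus L$, $\ov{\HH}_L=\OO$, and $C_L(M(L'))\simeq\OO_S$. I expect the only real obstacle to be the canonicity and vacuum compatibility of the glued isomorphisms, which the explicit global map handles.
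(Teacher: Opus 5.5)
Your proposal is correct and is essentially the paper's proof. Both use the local normal form of Lemma \ref{module-redII-lem}(i), describe $M(J)$ (resp.\ $M^r(J)$) over $U(\fc(I))$ as $S(I)$ tensored with the reduced module so that no higher $\Tor$'s appear, and read (ii) off from (i); the paper writes out \eqref{isotr-red-main-isom-1} and calls \eqref{isotr-red-main-isom-2} similar, the reverse of your choice. The only real difference is the globalizing map. The paper uses $x\mapsto 1\ot x$ from $M^r_{\ov{\HH}_I}(\ov{J})$ into the coinvariants, which is visibly well defined. Your map $M(J)\to M_{\ov{\HH}_I}(\ov{J})$ is well defined because the natural global map $U(\fc(I))/U(\fc(I))\ov{J}\to M(J)$ is an isomorphism, and it is your local computation that establishes this. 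The phrase that the map ``comes from'' $U(\ov{\HH})\ot_{U(\fc(I))}M(J)$ is circular as a justification.
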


\begin{proof}
(i) The two lines \eqref{isotr-red-main-isom-1} and \eqref{isotr-red-main-isom-2} are proved similarly, so we will only
consider the first.
The first isomorphism in \eqref{isotr-red-main-isom-1} follows from Lemma \ref{module-red-lem}(i). 
We have a natural map of right $U(\ov{\HH}_I)$-modules
$$U(\ov{\HH}_I)\to M^r(J)\ot_{U(\fc(I))} U(\ov{\HH}_I): x\mapsto 1\ot x.$$
It is clear that it factors through $U(\ov{\HH}_I)/(J\cap \fc(I))\cdot U(\ov{\HH}_I)=M^r_{\ov{H}_I}(\ov{J})$.
To show that we get an isomorphism (and there are no higher $\Tor$'s) we can argue locally.

Let us choose a local 
decomposition $\HH=I\oplus I'\oplus \HH'$,
such that $J=I'\oplus J'$, where $J'\sub \HH'$ (see Lemma \ref{module-redII-lem}(i)).
Then we have an identification $\HH'\rTo{\sim} \ov{\HH}_I$ sending $J'$ to $\ov{J}$, and we have
$$M^r(J)\simeq M^r_{\HH'}(J')\ot_{\OO} S(I)$$
as a right module over $U(\fc(I))=U(\HH')\ot_{\OO} S(I)$. Thus, we get
\begin{align*}
&M^r(J)\ot_{U(\fc(I))}^{\bbL} U(\HH')\simeq (M^r_{\HH'}(J')\ot_{\OO} S(I))\ot_{U(\HH')\ot_{\OO} S(I)}^{\bbL} U(\HH')\\
&\simeq M^r_{\HH'}(J')\ot_\OO (S(I)\ot_{S(I)}^{\bbL} \OO)\simeq M^r_{\HH'}(J'),
\end{align*}
as required.

\noindent
(ii) This follows immediately from (i).
\end{proof}


\begin{proof}[Proof of Theorem \ref{Ber-coinv-thm}]
Let us apply the isomorphism \eqref{der-coinv-red-eq} to $M=M(J')$ and $I:=J\cap J'$.
By Lemma \ref{module-red-lem}(ii), we have $M(J')^I\simeq M_{\ov{\HH}_I}(J'/I)$. Hence, we get
$$C_J(M(J'))\simeq C_{J/I}(M_{\ov{\HH}_I}(J'/I))\ot_{\OO_S} \Ber(L\cap L')[n].$$
We have $(J/I)^\perp=J^\perp/I$ in $I^\perp/I$, so $(J/I)^\perp+J'/I=I^\perp/I$, by assumption.
Now we conclude by applying Proposition \ref{redII-prop} to the isotropic subbundles
$\ov{J}=J/I$ and $\ov{J}'=J'/I$ in $\fc(I)/I$. Note that the reduction $\fc(\ov{J})/\ov{J}$ is naturally identified with $\fc(J)/J$,
so that $\ov{J'}\cap \fc(\ov{J})$ is identified with $J'\cap \fc(J)/J'\cap J$.
\end{proof}

\begin{proof}[Proof of Theorem \ref{coinv-red-thm}]
The fact that $\ov{J}$ is an isotropic subbundle was proved in Lemma \ref{module-redII-lem}(ii).
Applying Lemma \ref{module-red-lem}(ii) and Corollary \ref{cat-eq-cor}, we get
\begin{align*}
&C_J(M)=M^r(J)\otimes^{\bbL}_{U(\HH)}M\simeq
M^r(J)\otimes^{\bbL}_{U(\HH)} (U(\HH)\ot^{\bbL}_{U(\fc(I))}M^I)\\
&\simeq M^r(J)\ot^{\bbL}_{U(\fc(I))}M^I.
\end{align*}
It is easy to see that this isomorphism is given by the natural map of complexes
$${\bigwedge}^\bullet(J)\ot U(\HH)\ot_{U(\fc(I))} M^I\to {\bigwedge}^\bullet(J)\ot M.$$

Next, using \eqref{isotr-red-main-isom-1}, we can rewrite this (with $\ov{\HH}=\ov{\HH}_I$) as
\begin{align*}
&M^r(J)\ot^{\bbL}_{U(\fc(I))}M^I\simeq (M^r(J)\ot^{\bbL}_{U(\fc(I))} U(\ov{\HH}))\ot^{\bbL}_{U(\ov{\HH})} M^I\\
&\simeq M^r_{\ov{\HH}}(\ov{J})\ot^{\bbL}_{U(\ov{\HH})} M^I=C_{\ov{J}}(M^I).
\end{align*}
This isomorphism is induced by the natural map of complexes
$${\bigwedge}^\bullet(J\cap \fc(I))\ot M^I\to {\bigwedge}^\bullet(J)\ot U(\HH)\ot_{U(\fc(I))} M^I$$
(which inserts $1\in U(\HH)$).
\end{proof}

\begin{cor}\label{IJJ'-cor}
Let $I\sub J'\sub \HH$ be isotropic subbundles, and let $J\sub \HH$ be an isotropic subbundle such that 
$\fc(I)+J=\HH$. Then for $\ov{J}=J\cap \fc(I)$ and $\ov{J}':=J'/I$ we have an isomorphism
$$C_{\ov{J}}(M_{\ov{\HH}_I}(\ov{J'}))\rTo{\sim} C_J(M(J')),$$
induced by the chain map of complexes
$${\bigwedge}^\bullet(J\cap I^\perp)\ot M_{\ov{\HH}_I}(\ov{J'})\to {\bigwedge}^\bullet(J)\ot M(J)$$
(where the natural embedding $M_{\ov{\HH}_I}(\ov{J'})\to M(J)$ sends $1$ to $1$).
\end{cor}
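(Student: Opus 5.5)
The corollary should follow directly from Theorem \ref{coinv-red-thm}, applied with $M:=M(J')$. Implicitly the target complex is ${\bigwedge}^\bullet(J)\ot M(J')$, and the embedding sends $M_{\ov{\HH}_I}(\ov{J'})$ into $M(J')$.

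The first step is to check the hypothesis of Theorem \ref{coinv-red-thm}, namely that $I$ acts locally nilpotently on $M(J')$. Since $I\sub J'$, this is contained in Corollary \ref{cat-eq-cor}: the module $M_\HH(J')$ lies in $U(\HH)-\mod^I$. One can also see it locally: choose a complement $I'\simeq I^\vee$ to $\fc(I)$, so that $M(J')\simeq S(I')\ot_\OO M_{\ov{\HH}}(J'/I)$, and $I$ acts on the factor $S(I^\vee)$ by derivations, which lower degree. Theorem \ref{coinv-red-thm} then gives an isomorphism
$$C_{\ov{J}}(M(J')^I)\rTo{\sim} C_J(M(J')).$$
It is induced by the chain map ${\bigwedge}^\bullet(J\cap\fc(I))\ot M(J')^I\to{\bigwedge}^\bullet(J)\ot M(J')$, which comes from the inclusions.

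The second step is to identify $M(J')^I$. By Corollary \ref{cat-eq-cor}, the isomorphism $M(J')^I\simeq M_{\ov{\HH}_I}(J'/I)$ holds as $U(\ov{\HH}_I)$-modules. I must check that this isomorphism is the one sending vacuum to vacuum, so that the embedding $M_{\ov{\HH}_I}(\ov{J'})\to M(J')$ is the map $1\mapsto 1$. The vacuum $1\in M(J')$ is killed by $J'\supset I$, so it lies in $M(J')^I$. The map $\ov{M}\mapsto \ov{M}^{\,\text{lift}}$ in the equivalence of Proposition \ref{cat-eq-prop} comes from \eqref{M-J-red-isom}, namely $M(J)\simeq U(\HH)\ot_{U(\fc(I))}M_{\ov{\HH}}(\ov{J})$, which sends $1\ot 1$ to $1$. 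Restricting the adjunction unit \eqref{adj-unit-map} therefore gives exactly $1\mapsto 1$. Substituting this identification into the chain map yields the claimed one, where $J\cap I^\perp$ means $J\cap\fc(I)$, identified with $\ov{J}$ by Lemma \ref{module-redII-lem}(ii).

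The only delicate point is this compatibility of the identification in Corollary \ref{cat-eq-cor} with the vacuum vectors and with the natural embedding. Everything else is a direct substitution into Theorem \ref{coinv-red-thm}. If needed, I would verify the compatibility locally in the splitting $\HH=I\oplus I'\oplus\ov{\HH}$, where the embedding is $\ov{m}\mapsto 1\ot\ov{m}\in S(I')\ot M_{\ov{\HH}}(J'/I)$.
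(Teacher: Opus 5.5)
Your proposal is correct and is the intended argument: the paper gives no separate proof, and the corollary is exactly Theorem \ref{coinv-red-thm} applied to $M=M(J')$ (with $I$ acting locally nilpotently since $I\sub J'$), combined with the identification $M(J')^I\simeq M_{\ov{\HH}_I}(J'/I)$ of Corollary \ref{cat-eq-cor}. You are also right to read the target complex as ${\bigwedge}^\bullet(J)\ot M(J')$ and $J\cap I^\perp$ as $J\cap\fc(I)$, and right that this identification sends the vacuum vector to the vacuum vector.
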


\subsection{The case of odd rank and the Pfaffian}\label{odd-rk-Pf-sec}

The following statement is a generalization of the standard fact about Clifford algebras.

\begin{prop}\label{Clifford-prop} 
Let $\VV$ be a symplectic vector super bundle of rank $(0|2m)$, and let $0\to \OO\to \HH\to \VV\to 0$ be an extension of
(so $\HH$ has a Heisenberg super Lie algebra structure). 
Then for any $U(\HH)$-module $M$ and any Lagrangian $L\sub \HH$, the natural map
$$M(L)\ot_{\OO} M^L\to M$$
is an isomorphism.
\end{prop}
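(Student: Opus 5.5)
The plan is to reduce the statement to the equivalence of categories in Corollary \ref{odd-rk-cat-eq-cor}, applied with $I=L$. Since $\VV$ has rank $(0|2m)$, a Lagrangian $L\sub\HH$ has rank $(0|m)$. It therefore acts nilpotently on every $U(\HH)$-module, so every $M$ lies in $U(\HH)-\mod^L$. Because $L$ is Lagrangian, $\pi(L)^\perp=\pi(L)$. Hence $\fc(L)=\OO_S\oplus L$ (after the identification via $\pi^{-1}$), the reduced Heisenberg algebra is $\ov{\HH}_L=\fc(L)/L\simeq\OO_S$, and $U(\ov{\HH}_L)=\OO_S$.

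With this in hand, Corollary \ref{cat-eq-cor} gives the first key step: the natural map
$$U(\HH)\ot_{U(\fc(L))}M^L\to M$$
is an isomorphism. Here $U(\fc(L))$ acts on $M^L$ through the quotient $U(\fc(L))\to U(\ov{\HH}_L)=\OO_S$, i.e. $L$ acts by zero and $1_\HH$ by $1$.

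The second step is to rewrite the left-hand side as $M(L)\ot_\OO M^L$. Since $U(\fc(L))$ acts on $M^L$ through $\OO_S$,
$$U(\HH)\ot_{U(\fc(L))}M^L\simeq \bigl(U(\HH)\ot_{U(\fc(L))}\OO_S\bigr)\ot_{\OO}M^L=\bigl(U(\HH)/U(\HH)\cdot L\bigr)\ot_\OO M^L=M(L)\ot_\OO M^L.$$
This is an instance of \eqref{M-I-main-isom} with $U(\ov{\HH})=\OO$. Tensor associativity applies because $M^L$ is viewed as an $\OO$-module with the induced $U(\fc(L))$-structure. No derived issues arise: $U(\HH)$ is locally free over $U(\fc(L))$, so the tensor product is underived, although only the underived statement is needed anyway.

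The last step is to check that this identification carries the composite map to the natural map $M(L)\ot M^L\to M$, $u\cdot 1\ot v\mapsto uv$. This is immediate, since both maps send $u\ot v$ to $uv$.

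The only point needing care is the identification $\ov{\HH}_L\simeq\OO_S$, which also makes the tensor product over $\OO$ meaningful. It follows from $\pi(L)^\perp/\pi(L)=0$ for Lagrangian $L$ together with the exact sequence $0\to\OO_S\to\ov{\HH}\to\pi(L)^\perp/\pi(L)\to0$. The substantive content, the Kashiwara-type argument, is already contained in Proposition \ref{cat-eq-prop}, so no genuine obstacle is expected. If one preferred a self-contained route, one would work locally with a basis $x_i$ of $L$ and a dual odd isotropic complement $y_i$, and check directly that $\bigwedge(y)\ot M^L\to M$ is bijective by induction on $m$, using that each $x_i$ is odd with $x_i^2=0$.
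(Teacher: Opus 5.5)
Your proof is correct, but it is organized differently from the paper's. You obtain the statement as the Lagrangian case $I=L$ of Corollary \ref{odd-rk-cat-eq-cor} (equivalently, Corollary \ref{cat-eq-cor}). The only new input you need is $\ov{\HH}_L=\OO_S$, which gives $U(\HH)\ot_{U(\fc(L))}M^L=M(L)\ot_\OO M^L$.

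The paper does not invoke the equivalence of categories. It reproves the needed facts directly in the Clifford setting, in two steps:
\begin{itemize}
\item For surjectivity, it reduces to $M=U(\HH)$ locally with $\VV=L\oplus L^*$. It computes $\WW(\VV)^L=S^m(L)\ot S(L^*)$, the top exterior power of $L$ times $S(L^*)$. This generates $\WW(\VV)$ as a left module because the action map $S(L^*)\ot S^m(L)\to S(L)$ is an isomorphism.
\item For injectivity, it takes the kernel $K$ and observes $K^L=0$, since $(M(L)\ot_\OO M^L)^L=M^L$. Surjectivity applied to $K$ then forces $K=0$.
\end{itemize}

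This is the same skeleton as the proof of Proposition \ref{cat-eq-prop}: surjectivity of the counit, then vanishing of the invariants of its kernel. The difference is that the paper replaces the appeal to a super Kashiwara theorem with an explicit finite-dimensional Clifford computation.

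The paper's route is therefore self-contained and does not depend on the super-Kashiwara step, which in Proposition \ref{cat-eq-prop} is only sketched. Your route is shorter and shows clearly that the proposition is just the Lagrangian case of the odd-rank equivalence, with reduced algebra $\OO$. The cost is that it inherits exactly the rigor of that earlier result. Your fallback (induction on $m$ showing $\bigwedge(y)\ot M^L\to M$ is bijective) is essentially the paper's direct approach.
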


\begin{proof} Let us first check surjectivity. It is enough to check surjectivity locally for $M=U(\HH)$.
Locally we can assume that $\HH=\OO\oplus \VV$ and $L\sub \VV$. Furthermore, we can assume that
$\VV=L\oplus L^*$, with the standard (super) symplectic structure. Then 
$$\WW(\VV)^L=S^m(L)\ot S(L^*)\sub \WW(\VV)$$
(these are really exterior powers since $L$ has odd rank).
We need to check that this subbundle generates $\WW(\VV)$ as a left module. 
But this follows from the fact that the left action map
$$S(L^*)\ot S^m(L)\to S(L)$$
is an isomorphism.

To check injectivity consider the exact sequence of $U(\HH)$-modules
$$0\to K\to M(L)\ot_{\OO} M^L\to M\to 0$$
Taking $L$-invariants, we get that $K^L=0$. Now surjectivity of $M(L)\ot_{\OO} K^L\to K$ implies that $K=0$.
\end{proof}

\begin{cor}\label{Clifford-cor} 
(i) Let $\VV$ be a symplectic vector super bundle of rank $(0|2m)$, $\HH\to \VV$ a Heisenberg extension.
Then for any $U(\HH)$-module $M$ and any Lagrangian $L\sub \HH$, one has
$$C_L(M)\simeq M_L\simeq M^L\ot_{\OO}\Ber(L).$$

\noindent
(ii) Let $\HH\to \VV$ be as in (i), and let $\VV=L_1\oplus L_2$ be a Lagrangian splitting, where $L_1$ and $L_2$
are equipped with liftings to $\HH$.
Then for any $U(\HH)$-module $M$, we have a natural isomorphism
$$M^{L_1}\simeq C_{L_2}(M),$$
induced by the embedding $M^{L_1}\to M$ (in particular, there are no higher derived coinvariants).
\end{cor}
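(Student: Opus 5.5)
Both parts follow from results already established: Proposition \ref{Clifford-prop}, Theorem \ref{Ber-isotropic-thm} and Corollary \ref{odd-rk-cat-eq-cor}. Throughout, the point is that in the purely odd case every isotropic subbundle $I$ acts nilpotently, so the local nilpotency hypotheses in the earlier results hold automatically.

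\emph{Part (i).} Since $L$ has rank $(0|m)$ and $M$ is arbitrary, Theorem \ref{Ber-isotropic-thm} applies with $I=L$ and $n=0$. Because $L$ is Lagrangian, $\ov{\HH}_L=\OO_S$. The theorem then gives
$$C_L(M)\simeq M^L\ot_{\OO_S}\Ber(L),$$
with no shift, and this complex is concentrated in degree $0$. The degree $0$ cohomology of the complex $K(L,M)$ is by definition $M_L=M/L\cdot M$, so $C_L(M)\simeq M_L$.

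For a more hands-on check, apply $C_L$ to the isomorphism $M(L)\ot_\OO M^L\simeq M$ of Proposition \ref{Clifford-prop}. This gives $C_L(M)\simeq C_L(M(L))\ot_\OO M^L$, using that $M^L$ is a module with trivial $\HH$-action apart from the $\OO$-structure. Lemma \ref{C-I-M-I-lem}(ii) gives $C_L(M(L))\simeq\Ber(L)$ in degree $0$, which yields the same conclusion.

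\emph{Part (ii).} Here $\fc(L_1)=L_1\oplus\OO$, so $\HH=L_2\oplus\fc(L_1)$. Every module lies in $U(\HH)\text{-}\mod^{L_1}$ by oddness, so Proposition \ref{redII-prop}(ii) applies with $I=L_1$ and $J=L_2$. It gives $C_{L_2}(M)\simeq M^{L_1}$.

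It remains to identify this isomorphism with the one induced by the embedding $M^{L_1}\to M$. By Theorem \ref{coinv-red-thm} with $I=L_1$ and $J=L_2$, we have $\ov J=L_2\cap\fc(L_1)=0$, and $C_{\ov J}(M^{L_1})=M^{L_1}$. The isomorphism is given by the chain map $M^{L_1}\to K(L_2,M)$ induced by the inclusion, which is exactly the statement. In particular, there are no higher derived coinvariants.

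The only step requiring care is checking the hypotheses. First, $\fc(L_1)+L_2=\HH$, which holds because $\pi(L_1)^\perp=\pi(L_1)$ and $\pi(L_1)+\pi(L_2)=\VV$. Second, the action is locally nilpotent: each odd $x\in L_1$ satisfies $x^2=\tfrac12[x,x]=0$ by isotropy, so products of more than $m$ elements of $L_1$ vanish.
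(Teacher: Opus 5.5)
Your proof is correct. In part (i) your ``hands-on check'' is exactly the paper's argument. Proposition~\ref{Clifford-prop} reduces to the Fock module $M(L)$, and then $C_L(M(L))\simeq\Ber(L)$; the paper cites Theorem~\ref{Ber-coinv-thm} with $J=J'=L$, which here amounts to Lemma~\ref{C-I-M-I-lem}(ii). Your primary route through Theorem~\ref{Ber-isotropic-thm} uses the same mechanism, since that theorem is proved via the decomposition of Corollary~\ref{cat-eq-cor}, whose odd Lagrangian case is Proposition~\ref{Clifford-prop}.

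In part (ii) the routes genuinely differ. The paper applies Proposition~\ref{redII-prop}(ii) only to the Fock module, obtaining $C_{L_2}(M(L_1))\simeq\OO$ compatibly with vacuum vectors. It then transports this to arbitrary $M$ through the decomposition $M\simeq M(L_1)\ot_{\OO}M^{L_1}$ from part (i), and reads off from the vacuum compatibility that the isomorphism is induced by $M^{L_1}\hra M$. You instead apply Theorem~\ref{coinv-red-thm} directly to arbitrary $M$, with $\ov{J}=L_2\cap\fc(L_1)=0$ and local nilpotency automatic. This makes (ii) independent of (i), and the description of the map via the inclusion comes as part of the cited statement. The price is that you rely on the general Kashiwara-type equivalence of Proposition~\ref{cat-eq-prop}, used through Corollary~\ref{cat-eq-cor} in the proof of Theorem~\ref{coinv-red-thm}. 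The paper stays within the Clifford setting, where Proposition~\ref{Clifford-prop} is proved by a direct elementary computation.
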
 

\begin{proof}
(i) This follows from Proposition \ref{Clifford-prop} together with Theorem \ref{Ber-coinv-thm}.

\noindent
(ii) Since $L_1$ and $L_2$ are complementary in $\VV$, by Proposition \ref{redII-prop}(ii), we have $C_{L_2}(M(L_1))\simeq \OO$. Hence, using (i) we get 
$$M^{L_1}\simeq C_{L_2}(M(L_1)\ot_{\OO} M^{L_1})\simeq C_{L_2}(M).$$
\end{proof}

\begin{example} Assume we are over the point, $V$ is a super symplectic space of odd rank,
and let $L_1$ and $L_2$ be Lagrangians. If $L_1\cap L_2\neq 0$ then the vacuum vector $1\in M(L_1)$ becomes zero
in $C_{L_2}M(L_1)$. Indeed, we can assume that $V=I_1\oplus I_1^*\oplus I_2\oplus I_2^*$, where $L_1=I_1\oplus I_2$, and
$L_2=I_1\oplus I_2^*$. Then the question reduces to the case $L_2=L_1\neq 0$. Then the vacuum vector is in the image of $L_1\ot L_1^*\cdot 1\sub L_1\ot M(L_1)$. 
\end{example}

Corollary \ref{Clifford-cor}(ii) suggests the following definition, which is a generalization of the Pfaffian.

\begin{definition}\label{Pf-def} 
Let $L$ be a vector bundle of rank $(0|m)$ over a superscheme $S$, 
$\phi:L\to L^\vee$ a super symmetric $\OO_S$-linear map, $\la:L\to \OO$ an $\OO$-linear map.
Let us consider the Fock module $M(L)\simeq S(L^\vee)$ over $\WW(L\oplus L^\vee)$ (where the symplectic form on $L\oplus L^\vee$ is such that $(l,l^*)=\lan l,l^*\ran$), 
and the Lagrangian
$$L(\phi,\la):=\{(l,-\phi(l),-\la(l)) \ |\ l\in L\}\sub \WW(L\oplus L^\vee)_{\le 1}.$$
Then by Corollary \ref{Clifford-cor}(ii), we have a natural isomorphism
$$S^m(L^\vee)\simeq M(L)^{L^\vee}\simeq C_{L(\phi,\la)}M(L).$$
We define $\Pf(\phi,\la)\in S^m(L^\vee)$ as the element corresponding to the image of the vacuum vector of $M(L)$ under this isomorphism.
\end{definition}

Note that we can apply this definition also to locally free modules over sheaves of (super) commutative $\OO_S$-algebras.

For a quasicoherent sheaf $\FF$ over a superscheme $S$, equipped with an increasing filtration $\FF_{\le i}$, we set
\begin{equation}\label{N-Rees-eq}
R(\FF_{\le \bullet},\NN)_{\le m}:=\FF_{\le 0}+\NN\FF_{\le 1}+\NN^2\FF_{\le 2}+\ldots+\NN^m\FF_{\le m}\sub \FF,
\end{equation}
where $\NN\sub \OO_S$ is the ideal generated by all odd functions. We also set
$$R(\FF_{\le \bullet},\NN)=\cup_m R(\FF_{\le \bullet},\NN)_{\le m}\sub \FF.$$
Note that if $\FF$ has an $\OO$-algebra structure,
and $\FF_{\le \bullet}$ is compatible with it, then $R(\FF_{\le\bullet})$ is an $\OO$-subalgebra of $\FF$.

\begin{prop}\label{gen-Pf-prop}
(i) $\Pf(\phi,0)=\Pf(\phi)$, where $\Pf(\phi)$ is the usual Pfaffian;

\noindent
(ii) $\Pf(0,\la)=S^m(\la)\in \Hom_{\OO}(S^m(L),\OO)\simeq S^m(L^\vee)$.

\noindent
(iii) Let $\AA$ be a commutative $\OO_S$-algebra equipped with an algebra filtration $(\AA_{\le \bullet})$,
$L$ a vector bundle over $S$ of rank $(0|m)$, $\phi:L\to \AA_{\le 0}\ot_\OO L^\vee$ a (super-)symmetric map, $\la:L\to \NN\AA_{\le 1}$ an $\OO$-linear map.
Then, viewing $\phi$ and $\la$ as $\AA$-linear structures on $\AA\ot_{\AA} L$, we get
$$\Pf(\phi,\la)\in R(\AA_{\le \bullet},\NN)_{\le m}\ot S^m(L^\vee)\sub \AA\ot S^m(L^\vee)$$
and $\Pf(\phi,\la)\equiv \Pf(\phi) \mod \NN\AA\ot S^m(L^\vee)$.

\noindent
(iv) Assume $L$ has an odd basis $e_1,\ldots,e_m$, and let $(e^*_i)$ be the dual basis (with $\lan e_i,e_j^*\ran=\de_{ij}$), 
so we can write $\phi(e_j)=\sum_{ij}\phi_{ij}e_i^*$ for a skew-symmetric matrix of even functions $(\phi_{ij})$, and let 
$\la(e_i)=\la_i$, for odd functions $\la_i$. Then one has
\begin{equation}\label{Pf-gen-for}
\Pf(\phi,\la)=\sum_{I\sub [1,n]}(-1)^{w(I)+(m-|I|)/2}\la_{I}\Pf(\phi^I)\cdot e_m^*\ldots e_2^*e_1^*,
\end{equation}
where $\phi^I$ is the submatrix of $\phi$ with the rows/columns in $I$ deleted, and for $I=\{i_1<\ldots<i_k\}$, we set
$w(I)=\sum_{j=1}^k (i_j-j)$, $\la_I=\la_{i_1}\ldots \la_{i_k}$.
\end{prop}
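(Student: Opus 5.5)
The approach is to make the definition completely explicit. Let $e_i$ be a local odd basis of $L$ and $e_i^*$ the dual basis of $L^\vee$. Then $M(L)\simeq S(L^\vee)$ is the exterior algebra on the $e_i^*$. In it $e_i^*$ acts by multiplication and $e_i$ acts by the odd derivation $\pa_i$. The Lagrangian $L(\phi,\la)$ is spanned by the elements
$$g_i=e_i-\phi(e_i)-\la_i,$$
and $g_i$ acts as $\pa_i-\sum_j\phi_{ji}e_j^*-\la_i$ (up to sign conventions). By Corollary \ref{Clifford-cor}(ii), the map $S^m(L^\vee)=M(L)^{L^\vee}\to C_{L(\phi,\la)}M(L)=M(L)/\sum_i g_iM(L)$ is an isomorphism. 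So $\Pf(\phi,\la)$ is characterized as the unique top-degree element $P$ with
$$1\equiv P \mod \textstyle\sum_i g_i M(L).$$
The plan is to construct $P$ by a direct "normal ordering" computation. The group $\exp$ of the even quadratic element $q=\frac12\sum\phi_{ij}e_i^*e_j^*$ conjugates $\pa_i$ into $\pa_i-\sum_j\phi_{ji}e_j^*$. Adding the odd linear term $\ell=\sum\la_ie_i^*$ similarly absorbs $-\la_i$. Thus, after the twist by $u:=\exp(q+\ell)$ (an even element of $S(L^\vee)\otimes\OO$, since $\ell$ is a product of odd by odd), we get $g_i=u\,\pa_i\,u^{-1}$ up to signs. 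Hence $g_iM=u\cdot\pa_iM$, and the coinvariants of $M(L)$ under the $g_i$ are identified, via multiplication by $u^{-1}$, with the coinvariants under the $\pa_i$. These are $S(L^\vee)/\mathrm{im}(\pa)$, and projection to the top degree identifies that quotient with $S^m(L^\vee)$. Concretely, $1\equiv$ (top-degree component of $u$) $\cdot$ (a normalization), modulo $\sum g_iM$. One checks this by noting that $\sum g_iM=u\cdot(\text{span of non-top-degree-complement})$, i.e. $\mathrm{im}(\sum\pa_i)$ equals everything of degree $<m$. The conclusion is
$$\Pf(\phi,\la)=[\exp(q+\ell)]_{\deg m}.$$

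From this formula the four parts follow by expansion. For (i), with $\la=0$, $[\exp q]_m$ is the classical Pfaffian times $e_m^*\cdots e_1^*$, by the standard definition. For (ii), with $\phi=0$, $[\exp\ell]_m=\ell^m/m!$. Since the $\la_i e_i^*$ are even and commute, this is $\prod_i\la_ie_i^*=S^m(\la)$. For (iv), $\exp(q+\ell)=\exp(q)\exp(\ell)$, and $\exp(\ell)=\prod_i(1+\la_ie_i^*)$. The degree-$m$ part is $\sum_I\la_I e_I^*\cdot[\exp q]$ restricted to the complementary indices, and the latter is $\Pf(\phi^I)$ times the complementary monomial. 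Reordering odd symbols into $e_m^*\cdots e_1^*$ produces the sign $(-1)^{w(I)}$, and the remaining sign $(-1)^{(m-|I|)/2}$ comes from the convention relating $q^{k}/k!$ to the reversed monomial order. For (iii) we work over $\AA$. The entries of $\phi$ lie in $\AA_{\le0}$, and each $\la_i$ lies in $\NN\AA_{\le1}$. The product $\la_I$ therefore lies in $\NN^{|I|}\AA_{\le |I|}$, since the filtration is multiplicative, and $\Pf(\phi^I)\in\AA_{\le0}$. Hence each term lies in $R(\AA_{\le\bullet},\NN)_{\le m}$, and all terms with $I\ne\emptyset$ lie in $\NN\AA$, which gives the congruence with $\Pf(\phi)$.

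The main obstacle is the first step: rigorously justifying that conjugation by $u$ transforms $\pa_i$ into the generators $g_i$ with correct super signs (including that the $\la_i$ are odd functions multiplying odd $e_i^*$), and that the normalization identifying coinvariants with $S^m(L^\vee)$ through Corollary \ref{Clifford-cor}(ii) is precisely the top-degree projection rather than some scalar multiple of it. I would verify this first in the rank $(0|2)$ case against (i), and only then fix all signs globally. The sign bookkeeping in (iv) I would defer to a careful induction on $|I|$.
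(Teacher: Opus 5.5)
Your argument is correct, and it takes a genuinely different route from the paper's.

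\textbf{The paper's route.} The paper proves (iv) by induction on $m$. Acting with $(e_1,-\phi(e_1),-\la_1)$ on $e_1^*$ gives $1\equiv\la_1e_1^*+\sum_{i>1}\phi_{i1}e_i^*e_1^*$ modulo $L(\phi,\la)M(L)$. The two kinds of terms are then reduced to the smaller data $(\phi^{\{1\}},\la^{\{1\}})$ and $(\phi^{\{1,i\}},\la^{\{1,i\}})$ by right multiplication with $e_1^*$, resp.\ $e_i^*e_1^*$. The answer is reassembled using the expansion of a Pfaffian along its first row. Parts (i) and (ii) are read off from (iv), and (iii) comes from functoriality in the coefficient ring together with (i).

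\textbf{Your route and what it buys.} You gauge the Lagrangian away and obtain the closed, basis-free Gaussian (Berezin-integral) formula $\Pf(\phi,\la)=[\exp(q+\ell)]_m$; everything then follows by direct expansion.
\begin{itemize}
\item No induction is needed, and no Pfaffian minor expansion, whose signs the paper leaves to the reader.
\item You reprove the needed case of Corollary \ref{Clifford-cor}(ii) on your own: $M=M_{<m}\oplus M_m$ and $u^{-1}$ is the identity on $M_m$, so $M=u^{-1}M_{<m}\oplus M_m$. Hence the argument runs verbatim over any supercommutative $\mathbb{Q}$-algebra $\AA$.
\item Part (iii) is immediate even without a basis, since $q\in\AA_{\le 0}\ot S^2(L^\vee)$ and $\ell\in\NN\AA_{\le 1}\ot L^\vee$.
\item The factorization $\Pf(\phi,\la)=\Pf(\phi_W,\la|_W)\Pf(\phi_{W^\perp},\la|_{W^\perp})$, used without comment in the proof of Theorem \ref{Pf-thm}(ii), becomes obvious.
\end{itemize}
The paper's induction, in exchange, needs no ansatz and stays inside the coinvariant calculus.

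\textbf{Sign corrections.} There are two sign points to fix when you write this up.

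First, the direction of the conjugation. With $q=\frac12\sum_{i,j}\phi_{ij}e_i^*e_j^*$ and $\ell=\sum_i\la_ie_i^*$, one has $\pa_iq=-\phi(e_i)$ and $\pa_i\ell=-\la_i$ (the odd derivation passes the odd $\la_i$). So the exact identity is $g_i=u^{-1}\circ\pa_i\circ u$, not $u\pa_iu^{-1}$. Hence $\sum_i g_iM=u^{-1}M_{<m}$ and $P=[u]_m$. With the direction you wrote you would land on $[u^{-1}]_m$, which is off by $(-1)^{m/2}$ in (i). For $m=2$ it gives $(a+\la_1\la_2)e_2^*e_1^*$ instead of the correct $(-a+\la_1\la_2)e_2^*e_1^*$. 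Your final formula $[\exp(q+\ell)]_m$ is the right one.

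Second, in (i), $[\exp q]_m=\Pf(\phi)\,e_1^*\cdots e_m^*=(-1)^{m/2}\Pf(\phi)\,e_m^*\cdots e_1^*$, not $\Pf(\phi)\,e_m^*\cdots e_1^*$. The paper's $\Pf(\phi)$ means the former, as its remark $(-1)^{\binom m2+m/2}=1$ shows.

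With these fixes, (iv) closes exactly as you outline:
\begin{itemize}
\item $\prod_{i\in I}\la_ie_i^*=\la_I\,e_{i_k}^*\cdots e_{i_1}^*$.
\item The surviving part of $[\exp q]_{m-|I|}$ is $\Pf(\phi^I)$ times the increasing monomial in $I^c$. This is $(-1)^{(m-|I|)/2}$ times the decreasing one, since $\binom{2j}{2}\equiv j \pmod 2$.
\item Moving $\la_I$ produces no sign, because $|I^c|$ is even whenever $\Pf(\phi^I)\neq 0$.
\item Merging the two decreasing words into $e_m^*\cdots e_1^*$ costs exactly $(-1)^{w(I)}$.
\end{itemize}
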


\begin{proof}
Parts (i) and (ii) follow from (iv). Note that in (i) the sign that appears for $m$ is even is $(-1)^{{m\choose 2}+m/2}=1$.
Part (iii) follows from the functoriality of the construction with respect to homomorphisms between $\OO$-algebras and from (i), since
our data $(\phi,\la)$ are defined as $R(\AA_{\le \bullet})$-linear objects, and since under the reduction homomorphism
$R(\AA_{\le \bullet},\NN)\to \AA/\NN\AA$, $\la$ maps to $0$.
The proof of (iv) is given in appendix \ref{Pf-app}.
\end{proof}

\begin{example}
For $m=2$, we have 
$$\Pf\bigl(\left[\begin{matrix} 0 & a \\ -a & 0\end{matrix}\right],[\la_1,\la_2]\bigr)=(-a+\la_1\la_2)\cdot e_2^*e_1^*.$$
For $m=3$, we have
$$\Pf\bigl(\left[\begin{matrix} 0 & a_{12} & a_{13} \\ -a_{12} & 0 & a_{23}\\ -a_{13} & -a_{23} & 0\end{matrix}\right],[\la_1,\la_2,\la_3]\bigr)=
(-\la_1a_{23}+\la_2a_{13}+\la_3a_{12}-\la_1\la_2\la_3)\cdot e_3^*e_2^*e_1^*.$$
\end{example}

\subsection{Purely odd intersection}\label{odd-intersection-sec}

In the case when $\VV$ is purely odd, and $S$ is purely even, the usual sheaf of coinvariants of $L$ in $M(L')$
(for arbitrary Lagrangian subbundles $L$ and $L'$) is a line bundle, dual to the Pfaffian line bundles considered in \cite[Sec.\ 4]{BD}.
Part (i) of the following theorem is a generalization of this result. 

Let $\HH\rTo{\pi} \VV$ be a Heisenberg extension, where $\VV$ is a symplectic super vector bundle over a superscheme $S$.

\begin{theorem}\label{Pf-thm}
(i) Let $J\sub \VV$ be an isotropic subbundle, $L\sub \VV$ a Lagrangian subbundle, equipped with lifts to $\HH$. 
Assume that for all $s\in S$, one has 
$J|_s^+\cap L|_s^+=0$ in $\VV|_s^+$. Set $\ov{\HH}_J:=\fc(J)/J$.
Then $C_J(M(L))$ is concentrated in degree $0$ and locally near any $s_0\in S$ there exists an isomorphism
of $U(\ov{\HH}_J)$-modules
\begin{equation}\label{local-Fock-isom}
C_J(M(L))\simeq M_{\ov{\HH}_J}(\ov{L})
\end{equation}
for some Lagrangian subbundle $\ov{L}\sub \ov{\HH}_J$,
such that $\ov{L}|_s^+$ is the image of 
$L|_s^+\cap \fc(J)|_s^+$ in $\fc(J)/J|_s$.

\noindent
(ii) Assume in addition that $\fc(J)/J$ is purely even, and
let $v(J,L)\in C_J(M(L))$ denote the projection of the vacuum vector $1\in M(L)$.
Suppose also that for some $s_0\in S$, one has
$$\dim \pi(J)_{s_0}^-\cap \pi(L)_{s_0}^-=m.$$
Then near $s_0$, there exists an isomorphism $C_J(M(L))\simeq \ov{M}:=M_{\ov{\HH}_J}(\ov{L})$ as in (i), such that
$$v(J,L)=u\cdot \ov{v},$$
for an invertible section $u\in R(U(\ov{\HH}_J)_{\le \bullet},\NN)$ and for some 
$$\ov{v}\in R(\ov{M}_{\le \bullet},\NN)_{\le m},$$
vanishing at $s_0$.
\end{theorem}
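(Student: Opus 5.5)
Both parts are local near a point $s_0\in S$, so I will work in a neighborhood of $s_0$ and reduce by isotropic reduction to a situation where the relevant subbundles meet only in odd directions.

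\textbf{Part (i).} The hypothesis $J|_s^+\cap L|_s^+=0$ is open, and near $s_0$ it lets me choose the splitting adapted to $J$ and $L$.

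First I use the even parts. The even parts $J^+_{s_0}$ and $L^+_{s_0}$ meet trivially, so $(J^+_{s_0})^\perp+L^+_{s_0}=\VV^+_{s_0}$. Choose an odd subspace $W\subset L_{s_0}$ complementing $L_{s_0}\cap (J_{s_0})^\perp$ (odd part only) together with its even part. After shrinking, this gives a subbundle $I'\subset L$ with the following properties: its even part maps isomorphically onto $\HH^+/\fc(J)^+$; its odd part is chosen so that $I'\cap \fc(J)=0$; and $\fc(J)+I'$ has the largest possible rank. I then split $J=J_1\oplus J_0$, where $J_1$ pairs perfectly with $I'$ and $J_0=J\cap \fc(I')$.

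Set $I:=J_0$. I apply Theorem \ref{coinv-red-thm} in the roles where $I'\subset L$ makes $\fc(J_1)+L=\HH$. Lemma \ref{module-redII-lem} and Proposition \ref{redII-prop} then reduce $C_J(M(L))$ to $C_{J_0}(M_{\HH''}(L''))$, where $\HH''=\fc(J_1)/J_1$ and $L''$ is the image of $L\cap\fc(J_1)$.

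Next I use the odd parts. Locally the remaining $J_0$ has even part transversal to $L''$, so what is left is an odd-rank phenomenon. I write $\HH''$ as an orthogonal sum $\HH_{ev}\oplus\HH_{odd}$, where the bilinear form is purely odd on the odd part. Only part of this decomposition is canonical, so the construction depends on choices, which is acceptable because the conclusion of (i) is only local.

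In the purely odd direction, Corollary \ref{Clifford-cor} shows the coinvariants are the $L$-invariants, concentrated in degree $0$. In the even direction, transversality makes $J_0^+$ act freely, and Proposition \ref{redII-prop}(ii) gives $U(\ov\HH)$-modules of Fock type. Together these give \eqref{local-Fock-isom}, with $\ov{L}$ obtained by deforming the Lagrangian. Its even part at $s$ is visibly the image of $L^+_s\cap\fc(J)^+_s$.

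\textbf{Part (ii).} Here $\ov\HH_J$ is purely even, so the odd contribution sits entirely in the Clifford factor of rank $(0|2m')$. There the vacuum maps to a generalized Pfaffian $\Pf(\phi,\la)$ in the sense of Definition \ref{Pf-def}. In this Pfaffian, $\phi$ measures the deviation of $\pi(L)^-$ from $\pi(J)^-$-transversality, and $\la$ consists of odd terms coming from the even coupling.

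By Proposition \ref{gen-Pf-prop}(iii), $\Pf(\phi,\la)$ lies in $R(\cdot,\NN)_{\le m}$ and reduces to $\Pf(\phi)$ modulo $\NN$. Choosing coordinates at $s_0$ where the $m$-dimensional odd intersection makes $\phi$ vanish to first order in $m$ directions gives the vanishing at $s_0$. The invertible factor $u$ comes from the even-direction identification: it is the exponential of a Casimir-type element as in the proof of Lemma \ref{C-I-M-I-lem}, which lies in the Rees subalgebra $R(U(\ov\HH_J)_{\le\bullet},\NN)$ because its odd parameters carry $\NN$-weight.

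\textbf{Main obstacle.} The hardest step is the bookkeeping of filtrations. I must show that the odd data $\la$ has degree $\le 1$ in $U(\ov\HH_J)$ with coefficients in $\NN$, so that Proposition \ref{gen-Pf-prop}(iii) applies. My plan is to track explicitly how $L$ is written as a graph over the adapted splitting: even-to-even blocks give $\AA_{\le0}$, and the odd blocks contribute $\NN\AA_{\le1}$.
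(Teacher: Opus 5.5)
Your first reduction is sound, but the tool is Proposition \ref{trans-coinv-prop} combined with Proposition \ref{redII-prop}(i). It is not Theorem \ref{coinv-red-thm} with $I=J_0$, whose hypothesis $\fc(J_0)+L=\HH$ fails at $s_0$. Since $I'\sub L$ pairs perfectly with $J_1$, one has $\fc(J_1)+L=\HH$. Hence $C_J(M(L))\simeq C_{J_0}(M_{\HH''}(L''))$ compatibly with vacua, where $J_0$ has rank $(0|m)$ and $\pi(J_0)|_{s_0}\sub \pi(L'')|_{s_0}$. This is more economical than the paper, which at this stage splits off only the even part of $J$.

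The gap is the next step, which is the real content of the theorem. You write $\HH''$ as an orthogonal sum of a purely odd piece containing $J_0$ and a complement, then apply Corollary \ref{Clifford-cor} on one factor and a Fock statement on the other. That requires $L''$ to split along the same decomposition, and over a superscheme this generally fails. Take $\VV=\langle e,e^*\rangle\oplus\langle f,f^*\rangle$ with $e,e^*$ odd, $L=\langle e,f\rangle$, and $J=\langle e+\epsilon f^*\rangle$ with $\epsilon$ an odd function. In any nondegenerate rank $(0|2)$ subbundle $A\supset J$, the only isotropic line subbundle agreeing with $J$ at $s_0$ is $J$ itself, because the form on $A$ is symmetric with even coefficients. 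So $L=(L\cap A)\oplus(L\cap A^\perp)$ would force $J\sub L$, which is false.

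Your other two remarks do not help. $J_0^+=0$, so ``$J_0^+$ acts freely'' says nothing. Proposition \ref{redII-prop}(ii) would need $\fc(J_0)+L''=\HH''$, which fails at $s_0$. Concentration in degree $0$ can be salvaged: any isotropic $J_0^*$ pairing perfectly with $J_0$ spans with it a purely odd Heisenberg subalgebra, and Corollary \ref{Clifford-cor}(ii) applies there. But the identification \eqref{local-Fock-isom}, the description of $\ov{L}$, and the formula for the vacuum needed in (ii) remain unproved.

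The missing device is the heart of the paper's proof:
\begin{itemize}
\item Choose a Lagrangian $L'$ transversal to both $L''$ and $J_0$. Let $K\sub L''$ be the projection of $J_0$ along $L'$, write $L''=K\oplus K'$, and write $J_0$ as the graph of $(\phi,\psi,\rho):K\to K^\vee\oplus (K')^\vee\oplus\OO$.
\item The $\psi$-twisted symplectic embedding $(x,y)\mapsto(x,y,0,\psi(x))$ of $K\oplus K^\vee$ makes $J_0$ a Lagrangian, with complement $K^\vee$, in a purely odd Heisenberg subalgebra. Hence $C_{J_0}(M(L''))\simeq M(L'')^{K^\vee}$, which in the untwisted model $M(L'')=S((K')^\vee)\ot S(K^\vee)$ is $S((K')^\vee)\ot S^m(K^\vee)$.
\item The orthogonal embedding $(z,t)\mapsto(0,-\psi^*(z),z,t)$ identifies $K'\oplus(K')^\vee$ with $J_0^\perp/J_0$. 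Its $K^\vee$-component kills $K^\vee$-invariants, so the reduced algebra acts in the standard way and you get the Fock module $M(K')$.
\end{itemize}

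Part (ii) needs this same model. There the vacuum becomes $\Pf(\phi,\la)$ with $\la=(\psi,\rho)$, in the sense of Definition \ref{Pf-def} over $\AA=S((K')^\vee)$. Your ``main obstacle'' is then immediate. Since $\ov{\HH}_J$ is purely even, $K'$ is purely even, so $\la$ on the odd bundle $K$ lands in $\NN\AA_{\le 1}$ by parity alone.

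Finally, $u$ is not the exponential of a Casimir element. In the paper, $u=\Pf(\phi_W,\la|_W)$ for a $\phi$-orthogonal splitting $J_1=W^\perp\oplus W$ with $\phi_W$ nondegenerate near $s_0$. Your reduction has already removed that nondegenerate part compatibly with vacua. So you can take $u=1$ and $\ov{v}=\Pf(\phi,\la)\in R(\ov{M}_{\le\bullet},\NN)_{\le m}$, which vanishes at $s_0$ because $\phi(s_0)=0$ and $\la(s_0)=0$.
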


\begin{proof}
(i) The statement is local, so we can choose an even subbundle $I\sub J$, such that $I|_s=J|_s^+$.
Then $\fc(I)+L=\HH$, so by Proposition \ref{redII-prop}(i), we have an isomorphism
$$C_I(M(L))\simeq M_{\fc(I)/I}(L\cap \fc(I)).$$
Hence, by the transitivity of the coinvariants (see Proposition \ref{trans-coinv-prop})
$$C_J(M(L))\simeq C_{J/I}(M_{\fc(I)/I}(L\cap \fc(I))),$$
compatibly with the images of the vacuum vectors.
Thus, replacing $\HH$ with $\fc(I)/I$, $J$ with $J/I$ and $L$ with $L\cap \fc(I)$, we reduce to the
case when $J$ has odd rank.

Next, let us choose a Lagrangian $L'\sub \VV$ such that $L'$ is transversal to both $L$ and $J$.
Then the projection $J\to L$ along $L'$ is an embedding of a subbundle, and we denote by $J_1\sub L$ its image.
Let us choose a complement to $J_1$ in $L$, so that we have a decomposition
$L=J_1\oplus J_2$. Then  we get an identification
$$\VV\simeq J_1\oplus J_1^\vee\oplus J_2\oplus J_2^\vee,$$
where the right-hand side has a standard symplectic structure. 
Furthermore, we can choose a splitting $\HH=\VV\oplus \OO$, so that the lift of $L$ to $\HH$ lies in $\VV$.
With respect to the decomposition of $\HH$,
$J\sub V$ is the graph of a map $J_1\rTo{(\phi,\psi,\rho)} J_1^\vee\oplus J_2^\vee\oplus \OO$, where $\phi$ is (super-)symmetric.

Let us consider the embedding
$$i_\psi: J_1\oplus J_1^\vee=:\VV_1\to V=J_1\oplus J_1^\vee\oplus J_2\oplus J_2^\vee:(x,y)\mapsto (x,y,0,\psi(x)),$$
and the induced embedding $\HH_1=\VV_1\oplus \OO\to \VV\oplus \OO=\HH$.
Then $i_\psi$ is compatible with the (standard) symplectic forms, and $J=i_\psi(J')$,
where $J'\sub \GG_1=J_1\oplus J_1^\vee\oplus \OO$ is the graph of $(\phi,\rho)$.
Note that the complex $K_{\HH}(J,M(L))$ representing $C_J(M(L))$ depends only on the action of $J$ on $M(L)$, or equivalently,
on the action of $J'\sub \HH_1$. Hence, it is identified with the complex $K_{\HH_1}(J',M(L))$.
Now $J_1^\vee\sub V_1$ is a Lagrangian complement to $J'$ in $\VV_1$. Since $\VV_1$ is purely odd, by Corollary \ref{Clifford-cor},
the cohomology of $K_{\VV}(J,M(L))$ is concentrated in degree $0$, and the embedding
$M(L)^{J_1^\vee}\to M(L)$ induces an isomorphism
\begin{equation}\label{Clifford-ML-J-isom}
M(L)^{J_1^\vee}\rTo{\sim} M(L)_J.
\end{equation}

Furthermore, the embedding
$$i'_\psi:J_2\oplus J_2^\vee\to \VV: (z,t)\mapsto (0,-\psi^*(z),z,t)$$
induces an isomorphism of symplectic bundles $J_2\oplus J_2^\vee\to J^\perp/J$.
In addition, the image of $i'_{\psi}$ is orthogonal to $J_1^\vee$, hence
the isomorphism \eqref{Clifford-ML-J-isom} is compatible with
the $\WW(J_2\oplus J_2^\vee)$-module structures, where the action on the left is via $i'_\psi$, or equivalently
via the standard embedding $J_2\oplus J_2^\vee\to \VV$ (since $J_1^\vee$ acts trivially).

Since $L=J_1\oplus J_2$, we have a natural isomorphism
$$S(J_2^\vee)\ot S^m(J_1^\vee)\rTo{\sim}M_{\VV}(L)^{J_1^\vee},$$
where the rank of $J_1$ is $(0|m)$,
induced by the action on the vacuum vector of $M_{\VV}(L)$.
Now \eqref{local-Fock-isom} follows from the fact that $S(J_2^\vee)\simeq M_{J_2\oplus J_2^\vee}(J_2)$
as a module over $\WW(J_2\oplus J_2^\vee)$.

\noindent
(ii) As in (i) we reduce to the case when 
$$\VV\simeq J_1\oplus J_1^\vee\oplus J_2\oplus J_2^\vee, \ \ \HH=\VV\oplus\OO,$$
$L=J_1\oplus J_2$, $J_1$ has rank $(0|2m)$, and
$J\sub \HH$ is the graph of a map $J_1\rTo{(-\phi,-\psi,-\rho)} J_1^\vee\oplus J_2^\vee\oplus \OO$, where $\phi$ is (super-)symmetric.
We have an identification $M(L)\simeq S(J_2^\vee)\ot S(J_1^\vee)$, and the composition
$$S(J_2^\vee)\ot S^m(J_1^\vee)\to M(L)\to M(L)_J$$
is an isomorphism.
We have to rewrite $v(J,L)\in M(L)_J$ as the image of a vector in $S(J_2^\vee)\ot S^m(J_1^\vee)$.

Consider the $\OO$-algebra $\AA=S(J_2^\vee)$ with the standard increasing filtration $(\AA_{\le \bullet})$. 
Then we can view $\phi$ and $\la=(\psi,\rho)$ as a (super-)symmetric $\OO$-linear map $\phi:J_1\to \AA_{\le 0}\ot J_1^\vee$,
and as an $\OO$-linear map $\la:J_1\to \AA_{\le 1}$. Note that since $J_1$ is purely odd and $\AA$ is purely even, $\la$ takes values in $\NN\AA_{\le 1}$.
Now we observe that
$$v(J,L)=\Pf(\phi,\la).$$

By assumption $\dim\ker(\phi|_{s_0})=(0|m)$. Let us choose a complement $W|_{s_0}\sub J_1|_{s_0}$ to $\ker(\phi|_{s_0})$ and extend it
to a subbundle $W\sub J_1$. Then near $s_0$, $\phi$ restricts to a nondegenerate pairing $\phi_W$ on $W$. Hence,
near $s_0$ we have a decomposition
$$J_1=W^\perp\oplus W,$$
where $W^\perp\sub J_1$ is the orthogonal of $W$ with respect to $\phi$. Then we have
$$v(J,L)=\Pf(\phi,\la)=\Pf(\phi_W,\la|_{W})\cdot \Pf(\phi_{W^\perp},\la|_{W^{\perp}}).$$
Now by Proposition \ref{gen-Pf-prop}, both factors are in $R(\AA_{\le \bullet},\NN)$, and
$$u:=\Pf(\phi_W,\la|_W)\equiv \Pf(\phi_W) \mod \NN$$ 
so $u$ is invertible. Furthermore, since $\phi_{W^\perp}$ vanishes at $s_0$, so does $\Pf(\phi_{W^\perp},\la|_{W^{\perp}})$.
Since $\AA$ is a subalgebra of $U(\ov{\HH}_J)$, the assertion follows.
\end{proof}

\subsection{Odd intersection: Lagrangian case}\label{odd-int-sec}

Let us now consider the case of two Lagrangians $L_1,L_2$ in $\VV$, equipped with some liftings to a Heisenberg extension $\HH$.
We have the following Corollary from Theorem \ref{Pf-thm}.

\begin{cor}\label{Lagr-int-cor}
If $L|_s^+\cap L'|_s^+=0$ in $\VV|_s$ for all $s\in S$
then $C_{L_1}(M_\HH(L_2))$ is a line bundle, and $v(L_1,L_2)$ (the image of the vacuum vector in $M(L_2)$)
is its section vanishing exactly at $s\in S$ such that $L_1|_s\cap L_2|_s\neq 0$.
\end{cor}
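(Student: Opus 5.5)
The plan is to apply Theorem \ref{Pf-thm} with $J=L_1$ and $L=L_2$. For Lagrangians the reduced algebra $\ov{\HH}_{L_1}=\fc(L_1)/L_1$ is just $\OO_S$: indeed $\pi(L_1)^\perp=\pi(L_1)$, so $\fc(L_1)=\OO_S\oplus L_1$ (via the lift). Thus $\fc(J)/J$ is purely even, and the hypotheses of both parts (i) and (ii) hold.

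By part (i), $C_{L_1}(M(L_2))$ is concentrated in degree $0$. Locally it is isomorphic to $M_{\OO_S}(\ov{L})$, where $\ov{L}$ is a Lagrangian in the rank-zero symplectic bundle, so $\ov{L}=0$. Hence $M_{\OO_S}(0)=U(\OO_S)=\OO_S$, and $C_{L_1}(M(L_2))$ is locally free of rank one, i.e.\ a line bundle.

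For the vanishing locus, fix $s_0$ and set $m=\dim \pi(L_1)^-_{s_0}\cap\pi(L_2)^-_{s_0}$. Since the even parts meet trivially by hypothesis, $L_1|_{s_0}\cap L_2|_{s_0}\neq 0$ exactly when $m>0$.

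Part (ii) gives, near $s_0$, $v(L_1,L_2)=u\cdot\ov{v}$. Here $u$ is an invertible section, and $\ov{v}\in R(\ov{M}_{\le\bullet},\NN)_{\le m}$ vanishes at $s_0$ when $m>0$. So when $m>0$ the vacuum image vanishes at $s_0$.

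When $m=0$ we must check that $v$ does not vanish at $s_0$. I would trace the proof of part (ii). There $v=\Pf(\phi_W,\la|_W)\cdot\Pf(\phi_{W^\perp},\la|_{W^\perp})$ with $W^\perp=\ker\phi|_{s_0}=0$, so the second factor is the Pfaffian on a zero-rank bundle, which equals $1$. Then $v=u$ is invertible near $s_0$.

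Alternatively one can argue directly: when $m=0$, $L_1$ and $L_2$ are transversal near $s_0$. Proposition \ref{redII-prop}(ii) then gives $C_{L_1}(M(L_2))\simeq\OO_S$ compatibly with vacuum vectors, so $v$ is the generator $1$ and is nonvanishing.

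The main obstacle is the meaning of "vanishing exactly": over a superscheme, vanishing at a point should be read as vanishing in the fiber at $s$. Since $\ov{v}$ lies in $\NN$-filtered pieces and vanishes at $s_0$, its value in the fiber is $0$. I would state this in terms of fibers, so that no scheme-theoretic zero locus needs to be discussed.
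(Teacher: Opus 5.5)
Your proposal is correct and follows the paper's route: the paper presents this corollary as a direct consequence of Theorem \ref{Pf-thm}, with $\ov{\HH}_{L_1}=\OO_S$, so part (i) gives a line bundle and part (ii) gives vanishing wherever the odd intersection is nonzero. Your separate handling of $m=0$ is a worthwhile addition, since part (ii) literally gives vanishing only for $m>0$; your two arguments for that case are the Pfaffian on a rank-zero bundle being $1$, and the transversal case of Proposition \ref{redII-prop}(ii) sending the vacuum vector to $1$.
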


Following \cite{FKP-reg}, for a pair of Lagrangians $L_1,L_2$, which are generically transversal, we define a rational section
$\th(L_1,L_2)$ of $\Ber(L_1)^{-1}\ot \Ber(L_2)^{-1}$, as the Berezinian of the morphism $L_1\to L_2^\vee$ given by the pairing.
The following result gives a global version of \cite[Thm.\ 4.5]{FKP-reg}.

\begin{theorem}\label{theta-Pf-thm}
Assume that $S$ is smooth and irreducible and the Heisenberg extension $\HH$ is split: $\HH=\OO\oplus \VV$.
Let $L_1,L_2\sub\VV$ be Lagrangians such that
$L_1|_s^+\cap L_2|_s^+=0$ for all $s\in S$, and $L_1$ and $L_2$ are generically transversal.
Then there is a natural isomorphism of line bundles
\begin{equation}\label{coinv-square-isom}
\Ber(L_1)\ot \Ber(L_2)\simeq C_{L_1}(M(L_2))^2,
\end{equation}
such that 
$$\th(L_1,L_2)^{-1}=c\cdot v(L_1,L_2)^2,$$
where $c$ is a nonzero universal constant (possibly depending on the odd dimension of $\VV$).
\end{theorem}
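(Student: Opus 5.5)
Since $S$ is smooth (hence reduced in the even directions but possibly with odd coordinates) and irreducible, the plan is to construct the isomorphism \eqref{coinv-square-isom} on the open dense locus where $L_1,L_2$ are transversal and then extend it, comparing divisors of the relevant sections. By Corollary \ref{Lagr-int-cor} (i.e. Theorem \ref{Pf-thm}(i) with $J=L_1$, $L=L_2$, where $\ov{\HH}_J=\OO$), $C_{L_1}(M(L_2))$ is a line bundle concentrated in degree $0$, and $v=v(L_1,L_2)$ is a section of it vanishing exactly on the locus $D=\{s: L_1|_s\cap L_2|_s\neq 0\}$. On the transversal locus $U=S\setminus D$, Proposition \ref{redII-prop}(ii) gives $C_{L_1}(M(L_2))\simeq\OO$ with $v\mapsto 1$, so $v$ trivializes the line bundle there; likewise $\th(L_1,L_2)$ is regular and invertible on $U$, trivializing $\Ber(L_1)^{-1}\ot\Ber(L_2)^{-1}$. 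Thus on $U$ we define \eqref{coinv-square-isom} by sending $\th^{-1}\mapsto v^2$; the content of the theorem is that this extends to an isomorphism on all of $S$ (and then the constant $c$ accounts for normalization in the local computation).

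To extend, I would work locally near $s_0\in D$ using the normal form from the proof of Theorem \ref{Pf-thm}: after choosing a Lagrangian $L'$ transversal to both, write $\VV=J_1\oplus J_1^\vee\oplus J_2\oplus J_2^\vee$ with $L_2=J_1\oplus J_2$ and $L_1$ the graph of $(-\phi,-\psi,-\rho)$ on $J_1$, with $J_1$ purely odd of rank $(0|2m)$ (here $\rho=0$ since $\HH$ is split, and for Lagrangian $L_1$ the component $J_2$-part is trivial, so in fact $\psi$ plays no role and $\ov{\HH}_J=\OO$). Then, as established there, $C_{L_1}(M(L_2))\simeq S^{2m}(J_1^\vee)\simeq\Ber(J_1)^{-1}$ locally and $v=\Pf(\phi,0)=\Pf(\phi)$ by Proposition \ref{gen-Pf-prop}(i). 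On the other side, in the same coordinates, the pairing map $L_1\to L_2^\vee$ is block-triangular with the $J_2$-block an isomorphism and the $J_1$-block equal to $\phi$ (an odd-odd block of a purely odd bundle), so its Berezinian is $\det(\phi)^{-1}$ times a unit, i.e. $\th^{-1}=\det(\phi)\cdot(\text{unit})$ as a section of $\Ber(L_1)\ot\Ber(L_2)$, trivialized compatibly with $\Ber(J_1)^{-2}$. Since $\det\phi=\Pf(\phi)^2$, the ratio $\th^{-1}/v^2$ is a local unit, so the rational isomorphism on $U$ extends to an isomorphism near $s_0$.

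The remaining work is bookkeeping of canonicity: one must check that the local identifications $C_{L_1}(M(L_2))\simeq\Ber(J_1)^{-1}$ and $\Ber(L_1)\ot\Ber(L_2)\simeq\Ber(J_1)^{-2}\ot(\ldots)$ do not depend on choices (of $L'$, of the complement $J_2$) up to the same factor, so that the globally defined map on $U$ is what extends; equivalently, that the function $\th^{-1}/v^2$ on $U$, computed in any local frame, equals a universal constant $c$. I would verify this by noting that on $U$ both sides are canonical, so $\th^{-1}/v^2$ is a well-defined function on $U$ which by the local computation extends to a nowhere-vanishing function on $S$; then computing it at a transversal point in the standard model (reducing, via Proposition \ref{trans-coinv-prop}-type functoriality, to $\VV=J_1\oplus J_1^\vee$ with $\phi$ nondegenerate) gives the constant. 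I expect the main obstacle to be exactly this step: sign/normalization conventions for the super Berezinian versus the Pfaffian in odd variables, and making sure the comparison is genuinely constant rather than a unit depending on the odd coordinates of $S$ (which is where irreducibility and the identity $\Pf(\phi)^2=\det\phi$ over superrings must be used).
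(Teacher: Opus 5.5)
Your argument is correct in substance, but its central step differs from the paper's.

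\textbf{Shared first step.} Both proofs first reduce to the purely odd case. The paper picks $I\sub L_1$ with $I|_s=L_1|_s^+$, and uses $C_{L_1}(M(L_2))\simeq C_{\ov{L}_1}(M(\ov{L}_2))$, compatible with vacua, together with the quasi-isomorphism $[L_1\to L_2^\vee]\to[L_1/I\to L_2^\vee/I]$, to see that $\th$ is preserved. Your ``block-triangular'' sentence should be read this way. After the reduction in the proof of Theorem \ref{Pf-thm} one has $J_2=0$ for Lagrangian $L_1$, so there is no $J_2$-block. The invertible block is the even one coming from $I$, and the remaining block is the reduced odd pairing $\phi$.

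\textbf{Where the routes diverge.} In the purely odd case the paper constructs the isomorphism structurally. It doubles to $\VV\oplus\VV$, splits off $M(\De_i(\VV))$ via Proposition \ref{Clifford-prop} and Theorem \ref{Ber-coinv-thm}, and only afterwards compares sections at transversal points using dual bases; this is where the constant $c$ arises. You instead take the isomorphism determined by $\th^{-1}\mapsto v^2$ on the transversal locus $U$ and show it extends. For this you use:
\begin{itemize}
\item $v=\Pf(\phi)$ (Proposition \ref{gen-Pf-prop}(i), with $\la=0$ because $J_2=0$ and $\HH$ is split);
\item $\Ber(\phi)=\det(\phi)^{-1}$ for a purely odd map;
\item $\Pf(\phi)^2=\det\phi$.
\end{itemize}
This shows directly that the zero schemes of $\th^{-1}$ and $v^2$ coincide, odd directions included, which is the real content. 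Your route is more elementary. The paper's route yields an isomorphism built from general results independently of the sections, and it exhibits $C_{L_1}(M(L_2))$ as a square root of a Berezinian, a mechanism reused in Theorem \ref{LG-alg-Ber-thm}.

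\textbf{Your last paragraph.} For the same reason, the obstacle you flag there is not one. With your definition, $c=1$ tautologically. You only need two things: that in arbitrary local frames the coefficients of $\th^{-1}$ and $v^2$ differ by a unit, and that $U$ is schematically dense (from smoothness, irreducibility and generic transversality), so the local extensions glue. No independence of the choice of $L'$ or of frames is required. The ``function $\th^{-1}/v^2$ on $U$'' only exists after identifying the two line bundles. With the trivializations given by $\th$ and by Proposition \ref{redII-prop}(ii), it is identically $1$. A genuinely universal constant is meaningful only for an independently constructed isomorphism such as the paper's.
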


\begin{proof}
It is enough to construct an isomorphism \eqref{coinv-square-isom} locally since it is uniquely determined by the condition
that it maps $\th(L_1,L_2)^{-1}$ to $v(L_1,L_2)^2$, and so will glue into a global one.

Locally we can choose a subbundle $I\sub L_1$, such that $I|_s=L_1|_s^+$ for any $s\in S$. Then $I^\perp/I$ is purely odd and $I^\perp+L_2=\VV$.
Hence, by Propositions \ref{trans-coinv-prop} and \ref{redII-prop}(i), we have an isomorphism
$$C_{L_1}(M(L_2))\simeq C_{\ov{L}_1}C_I(M(L_2))\simeq C_{\ov{L}_1}(M_{I^\perp/I}(\ov{L}_2),$$
where $\ov{L}_1=L_1/I$, $\ov{L}_2=L_2\cap I^\perp$ viewed as a subbundle of $\ov{\VV}=I^\perp/I$.
Furthermore, this isomorphism sends $v(L_1,L_2)$ to $v(\ov{L}_1,\ov{L}_2)$,

On the other hand, since $I$ is transversal to $L_2$, we have a quasi-isomorphism
$$[L_1\to L_2^\vee]\to [L_1/I\to L_2^\vee/I],$$
where $L_2^\vee/I\simeq (L_2\cap I^\perp)^\vee$, so we get an isomorphism
$$\Ber(L_1)\ot \Ber(L_2)\simeq \Ber(\ov{L}_1)\ot \Ber(\ov{L}_2)$$
sending $\th(L_1,L_2)^{-1}$ to $\th(\ov{L},\ov{L}_2)^{-1}$.

Thus, replacing $(\VV,L_1,L_2)$ with $(\ov{\VV},\ov{L}_1,\ov{L}_2)$ we reduce to the case when $\VV$ is purely odd.
Let us consider $\VV^{(2)}=\VV\oplus \VV$ with the form $\om\oplus \om$. Then we have
$$C_{L_1}(M(L_2))^2\simeq C_{L_1\oplus L_1}(M_{\VV^{(2)}}(L_2\oplus L_2)),$$
compatibly with the vacuum vectors.
On the other hand, we have another Lagrangian
$$\De_i(\VV):=\{(x,ix) \ |\ x\in \VV\}\sub \VV^{(2)},$$ 
and by Proposition \ref{Clifford-prop}, we have
$$M_{\VV^{(2)}}(L_2\oplus L_2)\simeq M_{\VV^{(2)}}(\De_i(\VV))\ot_{\OO} M_{\VV^{(2)}}(L_2\oplus L_2)^{\De_i(\VV)}.$$

Therefore,
$$C_{L_1\oplus L_1}(M_{\VV^{(2)}}(L_2\oplus L_2))\simeq C_{L_1\oplus L_1}(M_{\VV^{(2)}}(\De_i(\VV)))\ot_{\OO} M_{\VV^{(2)}}(L_2\oplus L_2)^{\De_i(\VV)}.$$
By Theorem \ref{Ber-coinv-thm}, we have
$$C_{L_1\oplus L_1}(M_{\VV^{(2)}}(\De_i(\VV)))\simeq \Ber(L_1).$$
On the other hand, by Corollary \ref{Clifford-cor} and Theorem \ref{Ber-coinv-thm},
$$M_{\VV^{(2)}}(L_2\oplus L_2)^{\De_i(\VV)}\simeq \Ber^{-1}(\VV)\ot C_{\De_i(\VV)}(M_{\VV^{(2)}}(L_2\oplus L_2))\simeq \Ber^{-1}(\VV)\ot \Ber(L_2).$$
Hence, using the triviality of $\Ber(\VV)$, we get
$$C_{L_1\oplus L_1}(M_{\VV^{(2)}}(L_2\oplus L_2))\simeq \Ber(L_1)\ot \Ber(L_2).$$

To chase what happens with our canonical sections, we will make use of the Lagrangian $\De_{-i}(\VV)$, complementary to $\De_i(\VV)$.
We observe that the we have natural isomorphism induced by the Weyl algebra action,
$$S^{top}(\De_i(\VV)/\De_i(L_2))\rTo{\sim} M_{\VV^{(2)}}(L_2\oplus L_2)^{\De_i(\VV)},$$
$$S^{top}(\De_{-i}(\VV)/\De_{-i}(L_1))\rTo{\sim}C_{L_1\oplus L_1}(M_{\VV^{(2)}}(\De_i(\VV))),$$
so the above isomorphism corresponds to the induced isomorphism
\begin{align*}
&\Ber(L_1)\ot \Ber(L_2)\simeq S^{top}(\De_{-i}(\VV)/\De_{-i}(L_1))\ot S^{top}(\De_i(\VV)/\De_i(L_2))\\ 
&\rTo{\sim} 
C_{L_1\oplus L_1}(M_{\VV^{(2)}}(\De_i(\VV)))\ot M_{\VV^{(2)}}(L_2\oplus L_2)^{\De_i(\VV)}\to C_{L_1\oplus L_1}(M_{\VV^{(2)}}(L_2\oplus L_2)).
\end{align*}

We have to check that under this isomorphism $\th(L_1,L_2)^{-1}$ goes to the vacuum vector (up to a constant).
It is enough to check this locally on an open subset where $L_1$ and $L_2$ are transversal, so we can assume that $(e_\bullet)$ and $(f_\bullet)$ are dual bases
of $L_1$ and $L_2$. The section $\th(L_1,L_2)^{-1}$ corresponds (up to a sign) to 
$(f_1\ldots f_n)\ot (e_1\ldots e_n)\in S^n(\VV/L_1)\ot S^n(\VV/L_2)$. The above isomorphism sends this to
$$\De_{-i}(f_1)\ldots \De_{-i}(f_n)\cdot \De_i(e_1)\ldots \De_i(e_n)\cdot 1\in C_{L_1\oplus L_1}(M_{\VV^{(2)}}(L_2\oplus L_2)).$$ 
Now we can start moving $\De_{-i}(f_j)$ to the right, starting from $\De_{-i}(f_n)$. This will show that the above expression is $c\cdot 1$, for some universal constant $c$.
\end{proof}


\subsection{Coinvariants for degenerate Heisenberg extensions}

Let $\VV$ be a symplectic super vector bundle over $S$, and suppose
\begin{equation}\label{deg-Heis-ext-eq}
0\to \KK\to \wt{\VV}\to \VV\to 0
\end{equation}
is an extension of super vector bundles. We equip $\wt{\VV}$ with the (super) skew symmetric form, pulled back from $\VV$, so that $\KK$ is the kernel of this form.
Then we have a degenerate Weyl algebra $\WW(\wt{\VV})$ with the central subalgebra $S(\KK)$.

Given an isotropic subbundle $\II\sub \wt{\VV}$ and a $\WW(\wt{\VV})$-module $M$, as before, we can consider the sheaf of coinvariants
$$M_{\II}\simeq M^r(\II)\ot_{\WW(\wt{\VV})} M,$$
where
$$M^r(\II)=\WW(\wt{\VV})/\II\cdot \WW(\wt{\VV}).$$

Now given a homomorphism $\chi:\KK\to \OO_S$, we can consider $\WW(\wt{\VV})$-modules with the central character $\chi$, i.e., such that $\KK$ acts via $\chi$.
For example, given an isotropic subbundle $\JJ\sub \wt{\VV}$, projecting isomorphically to a subbundle in $\VV$ (i.e., transversal to $\KK$),
for every $\chi:\KK\to\OO$, we have the corresponding $\WW(\wt{\VV})$-module with the central character $\chi$,
$$M_{\wt{\VV},\chi}(\JJ):=\WW(\wt{\VV})/\WW(\wt{\VV})\cdot(\JJ+I_{\chi}),$$
where $I_{\chi}$ is the span of $r-\chi(r)\in \KK\oplus \OO$. 

Let $\HH_\chi$ denote the push-out of the extension \eqref{deg-Heis-ext-eq} with respect to $\chi:\KK\to \OO$. We can equip $\HH_\chi$ with
the natural structure of the Heisenberg extension. The projection $\wt{\VV}\ot \HH_{\chi}$ induces a surjection $\WW(\wt{\VV})\to U(\HH_\chi)$. 
A $\WW(\wt{\VV})$-module $M$ has $\chi$ as a central character if and only the $\WW(\wt{\VV})$-action on it factors through $U(\HH_\chi)$.

\begin{lemma}\label{central-char-red-lem}
Let $\II\sub \wt{\VV}$ be an isotropic subbundle transversal to $\KK$, and let $\II_\chi\sub \HH_\chi$ denote its image under the projection $\wt{\VV}\to \HH_\chi$.
Then for a $\WW(\wt{\VV})$-module $M$ with the central character $\chi$, there is a natural isomorphism
$$M_\II\simeq C_{\II_\chi}(M),$$
where on the right we view $M$ as an $\HH_\chi$-module.
\end{lemma}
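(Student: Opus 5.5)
The plan is to reduce everything to a tensor-product identity of right modules over $U(\HH_\chi)$. Note first that $M_\II$ here means the ordinary (underived) coinvariants $M/\II M = M^r(\II)\ot_{\WW(\wt{\VV})} M$, while $C_{\II_\chi}(M)$ is derived. So I would first show that the right-hand side is concentrated in degree $0$, or else read the statement as an identification of $M_\II$ with $H^0 C_{\II_\chi}(M)$. I will aim for the underived identification first and then address the derived one.

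The key observation is the following. Since $M$ has central character $\chi$, the $\WW(\wt{\VV})$-action on $M$ factors through the surjection $p:\WW(\wt{\VV})\to U(\HH_\chi)$. The kernel of $p$ is the two-sided ideal generated by $I_\chi$, that is, by the elements $r-\chi(r)$ with $r\in\KK$. Because these elements are central, this ideal equals $I_\chi\cdot\WW(\wt{\VV})$. For any right $\WW(\wt{\VV})$-module $N$ we then have
$$N\ot_{\WW(\wt{\VV})} M\simeq (N\ot_{\WW(\wt{\VV})} U(\HH_\chi))\ot_{U(\HH_\chi)} M.$$
Apply this with $N=M^r(\II)$. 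Then
$$M^r(\II)\ot_{\WW(\wt{\VV})}U(\HH_\chi)=\WW(\wt{\VV})/(\II\cdot\WW(\wt{\VV})+I_\chi\WW(\wt{\VV})).$$
Since $p(\II)=\II_\chi$ (the image of $\II$ in $\HH_\chi$), this quotient is $U(\HH_\chi)/\II_\chi U(\HH_\chi)=M^r(\II_\chi)$. Here one should check that $\II_\chi$ is a subbundle of $\HH_\chi$ projecting isomorphically to an isotropic subbundle of $\VV$. This holds because $\II$ is transversal to $\KK$, hence maps isomorphically to its image in $\VV$. Combining these gives
$$M_\II\simeq M^r(\II_\chi)\ot_{U(\HH_\chi)}M=H^0 C_{\II_\chi}(M).$$

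For the derived statement, the cleanest route is to compare complexes. By the lemma giving the complex \eqref{C-I-M-complex}, $C_{\II_\chi}(M)$ is computed by $K(\II_\chi,M)=\bigwedge^\bullet(\II_\chi)\ot M$, with differential determined by the action of $\II_\chi$ on $M$. Since $\II\simeq\II_\chi$ and $\II$ acts on $M$ through $p$, this complex coincides with $\bigwedge^\bullet(\II)\ot M$ with the analogous differential. That complex is the derived $\II$-coinvariants over $\WW(\wt{\VV})$, once one checks, as in that lemma, that $\bigwedge^\bullet(\II)\ot\WW(\wt{\VV})$ resolves $M^r(\II)$. Locally $\WW(\wt{\VV})\simeq S(\KK)\ot U(\HH')$ for a splitting, so the same Koszul argument applies.

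Thus the natural isomorphism is $M_\II\simeq C_{\II_\chi}(M)$, with $M_\II$ interpreted as $M^r(\II)\ot^{\bbL}_{\WW(\wt{\VV})}M$, and it is induced by the identity on $\bigwedge^\bullet(\II)\ot M$. The main obstacle is this interpretive point about derived versus underived coinvariants, together with verifying the resolution over the degenerate Weyl algebra. I expect the latter to follow by choosing locally a splitting $\wt{\VV}=\KK\oplus\VV'$ with $\II\sub\VV'$, which is possible by transversality.
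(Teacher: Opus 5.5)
The paper states this lemma without proof, so there is no argument of the authors to compare with. Your proposal is correct and gives the natural proof.

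\emph{Interpretation of $M_\II$.} You are right that $M_\II$ must be read as derived (or the statement as $M_\II\simeq H^0C_{\II_\chi}(M)$). The other option you mention, proving that $C_{\II_\chi}(M)$ is concentrated in degree $0$, is false in general. Suppose the image of $\II$ in $\VV$ is Lagrangian of rank $n|m$ with $n>0$, and take $M=M_{\HH_\chi}(\II_\chi)$. Corollary \ref{JL-cor}(ii), with $L=L'=\II_\chi$, gives $C_{\II_\chi}(M)\simeq \Ber(\II_\chi)[n]$. On the other hand the underived quotient $M/\II M$ is $0$. So the derived reading is forced. Your comparison of $K(\II,M)$ with $K(\II_\chi,M)$ proves it: transversality gives $\II\simeq\II_\chi$ and the local splitting $\wt{\VV}=\KK\oplus\VV'$ with $\II\sub\VV'$. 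That splitting makes $\bigwedge^\bullet(\II)\ot\WW(\wt{\VV})$ a resolution of $M^r(\II)$.

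\emph{An equivalent route without complexes.} This version isolates where transversality is used.
\begin{itemize}
\item Write $U(\HH_\chi)\simeq \WW(\wt{\VV})\ot_{S(\KK)}\OO_\chi$, where $\OO_\chi$ is $\OO_S$ viewed as an $S(\KK)$-module via $\chi$.
\item Since $\WW(\wt{\VV})\simeq S(\KK)\ot\WW(\VV')$ is flat over $S(\KK)$, you get $M^r(\II)\ot^{\bbL}_{\WW(\wt{\VV})}U(\HH_\chi)\simeq M^r(\II)\ot^{\bbL}_{S(\KK)}\OO_\chi$.
\item This equals $M^r(\II_\chi)$ with no higher Tor, because $M^r(\II)\simeq S(\KK)\ot M^r_{\VV'}(\II)$ is locally free over $S(\KK)$. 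This is exactly the point where $\II\cap\KK=0$ is needed.
\item Associativity of $\ot^{\bbL}$ then gives $M^r(\II)\ot^{\bbL}_{\WW(\wt{\VV})}M\simeq M^r(\II_\chi)\ot^{\bbL}_{U(\HH_\chi)}M=C_{\II_\chi}(M)$.
\end{itemize}
On $H^0$ this recovers your underived identification. It rests on the same local splitting as your Koszul argument and is only slightly more economical.
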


\begin{example} In the application to the moduli spaces of curves, we will have a collection of symplectic vector spaces $(V_i)_{i=1,\ldots,n}$ and of extensions
$$0\to K\to \wt{V}_i\to V_i\to 0,$$
together with Lagrangian subspaces $L_{i,+}\sub \wt{V}_i$ (transversal to $K$), so that for a collection of linear functionals $\chi_i:K\to \C$, we get 
$\WW(\wt{V}_i)$-modules $M_{\wt{V}_i,\chi_i}(L_{i,+})$.
In addition, we will have an isotropic subspace $\wt{I}\sub \wt{V}:=\bigoplus_i \wt{V}_i$, such that $\wt{I}\cap \bigoplus_i K$
is the diagonal $K$. Then, assuming that $\sum_i \chi_i=0$, we will be looking at the coinvariants
$$(\bigotimes_i M_{\wt{V}_ i,\chi_i}(L_{i,+}))_{\wt{I}}\simeq M_{\wt{V},\chi}(L_+)_{\wt{I}},$$
where $\chi=(\chi_i)$, $L_+=\bigoplus_i L_{i,+}$. 
Let $H_\chi$ be the corresponding Heisenberg extension of $\bigoplus_i V_i$ by $\C$. 
Note that $M_{\wt{V},\chi}(L_+)=M_{H_\chi}(L_+)$, where on the right we view $L_+$ as a subspace of $H_\chi$, via the projection.
Hence, using Lemma \ref{central-char-red-lem},
we can rewrite our space of coinvariants as
$$C_{I_\chi}(M_{H_\chi}(L_+)),$$
where $I_\chi$ is the image of $\wt{I}$ in $H_\chi$.
\end{example}


\section{Lie algebroid actions on Heisenberg coinvariants}\label{Lie-algebroid-sec}

\subsection{Lie algebroid associated with a family of isotropic subspaces and coinvariants}\label{constr-main-sec}

Let $V$ be a finite dimensional symplectic super vector space over $\C$, and let $\WW(V)$ be the corresponding super Weyl algebra.
We have a natural increasing filtration $(\WW_{\le n}(V))$, such that $\WW_{\le n}(V)/\WW_{\le n-1}(V)\simeq S^n(V)$.
The adjoint action of $\WW_{\le 2}(V)/\WW_{\le 1}(V)$ on $\WW_{\le 1}(V)/\WW_{\le 0}(V)\simeq V$ gives an identification
of $S^2(V)$ with the Lie super algebra $\sp(V)$ acting on $V$.


Now assume we have an isotropic subbundle 
$$\II\sub \OO_S\ot \WW_{\le 1}(V)$$
over a superscheme $S$
(recall that this means that $\II$ projects isomorphically onto an isotropic subbundle of $\OO\ot V$),
and let $\ov{\HH}_\II=\fc(\II)/\II$ denote the corresponding reduced Heisenberg Lie algebra
Let us consider the right $\OO_S\ot \WW(V)$-module
$$M^r(\II)=\OO\ot \WW(V)/\II\cdot \WW(V).$$
By \eqref{M-I-main-isom}, we have an isomorphism of sheaves of $\OO$-algebras,
$$\und{\End}_{\OO\ot \WW(V)^{op}}(M^r(\II))\simeq U(\ov{\HH}_\II).$$

Note that in the case $\II\sub \OO_S\ot V$, $M^r(\II)$ has a natural $\Z/2$-grading coming from the $\Z/2$-grading of $\WW(V)$ such that $\deg(V)=1$.

\begin{definition}\label{main-def} 
(i) Let $\AA_{\II}$ denote the subsheaf of the sheaf of first order differential operators $D_{\le 1}(M^r(\II),M^r(\II))$, consisting of operators $D$ such that
\begin{itemize}
\item $D$ commutes with the right $\WW(V)$-action (note that no Koszul signs appear in this condition);
\item the symbol of $D$ is in $\TT\sub \TT\ot \und{\End}_{\OO_S\ot \WW(V)^{op}}(M^r(\II))$;
\item $[D,U_{\le 1}(\ov{\HH}_\II)]\sub U_{\le 1}(\ov{\HH}_\II)$.
\end{itemize}
We also denote by $\wt{\AA}_{\II}$ the similar subsheaf subject only to the first two conditions.

\noindent
(ii) In the case $\II\sub \OO_S\ot V$, let $\AA^{ev}_{\II}$ denote the subsheaf of $\AA_{\II}$ consisting of operators preserving the $\Z/2$-grading of $M^r(\II)$.
\end{definition}

Note that $\wt{\AA}_{\II}$ is exactly the sheaf of infinitesimal symmetries of $M^r(\II)$ as a $\WW(V)^{op}$-module.

\begin{prop}\label{anchor-sur-prop} 
(i) The subsheaves $\AA_{\II}\sub \wt{\AA}_{\II}$ (resp., $\AA^{ev}_{\II}$ in the case $\II\sub \OO_S\ot V$) in the Atiyah Lie algebroid of $M^r(\II)$
are sub- Lie algebroids. 
Their anchor maps fit into the exact sequences
$$0\to U(\ov{\HH}_\II)\to \wt{\AA}_{\II}\to \TT_S\to 0,$$
$$0\to U_{\le 2}(\ov{\HH}_\II)\to \AA_{\II}\to \TT_S\to 0,$$
and in the case $\II\sub\OO_S\ot V$,
$$0\to \WW_{\le 2}^{ev}(\II^\perp/\II)\to \AA^{ev}_{\II}\to \TT_S\to 0.$$
The action of $D\in\AA_{\II}$ on $M^r(\II)$ satisfies 
\begin{equation}\label{D-filtration-property}
D(M^r(\II)_{\le i})\sub M^r(\II)_{\le i+2},
\end{equation}
where the filtration $M^r(\II)_{\le \bullet}$ is the image of the natural filtration on $\WW(V)$.

\noindent
(ii) The adjoint action of $\AA_{\II}/\OO$ on $\ov{\HH}_\II$ induces an isomorphism of $\AA_{\II}/\OO$ with the Atiyah Lie algebroid $\AA_{\ov{\HH}_\II}$
of infinitesimal symmetries of $\ov{\HH}_\II$ with a structure of a Heisenberg Lie algebra. 
The adjoint action of $\AA_{\II}/\OO$ on the kernel of the anchor map of $\AA_\II$ concides with the natural action of
$\AA_{\ov{\HH}_\II}$ on $U_{\le 2}(\ov{\HH}_\II)$. 
In the case $\II\sub \OO_S\ot V$, $\AA^{ev}_{\II}/\OO$ is identified with the Atiyah Lie algebroid $\AA_{\II^\perp/\II}$ of the principal symplectic bundle associated with $\II^\perp/\II$.
\end{prop}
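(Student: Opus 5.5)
The plan is to work locally on $S$ throughout, since every claim is local. We pick a local isotropic complement $\II'\sub \OO\ot\WW_{\le 1}(V)$ to $\fc(\II)$. By the proof of Lemma \ref{C-I-M-I-lem}, this gives an identification
$$M^r(\II)\simeq U(\ov{\HH}_\II)\ot_\OO S(\II^\vee)$$
as a $U(\ov{\HH}_\II)$--$\WW(V)$-bimodule. It follows that $\und{\End}_{\WW(V)^{op}}(M^r(\II))=U(\ov{\HH}_\II)$, acting by left multiplication, as recorded before Definition \ref{main-def}.

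\textbf{Kernels of the anchors and surjectivity.} The kernel of the symbol map $\wt{\AA}_\II\to\TT_S$ consists of $\OO$-linear endomorphisms commuting with the right action, so it equals $U(\ov{\HH}_\II)$. The bracket-preservation condition $[D,U_{\le1}]\sub U_{\le1}$ then cuts this down to $U_{\le 2}(\ov{\HH}_\II)$. Here we use that, locally, $\ad(u)$ preserves $U_{\le 1}$ iff $u\in U_{\le 2}$, which follows from the symbol calculus $\gr U=S(\pi(\II)^\perp/\pi(\II))\ot\OO$ and the Poisson bracket.

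For surjectivity of the anchor, choose locally a splitting $\II=\{x+A(x)\}$, i.e. a trivialization of the isotropic family, and realize $\II$ as $g\cdot \II_0$ for a local map $g:S\to$ (the group generated by $\exp$ of $\WW_{\le 2}(V)$) with $\II_0$ constant. Then $g$ acts on $\OO\ot\WW(V)$, and the operator $D_\xi:=g\circ\xi\circ g^{-1}$, where $\xi$ acts on $M^r(\II_0)=\OO\ot M^r_0$ through the first factor, gives a lift of $\xi$. This lift commutes with the right $\WW(V)$-action up to conjugation by $g$. To make it genuinely commute, we instead use the operator
$$D_\xi(m)=\xi(\tilde m)\ \mathrm{mod}\ \II\cdot\WW(V),$$
where $\tilde m$ is a lift of $m$ to $\OO\ot\WW(V)$. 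This is well defined exactly when $\xi(\II)\sub\II\cdot\WW(V)$ modulo the allowed corrections. The failure $\xi(\II)$ defines, via $\II\to(\OO\ot\WW_{\le1})/\II$, an element of $\Hom(\II,\HH/\II)$, and we correct by left multiplication by a quadratic element $q\in\OO\ot\WW_{\le2}(V)$ with $[q,\cdot]$ inducing that map. Such $q$ exists locally because the map is "symmetric" (isotropy is preserved to first order), and $\sp(V)\simeq S^2V$ acts transitively on isotropic Grassmannian tangent directions, with the degree-$\le1$ part handling the lift direction. The corrected operator $m\mapsto \xi(\tilde m)-q\tilde m$ is right-$\WW(V)$-linear, has symbol $\xi$, and raises filtration by at most $2$; the latter gives \eqref{D-filtration-property}. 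Since $\ad(q)$ preserves $\WW_{\le1}$, the operator also satisfies the third condition, so it lies in $\AA_\II$.

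\textbf{Remaining assertions.} Closure under bracket is immediate from the three defining conditions, since the commutator of first-order operators with symbols in $\TT$ behaves correctly and the Jacobi identity gives the third condition. For the $\Z/2$-graded case we choose $q$ even, and the kernel becomes $\WW^{ev}_{\le2}(\II^\perp/\II)$. For (ii), $\AA_\II/\OO$ acts on $U_{\le1}/\OO\cdot 1$-compatible data, i.e. on $\ov{\HH}_\II=U_{\le1}(\ov{\HH}_\II)$, by derivations preserving the bracket and fixing $1$. Comparing the exact sequences, whose kernel is $U_{\le2}/\OO$ mapped isomorphically onto $\mathfrak{sp}$-extended-by-$\ov{\HH}$ infinitesimal Heisenberg automorphisms, the five lemma gives the isomorphism with $\AA_{\ov{\HH}_\II}$.

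\textbf{Main obstacle.} I expect the hard step to be the existence of the correcting quadratic element $q$, i.e. showing that the obstruction class lies in the image of $\ad:\WW_{\le 2}\to\Hom(\II,\HH/\II)$ in the super setting with signs. I would handle this via the local normal form $\VV=\II\oplus\II^\vee\oplus\VV'$ from Lemma \ref{module-redII-lem}, writing the obstruction explicitly.
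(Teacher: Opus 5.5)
Your proof is correct. It constructs the same kind of lift as the paper: the vector field acting on the $\OO$-factor, plus left multiplication by a section of $\OO\ot\WW_{\le 2}(V)$. The difference is in how existence and well-definedness are verified.

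\textbf{The paper's route.} It fixes a constant decomposition $V=(I_0\oplus I_0^*)\oplus V'$ and writes $\II$ near $s_0$ as the graph of $(\phi,\psi,\la)$. It then works in the model $M^r(\II)\simeq\OO\ot\WW(V')\ot S(I_0^*)$, where $x\in I_0$ acts by $\pa^r_x-l_{\phi(x)+\psi(x)+\la(x)}J^{\ov{x}}$. There it solves explicitly for $D_v=v+l_{F+G}$ with $F\in\OO\ot(S^2(I_0^*)\oplus I_0^*)$ and $G\in\OO\ot I_0^*\ot V'$.

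\textbf{Your route.} You descend $\tilde m\mapsto\xi(\tilde m)-q\tilde m$ from $\OO\ot\WW(V)$. This makes right $\WW(V)$-linearity, the symbol, \eqref{D-filtration-property} and the third condition automatic. The only requirement left is $[q,i]\equiv\xi(i)\bmod\II$ for $i\in\II$. This is the invariant form of the paper's computation, and essentially the mechanism of Sec.~\ref{another-constr-sec} (Theorem~\ref{two-def-thm}), which the paper's remark says also yields (i).

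The step you flag as the main obstacle is routine.
\begin{itemize}
\item Differentiating $[i,i']=0$ shows that the $\II'$-component of $\beta_\xi\colon i\mapsto\xi(i)\bmod\II$ is (super)symmetric.
\item In a local decomposition $\HH=\OO\oplus\II\oplus\II'\oplus\VV'$, the $\OO$-, $\II'$- and $\VV'$-components of $\beta_\xi$ are $\ad$ of sections of $\II'$, $S^2(\II')$ and $\II'\cdot\VV'$ respectively. This is exactly the shape of the paper's $F+G$.
\item When $\II\sub\OO\ot V$, the $\OO$-component vanishes, which justifies choosing $q$ even.
\end{itemize}
The paper's route gives explicit formulas, reused e.g.\ in Sec.~\ref{2dim-for-sec}. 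Yours shows why the lift exists without sign bookkeeping in a model.

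\textbf{Minor points.}
\begin{itemize}
\item Drop the abandoned paragraph with $g\circ\xi\circ g^{-1}$; exponentials of quadratic elements do not live in $\WW(V)$ anyway.
\item $M^r(\II)\simeq U(\ov\HH_\II)\ot S(\II^\vee)$ is an identification of left $U(\ov\HH_\II)$-modules, not of bimodules. What you actually use is $\und{\End}_{\WW(V)^{op}}(M^r(\II))\simeq U(\ov\HH_\II)$.
\item For \eqref{D-filtration-property} on all of $\AA_\II$, also note that the kernel $U_{\le 2}(\ov\HH_\II)$ acts by left multiplication by elements of $U_{\le 2}(\fc(\II))$.
\item In (ii) you do not address the claim about the adjoint action on $U_{\le 2}(\ov\HH_\II)$. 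It holds because that action is a derivation of $U(\ov\HH_\II)$ determined by its values on $\ov\HH_\II$.
\item You also leave out the identification $\AA^{ev}_\II/\OO\simeq\AA_{\II^\perp/\II}$. It follows by the same five-lemma argument you give for $\AA_{\ov\HH_\II}$.
\end{itemize}
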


\begin{proof} (i) 
It is enough to show that locally every vector field $v$ lifts to a section of $\AA^{ev}_{\II}$. Let $I_0=\II|_{s_0}\sub V$ for a point $s_0\in S$.
We can fix a (constant) orthogonal decomposition
$$V=(I_0\oplus I_0^*)\oplus V',$$
where $V'$ is a symplectic supervector space, and $I_0\oplus I_0^*$ is equipped with a standard symplectic form (such that $(x,x^*)=\lan x,x^*\ran$ for $x\in I_0$, $x^*\in I_0^*$). 
Then near $s_0$ the isotropic subbundle 
$\II\sub \OO\ot \WW_{\le 1}(V)$ is
a graph of an even morphism 
$$\OO\ot I_0\rTo{(\phi,\psi,\la)} \OO\ot (I_0^*\oplus V'\oplus \C).$$
So $\phi:I_0\to \OO\ot I_0^*$, $\psi:I_0\to \OO\ot V'$ and $\la:I_0\to \OO$ are matrices of local functions on $S$ satisfying
$$-\phi^*+\phi+\psi^{\dagger}\psi=0,$$
where $\psi^{\dagger}:V'\to \OO\ot I_0^*$ is the dual of $\psi$ with respect to the symplectic form $(\cdot,\cdot)_{V'}$ on $V'$
(so that $(v',\psi(x))_{V'}=\lan \psi^{\dagger}(v'),x\ran$).

The embedding $\WW(V')\ot S(I_0^*)\to \WW(V)$, together with an action on the vacuum vector, give an identification
\begin{equation}\label{Mr-I-ident-eq}
\OO\ot \WW(V')\ot S(I_0^*)\rTo{\sim} M^r(\II).
\end{equation}
For an element $x$ of an algebra we denote by $l_x$ and $r_x$ the operators of left and right multiplications by $x$.
It is easy to check that under the isomorphism \eqref{Mr-I-ident-eq},
$x^*\in I_0^*$ (resp., $y\in V'$) acts on the left-hand side by $r_{x^*}$ (resp., $r_y$),
whereas $x\in I_0$ acts by 
$$\pa^r_x-l_{\phi(x)+\psi(x)+\la(x)}J^{\ov{x}},$$
where $J$ is the grading operator (equal to $1$ on the even component and to $-1$ on the odd component), and $\pa^r_x$ is 
the ``right derivation" given by
$$\pa^r_x(f\ot x_1^*\ldots x_m^*)=f\ot x_1^*\ldots x_{m-1}\cdot\lan x_m^*,x\ran+(-1)^{\ov{x}\ov{x}_m^*}f\ot x_1^*\ldots \lan x_m^*,x\ran x_m^*+\ldots,$$
where $f\in \WW(V')$, $x_i^*\in I_0^*$.

We denote the action of a vector field $v$ on the base $S$ on sections of trivial bundles over $S$ simply as $v$.
We need to construct an operator $D_v$ on $\OO\ot \WW(V')\ot S(I_0^*)$, commuting with the right action of $\WW(V)$ and satisfying
$[D_v,f]=v(f)$ for $f\in \OO_S$. We look for $D_v$ in the form
$$D_v=v+l_{F+G},$$
with $F\in \OO\ot (S^2(I_0^*)\oplus I_0^*)$, $G\in \OO\ot I_0^*\ot V'$ (of the same parity as $v$.
Then commutation with the action of $I_0^*$ and $\WW(V')$ is automatic. 
Using the relations
$$v r_a=r_a v+ r_{v(a)} J^{\ov{v}}, \ \ \pa^r_x l_b=l_b \pa^r_x + l_{\pa^r_x(b)} J^{\ov{b}}$$
we can restate the fact that $D_v$ commutes with $\pa^r_x-r_{\phi(x)}-r_{\psi(x)}-r_{\la(x)}$, for $x\in I_0$, as
$$\pa^r_x(F+G)+v(\phi(x)+\psi(x)+\la(x))+[G,\psi(x)]=0
$$
(where we use supercommutator).
We claim that these equations on $F$ and $G$ have a unique solution. 
Namely,
let $(e_i)$ be a basis of $I_0$, $(e_i^*)$ the dual basis of $I_0^*$ (with $\lan e^*_i,e_i\ran=1$), and let $(f_i)$ be a basis of $V'$.
We can write
$$\phi(e_j)=\sum_j a_{ij}e_i^*, \ \ \psi(e_j)=\sum_j b_{ij}f_i, \ \ \la(e_j)=c_j$$
for some functions $(a_{ij})$, $(b_{ij})$ and $(c_i)$ on $S$.
Then the solution for $(F,G)$ is given by
$$F=\frac{1}{2}\sum_{i,j} (A_{ij}-v(a_{ij}))e_i^*e_j^*-\sum_iv(c_i)e_i^*, \ \ G=-\sum_{i,j} v(b_{ij})f_ie_j^*,
$$
where 
$$A_{ij}=\sum_{k,l}(-1)^{\ov{v}(\ov{e}_j+\ov{f}_k)+\ov{e}_i\ov{f}_k}b_{kj}v(b_{li})(f_l,f_k).
$$

The property \eqref{D-filtration-property} holds since it holds for the local lifts $D_v$ above (and since it obviously holds for $D$ in $U_{\le 2}(\ov{\HH}_\II)$.

\noindent 
(ii) The adjoint action of $D\in \AA_\II/\OO$
on $\ov{\HH}_I$ is compatible with the structure of an $\OO_S$-linear Heisenberg Lie algebra on $\ov{\HH}_I$.
In other words, we can view $\ad(D)$ as a section of the Atiyah Lie algebroid $\AA_{\ov{\HH}_I}$.
The obtained map 
$$\AA_{\II}/\OO\to \AA_{\ov{\HH}_I}$$
is a map of Lie algebroids inducing an isomorphism of the kernels of the anchor maps, since
$U_{\le 2}(\ov{\HH}_I)/\OO$ is exactly the sheaf of $\OO_S$-derivations of
$\ov{\HH}_\II$, trivial on $\OO_S$.
Hence, our map of Lie algebroids is an isomorphism.
\end{proof}

\begin{remark}
Part (i) of Proposition \ref{anchor-sur-prop} also follows from another construction of a Lie algebroid on the isotropic Grassmannian of the Heisenberg 
algebra in Sec.\ \ref{another-constr-sec} below.
\end{remark}

Note that by construction, $\wt{\AA}_{\II}$ acts on the left on $M^r(\II)$, commuting with the right $\WW(V)$-action, so that 
$\WW(\ov{\HH}_\II)$ acts naturally.

In the case of a Lagrangian subbundle $\LL\sub \OO_S\ot \WW_{\le 1}(V)$, we have $\ov{\HH}_\LL=\OO_S$, so
$\wt{\AA}_{\LL}=\AA_{\LL}$ is a Picard-Lie algebroid (the kernel of the anchor map is $\OO$).
It is easy to describe the corresponding tdo algebra.

\begin{lemma}
The tdo $U\AA_{\LL}$ (universal enveloping of $\AA_{\LL}$) is isomorphic to the subalgebra $D_{\WW(V)^{op}}(M^r(\LL))$ of differential operators 
$D:M^r(\LL)\to M^r(\LL)$, commuting with the right action of $\WW(V)$.
\end{lemma}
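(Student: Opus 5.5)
The plan is to compare two filtered algebras: the universal enveloping algebra $U\AA_{\LL}$ of the Picard algebroid (with the central $\OO\sub\AA_\LL$ identified with $\OO_S$, so that $U\AA_\LL$ is a tdo), and the algebra $D_{\WW(V)^{op}}(M^r(\LL))$ of differential operators on $M^r(\LL)$ commuting with the right $\WW(V)$-action, filtered by order. First I will construct the map. By definition $\AA_\LL\sub D_{\le 1}(M^r(\LL),M^r(\LL))$, and it consists of operators commuting with the right $\WW(V)$-action. Also $\OO_S\sub\AA_\LL$ acts by multiplication, and the inclusion respects brackets. By the universal property of $U\AA_\LL$, the inclusion $\AA_\LL\hra D_{\WW(V)^{op}}(M^r(\LL))$ therefore extends to a homomorphism of filtered $\OO$-algebras
$$\Phi:U\AA_\LL\to D_{\WW(V)^{op}}(M^r(\LL)).$$

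Next I will identify the low-order pieces of the target. By \eqref{M-I-main-isom} with $\ov{\HH}_\LL=\OO_S$, we have $\und{\End}_{\WW(V)^{op}}(M^r(\LL))=\OO_S$. So order-zero operators commuting with $\WW(V)$ are exactly $\OO_S$, and $\Phi$ is an isomorphism in filtration degree $0$. I then pass to associated graded algebras. On the source side, $\gr U\AA_\LL\simeq S_{\OO}(\TT_S)$, by the PBW theorem for Lie algebroids with locally free anchor target; this holds here since $\AA_\LL$ is an extension of $\TT_S$ by $\OO$ by Proposition \ref{anchor-sur-prop}(i), and $S$ is assumed smooth locally. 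For the target, I will show that the symbol of an order-$\le n$ operator $D$ commuting with the right action lies in $S^n\TT_S\ot \und{\End}_{\WW(V)^{op}}(M^r(\LL))=S^n\TT_S$. The reason is that the symbol is built from iterated commutators $[\dots[D,f_1],\dots,f_n]$ with $f_i\in\OO_S$. These commutators again commute with the right $\WW(V)$-action, because $\OO_S$ is central with respect to it, and so they lie in $\OO_S$. Thus $\gr$ of the target embeds in $S(\TT_S)$.

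It remains to check that $\gr\Phi$ is surjective in each degree. Given a symbol $v_1\cdots v_n$, I will lift each $v_i$ to $D_{v_i}\in\AA_\LL$ using the surjectivity of the anchor (Proposition \ref{anchor-sur-prop}). The product $D_{v_1}\cdots D_{v_n}$ is then an element of the image with the required symbol, and this gives surjectivity locally, which suffices. Injectivity of $\gr\Phi$ follows because $S(\TT_S)\to\gr D_{\WW(V)^{op}}$ composed with the symbol embedding is the identity on $S(\TT_S)$. Hence $\Phi$ is a filtered isomorphism. The main obstacle is the symbol claim in the target: making precise the notion of differential operators on the quasicoherent but non-finite sheaf $M^r(\LL)$ over a superscheme. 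In particular, one must verify that the iterated commutators with functions have the expected form, and handle the super signs in the symmetric (rather than exterior) nature of the symbol, which I expect to be routine.
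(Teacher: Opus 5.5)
Your proposal is correct and follows essentially the same route as the paper: map $U\AA_\LL\to D_{\WW(V)^{op}}(M^r(\LL))$ via the action, pass to associated graded, and observe that the composite of $S^\bullet(\TT)\to\gr^\bullet D_{\WW(V)^{op}}(M^r(\LL))$ with the symbol embedding into $S^\bullet(\TT)\ot\und{\End}_{\OO\ot\WW(V)^{op}}(M^r(\LL))\simeq S^\bullet(\TT)$ is the identity. Your separate surjectivity step (lifting vector fields and multiplying) is harmless but redundant, since that identity composite already forces both maps to be isomorphisms.
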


\begin{proof}
The action of $\AA_{\LL}$ on $M^r(\LL)$ gives a homomorphism 
$$U\AA_{\LL}\to \DD:=D_{\WW(V)^{op}}(M^r(\LL)),$$
compatible with the natural filtrations. 
Consider the induced morphism on the associated graded quotients 
$$S^\bullet(\TT)\to \gr^\bullet(\DD).$$
its composition with an embedding 
$$\gr^\bullet(\DD)\hra S^\bullet(\TT)\ot \und{\End}_{\OO\ot\WW(V)^{op}}(M^r(\LL))\simeq S^\bullet(\TT)$$
is the identity map. Hence, it is an isomorphism.
\end{proof}

\begin{example}
Let $L\sub V$ be a fixed Lagrangian. We have a natural family of Lagrangians $L_\phi\sub \WW_{\le 1}(V)$ parametrized by $\phi\in L^*$
($L_\phi$ is the graph of $\phi$ in $\C\oplus V$). Let $\LL\sub \OO\ot \WW_{\le 1}(V)$ be the corresponding Lagrangian subbundle over $S=L^*$.
Then for every choice of a Lagrangian complement $L'\sub V$, we have a flat connection on $\AA_{\LL}$ constructed as follows.
We can identify $M(\LL)$ with $\OO_{L^*}\ot S(L')$. Then upon the identification $L'\simeq L^*$ induced by the pairing, the connection is given by
$$\nabla_{\partial_{l'}}=\partial_{l'}+l'\cdot?$$
for $l'\in L'\simeq L^*$,
where the right-hand side is an operator on $\OO_{L^*}\ot S(L')$ (where $\partial_{l'}$ is the differentiation on a constant vector bundle over $L^*$).
\end{example}

Thus, for any $\WW(V)$-module $M$ we have a natural object in the derived category of $\wt{\AA}_{\II}$-modules,
$$C_{\II}^\bullet(M):=M^r_{\OO_X\ot \WW(V)}(\II)\otimes_{\WW(V)}^{\bbL} M.$$
Note that if we forget the full $\wt{\AA}_{\II}$-module structure, but only keep the $U(\ov{\HH}_\II)$-module structure
we get exactly derived coinvariants considered in Sec. \ref{der-coinv-sec} (with the difference that now we only consider the Lie algebra structure on $U(\ov{\HH}_\II$).

\begin{remark} All our Lie algebroids contain $\OO$ as a Lie ideal, and the modules over them we construct (or objects of derived category)
are of weight $1$, i.e., $f\in \OO$ acts by multiplication by $f$. In particular, in case of Lagrangians we get modules over the corresponding tdo.
\end{remark}

\begin{example}\label{Heis-ex-1}
Let $L_+\sub V$ be a Lagrangian such that all isotropic subspaces $I\sub V$ of the form $I=\II|_s$ for $s\in S$, 
satisfy $I^\perp+L_+=V$. Then by Corollary \ref{JL-cor}, the derived coinvariants on $S$, $C_{\II}(M(L_+)$ are concentrated in degree $0$ and we have 
$$C_{\II}(M(L_+))\simeq M_{\ov{\HH}_\II}(\ov{L}_+)$$ 
as an $\ov{\HH}_\II$-module, where $\ov{L}_+$ is the image of $L_+\cap \fc(\II)$ in $\fc(\II)/\II=\ov{\HH}_\II$ (note that $\ov{L}_+$ is Lagrangian in
$\ov{\HH}_\II$).
Thus, our construction gives a structure of an $\AA_{\II}$-module on the sheaf of Fock modules $M(\ov{L}_+)$ over $S$.
\end{example}

\begin{remark}
The Lie algebroid $\wt{\AA}_{\II}$ has a richer structure, which is called in \cite[Sec.\ 1]{BS} an {\it Atiyah algebra} structure 
(more precisely, $U(\ov{\HH}_\II)$-Atiyah algebra), which keeps track
of a structure of an associative $\OO$-algebra on the kernel of the anchor map. The modules we construct over $\wt{\AA}_{\II}$ are actually modules
over an Atiyah algebra (so the kernel of the anchor map also acts as an associative algebra).
\end{remark}

\subsubsection{Pull-backs}

Recall that one has a notion of pull-back for Lie (resp., Picard) algebroids with respect to any morphisms between smooth superschemes $f:Y\to X$ 
(see \cite[Sec.\ 2.2]{BB} for the even case).
More concretely, for a Lie algebroid $\AA_X$ on $X$, one sets 
$$f^\bullet \AA_X:=f^*\AA_X\times_{f^*\TT_X} \TT_Y,$$
with the anchor map $f^\bullet \AA_X\to \TT_Y$ given by the natural projection, and the (super) bracket given by
\begin{align*}
&[(\sum_i \phi_i\cdot a_i,v), (\sum_j\phi'_j\cdot a'_j,v')] \\
&=
(\sum (-1)^{\ov{a_i}\cdot \ov{\phi'_j}}\phi_i\phi'_j\cdot [a_i,a'_j]+\sum_j v(\phi'_j)a'_j-(-1)^{\ov{v}\cdot\ov{v'}}\sum_i v'(\phi_i)a_i,[v,v']),
\end{align*}
where $a_i,a'_j\in f^{-1}\AA_X$, $\phi_i,\phi'_j\in \OO_Y$, $v,v'\in \TT_Y$.


Recall also (see \cite{BB}) that given a Lie algebroid $\AA$ on $X$, and a module $M$ over $\AA$, and a morphism $f:Y\to X$,
the pullback $f^*M$ has a natural structure of $f^\bullet\AA$-module.

\begin{lemma}
The formation of the Lie algebroid $\AA_\II$ is compatible with pull-backs: for an isotropic subbundle $\II\sub \OO_S\ot \WW_{\le 1}(V)$,
for a morphism $f:S'\to S$ one has 
$$\AA_{f^*\II}\simeq f^\bullet\AA_\II.$$
The natural isomorphism
$$f^*M^r(\II)\simeq M^r(f^*\II)$$
is compatible with the actions of $\AA_{f^*\II}$.
\end{lemma}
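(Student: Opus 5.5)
We construct a natural map of Lie algebroids $f^\bullet\AA_\II\to\AA_{f^*\II}$ directly from the action on Fock modules, check that it is compatible with the actions, and then show it is an isomorphism by comparing the anchor sequences.

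First, the identification $f^*M^r(\II)\simeq M^r(f^*\II)$ is immediate from right exactness of $f^*$: we have $f^*(\OO_S\ot\WW(V))=\OO_{S'}\ot\WW(V)$, and the image of $f^*\II\ot\WW(V)$ is $f^*\II\cdot\WW(V)$. This isomorphism is compatible with the right $\WW(V)$-action and with the filtrations. Next, a local section of $f^\bullet\AA_\II$ is a pair $(\sum_i\phi_i\ot a_i, v)$ with $a_i\in f^{-1}\AA_\II$, $\phi_i\in\OO_{S'}$, $v\in\TT_{S'}$, and with $\sum_i\phi_i\, df(\sigma(a_i))=df(v)$. It acts on $f^*M^r(\II)=\OO_{S'}\ot_{f^{-1}\OO_S}f^{-1}M^r(\II)$ by the standard pullback formula
$$(\psi\ot m)\mapsto v(\psi)\ot m+\sum_i\psi\phi_i\ot a_i(m),$$
exactly as in \cite{BB}. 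The compatibility condition guarantees that this is well defined, i.e.\ independent of the tensor representation. The resulting operator is a first order differential operator with scalar symbol $v$, and it commutes with the right $\WW(V)$-action because each $a_i$ does and $v$ acts only on the $\OO_{S'}$ factor. To see that it preserves $U_{\le1}(\ov{\HH}_{f^*\II})$ under commutators, we use $\ov{\HH}_{f^*\II}=f^*\ov{\HH}_\II$ (centralizers and quotients of subbundles commute with pullback) and the fact that each $a_i$ satisfies the third condition of Definition \ref{main-def}. This gives a morphism of Lie algebroids $f^\bullet\AA_\II\to\AA_{f^*\II}$ over $\TT_{S'}$. Compatibility with brackets follows from the bracket formula for $f^\bullet$, since both sides are computed as commutators of operators on $f^*M^r(\II)$.

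To prove that this morphism is an isomorphism, we compare anchor sequences using Proposition \ref{anchor-sur-prop}(i). Since the anchor of $\AA_\II$ is surjective, $f^\bullet\AA_\II$ is an extension of $\TT_{S'}$ by $f^*U_{\le2}(\ov{\HH}_\II)$; surjectivity is needed so that the fiber product has the expected kernel $f^*\ker$. The target is an extension of $\TT_{S'}$ by $U_{\le2}(\ov{\HH}_{f^*\II})=f^*U_{\le2}(\ov{\HH}_\II)$. On kernels the map is the identity under this identification, because an element $\sum\phi_i\ot u_i$ with $u_i\in U_{\le2}(\ov{\HH}_\II)$ acts by left multiplication by the corresponding element of $U(\ov{\HH}_{f^*\II})$. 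By the five lemma the map is an isomorphism, and compatibility of the actions on $f^*M^r(\II)\simeq M^r(f^*\II)$ holds by construction.

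The main technical point is well-definedness of the action on $f^*M^r(\II)$, together with verifying that $\ov{\HH}$ and $U_{\le2}$ commute with base change. The latter is local: we use the splitting of Lemma \ref{module-redII-lem}/Proposition \ref{anchor-sur-prop}, in which $\II$ is the graph of a morphism $(\phi,\psi,\la)$, and these data simply pull back along $f$.
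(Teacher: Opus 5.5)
Your proposal is correct and follows essentially the same route as the paper, whose entire proof is that the lemma ``easily follows from Proposition~\ref{anchor-sur-prop}(i).'' That is, the natural map $f^\bullet\AA_\II\to\AA_{f^*\II}$ is an isomorphism because both sides are extensions of $\TT_{S'}$ by $f^*U_{\le 2}(\ov{\HH}_\II)\simeq U_{\le 2}(\ov{\HH}_{f^*\II})$, and your argument supplies the details the paper leaves implicit: well-definedness of the pullback action, base change for $\ov{\HH}$, and the five-lemma comparison.
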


\begin{proof} This easily follows from Proposition \ref{anchor-sur-prop}(i).
\end{proof}

Thus, our Lie algebroids are pull-backs of the universal one on the isotropic Grassmannian of $V$. In Sec.\ \ref{another-constr-sec} below we will give a different construction of
this universal Lie algebroid.



\subsubsection{Pair of varying isotropic subspaces}

Assume now we have two isotropic subbundles $J,J'\sub \OO_S\ot \WW_{\le 1}(V)$.
Then the construction of Sec.\ \ref{constr-main-sec} gives a Lie algebroid $\AA_J$ on $S$, such that $M^r(J)$ has a structure of an
$\AA_J-\WW(V)$-bimodule, as well as a Lie algebroid $\AA_{J'}$, such that $M(J')$ has a structure of a $\WW(V)-\AA_{J'}^{op}$-bimodule.
Hence, the derived coinvariants object $M^r(J)\otimes^{\bbL}_{\WW(V)} M(J')$ has a structure of $\AA_J-\AA_{J'}$-bimodule.


\subsection{Another construction of the universal Lie algebroids}
\label{another-constr-sec}

Let $V$ be a symplectic super vector space, $H=\C\oplus V$ the corresponding Heisenberg Lie algebra.
Let $IG(H)=IG_d(H)$ denote the Grassmannian of isotropic subspaces in $H$ of given dimension $d$. 
Note that $IG(H)$ is a bundle over the usual isotropic Grassmannian $IG(V)$ with the fiber $I^*\simeq V/I$ over $I\sub V$.

We have a transitive action of the symplectic (super)group $\Sp(V)$ on $IG(V)$ and of the affine symplectic group $\Sp(V)\ltimes V$ on $IG(H)$. 
Hence, the Lie algebra $\fg:=\mathfrak{sp}(V)$ (resp., $\fg\ltimes V$) acts on $IG(V)$ (resp., $IG(H)$), 
so we have the corresponding Lie algebroid $\a:\OO\ot \fg \to \TT_{IG(V)}$ (resp., 
$\a:_H\OO\ot (\fg\ltimes V)\to \TT_{IG(H)}$).

Let us fix the base point $I_0\in IG(V)$.
The kernel of the anchor map $K:=\ker(\a)$ is the $\Sp(V)$-equivariant bundle associated with the adjoint representation $\fp_{I_0}$ of the stabilizer subgroup
$P_{I_0}\sub \Sp(V)$. 
Similarly, for the base point $(I_0,0)\in IG(H)$, $K_H:=\ker(\a_H)$ is the $\Sp(V)\ltimes V$-equivariant bundle assocaited with the adjoint representation
$\fp_{I_0}\ltimes I_0$ of the stabilizer subgroup $P_{I_0}\ltimes I_0$.

We have
$$\fp_{I_0}=S^2(I_0^\perp)+I_0V\sub S^2(V)=\fg,$$
and $I_0V\sub \fp_{I_0}$ is a Lie ideal. Furthermore, the embedding $I_0V\to \mathfrak{sp}(V)$ lifts to a natural $P_{I_0}$-equivariant map
$$i=i^l_{I_0}:I_0V\to \wt{\fg}:=\WW^{ev}_{\le 2}(V)$$
with the image $I_0\cdot V\sub \wt{\fg}$ (we just take the product of $x\in I_0$ and $v\in V$ in $\WW^{ev}_{\le 2}(V)$ wirh $x$ on the left).
We also have a $P_{I_0}\ltimes I_0$-equivariant map
$$i:I_0V\oplus I_0\to \WW_{\le 2}(V)$$
with the image $I_0\cdot V+I_0$.

Let $K_0\sub K$ (resp., $K_{H,0}\sub K_H$) denote the $\Sp(V)$-equivariant subbundle corresponding to the subrepresentation of the stabilizer,
$I_0V\sub \fp_{I_0}$ (resp., $I_0V\oplus I_0\sub \fp_{I_0}\ltimes I_0$).
Note that the fiber of $K_0$ at $I$ (resp., of $K_{V,0}$ at $(I, v\in V/I)$) is naturally identified with $IV$ (resp., $\exp(\ad_v)(IV+I)$).

Then $i(K_0)\sub \OO\ot \wt{\fg}$ (resp., $i(K_{H,0})\sub \OO\ot \WW_{\le 2}(V)$) is an $\Sp(V)$-equivariant (resp., $\Sp(V)\ltimes V$-equivariant)
subbundle contained in the kernel of the anchor map.
We claim that $i(K_0)$ is actually a Lie ideal in $\OO_X\ot \wt{\fg}$ (resp., $\OO_X\ot \WW_{\le 2}(V)$) with respect
to the Lie algebroid bracket. This follows from the following general result.

\begin{lemma}\label{Lie-ideal-lem}
Let $G$ be an algebraic group acting on a variety $X$, and let $\C\to \wt{\fg}\to \fg$ be a central extension of the Lie algebra of $G$, equipped with a $G$-equivariant structure
(i.e., $G$ acts on $\wt{\fg}$ inducing the adjoint action of $\fg$ on $\wt{\fg}$). Then any $G$-equivariant subbundle of $\ker(\OO\ot \wt{\fg}\to T_X)$ is a Lie ideal in
$\OO\ot \wt{\fg}$.
\end{lemma}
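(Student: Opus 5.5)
We will work directly with the Lie algebroid bracket on the action algebroid $\OO_X\ot\wt{\fg}$, where the anchor of $a\in\wt{\fg}$ is the vector field $\a(\bar a)$ generated by the image $\bar a\in\fg$. For sections $f\ot a$, $g\ot b$ the bracket is
$$[f\ot a,g\ot b]=fg\ot[a,b]+f\,\a(\bar a)(g)\ot b-(-1)^{\ov{(f\ot a)}\,\ov{(g\ot b)}}\,g\,\a(\bar b)(f)\ot a.$$
Let $E\sub\ker(\OO\ot\wt{\fg}\to T_X)$ be a $G$-equivariant subbundle. We must show $[s,e]\in E$ for every section $s$ of $\OO\ot\wt{\fg}$ and every section $e$ of $E$.

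Since the bracket is $\OO$-linear in $e$ up to anchor terms, we first reduce to $s=1\ot a$ with $a\in\wt{\fg}$ constant. Write $e=\sum_j g_j\ot b_j$. The term $[f\ot a,e]$ equals $f[1\ot a,e]$ plus a term proportional to the anchor of $e$, and that anchor vanishes because $E$ lies in the kernel of the anchor. So it suffices to treat constant $a$.

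For $s=1\ot a$, we get $[1\ot a,e]=\sum_j\bigl(g_j\ot[a,b_j]+\a(\bar a)(g_j)\ot b_j\bigr)$. This is exactly the derivative along $a$ of the natural $G$-action on sections of the trivial bundle $\OO\ot\wt{\fg}$, where $G$ acts on $X$ and on $\wt{\fg}$ by the given equivariant structure. That action is the Lie derivative $L_a$ coming from differentiating $(h\cdot e)(x)=h\,e(h^{-1}x)$, up to the sign convention for the vector field. The compatibility assumption matters here: it says that the derivative of the $G$-action on $\wt{\fg}$ is $\ad$, so the term $g_j\ot[a,b_j]$ is indeed the infinitesimal action on the fiber. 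Since $E$ is a $G$-equivariant subbundle, its sheaf of sections is stable under the infinitesimal action of $\fg$, that is, under $L_a$. Therefore $[1\ot a,e]\in E$.

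Finally, the bracket is skew-symmetric, so $E$ is a two-sided ideal. The main point requiring care is the sign and identification check between the algebroid bracket $[1\ot a,\cdot]$ and the infinitesimal $G$-action. One must confirm that the vector field in the bracket is the one generated by the action, with the matching sign convention so that both terms appear with consistent signs. In the super setting one should also track the Koszul signs, but these carry over verbatim with $G$ a supergroup and equivariance meaning stability under the Harish-Chandra pair $(G_0,\fg)$.
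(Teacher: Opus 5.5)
Your proposal is correct and follows essentially the same route as the paper. $G$-equivariance of $E$ gives stability under the infinitesimal $\fg$-action on $\OO\ot\wt{\fg}$, that action is bracketing with $1\ot a$ (because $\wt{\fg}$ carries the adjoint action), and the vanishing of the anchor on $E$ extends this to bracketing with $f\ot a$; your explicit check of the sign convention for the generating vector fields is a detail the paper leaves implicit.
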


\begin{proof}
First, we recall some standard facts about $G$-equivariant bundles. 
For any $G$-equivariant bundle $E$ on $X$, there is a natural $\fg$-action on the sheaf $E$ that endows $E$ with a structure of a module over the Lie algebroid $\OO_X\ot \fg$.
This construction is compatible with morphisms of $G$-equivariant bundles. Furthermore, if $E=\OO_X\ot V$, where $V$ is a $G$-representation,
then the $\fg$-representation on $\Ga(U,E)=\OO_X(U)\ot_{\C} V$ is the tensor product of the natural $\fg$-representations on $\OO_X(U)$ and on $V$.

We deduce that any $G$-equivariant subbundle $E\sub \OO\ot \wt{\fg}$ is preserved by the action of $\fg$ on $\OO\ot \wt{\fg}$. The latter action is induced by
the Lie bracket with $1\ot X$, where $X\in \wt{\fg}$. If in addition, $E$ lies in the kernel of the achor map, this implies that $E$ is preserved by the Lie bracket with $f\ot X$,
for any local function $f$ on $X$, as claimed.
\end{proof}

Hence, the quotients
$$\AA'_{IG(V)}:=\OO\ot \WW^{ev}_{\le 2}(V)/i(K_0),$$
$$\AA'_{IG(H)}:=\OO\ot \WW_{\le 2}(V)/i(K_{H,0})$$
have a natural structure of a Lie algebroid over $IG(V)$ and $IG(H)$, respectively. 
Furthermore, their anchor maps fit into exact sequences
$$0\to \WW^{ev}_{\le 2}(\II^\perp/\II)\to \AA'_{IG(V)}\rTo{\a} T_{IG(V)}\to 0$$
$$0\to U_{\le 2}(\fc(\II)/\II)\to \AA'_{IG(V)}\rTo{\a} T_{IG(H)}\to 0$$
where $\II\sub \OO\ot V$ (resp., $\II\sub \OO\ot H$) is the universal isotropic subbundle. 
Note that we have homomorphisms of Lie algebras
$$\WW^{ev}_{\le 2}(V)\to \Ga(IG(V),\AA'_{IG(V)}), \ \ \WW_{\le 2}(V)\to \Ga(IG(H),\AA'_{IG(H)})$$

Let us denote by $\AA^{ev}_{IG(V)}$ (resp., $\AA_{IG(H)}$) the Lie algebroids associated by the construction of Sec.\ \ref{constr-main-sec}
with the universal isotropic subbundle on $IG(V)$ (resp., on $IG(H)$).

\begin{theorem}\label{two-def-thm}
There are unique isomorphisms of Lie algebroids,
\begin{equation}\label{two-def-isom}
\AA'_{IG(V)}\simeq \AA^{ev}_{IG(V)}, \ \ \AA'_{IG(H)}\simeq \AA_{IG(H)},
\end{equation}
identical on $\WW^{ev}_{\le 2}(\II^\perp/\II)$ (resp., on $U_{\le 2}(\fc(\II)/\II)$), and
such that 
the operator $D_x\in \AA^{ev}_{IG(V)}\sub \End_{\WW(V)^{op}}(M^r(\II))$ associated with $x\in \WW^{ev}_{\le 2}(V)$ is induced by the operator on 
$\OO\ot \WW(V)$,
$$D_x(f\ot y)=v_x(f)\ot y+(-1)^{\ov{f}\ov{x}}f\cdot xy,$$
where $v_x$ is the vector field on $IG(V)$ coming from the projection of $x$ to $\mathfrak{sp}(V)$
(and similarly for $D_x\in\AA_{IG(H)}$ associated with $x\in \WW_{\le 2}(V)$).
\end{theorem}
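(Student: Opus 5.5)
The plan is to construct a map of Lie algebroids
$$\Phi:\OO\ot \WW^{ev}_{\le 2}(V)\to \AA^{ev}_{IG(V)}$$
(resp. $\OO\ot\WW_{\le 2}(V)\to \AA_{IG(H)}$), show that it kills $i(K_0)$ (resp. $i(K_{H,0})$), and then compare the two exact sequences. On the global generators we set $\Phi(1\ot x)=D_x$, with $D_x$ as in the statement, and extend $\OO$-linearly. We then carry out the following checks.

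\textbf{Step 1: $D_x$ is well defined on $M^r(\II)=\OO\ot\WW(V)/\II\cdot\WW(V)$.} We must check that $D_x$ preserves the submodule $\II\cdot \WW(V)$. For a local section $s$ of $\II$ and $y\in \WW(V)$, we have $D_x(sy)=v_x(s)y+xsy$, and $xsy=[x,s]y+sxy$. The term $sxy$ lies in $\II\cdot\WW(V)$. The vector field $v_x$ is, by definition, the infinitesimal motion of $\II$ under $\ad(x)$, so $v_x(s)+[x,s]\in \II$ for sections of the tautological bundle. With the sign conventions matched, this gives $D_x(\II\WW(V))\sub\II\WW(V)$.

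\textbf{Step 2: $D_x$ lies in $\AA^{ev}_{IG(V)}$ (resp. $\AA_{IG(H)}$).} Commutation with right multiplication by $\WW(V)$ is clear. The symbol of $D_x$ is $v_x\in\TT$. Evenness holds for $x$ even. The bracket condition $[D_x,U_{\le 1}(\ov\HH_\II)]\sub U_{\le 1}$ holds because $\ad(x)$ preserves $\WW_{\le 1}$. Extending $\OO$-linearly gives an $\OO$-linear map $\Phi$ compatible with anchors. It is a Lie algebroid map because $[D_x,D_{x'}]=D_{[x,x']}$: we have $[v_x,v_{x'}]=v_{[x,x']}$, and left multiplications compose as the commutator in $\WW(V)$. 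The Leibniz rule for $\OO$-multiples then follows formally, since both sides are algebroids with the same anchor.

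\textbf{Step 3: $\Phi$ kills $i(K_0)$.} At a point $I$ of $IG(V)$ (resp. $(I,v)$ of $IG(H)$), a section of $i(K_0)$ is a sum of products $a\cdot w$ with $a\in I$ (on the left) and $w\in V$. Its vector field vanishes, and its action on $M^r(\II)$ is left multiplication by $aw$. We need left multiplication by $aw$ to vanish, not right multiplication. Here we write $aw = \pm wa + (a,w)$, so left multiplication by $aw$ equals left multiplication by $\pm wa$ plus a constant. I expect this sign and constant bookkeeping to be the main obstacle. Rather than fight it, the cleaner check is that $i(K_0)$ is defined with $x\in I_0$ on the left. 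Then $\Phi(\sum f_j\ot a_jw_j)$, with $\sum f_j a_j$ a section of $\II$, acts by $m\mapsto \sum f_ja_jw_jm$, which lies in $\II\cdot\WW(V)$ and hence is zero in $M^r(\II)$. The vector field part is $\sum f_j v_{a_jw_j}$, which vanishes since $K_0\sub\ker\a$. One should verify the ideal $\II\WW(V)$ is a left-right sub-bimodule in the appropriate sense; it is a left $\OO$-submodule stable under left multiplication by $\II$, which is all that is needed. Equivariance, together with Lemma \ref{Lie-ideal-lem}, reduces this to the base point if desired.

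\textbf{Step 4: comparison of sequences.} We obtain $\ov\Phi:\AA'\to\AA$ compatible with anchors. Both anchors are surjective: for $\AA'$ by transitivity of the group action, and for $\AA$ by Proposition \ref{anchor-sur-prop}. It therefore suffices to check that $\ov\Phi$ is the identity on kernels. On the kernel of $\a$ in $\AA'$, a fibre element is represented by an element of $\OO\ot\WW_{\le 2}$ whose $\mathfrak{sp}$-image lies in $\fp_I$ (resp. in the stabilizer). Modulo $IV$ (resp. $IV+I$), such an element is an element of $\WW_{\le 2}(I^\perp)$ (resp. $U_{\le 2}(\fc(I))$), which maps to $\WW_{\le 2}(I^\perp/I)$ (resp. $U_{\le 2}(\fc/\II)$). $\Phi$ sends it to left multiplication, which is exactly the natural action of $U_{\le2}(\ov\HH_\II)$ on $M^r(\II)$ used in Proposition \ref{anchor-sur-prop}. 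The five lemma then gives the isomorphism.

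\textbf{Uniqueness.} Uniqueness holds because the images of $1\ot x$ generate $\AA'$ as an $\OO$-module and their images are prescribed.
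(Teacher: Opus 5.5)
Your proposal is correct and follows essentially the same route as the paper's proof. There, too, the Lie algebroid $\OO\ot\wt{\fg}$ acts on $\OO\ot\WW(V)$ by exactly your operators $D_x$ (the tensor-product action). This action descends to $\AA'$ because $\II\cdot V$ lies in the kernel of the anchor and acts by left multiplication into $\II\cdot\WW(V)$. That gives a map $\AA'\to\AA^{ev}$ (resp.\ $\AA_{IG(H)}$), which is then checked to be the identity on the kernels of the anchors.

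The only difference is cosmetic. You verify directly that each $D_x$ preserves $\II\cdot\WW(V)$, using the infinitesimal invariance $v_x(s)+[x,s]\in\II$. The paper instead invokes Lemma \ref{Lie-ideal-lem} to know that $\II\cdot V$ is a Lie ideal.
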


\begin{proof} We discuss the case of $IG=IG(V)$; the case of $IG(H)$ is similar.
We want to define a structure of an $\AA'_{IG}$-module on $M^r(\II)$.
By definition $\AA'_{IG}$ is the quotient of the Lie algebroid $\OO\ot \wt{\fg}$ by the ideal $\II\cdot V$, so we should start by defining
an $\OO\ot \wt{\fg}$-module structure.

Given a $\WW(V)$-module $M$, we can consider $M$ as a representation of the Lie algebra $\wt{\fg}=\WW_{\le 2}^{ev}(V)$.
Hence, $M_{IG}:=\OO_{IG}\ot M$ has a natural structure of a module over the Lie algebroid $\OO_{IG}\ot\wt{\fg}$, given by
\begin{equation}\label{tensor-product-Lie-alg-action}
(f\ot v)(f'\ot m)=f\cdot v(f')\ot m+(-1)^{\ov{f}'\ov{v}}ff'\ot v(m).
\end{equation}
For $M=\WW(V)$ this gives a structure of $\OO_{IG}\ot \wt{\fg}$-module on $\OO_{IG}\ot \WW(V)$, which commutes
with the right action of $\WW(V)$.
 
Now, since $\II\cdot V$ is an ideal in $\OO_{IG}\ot \wt{\fg}$, we get an $\AA'_{IG}$-module structure on
$$M':=\OO_{IG}\ot \WW(V)/((\II\cdot V)\cdot (\OO_{IG}\ot \WW(V))$$
Note that since $\II\cdot V$ is contained in the kernel of the anchor map, its action of $\OO_{IG}\ot \WW(V)$ is just an obvious $\OO$-linear action
induced by the $\OO$-linear algebra structure on $\OO_{IG}\ot \WW(V)$, hence, we get an identification $M'=M^r(\II)$.

The $\AA'_{IG}$-action preserves the $\Z/2$-grading, so we get
a homomorphism of Lie algebroids $\AA'_{IG}\to \AA^{ev}$.  
The compatibility of $\WW_{\le 2}^{ev}(\II^\perp/\II)$-actions follows immediately from the definitions, using the fact that $\WW_{\le 2}(\II^\perp)\sub \OO\ot \wt{\fg}$
belongs to the kernel of the anchor map.
\end{proof}

\begin{remark}
The isomorphisms \eqref{two-def-isom} give rise to Lie algebra homomorphisms
\begin{equation}\label{global-diff-oper-eq}
\ga^{ev}:\WW_{\le 2}^{ev}(V)\to H^0(IG(V),\AA^{ev}_{IG(V)}), \ \ \ga:\WW_{\le 2}(V)\to H^0(IG(H),\AA_{IG(H)}).
\end{equation}
In particular, the Lie algebra $\WW_{\le 2}(V)$ acts on the sheaves of $\II$-coinvariants over $IG(H)$.
\end{remark}



\subsection{Generalization to a flat Heisenberg bundle}\label{flat-Heis-sec}

Now assume that instead of a fixed symplectic super vector space $V$, we have a Heisenberg extension
$$0\to \OO_S\to \HH\to \VV\to 0$$ 
of a symplectic super vector bundle $\VV$ over $S$. 
We assume that $\HH$ is equipped with a {\it Heisenberg flat connection}, i.e., a flat connection $\nabla$, compatible with the bracket and preserving $\OO_S$, so that the induced
connection on $\OO_S$ is standard. 

Then we can consider the following analog of Definition \ref{main-def}, for an isotropic subbundle $\II\sub \HH$.

\begin{definition}\label{conn-alg-def}
Let $\AA_\II=\AA_{\HH,\nabla,\II}$ denote the subsheaf of the Atiyah Lie algebroid of $M^r_{\HH}(\II)$ consisting of operators $D$ such that
\begin{itemize}
\item
$D(m\cdot x)=D(m)\cdot x+(-1)^{\ov{D}\ov{m}}m\cdot \nabla_{\si(D)}(x),$
where $\si(D)\in \TT_X$ is the symbol of $D$.
\item $[D,U_{\le 1}(\ov{\HH}_\II)]\sub U_{\le 1}(\ov{\HH}_\II)$.
\end{itemize}
\end{definition}

Clearly, $\AA_{\HH,\nabla,\II}$ is a sub- Lie algebroid of the Atiyah Lie algebroid of $M^r(\II)$, containing $\OO_X$.
In the case when $\HH=\OO_S\ot \WW_{\le 1}(V)$, with the standard connection, we recover the construction of Sec.\ \ref{constr-main-sec}.

In the case when $\HH$ is split, i.e., $\HH=\OO_S\oplus \VV$ (compatibly with $\nabla$), and $\II\sub \VV$, we define 
$$\AA^{ev}_{\HH,\nabla,\II}\sub \AA_{\HH,\nabla,\II}$$
as the subsheaf of operators which are even with respect to the $\Z/2$-grading on $M^r_{\VV}(\II)$.

\begin{prop}\label{flat-conn-case-prop}
(i) $\AA_{\HH,\nabla,\II}$ is a Lie algebroid, and its anchor map fits into the exact sequence
$$0\to U_{\le 2}(\ov{\HH}_\II)\to \AA_{\HH,\nabla,\II}\to \TT_S\to 0.$$
The action of $D\in\AA_{\HH,\nabla,\II}$ on $M^r_{\HH}(\II)$ satisfies \eqref{D-filtration-property}.

In the split case, $\AA^{ev}_{\HH,\nabla,\II}$ is a sub- Lie algebroid, with the anchor map fitting into the exact sequence
$$0\to \WW^{ev}_{\le 2}(\II^\perp/\II)\to \AA^{ev}_{\HH,\nabla,\II}\to \TT_S\to 0.$$

\noindent
(ii) Let $M$ be a left $U(\HH)$-module, equipped with a flat connection $\nabla_M$, such that
$\nabla_{M,v}(xm)=\nabla_v(x)m+x\nabla_{M,v}(m)$.
Then $M^r_{\HH}(\II)\ot_{U(\HH)} M$ has an $\AA_{\HH,\nabla,\II}$-structure given by
\begin{equation}\label{left-connecion-action}
D(m_1\ot m_2)=D(m_1)\ot m_2+(-1)^{\ov{m}_1\ov{D}}m_1\ot \nabla_{M,\si(D)}(m_2).
\end{equation}
The derived coinvariants $C^\bullet_{\II}(M)$ also can be lifted to the derived category of $\AA_{\VV,\nabla,\II}$-modules.
This construction is functorial in $M$ (with respect to morphisms of $U(\HH)$-modules compatible with connections).
\end{prop}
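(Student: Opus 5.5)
The plan is to reduce to the constant case of Proposition \ref{anchor-sur-prop} by locally trivializing $\HH$ with its flat connection, and to check that everything else is formal.

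\textbf{Part (i), formal properties.} First I check that $\AA_{\HH,\nabla,\II}$ is closed under the commutator. If $D_1,D_2$ satisfy the twisted Leibniz rule with respect to the right action, then
\[
[D_1,D_2](m\cdot x)=[D_1,D_2](m)\cdot x\pm m\cdot[\nabla_{\si(D_1)},\nabla_{\si(D_2)}](x).
\]
By flatness the last term equals $\pm m\cdot\nabla_{[\si(D_1),\si(D_2)]}(x)$. The condition $[D,U_{\le 1}(\ov{\HH}_\II)]\sub U_{\le 1}(\ov{\HH}_\II)$ is preserved by the Jacobi identity. Closure under multiplication by $\OO$ is immediate, so $\AA_{\HH,\nabla,\II}$ is a Lie subalgebroid. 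In the split case, if $\nabla$ preserves $\VV$, the same argument applies to even operators.

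\textbf{Kernel of the anchor.} An operator with zero symbol commutes with the right $U(\HH)$-action. Hence it is an element of $\und{\End}_{U(\HH)^{op}}(M^r(\II))\simeq U(\ov{\HH}_\II)$, by \eqref{M-I-main-isom} applied locally. The third condition then cuts this down to $U_{\le 2}(\ov{\HH}_\II)$, exactly as in Proposition \ref{anchor-sur-prop}. In the split even case the kernel is $\WW^{ev}_{\le 2}(\II^\perp/\II)$.

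\textbf{Surjectivity of the anchor.} This is the main point, and it is local. Locally, in the analytic or formal-neighbourhood sense, or étale-locally if needed, a flat Heisenberg connection admits a horizontal trivialization $\HH\simeq\OO_S\ot\WW_{\le 1}(V)$. Here $V$ is the fiber of $\VV$ at a point, the trivialization is compatible with brackets, and it sends $1_\HH$ to $1$. Flatness gives horizontal frames, and the Heisenberg compatibility makes the bracket of horizontal sections horizontal and constant. Under such a trivialization $M^r_\HH(\II)$ becomes the module of Sec.\ \ref{constr-main-sec}, with $\nabla$ equal to the trivial connection. The twisted Leibniz condition becomes commutation with the constant right $\WW(V)$-action, so $\AA_{\HH,\nabla,\II}$ is identified with the algebroid $\AA_\II$ of Definition \ref{main-def}. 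Surjectivity, and the filtration property \eqref{D-filtration-property}, then follow from Proposition \ref{anchor-sur-prop}.

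\textbf{Main obstacle.} I expect the hardest part to be making this trivialization work in the algebraic super setting, where horizontal frames exist only formally. To handle it, I would instead redo the explicit local construction of $D_v=v+l_{F+G}$ from the proof of Proposition \ref{anchor-sur-prop}. I would use a local splitting adapted to $\II$ and replace $v$ by $\nabla_v$ acting on $\OO\ot\WW(V')\ot S(I_0^*)$. The connection then appears through its connection matrix, which lies in $\sp$ plus a translation part, that is, in $\WW_{\le 2}$. Adding the corresponding quadratic element to $F+G$ should reduce the defining equations to the same solvable linear system in $F$ and $G$.

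\textbf{Part (ii).} I would check directly that \eqref{left-connecion-action} is well defined on $M^r(\II)\ot_{U(\HH)}M$. For $m_1\cdot x\ot m_2$ versus $m_1\ot xm_2$, the twisted Leibniz rule for $D$ and the compatibility $\nabla_{M,v}(xm)=\nabla_v(x)m+x\nabla_{M,v}(m)$ produce matching terms $m_1\ot\nabla_{\si(D)}(x)m_2$. The bracket compatibility follows from the flatness of $\nabla_M$ together with the computation in the first step.

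\textbf{Derived version.} For the derived coinvariants I would use the Koszul-type resolution $\bigwedge^\bullet(\II)\ot M$ from the Lemma on $K(I,M)$. Alternatively, I would resolve $M$ by flat-connection-compatible free modules $U(\HH)\ot_\OO E$, with $E$ carrying a flat connection. Each term is then an $\AA$-module by the same formula, and the differentials are compatible with it. Functoriality is evident from the formula.
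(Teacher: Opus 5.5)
This is essentially the paper's proof. In (i), surjectivity of the anchor is obtained by passing to the formal completion at a point, choosing a horizontal trivialization (such trivializations exist formally or analytically, though not \'etale-locally), and applying Proposition \ref{anchor-sur-prop}(i). In (ii), the paper makes a direct check and then resolves $M$ by induced modules $U(\HH)\ot_{\OO}E$ carrying flat connections; concretely it uses the bar resolution, with terms $U(\HH)^{\ot(k+1)}\ot_{\OO}M$.

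Only that second option for the derived lift works. The Koszul complex $\bigwedge^\bullet(\II)\ot M$ does not consist of $\AA_{\HH,\nabla,\II}$-modules: $\II$ is not $\nabla$-stable, and $U(\ov{\HH}_\II)$ acts on it only up to homotopy, because $\II$ acts null-homotopically rather than by zero.
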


\begin{proof}
(i) To prove surjectivity we can pass to formal completion at a point. Then we can find a horizontal trivialization of a symplectic bundle $\VV$,
and the assertion follows from Proposition \ref{anchor-sur-prop}(i).

\noindent
(ii) The fact that \eqref{left-connecion-action} is well defined and gives an $\AA_{\VV,\nabla,\II}$-module structure is a straightforward check.
To lift $C^\bullet_{\II}(M)$ to the derived category of $\AA_{\HH,\nabla,\II}$-modules, we use the bar resolution
$$\ldots\to U(\HH)\ot_{\OO} U(\HH)\ot_{\OO} M\to U(\HH)\ot_{\OO} M\to M\to 0$$
of $M$, and observe that every term has a natural flat connection induced by $\nabla$ and $\nabla_M$, and that the differentials
are compatible with the connections.
\end{proof}

\begin{cor}\label{conf-bl-cor} Let $J\sub \HH$ be an isotropic subbundle, preserved by $\nabla$. Then the natural map
$$M^r_{\HH}(\II)\to C_{\II}(M_{\HH}(J))$$
is compatible with the $\AA_{\HH,\nabla,\II}$-module structures.
\end{cor}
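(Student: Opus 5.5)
The plan is to realize the natural map as $m\mapsto m\ot 1$, where $1$ is the vacuum vector of $M_{\HH}(J)$. That is, we use
$$M^r_{\HH}(\II)\simeq M^r_{\HH}(\II)\ot_{U(\HH)}U(\HH)\to M^r_{\HH}(\II)\ot_{U(\HH)}M_{\HH}(J)\to C_{\II}(M_{\HH}(J)),$$
where the last arrow is the canonical map from $H^0$ of the derived tensor product (the degree-zero term of the complex $K(\II,M(J))$). We then check compatibility of the $\AA_{\HH,\nabla,\II}$-actions in three steps.

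First, we equip $M_{\HH}(J)$ with a flat connection satisfying the hypothesis of Proposition \ref{flat-conn-case-prop}(ii). Since $\nabla$ on $\HH$ is flat and compatible with the bracket, it extends uniquely to a flat connection on $U(\HH)$ by the Leibniz rule, and this connection satisfies $\nabla_v(xy)=\nabla_v(x)y+xy'$-type compatibility. Because $J$ is preserved by $\nabla$, the left ideal $U(\HH)\cdot J$ is preserved as well:
$$\nabla_v(uj)=\nabla_v(u)j\pm u\nabla_v(j)\in U(\HH)J.$$
Hence $\nabla$ descends to a flat connection $\nabla_M$ on $M=M_{\HH}(J)$ with $\nabla_M(1)=0$ and $\nabla_{M,v}(xm)=\nabla_v(x)m+x\nabla_{M,v}(m)$. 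So Proposition \ref{flat-conn-case-prop}(ii) gives the $\AA_{\HH,\nabla,\II}$-module structures on $M^r(\II)\ot_{U(\HH)}M(J)$ and on $C_{\II}(M(J))$.

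Second, we check compatibility on the underived level. By \eqref{left-connecion-action},
$$D(m\ot 1)=D(m)\ot 1\pm m\ot\nabla_{M,\si(D)}(1)=D(m)\ot 1,$$
since the vacuum vector is horizontal. So $m\mapsto m\ot1$ is $\AA_{\HH,\nabla,\II}$-linear.

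Third, we upgrade this to the derived statement, which is the main obstacle: the map $M^r(\II)\to C_\II(M(J))$ must be a morphism in the derived category of $\AA$-modules. Following the proof of Proposition \ref{flat-conn-case-prop}(ii), we represent $C_\II(M(J))$ by $M^r(\II)\ot_{U(\HH)}B_\bullet$, where $B_\bullet$ is the bar resolution of $M(J)$ with its induced flat connections. The map in question is then the composition of $M^r(\II)\to M^r(\II)\ot_{U(\HH)}B_0$, given by $m\mapsto m\ot 1\ot 1$ with $B_0=U(\HH)\ot M(J)$, with the natural inclusion of the degree-zero term into the total complex. Both are compatible with the connections, since $1\ot 1$ is horizontal. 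This gives the required chain map of complexes of $\AA_{\HH,\nabla,\II}$-modules. Its image in the derived category of $\OO$-modules agrees with the canonical map, because the bar resolution and the Koszul complex $K(\II,M(J))$ are both resolutions computing $M^r(\II)\ot^{\bbL}M(J)$, compatibly with the degree-zero augmentations.
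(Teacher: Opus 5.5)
Your proposal is correct and is essentially the argument the paper intends. The paper states this corollary without proof as an immediate consequence of Proposition \ref{flat-conn-case-prop}(ii): $M_{\HH}(J)$ carries the flat connection induced by $\nabla$, which is well defined because $J$ is $\nabla$-stable and has a horizontal vacuum vector, and one applies the functoriality in (ii) to $U(\HH)\to M_{\HH}(J)$, $u\mapsto u\cdot 1$. Your bar-resolution step is an explicit unpacking of that functoriality at the derived level.
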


Note that in the case when $\II$ and $J\sub \HH$ are Lagrangian and the images of $J|_s$ and $\II|_s$ in $\VV|_s$ are transversal then the coinvariants sheaf
$C_{\II}(M_{\HH}(J))$ is a line bundle on $S$. In this case we can view the construction of Corollary \ref{conf-bl-cor} as a replacement
for considering (local) horizontal conformal blocks.

\subsubsection{Picard algebroid of a virtual Fock module}

Assume now that $M$ is a right $\HH$-module, such that locally there exists a Lagrangian $L\sub \HH$, such that
$M\simeq M^r(L)$. In this case we say that $M$ is a {\it virtual Fock module}.

Repeating Definition \ref{conn-alg-def} for $M$, we get a Picard algebroid $\AA_{\HH,\nabla,M}$ acting on $M$ (commuting with the right action of horizontal sections of $\HH$).

For example, we get an example of a (left) virtual Fock module in the situation of Theorem \ref{Pf-thm}, assuming additionally that $\ov{\HH}_J$ has a flat connection.
This situation occurs for the moduli of supercurves.

\subsection{Compatibility with isotropic reduction}\label{alg-isotr-red-sec}

Let $\HH$ be a Heisenberg extension over a super scheme $S$, equipped with a flat connection $\nabla$, as in Sec.\ \ref{flat-Heis-sec}.
Let $I\sub \HH$ be an isotropic subbundle, preserved by $\nabla$. Then the reduced Heisenberg extension
$\ov{\HH}_I=\fc(I)/I$ has an induced flat connection, which we still denote as $\nabla$. 

Assume that an isotropic subbundle $\JJ\sub \HH$ is such that $\fc(I)+\JJ=\HH$.
Then $\ov{\JJ}:=\fc(I)\cap \JJ$ can be viewed as an isotropic subbundle in $\ov{\HH}_I$, and
\begin{equation}\label{isotr-red-main-isom-repeated}
M^r_{\ov{\HH}_I}(\ov{\JJ})\simeq M^r_{\HH}(\JJ)\ot_{U(\fc(I))}U(\ov{\HH}_I)
\end{equation}
(see \eqref{isotr-red-main-isom-1}).

\begin{prop}\label{isotr-red-Lie-alg-prop}
(i) We have an isomorphism of Lie algebroids,
$$\AA_{\HH,\nabla,\JJ}\to \AA_{\ov{\HH}_I,\nabla,\ov{\JJ}},$$
sending $D$ to the operator
$$m\ot h\mapsto D(m)\ot h+m\ot \nabla_{\si(D)}(h)$$

\noindent
(ii) For an $\HH$-module $M$, equipped with a flat connection (compatible with the $\HH$-action), such that $I$ acts locally nilpotently on $M$, the isomorphism 
\eqref{redII-conv-isom},
$$C_{\ov{\JJ}}(M^I)\rTo{\sim} C_{\JJ}(M)$$
lifts to the derived category of $\AA_{\HH,\nabla,\JJ}$-modules.
In particular, this holds if $M=M(J')$, for an isotropic subbundle $J'\sub\HH$ containing $I$ and preserved by $\nabla$, in which case $M^I=M_{\ov{\HH}_I}(J'/I)$.
\end{prop}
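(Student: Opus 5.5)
The plan is to build the map of part (i) from the bimodule isomorphism \eqref{isotr-red-main-isom-repeated}, check that it lands in $\AA_{\ov{\HH}_I,\nabla,\ov{\JJ}}$, and then get bijectivity by comparing anchor sequences. Part (ii) should then follow by tracing the Lie algebroid actions through the chain of identifications used in the proof of Theorem \ref{coinv-red-thm}.

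\textbf{Well-definedness in (i).} For $D\in\AA_{\HH,\nabla,\JJ}$, define $\ov{D}$ on $M^r_{\HH}(\JJ)\ot_{U(\fc(I))}U(\ov{\HH}_I)$ by
$$\ov{D}(m\ot h)=D(m)\ot h+(-1)^{\ov{D}\ov{m}}m\ot\nabla_{\si(D)}(h).$$
First I check that $\ov{D}$ respects the relations $mx\ot h=m\ot \bar x h$ for $x\in\fc(I)$. Since $I$ is $\nabla$-stable, $\fc(I)$ is $\nabla$-stable too, so $\nabla_{\si(D)}x\in\fc(I)$. The defining property $D(mx)=D(m)x\pm m\nabla_{\si(D)}(x)$ then matches the Leibniz rule $\nabla(\bar x h)=\overline{\nabla x}\,h+\bar x\nabla h$. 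The two terms cancel, and the $\OO$-linearity relations are handled the same way.

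Next, $\ov{D}$ satisfies the twisted commutation condition of Definition \ref{conn-alg-def} with respect to the right $U(\ov{\HH}_I)$-action, since the connection on $\ov{\HH}_I$ is the induced one. Its symbol is $\si(D)$. Finally, the condition $[\ov{D},U_{\le1}(\ov{\HH}_{\ov{\JJ}})]\sub U_{\le1}$ should follow because Lemma \ref{module-redII-lem}(ii) identifies $\ov{\HH}_{\JJ}\simeq\fc(I)\cap\fc(\JJ)\simeq\ov{\HH}_{\ov{\JJ}}$. I will check that this identification is compatible with the left actions on the two Fock modules under \eqref{isotr-red-main-isom-repeated}, so that commutators transfer. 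The assignment $D\mapsto\ov{D}$ is evidently compatible with brackets and anchors.

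\textbf{Bijectivity in (i).} Both algebroids are extensions of $\TT_S$: by Proposition \ref{flat-conn-case-prop}(i) the kernels of the anchors are $U_{\le2}(\ov{\HH}_\JJ)$ and $U_{\le2}(\ov{\HH}_{\ov{\JJ}})$. Under the identification above these kernels correspond, and the map restricted to kernels is the induced isomorphism of $U_{\le 2}$. A morphism of extensions of $\TT_S$ that is an isomorphism on kernels is an isomorphism by the five lemma.

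\textbf{The main obstacle.} The step I expect to be hardest is confirming that the map on kernels really is the identity under these identifications. That is, for $u\in U(\fc(I)\cap\fc(\JJ))$, left multiplication on $M^r(\JJ)$ must correspond to left multiplication on $M^r_{\ov{\HH}_I}(\ov{\JJ})$. I would verify this locally using the splitting $\HH=I\oplus I'\oplus\HH'$ of Lemma \ref{module-redII-lem}(i). In that splitting $M^r(\JJ)\simeq M^r_{\HH'}(J')\ot S(I)$, and everything is explicit.

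\textbf{Part (ii).} The isomorphism \eqref{redII-conv-isom} was built as the composite
$$M^r(\JJ)\ot^{\bbL}_{U(\HH)}M\simeq M^r(\JJ)\ot^{\bbL}_{U(\fc(I))}M^I\simeq M^r_{\ov{\HH}_I}(\ov{\JJ})\ot^{\bbL}_{U(\ov{\HH}_I)}M^I.$$
First, $M^I$ is $\nabla$-stable because $I$ is, so it carries an induced flat connection. I would compute each step with bar resolutions carrying connections, as in the proof of Proposition \ref{flat-conn-case-prop}(ii). Each identification is then induced by maps of $\nabla$-compatible resolutions, so it intertwines the actions defined by \eqref{left-connecion-action}. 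The middle step uses precisely the formula for $\ov{D}$ from (i).

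For $M=M(J')$, the connection on $M(J')$ comes from $\nabla$ on $\HH$, since $J'$ is $\nabla$-stable. The identification $M(J')^I\simeq M_{\ov{\HH}_I}(J'/I)$ of Corollary \ref{cat-eq-cor} is horizontal, because it sends vacuum to vacuum.
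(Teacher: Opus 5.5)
Your proposal is correct and follows the paper's own argument. The paper likewise checks that $D\mapsto\ov{D}$ is a well-defined morphism of Lie algebroids inducing the isomorphism $\fc(\JJ)/\JJ\simeq\fc_{\ov{\HH}_I}(\ov{\JJ})/\ov{\JJ}$ on the kernels of the anchors, hence an isomorphism; for (ii) it traces the identifications built from \eqref{isotr-red-main-isom-repeated}, passing to bar resolutions with connections for the derived version.

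One small precision: $\fc(I)\cap\fc(\JJ)$ is not itself isomorphic to these reduced algebras. It surjects onto both with common kernel $\fc(I)\cap\JJ$, and that is all your kernel comparison needs. The compatibility of left actions also holds directly, without a local splitting, since $1\ot\bar{x}h=[x]\ot h$ for $x\in\fc(I)\cap\fc(\JJ)$.
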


\begin{proof} 
(i) One immediately checks that this map of Lie algebroids is well defined, and restricts to
the isomorphism $\fc(J)/J\rTo{\sim} \fc(\ov{J})/\ov{J}$ on the kernels of the 
the anchor maps. Hence, it is an isomorphism.

\noindent
(ii) For the usual coinvariants, this is deduced easily using the fact that both the isomorphism \eqref{redII-conv-isom} and the identification of $\JJ$ and $\ov{\JJ}$ coinvariants
are constructed using \eqref{isotr-red-main-isom-repeated}. For derived coinvariants, we first use bar-resolution of $M$ as in the proof of Proposition \ref{flat-conn-case-prop}(ii).
\end{proof}



\subsection{The Lie algebroid associated with Lagrangians and the Berezinian}\label{Ber-sec}

Recall that in the case of a Lagrangian subbundle $\LL\sub \HH$, where $\HH$ is equipped with a flat connection, 
our construction gives a Picard algebroid $\AA_{\HH,\nabla,\LL}$ (see Sec.\ \ref{constr-main-sec}).
In the case when $\HH=\VV\oplus \OO$ and $\LL\sub \VV$, 
we can relate $\AA_{\HH,\nabla,\LL}$ to the determinant line bundle $\Ber(\LL)$.

Given a line bundle $L$ and a complex number $\la$, we denote by $\AA_{L^\la}$ the Picard algebroid obtained from the Atiyah Lie algebroid
$\AA_L$ by the push-out $\la:\OO\to \OO$.

\begin{theorem}\label{LG-alg-Ber-thm}
Let $\VV$ be a symplectic super vector bundle, equipped with a symplectic connection $\nabla$, and let $\LL\sub \VV$ be a Lagrangian subbundle.
Then
$$\AA_{\VV,\nabla,\LL}\simeq \AA_{\Ber(\LL)^{1/2}}.$$
More generally, if $M$ is a right virtual Fock module over $\WW(\VV)$, then
$$\AA_{\VV,\nabla,M}\simeq \AA_{L(M)^{1/2}},$$
where $L(M)$ is a certain line bundle associated with $M$.
\end{theorem}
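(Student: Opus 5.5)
The plan is to realize $\Ber(\LL)$, up to a shift, as a derived coinvariant sheaf on which two copies of the Picard algebroid $\AA_{\VV,\nabla,\LL}$ act. This forces the class of $\AA_{\VV,\nabla,\LL}$ to be half the class of $\AA_{\Ber(\LL)}$.

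\emph{Step 1: two commuting actions on $C_\LL(M(\LL))$.} Consider
$$C_\LL(M(\LL))=M^r(\LL)\ot^{\bbL}_{\WW(\VV)}M(\LL).$$
The factor $M^r(\LL)$ is a module over $\AA:=\AA_{\VV,\nabla,\LL}$, commuting with the right $\WW(\VV)$-action up to $\nabla$, as in Definition \ref{conn-alg-def}.

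For the left Fock module $M(\LL)$, I use the anti-automorphism of $\WW(\VV)$ given by $v\mapsto iv$ on $\VV$. It identifies $\WW(\VV)^{op}\simeq\WW(\VV)$, since $(iv,iw)=-(v,w)$ compensates the sign change of the commutator, and it is compatible with $\nabla$. Under this identification $M(\LL)$ becomes $M^r(\LL)$ for the same subbundle $\LL$. Hence the Picard algebroid of $M(\LL)$, namely the analogue of $\AA_{J'}$ from the subsection on a pair of varying isotropic subspaces, is again canonically $\AA$.

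Arguing as in Proposition \ref{flat-conn-case-prop}(ii), via bar resolutions with connections on both sides, the derived tensor product becomes an object of the derived category of modules over the Baer sum $\AA^{(2)}:=\AA\oplus_{\TT}\AA/\{(f,-f)\}$. Here the centers $\OO$ of the two copies act through $(f_1,f_2)\mapsto f_1+f_2$. Since Picard algebroids over $\C$ form a $\C$-vector space of classes (with the push-outs $\la:\OO\to\OO$ giving the scalar multiplication), $\AA^{(2)}$ has twice the class of $\AA$.

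\emph{Step 2: identify the coinvariants.} Apply Corollary \ref{JL-cor}(ii) with $L=L'=\LL$; its hypotheses hold trivially, since $\pi(\LL)\cap\pi(\LL)=\pi(\LL)$ and $\pi(\LL)+\pi(\LL)=\pi(\LL)$ are subbundles. This gives
$$C_\LL(M(\LL))\simeq\Ber(\LL)[n],$$
so the derived object has a single cohomology sheaf, $\Ber(\LL)$ in degree $-n$. Taking that cohomology makes $\Ber(\LL)$ a weight-one module over the Picard algebroid $\AA^{(2)}$. A Picard algebroid acting on a line bundle with weight one is canonically isomorphic to that line bundle's Atiyah algebroid, so $\AA^{(2)}\simeq\AA_{\Ber(\LL)}$. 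Halving classes, $\AA\simeq\AA_{\Ber(\LL)^{1/2}}$; this is exactly where the derived coinvariants are essential.

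\emph{Step 3: virtual Fock modules.} For a right virtual Fock module $M$, let $M'$ be the left module obtained from $M$ through the same anti-automorphism. Define
$$L(M):=H^{-n}\bigl(M\ot^{\bbL}_{\WW(\VV)}M'\bigr).$$
Locally $M\simeq M^r(L)$, so by Step 2 this is a line bundle, locally $\Ber(L)$, and the argument of Steps 1 and 2 goes through verbatim.

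The main obstacle is Step 1, in particular the sign bookkeeping. One must check that the anti-automorphism genuinely transports the left action of $\AA$ on $M(\LL)$ to the action defined by Definition \ref{conn-alg-def}, rather than to its opposite algebroid, which has class $-c$ and would give $\AA_{\LL}\otimes\AA_{\LL}^{op}$, i.e. the trivial algebroid. One must also check that the two weights add rather than cancel, in the super setting and with $\nabla$ present. I would first verify this locally with the explicit lifts $D_v$ from the proof of Proposition \ref{anchor-sur-prop}, checking that the induced action on the generator of $C_\LL(M(\LL))$ has weight $2$.
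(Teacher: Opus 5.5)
Your proposal is correct and is essentially the paper's argument in different packaging. The paper passes to $\VV^{(2)}=\VV\oplus\VV$ and uses the Lagrangian $\De_i(\VV)$, which is the graph of multiplication by $i$, i.e.\ your anti-automorphism unfolded; so $C_{\LL^{(2)}}(M_{\VV^{(2)}}(\De_i(\VV)))\simeq\Ber(\LL)[n]$ is your $C_\LL(M(\LL))$, and your Baer sum is replaced by the diagonal morphism $D\mapsto D\ot\id+\id\ot D$ into $\AA_{\VV^{(2)},\nabla,\LL^{(2)}}\simeq\AA_{\Ber(\LL)}$, which acts as $2$ on $\OO$ and gives the isomorphism directly rather than by halving classes. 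The sign issue you flag is harmless: the transported operators are the same operators on the same sheaf, they satisfy the left Leibniz rule with respect to $\nabla$ because $v\mapsto iv$ commutes with $\nabla$, and $\OO$ acts with weight $+1$ on both tensor factors, so the weights add.
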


\begin{proof} Consider the doubled symplectic bundle $\VV^{(2)}=(\VV\oplus\VV,\om\oplus\om)$ with the induced flat symplectic connection.
Then $\De_i(\VV)=\{(x,ix)\ |\ x\in \VV\}$ is Lagrangian subbundle in $\VV^{(2)}$, preserved by the connection. On the other hand, $\LL^{(2)}\sub \VV^{(2)}$ is also Lagrangian.
Since $\De_i(\VV)\cap \LL^{(2)}\simeq \LL$, by Theorem \ref{Ber-coinv-thm}, we have an isomorphism
$$C_{\LL^{(2)}}(M_{\VV^{(2)}}(\De_i(\VV)))\simeq \Ber(\LL)[n],$$
where $V$ has rank $(2n|2m)$. 
By Proposition \ref{flat-conn-case-prop}(ii), we get an $\AA_{\VV^{(2)},\nabla,\LL^{(2)}}$-module structure on $\Ber(\LL)$, so that $1\in\OO$ acts by the identity.
Hence, we get an isomorphism of Picard algebroids
$$\AA_{\VV^{(2)},\nabla,\LL^{(2)}}\simeq \AA_{\Ber(\LL)}.$$

On the other hand, we can construct a morphism of Lie algebroids
\begin{equation}\label{diagonal-alg-mor}
\AA_{\VV,\nabla,\LL}\to \AA_{\VV^{(2)},\nabla,\LL^{(2)}}
\end{equation}
acting as $2\id$ on $\OO\sub \AA_{\VV,\nabla,\LL}$.
Namely, to an operator $D:M^r(\LL)\to M^r(\LL)$ we associate the operator 
$D^{(2)}:=D\ot\id+\id\ot D$ on 
$$M^r_{\VV^{(2)}}(\LL^{(2)})\simeq M^r_{\VV}(\LL)\ot_{\OO} M^r_{\VV}(\LL).$$
One immediately checks that $D^{(2)}$ is compatible with the right action of $\WW(\VV^{(2)})$ and $\nabla$, and so induces a morphism of Lie algebroids
\eqref{diagonal-alg-mor}, acting as $2\id$ on $\OO$. This gives the requred identification of $\AA_{\VV,\nabla,\LL}$.

In the case of a right virtual Fock module $M$, we can repeat the above construction with $M(\VV^{(2)})$ replaced by $M\ot_{\OO} M$.
Then the above argument shows that $(M\ot_\OO M)\ot_{\WW(\VV)} M_{\VV^{(2)}}(\De_i(\VV))$ is a line bundle $L(M)$ in degree $-n$.
and we get an identification of $\AA_{\VV,\nabla,M}$ with $\AA_{L(M)^{1/2}}$. 
\end{proof}

\begin{remark} 1. In the case when there exists a Lagrangian subbundle $L_0\sub \VV$ 
preserved by the connection, such that $\LL$ and $L_0$ have purely odd intersection, the algebroid $\AA_\LL$ acts on
the sheaf of coinvariants $C_{\LL}(M(L_0))$, which is a line bundle, isomorphic to the square root of the relevant Berezinian bundle
(see Theorem \ref{theta-Pf-thm}).

\noindent
2. For arbitrary Heisenberg extension $0\to \OO_S\to \HH\to \VV\to 0$, with a flat connection $\nabla$, and a Lagrangian subbundle $\LL$,
we can define the opposite extension $\HH'$ as the push-forward with respect to the map $-\id:\OO_S\to \OO_S$, so that there is a Lie algebra
isomorphism $\iota:\HH\to \HH'$ acting as $-\id$ on $\OO_S$ (and inducing the identity on $\VV$). Let $\LL'=\iota(\LL)$. Then the argument
of Theorem \ref{LG-alg-Ber-thm} gives an isomorphism for the Baer sum of the Picard algebroids,
$$\AA_{\HH,\nabla,\LL}+\AA_{\HH',\nabla,\LL'}\simeq \AA_{\Ber(\LL)}$$
In the split case we have an additional isomorphism of Heisenberg extensions $\HH\simeq \HH'$ sending $\LL$ to $\LL'$.
In general, $\AA_{\HH,\nabla,\LL}$ is not isomorphic to $\AA_{\Ber(\LL)^{1/2}}$, see appendix \ref{P1-example-sec}.
\end{remark}

\subsection{Picard algebroids for families of isotropic subspaces with a flat reduction}\label{isotropic-PL-sec}

Let $V$ be a symplectic super vector bundle, and let $\II\sub \HH:=\OO_S\ot \WW_{\le 1}(V)$ an isotropic subbundle.
We have the corresponding Lie algebroid $\AA_{\II}$ on $S$.
Recall (see Proposition \ref{anchor-sur-prop}(ii)) 
that $\AA_\II/\OO_S$ is identified with the Atiyah Lie algebroid $\AA_{\ov{\HH}_\II}$ of infinitesimal symmetries of the Heisenberg Lie algebra
$\ov{\HH}_{\II}$.

Now suppose we have a flat Heisenberg connection $\nabla$ on $\ov{\HH}_{\II}$. 
We can view $\nabla$ as a splitting $\TT\to \AA_{\ov{\HH}_\II}$.
Thus, we can define a Picard algebroid $\AA^{Pic}_\II:=\AA^{Pic}_\II(\nabla)$ on $S$ as the pullback via $\nabla$:
\begin{diagram}
0&\rTo{}& \OO_X&\rTo{}& \AA^{Pic}_\II&\rTo{}&\TT&\rTo{}&0\\
&&\dTo{\id}&&\dTo{}&&\dTo{\nabla}\\
0&\rTo{}& \OO_X&\rTo{}& \AA_{\II}&\rTo{p}&\AA_{\ov{\HH}_\II}&\rTo{}&0
\end{diagram}

\begin{remark}
One can also consider a more general construction of a Picard algebroid starting from
\begin{itemize}
\item a Heisenberg extension $\HH$, equipped with a Heisenberg flat connection;
\item an isotropic subbundle $\II\sub\HH$;
\item a Heisenberg connection on $\ov{\HH}_\II$. 
\end{itemize}
We are not aware of any natural examples requiring such a construction.
\end{remark}

Since we have a homomorphism of Lie algebroids $\AA^{Pic}_\II\to \AA_\II$, we get a structure of an $\AA^{Pic}_\II$-module on the sheaves of derived
$\II$-coinvariants of a $\WW(V)$-module $M$. 

\begin{lemma} For any $\WW(V)$-module $M$, the action of $\AA^{Pic}_\II$ on $C_\II(M)$ 
is compatible with the action of $\ov{\HH}_\II$ as follows
\begin{equation}\label{conn-comp-eq}
v\cdot (x\cdot m)=\nabla_{\a(v)}(x)\cdot m+(-1)^{\ov{v}\ov{x}}x\cdot (v\cdot m),
\end{equation}
where $v$ is a local section of $\AA^{Pic}_\II$, $x$ is a local section of $\ov{\HH}_\II$, $m\in C_\II(M)$.
\end{lemma}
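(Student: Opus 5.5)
The plan is to reduce \eqref{conn-comp-eq} to a commutator identity in the Atiyah algebroid of $M^r(\II)$, and then transport it to coinvariants. Recall that $\AA_\II$ acts on $C_\II(M)=M^r(\II)\ot^{\bbL}_{\WW(V)}M$ through its left action on $M^r(\II)$, which commutes with the right $\WW(V)$-action. The algebra $U(\ov{\HH}_\II)=\und{\End}_{\WW(V)^{op}}(M^r(\II))$ acts on $C_\II(M)$ in the same way. So it suffices to prove the identity
$$[D,x]=\nabla_{\a(D)}(x)$$
as operators on $M^r(\II)$, for $D$ a local section of $\AA^{Pic}_\II$ (viewed in $\AA_\II$) and $x\in\ov{\HH}_\II\sub U_{\le 1}(\ov{\HH}_\II)$. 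Here $[\cdot,\cdot]$ is the supercommutator. The identity then passes to $C_\II(M)$ by functoriality, since both sides act on the first tensor factor. One checks this first on the explicit complex $K(\II,M)$ (or via a resolution of $M$), noting that the action of $D$ on $M^r(\II)\ot M$ is $D\ot\id$.

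To prove the commutator identity, I will use Proposition \ref{anchor-sur-prop}(ii). By the third condition of Definition \ref{main-def}, $[D,\cdot]$ preserves $U_{\le 1}(\ov{\HH}_\II)=\ov{\HH}_\II$. It also satisfies the Leibniz rule with respect to $\OO$, with symbol $\a(D)$. The map $p:\AA_\II\to\AA_{\ov{\HH}_\II}$ is, by definition, $D\mapsto \ad(D)|_{\ov{\HH}_\II}$. The isomorphism $\AA_\II/\OO\simeq\AA_{\ov{\HH}_\II}$ is compatible with anchors, so the anchor of $p(D)$ is $\a(D)$.

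Next I use the definition of $\AA^{Pic}_\II$ as the fibre product along $\nabla:\TT\to\AA_{\ov{\HH}_\II}$. An element $D\in\AA^{Pic}_\II$ with image $\xi\in\TT$ satisfies $p(D)=\nabla_\xi$, where $\xi=\a(D)$ by commutativity of the diagram. Therefore $[D,x]=p(D)(x)=\nabla_{\a(D)}(x)$, which is exactly the required identity.

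The only delicate point is the derived setting. We must make sure the actions of $\AA_\II$ and $U(\ov{\HH}_\II)$ on $C_\II(M)$ are realized on a common model, so that the operator identity on $M^r(\II)$ gives the identity in the derived category and not just up to homotopy. I expect this to be the main obstacle, but a mild one. Using the Koszul model $K(\II,M)$ does not directly work, because $\II$ varies and $D$ does not preserve $\bigwedge^\bullet(\II)$. Instead I will use a resolution $P^\bullet\to M$ by $\WW(V)$-modules that are free over $\WW(V)$, for instance the bar resolution over $\C$. This does not depend on $S$. On $M^r(\II)\ot_{\WW(V)}P^\bullet$ both $D\ot\id$ and $x\ot\id$ act strictly, so the commutator identity holds on the nose, and it descends to cohomology and to the derived category.
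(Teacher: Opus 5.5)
Your proposal is correct and follows essentially the same route as the paper. The paper also reduces the claim to $[v,x]=\nabla_{\a(v)}(x)$ as operators on $M^r_{\HH}(\II)$, and then observes that the left-hand side is $p(v)(x)$, which equals $\nabla_{\a(v)}(x)$ by the fibre-product definition of $\AA^{Pic}_\II$. Your extra step of realizing both actions strictly on $M^r(\II)\ot_{\WW(V)}P^\bullet$ via the bar resolution is a harmless elaboration that the paper leaves implicit; it uses the same device in the proof of Proposition \ref{flat-conn-case-prop}(ii).
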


\begin{proof}
It is enough to check that
$$[v,x]=\nabla_{\a(v)}(x)$$
in the action on $M^r_{\HH}(\II)$.
The left-hand side represents the action of $p(v)\in \AA_{\ov{\HH}_\II}$ on $x\in\ov{\HH}_\II$, so by the definition of $\AA^{Pic}_{\II}$ it agrees with the action of $\nabla_{\a(v)}$.
\end{proof}

\begin{prop}\label{virt-Fock-prop} 
In the above situation, assume in addition that we have a Lagrangian $L\sub \HH$, such that $\II|_s^+\cap L|_s^+=0$ for any $s\in S$.
Consider the corresponding virtual Fock module $M:=M_{\HH}(L)_{\II}$ over $\ov{\HH}_\II$ (see Theorem \ref{Pf-thm}). Then 
we have a unique isomorphism of Picard algebroids,
$$\AA^{Pic}_\II\simeq \AA^{op}_{\ov{\HH}_\II,\nabla,M},$$
compatible with the actions on $M$.
\end{prop}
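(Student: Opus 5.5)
The plan is to construct the isomorphism directly from the action of $\AA^{Pic}_\II$ on $M=M_{\HH}(L)_\II=C_\II(M_\HH(L))$, and then to argue that any morphism of Picard algebroids which is the identity on $\OO$ and compatible with the anchors is an isomorphism. First, by Theorem \ref{Pf-thm}(i), $C_\II(M_\HH(L))$ is concentrated in degree $0$. Locally it is isomorphic to a Fock module $M_{\ov{\HH}_\II}(\ov{L})$ for some Lagrangian $\ov{L}\sub\ov{\HH}_\II$, so $M$ is indeed a (left) virtual Fock module over $\ov{\HH}_\II$. Since the derived object $C_\II(M_\HH(L))$ is an object of the derived category of $\AA_\II$-modules whose cohomology is concentrated in one degree, $M$ is an honest $\AA_\II$-module of weight $1$. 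Pulling back along $\AA^{Pic}_\II\to\AA_\II$ makes $M$ an $\AA^{Pic}_\II$-module of weight $1$.

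Next, I show that each local section $v$ of $\AA^{Pic}_\II$ acts on $M$ by an operator lying in $\AA_{\ov{\HH}_\II,\nabla,M}$. The action of $v$ is a first order differential operator on $M$ with scalar symbol $\a(v)\in\TT$, because $\OO\sub\AA^{Pic}_\II$ acts by multiplication. By the Lemma preceding the proposition, equation \eqref{conn-comp-eq} holds:
$$v(xm)=\nabla_{\a(v)}(x)m+(-1)^{\ov{v}\ov{x}}x\,v(m).$$
This is exactly the left-module version of the defining condition in Definition \ref{conn-alg-def}, now with $\HH$ replaced by $\ov{\HH}_\II$ and $M^r(\II)$ replaced by the left virtual Fock module $M$. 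Passing from right to left modules is what introduces the superscript $op$: we compare with the algebroid defined for the right module over $\ov{\HH}_\II^{op}$, using $U(\ov{\HH})^{op}\simeq U(\ov{\HH}^{op})$. The remaining condition, that $[D,U_{\le1}]\sub U_{\le1}$, is automatic here, since by \eqref{conn-comp-eq} the commutator of $v$ with $x$ equals $\nabla_{\a(v)}x\in\ov{\HH}_\II$. Because $v\mapsto(\text{action of }v)$ comes from a module structure, it is a morphism of Lie algebroids
$$\AA^{Pic}_\II\to\AA^{op}_{\ov{\HH}_\II,\nabla,M}.$$
By construction this morphism is the identity on $\OO$ and commutes with the anchors.

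It remains to check that the target is a Picard algebroid, i.e.\ that the anchor map is surjective with kernel exactly $\OO$. The kernel consists of $\OO$-linear endomorphisms of $M$ commuting with $\ov{\HH}_\II$. Locally $M\simeq M_{\ov{\HH}_\II}(\ov{L})$ with $\ov{L}$ Lagrangian, and by \eqref{M-I-main-isom} the endomorphism algebra $\und{\End}_{U(\ov{\HH})}(M(\ov{L}))$ equals $U(\ov{\HH}_{\ov{L}})=\OO$. Surjectivity of the anchor is not needed separately: the source $\AA^{Pic}_\II$ already surjects onto $\TT$. Hence the five lemma, applied to the map of extensions of $\TT$ by $\OO$, shows that the morphism is an isomorphism. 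For uniqueness, $\AA_{\ov{\HH}_\II,\nabla,M}$ is by definition a subsheaf of the Atiyah algebroid of $M$, so it acts faithfully on $M$. Therefore any isomorphism compatible with the actions on $M$ coincides with the one constructed.

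I expect the main obstacle to be the sign and side bookkeeping behind the superscript $op$. Definition \ref{conn-alg-def} was written for the right module $M^r(\II)$, while $M$ is a left $\ov{\HH}_\II$-module. One has to verify carefully that the Koszul signs in \eqref{conn-comp-eq} match the transported definition, and that the bracket of $\AA^{op}$ is the one compatible with composition of operators on $M$. A secondary point is descending the module structure from the derived category to the sheaf $M$. Here I would use the explicit Koszul complex $K(\II,M_\HH(L))$ together with the bar-resolution argument of Proposition \ref{flat-conn-case-prop}(ii), and note that this complex has cohomology only in degree $0$.
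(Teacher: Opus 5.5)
Your proposal is correct and follows the paper's argument: the paper also uses \eqref{conn-comp-eq} to see that the action of $\AA^{Pic}_\II$ on $M=C_\II(M_\HH(L))$ defines a morphism of Picard algebroids $\AA^{Pic}_\II\to\AA^{op}_{\ov{\HH}_\II,\nabla,M}$, which is then automatically an isomorphism. Your extra details (the kernel of the anchor being locally $\und{\End}_{U(\ov{\HH}_\II)}(M)\simeq\OO$, the five lemma, and uniqueness by faithfulness of the action on $M$) only spell out what the paper leaves implicit, and the sign issue you flag is harmless: if you set $m\cdot x:=(-1)^{\ov{m}\ov{x}}xm$, the right-module condition of Definition \ref{conn-alg-def} becomes exactly \eqref{conn-comp-eq}.
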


\begin{proof} The identity \eqref{conn-comp-eq} for the action of $\AA^{Pic}_\II$ on $M=C_\II(M_\HH(L))$ 
implies that the action of $\AA^{Pic}_\II$ on $M$ gives a morphism
of Picard-Lie algebroids $\AA^{Pic}_\II\to \AA_{\ov{\HH}_I,\nabla,M}^{op}$, hence, an isomorphism.
\end{proof}

Taking into account Corollary \ref{JL-cor}, we get the following result.

\begin{cor}\label{Pic-alg-cor}
In the situation of Proposition \ref{virt-Fock-prop}, if $\fc(\II)+L=\HH$ then we have
$$\AA^{Pic}_\II\simeq \AA^{op}_{\ov{\HH}_\II,\nabla,\ov{L}},$$
where $\ov{L}=\fc(\II)\cap L$ is the reduced Lagrangian in $\ov{\HH}_\II$,
compatibly with the actions on
$$M_{\HH}(L)_\II\simeq M_{\ov{\HH}}(\ov{L}).$$
\end{cor}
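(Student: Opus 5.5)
The plan is to reduce Corollary \ref{Pic-alg-cor} to Proposition \ref{virt-Fock-prop} by identifying the virtual Fock module concretely. There are three steps.

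First I check the hypotheses of Corollary \ref{JL-cor}(i). The bundle $\II$ is isotropic and $L$ is Lagrangian, and we assume $\fc(\II)+L=\HH$, which is equivalent to $\pi(\II)^\perp+\pi(L)=\VV$. So Corollary \ref{JL-cor}(i) applies and gives an isomorphism of $U(\ov{\HH}_\II)$-modules
$$C_\II(M_\HH(L))\simeq M_{\ov{\HH}_\II}(\ov{L}),\qquad \ov{L}=L\cap\fc(\II).$$
Concretely this comes from Proposition \ref{redII-prop}(i), namely \eqref{isotr-red-main-isom-2} with the roles of $I$ and $J$ played by $\II$ and $L$. Along the way I verify that $\ov{L}$ is Lagrangian in $\ov{\HH}_\II$, and not merely isotropic. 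This is a rank count using $\pi(L)\cap\pi(\II)=0$ together with transversality. In particular the left-hand side is concentrated in degree $0$, so it agrees with the sheaf of ordinary coinvariants $M_\HH(L)_\II$.

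Second, this identification shows that the virtual Fock module $M=M_\HH(L)_\II$ appearing in Proposition \ref{virt-Fock-prop} is globally isomorphic to the honest Fock module $M_{\ov{\HH}}(\ov{L})$. Transporting the Picard algebroid of symmetries along this isomorphism gives
$$\AA_{\ov{\HH}_\II,\nabla,M}\simeq\AA_{\ov{\HH}_\II,\nabla,\ov{L}},$$
since both are defined by the same conditions in Definition \ref{conn-alg-def}: compatibility with $\nabla$ and with the $\ov{\HH}_\II$-action. Some care is needed with conventions. Definition \ref{conn-alg-def} is phrased for right modules, while here we have left modules, which is exactly why ``op'' appears. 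I would either use $U(\ov{\HH})^{op}\simeq U(\ov{\HH}^{op})$ or simply read $\AA_{\ov{\HH}_\II,\nabla,\ov{L}}$ as the left-module version, matching the convention of Proposition \ref{virt-Fock-prop}.

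Third, I combine the two steps with Proposition \ref{virt-Fock-prop} to get $\AA^{Pic}_\II\simeq\AA^{op}_{\ov{\HH}_\II,\nabla,\ov{L}}$, compatibly with the actions on $M_\HH(L)_\II\simeq M_{\ov{\HH}}(\ov{L})$.

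One point to check is the hypothesis $\II|_s^+\cap L|_s^+=0$ of Proposition \ref{virt-Fock-prop}. It follows from $\fc(\II)+L=\HH$, which forces $\II\cap L=0$ fiberwise: any element of $\II\cap L$ lies in the center of $\fc(\II)+L$, hence in $\OO\cap\II=0$. Alternatively one can bypass Proposition \ref{virt-Fock-prop} and argue directly, repeating its proof. Identity \eqref{conn-comp-eq} shows that the action of $\AA^{Pic}_\II$ on $M_{\ov{\HH}}(\ov{L})$ is by operators in $\AA^{op}_{\ov{\HH}_\II,\nabla,\ov{L}}$. The resulting morphism of Picard algebroids is the identity on $\OO$ and on $\TT$, so it is an isomorphism. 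Uniqueness holds because the action on a Fock module determines the operator.

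The main obstacle is bookkeeping rather than substance: making sure the left/right and op conventions line up, and that the isomorphism of Corollary \ref{JL-cor} is $\ov{\HH}_\II$-linear rather than merely $\OO$-linear. Only the linear version lets us transport the algebroid.
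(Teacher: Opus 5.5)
Your proposal is correct and follows the paper's own argument, which simply combines Proposition \ref{virt-Fock-prop} with Corollary \ref{JL-cor}(i). That corollary identifies the virtual Fock module $M_{\HH}(L)_\II$ with $M_{\ov{\HH}}(\ov{L})$ as $\ov{\HH}_\II$-modules. Your additional checks, namely that $\fc(\II)+L=\HH$ forces $\II|_s\cap L|_s=0$ and that $\ov{L}$ is Lagrangian, fill in details the paper leaves implicit.
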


\subsection{Flat connections associated with a transversal data}\label{local-flat-conn-sec}

Let $\HH\to \VV$ be a Heisenberg extension, equipped with a Heisenberg  flat connection $\nabla$, as in Sec.\ \ref{flat-Heis-sec},
and let $\II\sub \HH$ be an isotropic subbundle. Assume that we have another isotropic subbundle $J\sub \HH$, such that $J$
is preserved by $\nabla$, and such that the commutator induces a perfect pairing between $\II$ and $J$.
In this case the images of $\II$ and $J$ in $\VV$ satisfy
$$\VV=\II\oplus J^\perp=\II^\perp\oplus J=\II\oplus (\II^\perp\cap J^\perp)\oplus J.$$
In particular, we get an identification compatible with the Heisenberg extension structures, 
\begin{equation}\label{IperpI-trivialization}
\fc(\II)/\II\simeq \fc(J)/J.
\end{equation}

In this situation we can construct a flat connection $\nabla_J$ on the Lie algebroid $\AA_{\HH,\nabla,\II}$, i.e., a morphism of Lie algebroids,
$$\nabla^J:\TT\to \AA_{\HH,\nabla,\II}.$$
Recall that $\AA_{\HH,\nabla,\II}$ is realized as an algebra of operators on $\MM^r_{\HH}(\II)$.

\begin{prop}\label{transv-flat-connections-prop}
In the above situation there exists a unique flat connection $\nabla_J$ on $\AA_{\HH,\nabla,\II}$, such that
any $D$ in the image of $\nabla^J$ satisfies
\begin{equation}\label{D1-J-condition}
D(1)\sub 1\cdot S^2(J)+U_{\le 1}(\ov{\HH}_\II)\cdot 1\cdot J,
\end{equation}
where $1\in M^r_{\HH}(\II)$ is the vacuum vector.
The induced flat connections $\nabla^J\mod \OO$ on $\AA_{\HH,\nabla,\II}/\OO$ and
$\nabla^J$ on $\ov{\HH}_\II$ (as on a $\AA_{\HH,\nabla,\II}$-module), both come from the flat Heisenberg connection on $\fc(\II)/\II$
induced by the isomorphism \eqref{IperpI-trivialization} and by $\nabla$.
\end{prop}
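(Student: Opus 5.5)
Since $\II$ and $J$ are in perfect pairing, we have $\HH=\II\oplus \fc(J)$. Proposition \ref{redII-prop}(ii), applied with the roles $(I,J)\to (J,\II)$, then gives an identification
$$M^r_{\HH}(\II)\simeq U(\fc(J))/\bigl(\text{right ideal generated by } \II\cap\fc(J)\bigr)\cdot\ldots,$$
which we will use in a simpler local form. Concretely, the embedding $S(J)\subset U(\HH)$ together with $U(\ov{\HH}_\II)$ (lifted via \eqref{IperpI-trivialization} to $\fc(\II)\cap\fc(J)$) induces an isomorphism
$$U(\ov{\HH}_\II)\ot_\OO S(J)\rTo{\sim} M^r_{\HH}(\II),\quad h\ot s\mapsto h\cdot 1\cdot s .$$
This isomorphism is compatible with the right action of $\fc(\II)\cap\fc(J)$ and of $J$. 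The plan is to define $\nabla^J_v$ as the operator that, in this trivialization, acts by $\nabla_v$ on both factors: on $S(J)$ through the connection preserved on $J$, and on $U(\ov{\HH}_\II)$ through the flat Heisenberg connection induced from $\fc(J)/J$ via \eqref{IperpI-trivialization}. To this we add a correction term in $U_{\le 2}$ that makes it commute with the twisted right action by $\II$.

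\textbf{Uniqueness.} Suppose two lifts $D,D'$ of $v$ both satisfy \eqref{D1-J-condition}. Their difference $a$ lies in $U_{\le 2}(\ov{\HH}_\II)$, acting on the left. Then $a\cdot 1$ lies in $1\cdot S^2(J)+U_{\le 1}(\ov{\HH}_\II)\cdot 1\cdot J$. By the tensor decomposition above, $a\cdot 1=a\ot 1$ has $S(J)$-degree $0$, so it must vanish, and since the map is an isomorphism we get $a=0$. The same argument gives flatness and the Lie algebroid morphism property once existence is known. Indeed, $[\nabla^J_v,\nabla^J_w]-\nabla^J_{[v,w]}$ is a section of the kernel $U_{\le 2}(\ov{\HH}_\II)$, and applied to $1$ it again lands in the subspace of \eqref{D1-J-condition}. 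For this we must check that this subspace is stable under operators satisfying \eqref{D1-J-condition}, using \eqref{D-filtration-property} and the fact that $\nabla$ preserves $J$. Hence the bracket defect is zero.

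\textbf{Existence.} This is the main step. Pick any local lift $D$ of $v$ from Proposition \ref{flat-conn-case-prop}(i). Using the compatibility $D(m\cdot x)=D(m)\cdot x\pm m\cdot\nabla_{v}(x)$, the operator is determined by $D(1)\in M^r(\II)_{\le 2}$ (by \eqref{D-filtration-property}). Decompose $D(1)$ according to $U(\ov{\HH}_\II)\ot S(J)$ filtered in total degree $\le 2$. The components are in $S^2(J)$, in $U_{\le1}(\ov{\HH}_\II)\ot J$, and in $U_{\le 2}(\ov{\HH}_\II)\ot 1$. Subtract the last component, viewed as an element of the kernel $U_{\le 2}(\ov{\HH}_\II)$ acting on the left. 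The resulting operator is still in $\AA_{\HH,\nabla,\II}$ and satisfies \eqref{D1-J-condition}. The obstacle I expect is to verify that this is consistent, i.e.\ that the degree-$0$-in-$J$ part of $D(1)$ truly comes from an element of $U_{\le 2}(\ov{\HH}_\II)$ rather than something of higher filtration. This follows from \eqref{D-filtration-property} and the tensor decomposition.

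\textbf{Final assertion.} For $x\in\ov{\HH}_\II$ realized in $\fc(\II)\cap\fc(J)$, compute $[\nabla^J_v,x]$ on the vacuum. We have $\nabla^J_v(x\cdot 1)=\nabla^J_v(1\cdot x)=\nabla^J_v(1)\cdot x\pm 1\cdot\nabla_v x$. Moving $x$ past the \eqref{D1-J-condition}-type term $\nabla^J_v(1)$ produces only commutators of $x$ with $J$, which vanish because $x\in\fc(J)$. Hence $[\nabla^J_v,x]=\nabla_v x$, computed in $\fc(J)$ and transported via \eqref{IperpI-trivialization}, which is the claimed description of $\nabla^J$ on $\ov{\HH}_\II$. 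The statement modulo $\OO$ then follows from Proposition \ref{anchor-sur-prop}(ii), identifying $\AA/\OO$ with $\AA_{\ov{\HH}_\II}$ via the adjoint action.
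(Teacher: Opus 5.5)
Your existence and uniqueness arguments are the paper's own. They rest on the filtered isomorphism $\kappa\colon U(\ov{\HH}_\II)\ot S(J)\to M^r_{\HH}(\II)$ and on the bound $D(1)\in M^r_{\HH}(\II)_{\le 2}$ from \eqref{D-filtration-property}. Existence comes from subtracting the $U_{\le 2}(\ov{\HH}_\II)\ot 1$-component of $D(1)$. Uniqueness holds because $W:=1\cdot S^2(J)+U_{\le 1}(\ov{\HH}_\II)\cdot 1\cdot J$ has no component of $J$-degree $0$. Your opening ``plan'' could not work: left multiplication by $U_{\le 2}(\ov{\HH}_\II)$ commutes with the right action, so such a correction cannot restore compatibility with the right $\II$-action. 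You never use it, though.

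\textbf{The gap is in flatness.} You correctly reduce to showing that $[D,D']$ satisfies \eqref{D1-J-condition} whenever $D$ and $D'$ do. However, your justification, that $W$ is stable under every operator satisfying \eqref{D1-J-condition}, is false. For a local horizontal basis $(e_i)$ of $J$ one has $D(1\cdot e_ie_j)=D(1)\cdot e_ie_j$. This has components of $J$-degree $3$ and $4$ as soon as $D(1)$ has a nonzero $S^2(J)$-part, as it typically does; for example $\nabla_0(\pa_x)(1)=-(e^*)^2/2$ in Sec.~\ref{2dim-for-sec}. The bound \eqref{D-filtration-property} only limits the filtration jump to $2$ and cannot give stability.

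What is true is that these high-degree terms cancel in the supercommutator, and seeing this needs the explicit computation the paper does. Write $D(1)=\sum a_{ij}\cdot 1\cdot e_ie_j+\sum b_i\cdot 1\cdot e_i$, and similarly for $D'$.
\begin{itemize}
\item The $J$-degree $3$ and $4$ terms of $D(D'(1))$ and $D'(D(1))$ cancel, by supercommutativity of $S(J)$ and because the $a_{ij}$ are functions.
\item The products $b'_jb_i$ and $b_ib'_j$ combine into $[b'_j,b_i]\in\OO$ by the Heisenberg relation, so they land in $\OO\cdot 1\cdot S^2(J)$.
\item The remaining $J$-degree-$1$ terms are $([D,b'_i]\mp[D',b_i])\cdot 1\cdot e_i$.
\end{itemize}
These last terms lie in $U_{\le 1}(\ov{\HH}_\II)\cdot 1\cdot J$ only because of the third defining condition $[D,U_{\le 1}(\ov{\HH}_\II)]\sub U_{\le 1}(\ov{\HH}_\II)$ on $\AA_{\HH,\nabla,\II}$. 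Without it, $[D,b'_i]$ would only be known to lie in $U_{\le 2}(\ov{\HH}_\II)$. Your sketch never invokes this condition.

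\textbf{A similar slip occurs in your last step.} Moving $x\in\fc(\II)\cap\fc(J)$ past $\nabla^J_v(1)$ also produces terms $\pm[b_i,x]\cdot 1\cdot e_i$ with $[b_i,x]\in\OO$, and these need not vanish. So the identity $[\nabla^J_v,x]\cdot 1=\pm 1\cdot\nabla_v x$ is wrong as written: its left side has $J$-degree $0$, while $\nabla_v x$ can have a nonzero $J$-component. The fix is to compare only $J$-degree-$0$ components under $\kappa$. Since $\nabla_v x\in\fc(J)=(\fc(\II)\cap\fc(J))\oplus J$, this shows that $[\nabla^J_v,x]$ is the $\fc(\II)\cap\fc(J)$-component of $\nabla_v x$. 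That is the class of $\nabla_v x$ in $\fc(J)/J$ transported by \eqref{IperpI-trivialization}, which is the claimed statement. The paper's proof leaves this part implicit.
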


\begin{proof}
We know that $D(1)$ is in $M^r_{\HH}(\II)$. The action on the vacuum vector (where $U(\ov{\HH}_I)$ acts on the left and $S(J)$ on the right) induces an isomorphism
$$\kappa:U(\ov{\HH}_I)\ot S(J)\rTo{\sim}M^r_{\HH}(\II),$$
compatible with filtrations. Hence, by the construction of Sec.\ \ref{another-constr-sec}, for any $D\in \AA_{\HH,\nabla,\II}$, we have
$$D(1)\in M^r_{\HH}(\II)_{\le 2}=\kappa[S^2(J)+U_{\le 1}(\ov{\HH}_\II)\cdot J+U_{\le 2}(\ov{\HH})].$$
Therefore, there is a unique way to add an element of $U_{\le 2}(\ov{\HH})$ to $D$, to get an element satisfying
\eqref{D1-J-condition}. This shows that the existence of a unique $\OO_S$-linear splitting $\nabla^J$ of the anchor map.

To show flatness of $\nabla^J$, we can argue in a formal neighborhood of some point. Then we can choose a local horizontal basis $(e_i)$ of $J$.
Given $D$ and $D'$ satisfying \eqref{D1-J-condition}, we need to show that $[D,D']$ also satisfies it. Let
$$D(1)=\sum a_{ij}\cdot 1\cdot e_ie_j+\sum b_i\cdot 1\cdot e_i, \ \ D'(1)=\sum a'_{ij}\cdot 1\cdot e_ie_j+\sum b'_i\cdot 1\cdot  e_i,$$
where $a_{ij},a'_{ij}\in \OO$ and $b_i,b'_i\in U_{\le 1}(\ov{\HH})$. Then one can easily calculate that
\begin{align*}
&[D,D'](1)=\sum ([D,a'_{ij}]-(-1)^{\ov{D}\ov{D'}}[D',a_{ij}]+(-1)^{\ov{D}(\ov{D'}+\ov{e}_j)}([b'_j,b_i])e_ie_j\\
&+\sum ([D,b'_i]-(-1)^{\ov{D}\ov{D'}}[D',b_i])e_i).
\end{align*}
Thus, $[D,D']$ also satisfies \eqref{D1-J-condition}.
\end{proof}


In the Lagrangian case our construction simplifies as follows. Let $\LL\sub \HH$ be a Lagrangian, and let $L'\sub \HH$ be a complementary Lagrangian,
preserved by $\nabla$. 

\begin{cor}\label{Lag-open-conn-cor}
There exists a unique flat connection $\nabla^{L'}$ on $\AA_{\HH,\nabla,\LL}$, such that
any $D$ in the image of $\nabla^{L'}$ satisfies
$$D(1)\in 1\cdot [S^2(L')+L'].$$
In the split case $\HH=\OO\oplus \VV$, with $\LL\sub \VV$, $L'\sub \VV$, the connection $\nabla^{L'}$ can be viewed as a flat connection for 
$\AA^{ev}_{\VV,\nabla,\LL}$ such that any $D$ in the image of $\nabla^{L'}$ satisfies
$D(1)\in 1\cdot S^2(L')$.
\end{cor}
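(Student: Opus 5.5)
Corollary \ref{Lag-open-conn-cor} will be deduced from Proposition \ref{transv-flat-connections-prop} with $\II=\LL$ and $J=L'$. First we check the hypotheses. $L'$ is preserved by $\nabla$ by assumption. Since $\LL$ and $L'$ are complementary Lagrangians, the commutator pairing $\LL\times L'\to\OO$ is perfect: $\VV=\pi(\LL)\oplus\pi(L')$ and $\pi(\LL)^\perp=\pi(\LL)$, so $\VV/\pi(\LL)\simeq \pi(\LL)^\vee$ is identified with $\pi(L')$. Hence the proposition gives a unique flat connection $\nabla^{L'}:\TT\to\AA_{\HH,\nabla,\LL}$ such that
$$D(1)\in 1\cdot S^2(L')+U_{\le 1}(\ov{\HH}_\LL)\cdot 1\cdot L'$$
for every $D$ in its image.

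Next we simplify this condition. For Lagrangian $\LL$ we have $\fc(\LL)=\OO\oplus\LL$, so $\ov{\HH}_\LL=\OO$ and $U_{\le 1}(\ov{\HH}_\LL)=\OO$. The second summand is therefore $\OO\cdot 1\cdot L'=1\cdot L'$, and the condition becomes exactly $D(1)\in 1\cdot[S^2(L')+L']$. Uniqueness follows from the uniqueness clause of the proposition, which was proved by the normalization argument with $\kappa:S(L')\simeq M^r(\LL)$. That argument also makes sense of the notation $1\cdot[S^2(L')+L']$ as the image under $\kappa$.

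For the split case $\HH=\OO\oplus\VV$ with $\LL,L'\sub\VV$, $M^r(\LL)$ is $\Z/2$-graded and $\kappa:S(L')\to M^r(\LL)$ is a graded isomorphism. Here $L'$ sits in odd $\WW$-degree, and $\deg(V)=1$ in the grading used for $\AA^{ev}$. The aim is to show that the image of $\nabla^{L'}$ lies in $\AA^{ev}_{\VV,\nabla,\LL}$ and that its $L'$-component vanishes.

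Take a lift $D$ of $v$ lying in $\AA^{ev}$; it exists by the exact sequence of Proposition \ref{flat-conn-case-prop}(i). Then $D(1)$ is even, so it lies in $\kappa(S^{\le 2}(L')^{ev})=\kappa(\OO+S^2(L'))$, with no $L'$-component. Subtracting the scalar term (an element of $\OO\sub\WW^{ev}_{\le 2}$) gives an even lift with $D(1)\in 1\cdot S^2(L')$. This lift satisfies the normalization, so by uniqueness it equals $\nabla^{L'}(v)$. Thus $\nabla^{L'}$ factors through $\AA^{ev}$ and satisfies $D(1)\in 1\cdot S^2(L')$.

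The only step needing care is this last parity argument. One must check that the $\Z/2$-grading on $M^r(\LL)$ makes $\kappa$ grading-compatible, with $L'$ odd in the relevant sense. This holds because $\kappa$ is induced by right multiplication in $\WW(\VV)$, where $\VV$ has $\WW$-degree one.
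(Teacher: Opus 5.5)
Your proposal is correct and is the intended deduction: the paper states this corollary without a separate proof, as the specialization of Proposition \ref{transv-flat-connections-prop} to $\II=\LL$, $J=L'$, where $\ov{\HH}_\LL=\OO$ reduces the normalization to $1\cdot[S^2(L')+L']$. Your parity argument for the split case fills in the step the paper leaves implicit: an even lift has $D(1)$ in the image of $\OO\oplus S^2(L')$, by \eqref{D-filtration-property} and because $\kappa$ is strictly compatible with both the filtration and the Weyl grading; uniqueness then forces $\nabla^{L'}(v)$ to equal that normalized even lift.
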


\begin{example}
For a fixed Lagrangian $L'$ in a symplectic (super) vector space $V$, consider the open subset $U_{L'}$ in $LG(V)$ of Lagrangians $L$ such that $L\cap L'=0$.
Recall (see Sec.\ \ref{another-constr-sec}) that we can view $\AA^{ev}_{LG(V)}$ as the quotient of $\OO\ot \WW^{ev}_{\le 2}(V)$, and the connection $\nabla^{L'}$
can be seen as the inverse of the composition 
$$\OO\ot S^2(L')\to \OO\ot \WW^{ev}_{\le 2}(V)\to \AA^{ev}_{LG(V)}\to T_{LG(V)},$$
which is an isomorphism. 
\end{example}

\subsubsection{Explicit formulas for the $2$-dimensional case}\label{2dim-for-sec}

Let us consider the simplest example when $V$ is the even $2$-dimensional symplectic space
with the basis $e,e^*$, such that $(e^*,e)=1$. 
We think of subspaces in $LG(H)$ as lines $L\sub V$ equipped with functionals $\varphi:L\to \C$, giving a lift of $L$ to $H=\C\oplus V$.
Let $\LL\sub \OO\ot H$ denote the universal Lagrangian, and let $\AA_\LL$ be the corresponding Picard algebroid over $LG(H)$.

Let $U_0\sub LG(H)$ denote the open subset where $e^*\not\in L$.
Let us calculate the flat connection $\nabla_0$ on $\AA_{\LL}|_{U_0}$ given by Corollary \ref{Lag-open-conn-cor}.

We define coordinates $(x,\la)$ on $U_0$
so that $L\sub \C\oplus V$ is spanned by $e+xe^*+\la$
 Over $U_0$ we can identify the right Fock module $M^r(\LL)$ with $1\cdot \OO(U_0)[e^*]$, where $e^*$ acts by multiplication and the right action of $e$ is given by 
$$f\cdot e=\pa_{e^*}f-xe^*f-\la f.$$
 
As in the proof of Proposition \ref{anchor-sur-prop}, we see that over $U_0$, the operators
$\pa_x-(e^*)^2/2$ and $\pa_\la-e^*$ on $M^r(\LL)=\OO(U_0)[e^*]$ are in $\AA_{\LL}$. Therefore,
$$\nabla_0(\pa_x)=\pa_x-(e^*)^2/2, \ \ \nabla_0(\pa_\la)=\pa_\la-e^*.$$
 
We can also calculate the Lie homomorphism (see \eqref{global-diff-oper-eq})
$$\ga:\WW_{\le 2}(V)\to H^0(LG(H),\AA_\LL)\to \AA_\LL(U_0)$$ 
coming from the action of
$\Sp(V)\ltimes V$ on $LG(H)$ (where $v\in V$ sends $(L,\varphi)$ to $(L,\varphi+(v,\cdot))$),
One can check that the corresponding homomorphism of Lie algebras
$$\ov{\ga}:\WW_{\le 2}(V)/\C\to \TT(U_0)$$
is given by
$$\ov{\ga}(e^*)=-\pa_\la, \ \ \ov{\ga}(e)=x\pa_\la,$$
$$\ov{\ga}(\frac{(e^*)^2}{2})=-\pa_x, \ \ \ov{\ga}(ee^*)=2x\pa_x+\la\pa_\la, \ \ \ov{\ga}(\frac{e^2}{2})=-x^2\pa_x-x\la\pa_\la.$$

\begin{prop}\label{ga-calc-prop}
One has 
$$\ga(e^*)=-\nabla_0(\pa_\la), \ \ \ga(e)=\nabla_0(x\pa_\la)-\la,$$
$$\ga(\frac{(e^*)^2}{2})=-\nabla_0(\pa_x), \ \ \ga(ee^*)=\nabla_0(2x\pa_x+\la\pa_\la), \ \ \ga(\frac{e^2}{2})=-\nabla_0(x^2\pa_x+x\la\pa_\la)+\la^2-x.$$
\end{prop}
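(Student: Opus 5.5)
The plan is a direct computation on $U_0$, using the concrete description of $\ga$ from Theorem \ref{two-def-thm}. By that theorem, $\ga(y)$ for $y\in \WW_{\le 2}(V)$ is the operator $D_y$ on $M^r(\LL)$ induced from the operator $f\ot w\mapsto v_y(f)\ot w+f\cdot yw$ on $\OO\ot\WW(V)$, where $v_y=\ov{\ga}(y)$ is the vector field listed before the proposition. The steps are as follows.

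\emph{Transport $D_y$ to the model.} Over $U_0$ we use the identification $M^r(\LL)\simeq 1\cdot\OO(U_0)[e^*]$, in which a polynomial $g(e^*)$ with coefficients in $\OO(U_0)$ represents the class of $g(e^*)\in\OO\ot\WW(V)$ modulo $\LL\cdot\WW(V)$. Under this identification $D_y$ becomes $v_y$, acting on the coefficients, plus left multiplication by $y$ followed by reduction modulo the left ideal generated by $\ell:=e+xe^*+\la$.

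\emph{Reduce left multiplication by the generators.} Left multiplication by $e^*$ is just multiplication by $e^*$ in $\OO[e^*]$. For $e$, we write $e=\ell-xe^*-\la$ and use that $\ell\cdot(\text{anything})\equiv 0$. This gives $e\cdot g\equiv -(xe^*+\la)g$. Hence
$$D_{e^*}=-\pa_\la+e^*,\qquad D_e=x\pa_\la-xe^*-\la.$$
Comparing with $\nabla_0(\pa_\la)=\pa_\la-e^*$, and with $\nabla_0(x\pa_\la)=x\nabla_0(\pa_\la)=x\pa_\la-xe^*$ (by $\OO$-linearity of $\nabla_0$), we obtain the first two identities.

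\emph{Quadratic elements.} For $(e^*)^2/2$, $ee^*$ and $e^2/2$ we compose the linear reductions just obtained. For example, $e^2\cdot g$ is computed by applying the rule for $e\cdot(\,\cdot\,)$ twice. The result is then compared with $\nabla_0$ of the corresponding vector field, which by $\OO$-linearity is a combination of $\pa_x-(e^*)^2/2$ and $\pa_\la-e^*$ with function coefficients. In each case the operator parts ($\pa_x$, $\pa_\la$, $e^*$, $(e^*)^2$) must cancel against $\nabla_0$; this cancellation serves as a consistency check on the formulas for $\ov{\ga}$. What survives is a function on $U_0$: $0$ for $(e^*)^2/2$ and $ee^*$, and $\la^2-x$ for $e^2/2$.

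\emph{Main obstacle.} The hard part is bookkeeping of the scalar terms, in particular the $-x$ in $\ga(e^2/2)$. This term comes from the Weyl commutation $[e,e^*]=\mp1$ and from how elements of $\WW_{\le 2}(V)$ are normalized (for instance, whether $ee^*$ denotes the product or its symmetrization). I would first fix these conventions by rechecking the formula for the right action, $f\cdot e=\pa_{e^*}f-xe^*f-\la f$. Then I would recompute $e\cdot(e^*g)$ keeping the commutator term explicitly, and verify the resulting constant against the Lie algebra relation $[\ga(e),\ga(e)]$-type consistency. The cleanest such check is $[\ga(e^*),\ga(e^2/2)]=\pm\ga(e)$, using that $\ga$ is a Lie homomorphism.
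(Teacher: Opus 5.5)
Your route is the same as the paper's. By Theorem \ref{two-def-thm}, $\ga(y)$ acts on $\OO(U_0)[e^*]$ as $\ov{\ga}(y)$ plus left multiplication by $y$, and one reduces using $e\equiv -xe^*-\la$ modulo $\LL\cdot\WW(V)$. That is a right ideal, not a left one, but the rule $\ell\cdot(\text{anything})\equiv 0$ that you actually use is the correct one. With the paper's conventions ($e^*e-ee^*=1$, and $ee^*$ the plain product), your formulas for $\ga(e^*)$, $\ga(e)$, $\ga((e^*)^2/2)$ and $\ga(ee^*)$ are right.

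The gap is $\ga(e^2/2)$. You assert that the surviving function is $\la^2-x$ without computing it, and the procedure you describe would not give a correct answer. ``Applying the rule for $e$ twice'' is invalid. The elements $e\cdot g$ and $-(xe^*+\la)g$ differ by $\ell g\in\LL\cdot\WW(V)$, and left multiplication by $e$ does not preserve that right ideal, since $e\ell=\ell e-x$. Naive iteration therefore yields $(xe^*+\la)^2g$ and drops a term. The correct identity in $\WW(V)$ is
\[
e^2=\ell\bigl(\ell-2(xe^*+\la)\bigr)+(xe^*+\la)^2-x .
\]
Equivalently, using the well-defined right action, $1\cdot e\cdot e=(\pa_{e^*}-xe^*-\la)^2(1)=(xe^*+\la)^2-x$. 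Right multiplication by $g$ is then ordinary multiplication.

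Carrying this through gives
\[
\ga(\tfrac{e^2}{2})=-\nabla_0(x^2\pa_x+x\la\pa_\la)+\tfrac12(\la^2-x).
\]
So the constant you aimed at, $\la^2-x$, is off by a factor $\tfrac12$; the printed formula contains a slip, and no correct argument can reach it.

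The paper's own Appendix A confirms this. On $U_1$ one has $\ga(e^2/2)=-\pa_y+e^2/2=-\nabla_1(\pa_y)$ with $\pa_y=x^2\pa_x+x\la\pa_\la$, and the appendix computes $\nabla_1(\pa_y)-\nabla_0(\pa_y)=\tfrac{x-\la^2}{2}$.

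Your own proposed consistency check also detects it. Write $\ga(e^2/2)=-\nabla_0(\pa_y)+c$ and use flatness of $\nabla_0$. The relation $[\ga(e^*),\ga(e^2/2)]=\ga(e)$ forces $\pa_\la c=\la$. The relation $[\ga((e^*)^2/2),\ga(e^2/2)]=\ga(ee^*)+\tfrac12$ forces $\pa_x c=-\tfrac12$. Hence $c=\tfrac12(\la^2-x)$. These checks have to be carried out, not used to confirm the printed constant.
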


\begin{proof}
We use the fact that for $x\in \WW_{\le 2}(V)$, the action of $x$ on $M^r(\LL)(U_0)=\OO(U_0)[e^*]$ is given by 
$$f(e^*)\mapsto \ov{\ga}(x)f(e^*)+x\cdot f(e^*)$$
(see Theorem \ref{two-def-thm}). This immediately leads to the formulas for $\ga(e^*)$ and $\ga((e^*)^2)$. In the case when $x$ is either $e$, $ee^*$ or $e^2$, one has
in addition to rewrite $xf(e^*)$ as a polynomial in $e^*$ in $M^r(\LL)$, using the fact that $e\equiv -xe^*-\la \mod(\LL)$. 
\end{proof}
 
\subsubsection{The map to global sections of the coinvariants sheaf}

Let us consider the Grassmannian $LG(H)$ of Lagrangian subspace in the Heisenberg extension $H=\C\oplus V$, where $V$ is a symplectic super vector space.
Let $M=M(L_0)$ be the Fock module over $\WW(V)$ associated with a fixed Lagrangian $L_0$, and let $\LL\sub \OO\ot H$ denote the universal Lagrangian subbundle 
over $LG(H)$. Thus, we can consider the corresponding sheaf of coinvariants $(\OO\ot M)_\LL$.

The natural projection $\OO\ot M\to (\OO\ot M)_\LL$ gives a map 
\begin{equation}\label{I-M-H-map}
I: M\to H^0(LG(H),(\OO\ot M)_\LL),
\end{equation}
It is compatible with the action of $\WW_{\le 2}(V)$, where the action on
the right-hand side comes from the Lie algebra homomorphism $\ga:\WW_{\le 2}(V)\to H^0(\AA_\LL)$ (see \eqref{global-diff-oper-eq}).
It follows that the elements in the image of $I$ satisfy differential equations 
\begin{equation}\label{abstr-heat-eq}
(\ga(v_1)\ga(v_2)-\ga(v_1v_2))I(m)=0.
\end{equation}
Indeed, this follows from the similar relation in $\WW(V)$. 
Note that if we replace $LG(H)$ by $LG(V)$  the map 
$$I_0:M\to H^0(LG(V),C_\LL(M)),$$
will be zero on the odd part of $M$, and we will see only
some consequences of \eqref{abstr-heat-eq}, such as
$$(\ga(l_1l_2)\ga(l_3l_4)-(-1)^{\ov{l}_2\ov{l}_3}\ga(l_1l_2)\ga(l_2l_4))I_0(m)=0,$$
for $l_i\in L$, where $L\sub H$ is a fixed Lagrangian subspace.
The map $I_0$ and the differential equations for the image are well known in the theory of the Weil representation (see \cite{GS}, \cite{Goncharov}). 

Recall that in the case when $V$ is an even $2$-dimensional symplectic space with the standard basis $(e^*,e)$
we calculated the homomorphism $\ga$ explicitly in terms
of the action on $M^r(\LL)(U_0)$, where $U_0\sub LG(H)$ is an open subset of $L$ transversal to $e^*\in H$ (see Proposition \ref{ga-calc-prop}).
For example, 
$$\ga(e^*)=-\pa_\la+e^*, \ \ \ga((e^*)^2)=-2\pa_x+(e^*)^2,$$ 
where $(x,\la)$ are natural coordinates on $U_0$.
Given a $\WW(V)$-module $M$, we have the induced action of $\AA_\LL$ on $(\OO\ot M)_\LL=M^r(\LL)\ot_{\WW(V)}M$.
For example, the action of $\ga(e^*)$ and $\ga((e^*)^2)$ on $(\OO\ot M)_\LL(U_0)$ is still given by the above formulas (where $e^*$ and $(e^*)^2$ act
on $M$ as elements of $\WW(V)$), so the sections of the form $I(m)$ satisfy
$$[2\pa_x-(e^*)^2+(\pa_\la-e^*)^2]I(m)=0.$$

All of this also makes sense in the complex analytic context. If we take $M$ to be the standard representation of $H$ on the space $\SS$ of Schwartz functions $f(t)$ on $\R$,
where $e$ acts by $-d/dt$ and $e^*$ acts by $f\mapsto tf$ (in the classical theory it is customary to introduce $i$ in the Heisenberg relations and let $e$ act by $it$).
Let us consider the domain $D\sub U_0$ where $\operatorname{Re}(x)>0$. Then we have an $\OO(D)$-linear isomorphism 
$$(\OO(D)\hat{\ot} \SS)_\LL\to \OO(D): f(x,t)\mapsto I^{cl}(f):=\int_{\R} f(x,t)\exp(-x\frac{t^2}{2}-\la t)dt.$$
One immediately checks that this map interwines the action of $\pa_x-(e^*)^2/2$ and $\pa_\la-e^*$ on the coinvariant sheaf with the usual operators $\pa_x$ and $\pa_\la$ on
the target. The differential equation above translates into the elementary fact that for $f(x,t)=f(t)$ (i.e., for global sections coming from $\SS$), one has
$$(2\pa_x+\pa_\la^2)I^{cl}(f(t))=0.$$
This makes sense also for some generalized functions. In particular, for $f(t)$ being the delta-function of a lattice, this gives the heat equation for the theta function (written
in slightly non-standard coordinates).
 
\subsection{Holonomicity and vanishing of higher derived coinvariants}\label{holonomicity-sec}

Using our twisted $D$-module structures on coinvariants, we will
prove the following criterion for vanishing of the higher derived coinvariants.

\begin{theorem}\label{codim-thm}
Let $\pi:\HH\to \VV$ be a super Heisenberg extension over a smooth superscheme $S$.
$\LL_1,\LL_2\sub \HH$ a pair of Lagrangian subbundles. 
Assume that  for every $i\ge 1$, one has
$$\codim\{s\in S_{\bos} \ |\ \dim (\pi(\LL_1)|_s^+\cap \pi(\LL_2)|_s^+)\ge i\}\ge i.$$
Then the coinvariants object $C_{\LL_1}(M_{\HH}(\LL_2))$ has cohomology concentrated in degree $0$.
In the case when $S$ is even and $\HH$ has a Heisenberg flat connection, the coinvariants sheaf, viewed as an $\AA_{\HH,\nabla,\LL}$-module, is holonomic.
\end{theorem}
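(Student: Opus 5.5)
The plan is to reduce to a statement about coherent modules over a Picard algebroid (twisted $D$-modules) and then use the classical fact that such modules are locally free on the smooth locus of their support. First I would reduce to the case where $S$ is even. The cohomology sheaves of $C_{\LL_1}(M_\HH(\LL_2))$ can be computed by the Koszul complex $K(\LL_1,M(\LL_2))$ from \eqref{C-I-M-complex}. Vanishing in negative degrees can be checked after restricting to $S_{\bos}$, by a Nakayama-type argument on the $\NN$-adic filtration together with base change for the complex of (quasi-coherent) bundles. The condition in the statement only involves $S_{\bos}$, and the problem is local. So after formal completion at a point I can assume there is a horizontal trivialization, as in the proof of Proposition \ref{flat-conn-case-prop}(i). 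Hence $\HH$ carries a Heisenberg flat connection and everything is pulled back from the universal situation: $\LL_2$ is constant and $\LL_1$ varies. By Corollary \ref{conf-bl-cor}, the cohomology sheaves $\mathcal{C}^j$ are then modules over the Picard algebroid $\AA_{\HH,\nabla,\LL_1}$, i.e.\ twisted $D$-modules on the even smooth scheme $S$.

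Second, I would stratify $S$ by $S_i=\{s:\dim(\pi(\LL_1)|_s^+\cap\pi(\LL_2)|_s^+)=i\}$, where the odd intersection is allowed to jump freely. On the locus where the even intersection is zero, Theorem \ref{Pf-thm}(i) (or Corollary \ref{Lagr-int-cor}) shows that $C$ is a line bundle in degree $0$. More generally, near a point of $S_i$ I would reduce by an even isotropic subbundle $I\subset\LL_1$ with $I|_{s}=$ the even intersection at $s_0$. Using Proposition \ref{trans-coinv-prop}, Proposition \ref{redII-prop} and Theorem \ref{Ber-isotropic-thm}, this gives that the fibrewise (derived base change) coinvariants at $s$ live in degrees $[-i,0]$ and are finite rank. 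This yields two facts: each $\mathcal{C}^j$ is coherent, and $\mathcal{C}^{-j}$ is supported on $\bigcup_{i\ge j}\ov{S_i}$, which has codimension $\ge j$ by hypothesis.

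Third, I would use the $D$-module structure. A coherent module over a Picard algebroid on a smooth variety is locally free, hence either zero or supported everywhere. So for $j\geq1$, each $\mathcal{C}^{-j}$, being supported on a proper closed subset of codimension $\ge j\ge1$, must vanish. This is where the twisted $D$-module structure is essential, and the codimension bound matters only for the lower degrees of the base-change spectral sequence. To run the induction cleanly, I would argue by descending $j$ using the base-change spectral sequence $\mathcal{T}or_p(\mathcal{C}^{-q},k(s))\Rightarrow H^{-p-q}(C\otimes k(s))$. The top nonzero $\mathcal{C}^{-j}$ is coherent, supported in codimension $\ge j$ and locally free, hence zero. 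In the even flat case this leaves $\mathcal{C}^0$ a coherent module over a tdo, and its holonomicity would follow by identifying its characteristic variety via the generic rank-one computation, so that it is lisse of rank $1$ generically.

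The main obstacle is the first reduction, namely passing from a general superscheme without a flat connection to the even flat universal case. Here a general $S$ has no algebroid action. I would handle this by first treating the universal family over the Lagrangian Grassmannian $LG(H)$, stratified by intersection with the fixed $\LL_2$. Then I would prove the stronger statement that the $\mathcal{C}^{-j}$ there have appropriate Tor-amplitude, and deduce the general case by pulling back along the classifying map, using the codimension hypothesis to control the derived pullback (a Cohen–Macaulay/Eagon–Northcott style depth argument on the degeneracy loci).
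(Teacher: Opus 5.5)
Your reduction to $S_{\bos}$ is sound, and it is simpler than the paper's appeal to Kashiwara's theorem. $K(\LL_1,M(\LL_2))$ consists of flat $\OO_S$-modules, and for smooth $S$ the quotients $\NN^k/\NN^{k+1}$ are locally free over $\OO_{S_{\bos}}$, so vanishing in negative degrees climbs up the $\NN$-adic filtration. The flat connection is also not an obstacle: Zariski locally you can trivialize $\VV$ with $\LL_2$ constant, as in Step~1 of the paper, so no depth argument is needed.

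The genuine gap is your claim that the cohomology sheaves $\mathcal C^j$ are $\OO$-coherent. Finite-rank derived fibres do not imply this, and the claim is false. Take $V$ even of dimension $2$, $M(L_2)=k[x]$ with $L_2$ acting by $\pa_x$, and $\LL_1=\lan \pa_x-ax\ran$ over $S=\A^1_a$; this satisfies the hypothesis.
\begin{itemize}
\item $\mathcal C^{-1}=0$.
\item In $\mathcal C^0=k[a][x]/(\pa_x-ax)k[a][x]$ one has $ax=0$ and $ax^{2k+1}=2k\,x^{2k-1}$.
\item Hence $x^{2k+1}$ is killed by $a^{k+1}$, while $a^kx^{2k+1}=2^kk!\,x\neq 0$.
\end{itemize}
The torsion has unbounded exponent, so $\mathcal C^0$ is not finitely generated; it has a $\delta$-type component at $a=0$. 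So the principle ``a module over a Picard algebroid supported on a proper closed subset vanishes'' is unavailable, since $\delta$-modules are exactly such objects.

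Your argument, if valid, would use only that the degeneracy locus is proper, not the bound $\codim\ge i$, and that stronger statement is false. Take $V$ even of dimension $4$, $M(L_2)=k[x_1,x_2]$, and $\LL_1=\lan \ell_1,\ell_2\ran$ with $\ell_j=\pa_{x_j}-ax_j$. The Koszul $(-1)$-cycle $\ell_1\ot x_2-\ell_2\ot x_1$ is not a boundary, since that would require $(\pa_{x_1}-ax_1)g=\pm x_1$. But $a$ times it is the boundary of $\pm\ell_1\we\ell_2\ot 1$. So $\mathcal C^{-1}$ is a nonzero $\AA_{\LL_1}$-module supported on $\{a=0\}$, where the even intersection jumps by $2$ in codimension $1$.

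The codimension hypothesis has to be used quantitatively against such $\delta$-type pieces, and this is what the paper does.
\begin{enumerate}
\item Reduce $V$ to its even part (Step~2, via $I=L_2^-$ and Theorem~\ref{Pf-thm}(i)).
\item Show that $F=C_{\LL_1}(M(L_2))$ is a holonomic complex. $M^r(\LL_1)$ is a holonomic twisted $D$-module on $L_2\times S$: locally it is a relative Fourier transform of the exponential of a quadratic function. $F$ is its relative de Rham pushforward to $S$.
\item Apply the criterion that a holonomic complex lies in $D^{\ge 0}$ if $j_i^!F\in D^{\ge 0}$, with $\OO$-coherent cohomology, along a stratification.
\item Stratify by the dimension $i$ of the even intersection. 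Then $j_i^!F=Lj_i^*F[-\codim S_i]$, and $Lj_i^*F$ is the derived coinvariants over $S_i$. By Corollary~\ref{JL-cor}(ii) this is a line bundle in degree $-i$, so $j_i^!F$ sits in degree $\codim S_i-i\ge 0$.
\item Since the Koszul complex lives in nonpositive degrees, $F$ is concentrated in degree $0$.
\end{enumerate}
Your holonomicity step has the same defect: being lisse of rank one on a dense open set does not make a module holonomic. Holonomicity has to come from the construction, via the Fourier transform and the pushforward.
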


\begin{proof}
{\bf Step 1}. The statement is local, so we can choose a splitting $\HH=\OO\oplus \VV$, such that $\LL_2\sub \VV$. Furthermore, locally there exist compatible trivializations
of $\LL_2$ and $\VV$, so we can assume
$\VV=\OO\ot V$, $\LL_2=\OO\ot L_2$, where $L_2\sub V$ is a constant Lagrangian.

\noindent
{\bf Step 2}. Reduction to the case when $V$ is purely even. Set $I:=L_2^-$. Then $\ov{V}:=I^\perp/I=V^+$ is purely even, and by Corollary \ref{odd-rk-cat-eq-cor}, we have an equivalence of categories
$$\WW(V)^{op}-\mod\to \WW(\ov{V})^{op}:M\to M^I.$$
Furthermore, by Proposition \ref{trans-coinv-prop}, we have
$$C_{\LL_1}(M(L_2))=M^r(\LL_1)\ot^{\bbL}_{\WW(V)}M(L_2)=C_{L_2}(M^r(\LL_1))\simeq 
C_I(M^r(\LL_1))\ot^{\bbL}_{\WW(\ov{V})} M_{\ov{V}}(\ov{L_2}),$$
where $\ov{L}_2=L_2/I\sub \ov{V}$. Now applying Theorem \ref{Pf-thm}(i), locally we have an isomorphism
$$C_I(M^r(\LL_1))\simeq M_{\ov{V}}^r(\ov{\LL}_1),$$
for some Lagrangian $\ov{\LL}_1\sub \OO\ot \ov{V}$, such that $\ov{\LL}_1|_s=\LL_1|_s^+$.
Hence, the assumption still holds for the data $(\ov{V},\ov{\LL}_1,\ov{L}_2)$.

\noindent
{\bf Step 3}. Reduction to the case $S$ is even.
This follows essentially from Kashiwara's theorem (see also \cite{Penkov} for this particular case\footnote{In \cite{Penkov} does thie for usual $D$-modules,
the twisted case follows similarly, see \cite[Sec.\ 4]{Milicic}.}), 
establishing an equivalence between the category of $\AA_{\LL_1}$-modules and
the modules over its pullback to $S_{\bos}$ (which is $\AA_{\LL_1|_{S_{\bos}}}$). Namely,
$C_{\LL_1}(M(L_2))$ is an object in the derived category of $\AA_{\LL_1}$-modules, so if its restriction to $S_{\bos}$ is concentrated in
degree $0$, so is $C_{\LL_1}(M(L_2))$ itself.

\noindent
{\bf Step 4}. Now we assume that $S$ is even. Note that $M^r(\LL_1)$ is an $\AA_{\LL_1}-\WW(V)$-bimodule. Hence, upon choosing a decomposition $V=L_2\oplus L_2^*$, 
we can think of $M^r(\LL_1)$ as a twisted $D$-module on $L_2\times S$. We claim that it is holonomic. This is a local statement, so we can choose a decomposition
$V=L_0\oplus L_0^*$ trasversal to all other Lagrangians. Then if we identify $\AA_{\LL_1}-\WW(V)$-modules with $D$-modules over $L_0\times S$, the holonomicity of $M^r(\LL_1)$
is clear: $\LL_1$ is a graph of a nondegenerate symmetric pairing $L_0\to L_0^*$, and $M^r(\LL_1)$ is the $D$-module corresponding to the corresponding exponential of a quadratic function. Passing from $L_0\oplus L_0^*$ to $L_2\oplus L_2^*$ corresponds to a (relative) Fourier transform of $D$-modules, which is known to preserve holonomicity.

\noindent
{\bf Step 5}. The standard complex computing $M^r(\LL_1)\ot^{\bbL}_{\WW(V)} M(L_2)$, up to a twist with a line bundle on $S$, 
can be identified with the relative de Rham complex for the projection $\pi:L_2\times S\to S$. Since the push-forward preserves holonomicity,
we deduce that $C_{\LL_1}(M(L_2))$ is a holonomic complex of twisted $D$-modules on $S$.
Now we use a well-known criterion for a holonomic complex $F^\bullet$ to be concentrated in 
nonnegative degrees: it is enough to check that there exists a stratification $S=\sqcup_i S_i$ such that for each locally closed embedding $j_i:S_i\to S$ one has 
$j_i^!F\in D^{\ge 0}$ and $j_i^!F$ has $\OO$-coherent cohomology (where we use Bernstein's notation $j^!$, see \cite{Bernstein}).

In our case, we take the stratification of $S$ by the dimension of intersection of $\LL|_s$ with $L_0$. Then the corresponding complexes $j_i^!F$ can be identified
with the derived coinvariants computed on the strata, and the required property follows from Corollary \ref{JL-cor}(ii).
\end{proof}

For example, conditions of Theorem \ref{codim-thm} are satisfied for $\LL_1=\LL$ and $\LL_2=\OO\ot L_0$, 
where $\LL$ is the universal Lagrangian over $LG(H)$ or $LG(V)$, and $L_0\sub V$ is a fixed Lagrangian,
so in this case the derived coinvariants $C_\LL(M(L_0))$ are concentrated in degree $0$.

\section{Equivariant structures}\label{equiv-sec}

\subsection{Equivariant structure on the Lie algebroid}

Below we use the notion of a $G$-equivariant structure on a Lie algebroid from \cite[Sec.\ 1.8.4]{BB} (in loc.\ cit.\ this structure is referred to as that of
{\it Harish-Chandra Lie algebroid}).
Recall that if an algebraic group $G$ acts on $S$, and $\AA$ is a Lie algebroid over $S$, then a $G$-equivariant structure consists of
\begin{itemize}
\item a structure of a $G$-equivariant bundle on $\AA$ and
\item a homomorphism of Lie algebras $i_{\fg}:\fg={\operatorname{Lie}}(G)\to H^0(X,\AA)$,
\end{itemize}
subject to the following compatibilities:
\begin{itemize}
\item the $G$-action on $\AA$ is compatible with the Lie bracket and with the anchor map;
\item $i_{\fg}$ is is compatible with the $G$-actions (where $G$ acts on $\fg$ via the adjoint representation) and with the anchor map;
\item the $\fg$-action on $\AA$ coming from the $G$-action coincides with the adjoint action via the homomorphism $i_{\fg}$.
\end{itemize}

We will describe a setup for constructing equivariant structures on the Lie algebroids associated with isotropic subbundles.
Let $V$ be a symplectic (super) vector space, $\II\sub \OO_S\ot \WW_{\le 1}(V)$, an isotropic subbundle, and let $\AA_\II$ denote the corresponding Lie algebroid on $S$.

Suppose we have the following extra data:
\begin{itemize}
\item an action of an algebraic (super) group $G$ on $S$;
\item a homomorphism $\rho:G\to \Sp(V)$;
\item a Lie homomorphism $\wt{d\rho}:\fg=\Lie(G)\to \WW_{\le 2}(V)$ lifting $d\rho:\fg\to \mathfrak{sp}(V)$.
\end{itemize}

\begin{prop}\label{G-equiv-prop} 
Assume the subbundle $\II\sub \OO_S\ot \WW_{\le 1}(V)$ is $G$-equivariant (where we use the action of $G$  on $S$ and on $V$ via $\rho$)
and that $\wt{d\rho}$ is $G$-equivariant. For each $X\in \fg$, let us define an operator $i_\fg(X)$ on $M^r(\II)$ by
$$i_\fg(X)(m)=X(m)+(-1)^{\ov{X}\ov{m}}m\cdot \wt{d\rho}(X),$$
where $m\mapsto X(m)$ denotes the action of $\fg$ on $M^r(\II)$ induced by the $G$-equivariant structure.
Then $i_{\fg}$ is in $H^0(S,\AA_\II)$, and the natural $G$-action on $\AA_\II$ together with $i_{\fg}$ define a $G$-equivariant structure on $\AA_\II$.
\end{prop}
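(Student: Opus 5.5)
The plan is to verify directly that $i_\fg(X)$ satisfies the three conditions of Definition \ref{main-def}, and then to check the compatibilities in the definition of a Harish-Chandra Lie algebroid.

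\textbf{Step 1: $i_\fg(X)$ is a section of $\AA_\II$.} Since $\II$ is $G$-equivariant, $G$ acts on $\OO_S\ot\WW(V)$ (on $\OO_S$ through the action on $S$, on $\WW(V)$ through $\rho$) preserving the right ideal $\II\cdot\WW(V)$. Hence $G$ acts on $M^r(\II)$, and differentiating gives the operators $m\mapsto X(m)$. Each such operator is a first order differential operator with scalar symbol $a(X)\in\TT_S$, the vector field of the $G$-action. It satisfies the derivation rule $X(m\cdot w)=X(m)\cdot w+(-1)^{\ov{X}\ov{m}}m\cdot d\rho(X)(w)$ for $w\in\WW(V)$. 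The second summand $m\mapsto \pm m\cdot\wt{d\rho}(X)$ is well defined on $M^r(\II)$, because right multiplication preserves the left quotient by $\II\cdot \WW(V)$. Its supercommutator with right multiplication by $w$ is $-m\cdot[\wt{d\rho}(X),w]=-m\cdot d\rho(X)(w)$, which uses that $\wt{d\rho}$ lifts $d\rho$. So the two error terms cancel, and $i_\fg(X)$ commutes with the right $\WW(V)$-action. Its symbol is $a(X)\in\TT$.

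For the third condition, I would show that $[i_\fg(X),\cdot]$ preserves $U_{\le1}(\ov{\HH}_\II)$ inside $\und{\End}_{\WW(V)^{op}}(M^r(\II))\simeq U(\ov{\HH}_\II)$. Here I would use $G$-equivariance of $\II$: the induced infinitesimal action preserves $\fc(\II)$ and $\II$, hence acts on $\ov{\HH}_\II$ by derivations preserving the filtration. The cleanest route is the universal construction of Theorem \ref{two-def-thm}. Since $\AA_\II$ is the pullback of $\AA_{IG(H)}$ along the $G$-equivariant classifying map $S\to IG(H)$, the operator $i_\fg(X)$ equals the pullback-pairing of $a(X)$ with $D_{\wt{d\rho}(X)}$, which already lies in the algebroid. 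I will try to write this identification down explicitly and use it, rather than checking the filtration by hand.

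\textbf{Step 2: Lie homomorphism and equivariance.} That $i_\fg$ is a Lie homomorphism follows from two facts: $X\mapsto X(\cdot)$ is a Lie action, and $\wt{d\rho}$ is a Lie homomorphism. The cross terms $[X(\cdot),\,r_{\wt{d\rho}(Y)}]=r_{X\cdot\wt{d\rho}(Y)}$ combine correctly because $\wt{d\rho}$ is $G$-equivariant, so that $X\cdot\wt{d\rho}(Y)=[\wt{d\rho}(X),\wt{d\rho}(Y)]$. (Right multiplications compose anti-homomorphically, and this sign works out against the commutator.)

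Next, $G$ acts on $\AA_\II$ by conjugating operators via its action on $M^r(\II)$. This action preserves the defining conditions, the bracket, and the anchor. For $g\in G$ one has $g\,i_\fg(X)\,g^{-1}=i_\fg(\operatorname{Ad}_g X)$, by equivariance of both summands. Finally, the derivative of the conjugation action at $X$ is $[X(\cdot),D]$. This agrees with $[i_\fg(X),D]$ because $D$ commutes with right multiplication by $\wt{d\rho}(X)$.

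The main obstacle is Step 1's third condition, namely preservation of $U_{\le1}(\ov{\HH}_\II)$, together with the sign bookkeeping in the super setting. Everything else is formal.
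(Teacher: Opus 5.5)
Your proposal is correct, and apart from the third condition of Definition~\ref{main-def} it follows the paper's proof step by step. The matching steps are: commutation with the right $\WW(V)$-action via $[\wt{d\rho}(X),w]=d\rho(X)(w)$; the identity $[i_\fg(X),i_\fg(Y)]=i_\fg([X,Y])$ via $G$-equivariance of $\wt{d\rho}$; and the last Harish-Chandra axiom via the fact that every $A\in\AA_\II$ commutes with right multiplication by $\wt{d\rho}(X)$.

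The difference is in the step you call the main obstacle, which the paper settles directly in two lines. The $G$-action on $M^r(\II)$ is compatible with the left $\ov{\HH}_\II$-action: $X(h\cdot m)=X(h)\cdot m+(-1)^{\ov{X}\ov{h}}h\cdot X(m)$, with $X(h)\in\ov{\HH}_\II$ because $\II$, hence $\fc(\II)$, is $G$-stable. Right multiplication by $\wt{d\rho}(X)$ supercommutes with left multiplication by $h$. Hence $[i_\fg(X),h]=X(h)$, and together with $[i_\fg(X),f]=a(X)(f)$ for $f\in\OO_S$ this shows $U_{\le1}(\ov{\HH}_\II)$ is preserved. So your direct sketch is complete once you add that supercommutation remark.

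Your universal route also works. Write the infinitesimal action on $\OO_S\ot\WW(V)$ as $a(X)\ot1+1\ot\ad\wt{d\rho}(X)$; this is legitimate because $d\rho(X)$ and $\ad\wt{d\rho}(X)$ are derivations of $\WW(V)$ that agree on $V$. This gives
\[
i_\fg(X)(f\ot y)=a(X)(f)\ot y+(-1)^{\ov{X}\ov{f}}f\ot\wt{d\rho}(X)\,y .
\]
That is exactly how the pair $(D_{\wt{d\rho}(X)},a(X))\in\psi^\bullet\AA_{IG(H)}$ acts, for the classifying map $\psi\colon S\to IG(H)$; the pair is admissible by equivariance of $\psi$.

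What this route buys is a conceptual description: $i_\fg(X)$ is the pullback of the global section $\ga(\wt{d\rho}(X))$ from \eqref{global-diff-oper-eq}, paired with $a(X)$. The Lie homomorphism property then also comes for free. The price is dependence on Theorem~\ref{two-def-thm} and the pull-back lemma. Moreover, the paper's proof of Theorem~\ref{two-def-thm} does not itself spell out the third condition for $D_x$; in the universal setting that check is the same computation as above. The direct argument is therefore shorter and self-contained.
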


\begin{proof}
First, we need to check that $i_\fg(X)$ commutes with the right action of $v\in \WW_{\le 1}(V)$.
We have
\begin{align*}
&i_\fg(X)(mv)=X(mv)+(-1)^{\ov{X}(\ov{m}+\ov{v})}mv\cdot(\wt{d\rho})(X)\\
&=X(m)v+(-1)^{\ov{X}\ov{m}}m\cdot (d\rho)(X)(v)+(-1)^{\ov{X}(\ov{m}+\ov{v})}mv\cdot (\wt{d\rho})(X)\\
&=[X(m)+(-1)^{\ov{X}\ov{m}}m\cdot (\wt{d\rho})(X)]\cdot v,
\end{align*}
where we used the fact that for any $\wt{y}\in \WW_{\le 2}(V)$ lifting $y\in\mathfrak{sp}(V)$, one has
$$\wt{y}v-(-1)^{\ov{y}\ov{v}}v\wt{y}=y(v).$$

Next, given $h\in \ov{\HH}_I$, and $X\in \fg$, we have
$$X(h\cdot m)=X(h)\cdot m+(-1)^{\ov{X}\ov{h}}h\cdot X(m),$$
where $m\in M^r(\II)$, for the action of $\fg$ on $\ov{\HH}_I$ coming from the equivariant structure.
This easily implies that
$$i_\fg(X)(hm)=X(h)\cdot m+(-1)^{\ov{X}\ov{h}}h\cdot i_\fg(X)(m),$$
hence $i_{\fg}(X)$ is a section of $\AA_\II$.

It is easy to check that for $X,Y\in \fg$, one has
$$[i_\fg(X),i_\fg(Y)]=i_{\fg}([X,Y])$$
(one has to use that $((d\rho)(X))(\wt{d\rho}(Y))=\wt{d\rho}([X,Y])$ which follows from the $G$-equivariance of $\wt{d\rho}$).

The compatibility of $i_\fg$ with the $G$-action and with the anchor map is clear. To check the last condition in the definition of a $G$-equivariant structure
we have to compare $X(A)$ with $[i_\fg(X),A]$ for a section $A$ of $\AA_\II$ and $X\in \fg$.
We have for $m\in M^r(\II)$,
\begin{align*}
&i_{\fg}(X)(A(m))-(-1)^{\ov{X}\ov{A}}A i_\fg(X)(m)=X(A(m))+(-1)^{\ov{X}(\ov{A}+\ov{m})}A(m)\cdot \wt{d\rho}(X)\\
&-(-1)^{\ov{X}\ov{A}}A(X(m))-(-1)^{\ov{X}(\ov{A}+\ov{m})}A(m\cdot \wt{d\rho}(X)).
\end{align*}
Using the fact that $A$ commutes with the right action of $\wt{d\rho}(X)$ this simplifies to $X(A(m))-(-1)^{\ov{X}\ov{A}}A(X(m))$, which is exactly $X(A)(m)$.
\end{proof}


\subsection{Localization of Harish-Chandra modules}

Suppose $\II\sub \OO_S\ot V$ is an isotropic subbundle, $G$ acts on $S$, $\rho:G\to \Sp(V)$ is a homomorphism,
$\wt{d\rho}:\fg\to \WW_{\le 1}(V)$ is a Lie homomorphism, such that the conditions of Proposition \ref{G-equiv-prop} are satisfied.

Given a (left) $\WW(V)$-module $M$, the coinvariants $C_\II(M)=M^r(\II)\ot_{\WW(V)} M$ get equipped with an $\AA_\II$-module structure.
Suppose $G$ acts on $M$ compatibly with the $\WW(V)$-action (where $G$ acts on $\WW(V)$ via $\rho$).
Then $C_\II(M)$ has a $G$-equivariant structure as an $\OO$-module, and the $\AA_\II$-action is compatible with the $G$-action.
Thus, $C_\II(M)$ is a weak $(\AA_\II,G)$-module (see \cite[Sec.\ 1.8.5]{BB}). Recall that a weak $(\WW_\II,G)$-module is a Harish-Chandra module
if in addition the $\fg$-action induced by the $G$-action coincides with the one given by the homomorphism $i_\fg:\fg\to \AA_\II$.

\begin{lemma}\label{HC-mod-lem1}
Assume that the $G$-action on $M$ satisfies in addition the following property: the induced $\fg$-action coincides with the
one induced by the $W(V)_{\le 2}$-action via $\wt{d\rho}:\fg\to W(V)_{\le 2}$.
Then $C_\II(M)$ is a Harish-Chandra module for $(\AA_\II,G)$.
\end{lemma}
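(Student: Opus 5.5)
We have to show that for $X\in\fg$ the action of $X$ on $C_\II(M)$ obtained by differentiating the $G$-equivariant structure coincides with the action of $i_\fg(X)\in H^0(S,\AA_\II)$ from Proposition \ref{G-equiv-prop}. The plan is to check this first on the underived tensor product $M^r(\II)\ot_{\WW(V)}M$, and then pass to the derived version by a resolution.

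For the underived check, recall that the $G$-structure on $C_\II(M)$ is the diagonal one coming from $M^r(\II)$ and $M$. Its derivative therefore acts on $m_1\ot m_2$ by
$$X(m_1\ot m_2)=X(m_1)\ot m_2+(-1)^{\ov{X}\ov{m}_1}m_1\ot X(m_2).$$
By hypothesis, $X(m_2)=\wt{d\rho}(X)\cdot m_2$. Since the tensor product is over $\WW(V)$, the second term equals $(-1)^{\ov{X}\ov{m}_1}m_1\cdot\wt{d\rho}(X)\ot m_2$. Comparing with the definition
$$i_\fg(X)(m_1)=X(m_1)+(-1)^{\ov{X}\ov{m}_1}m_1\cdot\wt{d\rho}(X),$$
the whole expression is exactly $i_\fg(X)(m_1)\ot m_2$. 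This is precisely the $\AA_\II$-action of $i_\fg(X)$ on $C_\II(M)$, since $\AA_\II$ acts only through the left factor $M^r(\II)$. The one point to watch here is sign bookkeeping in the super case, where $\ov{X}$ may be odd; it is the same kind of bookkeeping already carried out in the proof of Proposition \ref{G-equiv-prop}.

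For derived coinvariants, I would compute $C_\II(M)$ by $M^r(\II)\ot_{\WW(V)}P^\bullet$, where $P^\bullet$ is a resolution of $M$ by $G$-equivariant $\WW(V)$-modules satisfying the same hypothesis. The natural candidate is the bar resolution with terms $\WW(V)^{\ot k+1}\ot M$. Alternatively, one can use the Koszul-type complex $K(\II,M)$ from \eqref{C-I-M-complex} built from $\bigwedge^\bullet(\II)\ot M$, computing the $\AA_\II$-action termwise there; this is closer to how the paper sets things up in Proposition \ref{flat-conn-case-prop}(ii).

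The main obstacle is ensuring that each term of the chosen resolution again has the property that the $\fg$-action equals the $\wt{d\rho}$-action. On a bar term $\WW(V)^{\ot k+1}\ot M$, I would put the $G$-action so that $\fg$ acts on the leftmost factor $\WW(V)$ by left multiplication by $\wt{d\rho}(X)$. This requires the rest of the term to carry a compatible twisted structure, or a check that the discrepancy is null-homotopic. Since the differentials are $G$-equivariant, I would first try the simplest route: verify the identity on the complex $K(\II,M)$ directly, termwise, using the same computation as in the underived case.
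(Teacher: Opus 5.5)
Your first paragraph is exactly the paper's proof: differentiate the diagonal $G$-action on $m_1\ot m$, use the hypothesis to replace $X(m)$ by $\wt{d\rho}(X)\cdot m$, move $\wt{d\rho}(X)$ across the tensor sign, and recognize the result as $i_\fg(X)(m_1)\ot m$.

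In this section $C_\II(M)$ is the ordinary tensor product $M^r(\II)\ot_{\WW(V)}M$, so that computation is already the whole proof and the derived discussion can be dropped. As sketched, that discussion would not work anyway. The complex $\bigwedge^\bullet(\II)\ot M$ carries no evident termwise $\AA_\II$-action, and Proposition \ref{flat-conn-case-prop}(ii) uses the bar resolution of $M$ instead. Also, left multiplication by $\wt{d\rho}(X)$ on a free factor $\WW(V)$ is in general not locally finite, so it does not integrate to a $G$-action.
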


\begin{proof}
The $\fg$-action on $C_\II(M)$ coming from the $G$-action satisfies
$$X(m_1\ot m)=X(m_1)\ot m+(-1)^{\ov{X}\ov{m}_1}m_1\ot X(m),$$
where $X\in \fg$, $m_1\in M^r(\II)$, $m\in M$. By assumption, this is equal to
\begin{align*}
&X(m_1)\ot m+(-1)^{\ov{X}\ov{m}_1}m_1\ot (\wt{d\rho}(X)\cdot m)=X(m_1)\ot m+(-1)^{\ov{X}\ov{m}_1}(m_1\cdot \wt{d\rho}(X))\ot m\\
&=i_\fg(X)(m_1)\ot m,
\end{align*}
as required.
\end{proof}

Let $L_+$ be a fixed Lagrangian subspace of $V$. Then
$P=P(L_+)=\operatorname{Stab}(L_+)$ be the corresponding maximal
parabolic subgroup of $\operatorname{Sp}(V)$. Its Lie algebra $\mathfrak p(L_+)$ is the
subalgebra $L_+V$ of $\mathfrak{sp}\,V\cong S^2V$. We
can lift $\mathfrak p(L_+)$ to a Lie subalgebra of $\WW_{\le 2}(V)$ via the homomorphism
$$i_+=i^r_{L_+}\colon \mathfrak{p}(L_+)\to \WW_{\le 2}(V)$$ 
sending $\ell v\in \mathfrak p(L_+)$ to $v\cdot\ell$ (the product in $\WW_{\le 2}(V)$) for $\ell\in L_+$ and $v\in V$ (note that $\ell$ is put on the right of $v$). 
Together with the
embedding $L_+\subset V\subset \WW_{\le 2}(V)$ of the abelian Lie
algebra $L_+$ we extend $i_+$ to a lift
\[
  i_+\colon \mathfrak p(L_+)\ltimes L_+\hookrightarrow \hat {\mathfrak
  g}\subset \WW_{\le 2}(V),
\]
of the Lie algebra of $P(L_+)\ltimes L_+$.

\begin{definition}\label{d-1} Let $\tilde P(L_+)=P(L_+)\ltimes L_+\subset \operatorname{Sp}V\ltimes V$ and
  $\tilde{\mathfrak{p}}=\mathfrak{p}(L_+)\ltimes L_+$ its Lie algebra.  An
  $\WW(V)$-module $M$ is called a \textit{Harish-Chandra module} for the
  group $\tilde P(L_+)$ if the action of $i_+\tilde{\mathfrak p}(L_+)$
  on $M$ integrates to an action of $\tilde P(L_+)$.
\end{definition}


\begin{lemma}\label{HC-mod-lem2} 
Let $M$ be a Harish-Chandra $(\WW(V),\tilde P(L_+))$-module. Then 
the action of $\tilde P(L_+)$ on $M$ is compatible with the $G$-action on $\WW(V)$ and
the $\WW(V)$-module structure on $M$:
$$g(h\cdot m)=g(h)\cdot g(m),$$
for $g\in \tilde P(L_+)$, $h\in \WW(V)$, $m\in M$.
\end{lemma}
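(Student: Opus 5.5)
The plan is to reduce the group-level compatibility to a Lie-algebra computation: show that the two sides agree infinitesimally, then integrate using connectedness of $\tilde P(L_+)$. Here the action of $g\in\tilde P(L_+)\subset \Sp(V)\ltimes V$ on $\WW(V)$ is the natural one: $\Sp(V)$ acts linearly on $V$ and extends to algebra automorphisms of $\WW(V)$, while $v\in V$ acts by the automorphism $\exp(\ad_v)$, i.e.\ $w\mapsto w+(v,w)$ on $V$.

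\textbf{Infinitesimal statement.} First I check that for $X\in\tilde{\mathfrak p}(L_+)$ the derivation of $\WW(V)$ induced by $X$ equals $\ad(i_+(X))$, the supercommutator with $i_+(X)$ inside $\WW(V)$. For $X=\ell\in L_+$ this holds by definition of the Heisenberg action, since $[\ell,w]=(\ell,w)$ for $w\in V$. For $X=\ell v\in\mathfrak p(L_+)$, the lift $i_+(\ell v)=v\cdot\ell$ differs from the symmetric lift $\frac12(v\ell\pm\ell v)$ by a scalar, $\frac12(v,\ell)$ up to sign. Scalars are central, so $\ad(v\cdot\ell)$ equals the adjoint action of the symmetric lift, which is the standard action of $S^2(V)\simeq\mathfrak{sp}(V)$ on $V$ recalled at the start of Sec.~\ref{constr-main-sec}. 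Both $\ad(i_+(X))$ and the derivation induced by $X$ are derivations of $\WW(V)$ that agree on the generators $V$, so they agree on all of $\WW(V)$. It follows that for $X\in\tilde{\mathfrak p}$, $h\in\WW(V)$ and $m\in M$,
\[
X\cdot(h m)=i_+(X)\,h\,m=[i_+(X),h]\,m\pm h\,i_+(X)\,m=X(h)\,m\pm h\,X(m),
\]
which is the derivative of the identity $g(h\cdot m)=g(h)\cdot g(m)$.

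\textbf{Integration.} The group $\tilde P(L_+)=P(L_+)\ltimes L_+$ is connected: $P(L_+)$ is a parabolic subgroup of the connected group $\Sp(V)$, and $L_+$ is a vector group. In the super case I would argue via the Harish-Chandra pair, using that the underlying even group is connected. Now fix $h$ and consider the map $\WW(V)\ot M\to M$, $h\ot m\mapsto hm$. This map is $\tilde{\mathfrak p}$-equivariant when the source carries the tensor-product action. Since $M$ is a Harish-Chandra module, the $\tilde{\mathfrak p}$-action on $M$ integrates to a $\tilde P$-action, and $\WW(V)$ is a rational $\tilde P$-representation. Hence both $\WW(V)\ot M$ and $M$ are rational representations on which the Lie algebra action is the derivative of the group action. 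A map between rational representations of a connected group that intertwines the Lie algebra actions is automatically $G$-equivariant, and this is exactly the claimed identity.

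\textbf{Main obstacle.} I expect the main obstacle to be this last step: making precise that a $\fg$-equivariant map between locally finite, integrable representations is $G$-equivariant, especially in the super setting and when $M$ is infinite-dimensional. I would handle it by restricting to finite-dimensional $\tilde P$-stable subspaces, which exist because rational actions are locally finite. On those subspaces the statement is standard for connected algebraic groups, namely that the fixed points of $G$ coincide with the $\fg$-invariants in $\Hom$. The sign and central-constant bookkeeping in the first step is routine.
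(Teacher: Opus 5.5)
Your proposal is correct and follows the paper's proof. Both reduce to the Lie algebra $\tilde{\mathfrak p}(L_+)$ by connectedness, then use $i_+(X)\cdot(v\cdot m)=[i_+(X),v]\cdot m\pm v\cdot(i_+(X)\cdot m)$ together with the fact that $\ad(i_+(X))$ restricted to $V$ is the natural action of $X$. Your extra discussion of the integration step (local finiteness, and the equality of $G$-invariants and $\mathfrak{g}$-invariants for a connected group) only makes explicit what the paper leaves to ``since $\tilde P(L_+)$ is connected.''
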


\begin{proof} 
Since $\tilde P(L_+)$ is connected it is enough to check the compatibility of the $\tilde{\mathfrak p}(L_+)$-actions.
For $X\in \tilde{\mathfrak p}(L_+)$ and $v\in V$, we have
$$i_+(X)\cdot (v\cdot m)=[i_+(X),v]\cdot m+(-1)^{\ov{X}\ov{v}}v\cdot (i_+(X)\cdot m),$$
so the assertion follows from the fact that $v\mapsto [i_+(X),v]$ coincides with the natural action of $X\in \mathfrak{sp}(V)$ on $V$.
\end{proof}

Recall that we denote by $\WW(V)-\mod^{L_+}$ the category of $\WW(V)$-modules on which $L_+$ acts locally nilpotently.

\begin{lemma}\label{F-HC-mod} 
Every module in $\WW(V)-\mod^{L_+}$ is a
  Harish-Chandra module over the pair $(\WW(V),\tilde P(L_+))$.
\end{lemma}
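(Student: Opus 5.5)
The plan is to reduce, via the equivalence of categories in Proposition \ref{cat-eq-prop} (with $I=L_+$ and $\ov{\HH}_{L_+}=\C$), to a statement about vector spaces with a locally nilpotent action. Take $M\in\WW(V)-\mod^{L_+}$. By Corollary \ref{cat-eq-cor} the natural map
$$\WW(V)\ot_{U(\fc(L_+))}M^{L_+}\to M$$
is an isomorphism. Since $L_+$ is Lagrangian, $\fc(L_+)=\C\oplus L_+$. Choosing a Lagrangian complement $L_-$ (identified with $L_+^*$), this becomes
$$M\simeq S(L_-)\ot M^{L_+}\simeq M(L_+)\ot_\C M^{L_+},$$
where $\WW(V)$ acts only on the first factor. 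So it suffices to treat $M=M(L_+)$, with $\tilde{\mathfrak p}(L_+)$ acting on the first factor and trivially on the multiplicity space $M^{L_+}$. Integrability then passes to direct sums and tensor products with trivial representations.

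For $M=M(L_+)=S(L_-)$, I will check two things.

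\textbf{(a) Filtration and nilpotence.} The action of $i_+\tilde{\mathfrak p}$ preserves the degree filtration $S^{\le n}(L_-)$. Elements $v\cdot\ell$ with $v\in L_-$ preserve degree, acting as the derivation induced by the $\mathfrak{gl}(L_-)$-action plus a scalar. Elements $v\cdot\ell$ with $v\in L_+$, and elements $\ell\in L_+$, lower the degree. The choice of $i_+=i^r_{L_+}$ (with $\ell$ on the right) is what makes the vacuum killed by $\ell$, so that no constant term appears in the $\mathfrak{gl}$ part.

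\textbf{(b) Integration on each finite-dimensional piece.} On each $S^{\le n}(L_-)$, the action of the Levi part $\mathfrak{gl}(L_+)$ integrates to $GL(L_+)$, since $S^n(L_-)$ is a polynomial representation of $GL(L_-)\simeq GL(L_+)$. The unipotent radical $S^2(L_+)\ltimes L_+$ acts by nilpotent operators, so its action integrates via $\exp$.

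Assembling these gives a representation of $P(L_+)\ltimes L_+=(GL(L_+)\ltimes S^2(L_+))\ltimes L_+$. Compatibility of the two pieces holds because $\exp$ intertwines the adjoint actions, and on the unipotent part one uses the Baker–Campbell–Hausdorff formula for nilpotent Lie algebras. Since $M$ is the union of these finite-dimensional invariant subspaces, the action is locally finite and integrates.

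In the super case, $GL(L_+)$ is a supergroup, and integrability should be checked as a Harish-Chandra pair: the even part integrates as above, and the odd part of the Lie algebra is just the differential.

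The main obstacle I expect is the precise scalar shift in the $\mathfrak{gl}(L_+)$ part. The right-ordered lift $v\cdot\ell$ should act on $S(L_-)$ exactly by the natural derivation extension, with no half-integer (metaplectic) twist. If some shift remained, it would be an integral multiple of a character (super)trace and would still integrate. A half-shift would not integrate, but the right-ordering convention is designed precisely to avoid it.
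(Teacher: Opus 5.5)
Your argument is correct and follows the paper's proof. Both split $\tilde{\mathfrak p}(L_+)$ into a nilpotent part ($L_+$ together with the radical $L_+L_+$), which integrates by terminating exponentials, and the Levi factor $L_+L_-$. Under the right-ordered lift $i_+$ the Levi factor kills the vacuum, so it acts on $S^\bullet L_-$ by the derivation extension of its action on $L_-$ and integrates to $\operatorname{Aut}(L_-)$.

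Your reduction via Corollary \ref{cat-eq-cor} to $M\simeq M(L_+)\otimes M^{L_+}$, and your explicit gluing of the Levi and unipotent actions, only make precise two points the paper leaves implicit. In particular, the paper writes $M\cong S^\bullet L_-$ as though $M^{L_+}$ were one-dimensional.
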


\begin{proof} It is clear that the action of the subalgebra $L_+$
  integrates to an action of the group because $L_+$ acts locally
  nilpotently ( for any $v\in M$ there exists an $n$ su that
  $L_+^nv=0$) so that the exponential series terminates. As for
  $\mathfrak p(L_+)$ let $V=L_+\oplus L_-$ be a decomposition into the
  sum of Lagrangian subspaces. Then the action of
  $L_-\subset \WW(V)$ on a generating vector $v\in M^{L_+}$ of
  $M$ defines an isomorphism $M\cong S^\bullet L_-$, Moreover $L_+L_+$
  is the radical of $\mathfrak p(L_+)\cong L_+V$ and $L_+L_-$ is a
  Levi factor.  Since $L_+$ acts locally nilpotently on $M$ the action
  of the radical integrates to the group.  With the choice of lift
  $i_+$ the Levi factor acts trivially on the generator of $M$.  The
  action of the Levi factor $L_+L_-$ on $V$ preserves $L_-$ and its
  action on $L_-$ integrates to the group of linear automorphisms of
  $L_-$.  Thus the action on $M$ integrates to the induced action of
  $\operatorname{Aut}(L_-)$ on $M\cong S^\bullet L_-$.
\end{proof}


Now let $\II\sub \OO_S\ot \WW_{\le 1}(V)$ be an isotropic subbundle over $S$.

\begin{prop}\label{G-equiv-main-prop}
Assume that a connected algebraic (super) group $G$ acts on $S$ and we have a homomorphism $\rho:G\to \tilde{P}(L_+)$, such that
$\II\sub \OO_S\ot \WW_{\le 1}(V)$ is $G$-equivariant. Set $\wt{d\rho}=i_+\circ d\rho$. Then the construction of Proposition \ref{G-equiv-prop} 
gives a $G$-equivariant structure on $\AA_\II$,
such that for any Harish-Chandra module $M$ for $(\WW(V),\tilde P(L_+))$, the coinvariants sheaf $C_\II(M)$ is a Harish-Chandra module for $(\AA_\II,G)$.
In particular, $C_\II(M(L_+))$ is a Harish-Chandra module for $(\AA_\II,G)$.
\end{prop}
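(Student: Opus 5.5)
The proof is an assembly of Proposition \ref{G-equiv-prop}, Lemma \ref{HC-mod-lem1}, Lemma \ref{HC-mod-lem2} and Lemma \ref{F-HC-mod}. The work consists of checking that $\wt{d\rho}=i_+\circ d\rho$ meets the hypotheses of Proposition \ref{G-equiv-prop}, with $\rho$ composed with the projection $\tilde P(L_+)\to P(L_+)\sub\Sp(V)$ playing the role of the homomorphism to $\Sp(V)$.

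\textbf{Step 1: hypotheses on $\wt{d\rho}$.} First, $i_+$ is a Lie homomorphism $\tilde{\mathfrak p}(L_+)\to\WW_{\le 2}(V)$. On $\mathfrak p(L_+)=L_+V$ this comes from the commutator identity $[v\ell,v'\ell']$ in $\WW(V)$, whose linear-in-$\WW_{\le 0}$ correction vanishes because $(\ell,\ell')=0$. On the semidirect product one uses that $[v\ell,\ell']=(\ell,\ell')v\pm(v,\ell')\ell$ lies in $L_+$. Hence $\wt{d\rho}$ is a Lie homomorphism, and it lifts $d\rho$ to $\mathfrak{sp}(V)\ltimes V$.

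Second, $\wt{d\rho}$ must be $G$-equivariant, where $G$ acts on $\WW_{\le 2}(V)$ through $\rho$. This reduces to $\tilde P(L_+)$-equivariance of $i_+$. For $P(L_+)$ this is clear, since $i_+(\ell v)=v\cdot\ell$ is defined by a natural formula preserved by any symplectic map fixing $L_+$. For the translation part $L_+\sub V$, acting by $\exp(\ad_u)$, one checks $\exp(\ad_u)(v\cdot\ell)=(v+(u,v))\cdot\ell$ up to the ordering convention. This matches $i_+$ of the transformed element of $\tilde{\mathfrak p}$, namely $\ell v+(u,v)\ell$. Because $G$ is connected, it would also suffice to check this infinitesimally, i.e. that $i_+$ is a $\tilde{\mathfrak p}$-module map for the adjoint action, which is just Step 1's homomorphism property. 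Proposition \ref{G-equiv-prop} then gives the $G$-equivariant structure on $\AA_\II$.

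\textbf{Step 2: Harish-Chandra property.} Let $M$ be a Harish-Chandra $(\WW(V),\tilde P(L_+))$-module and let $G$ act on $M$ through $\rho$. By Lemma \ref{HC-mod-lem2}, this action is compatible with the $\WW(V)$-module structure. The induced $\fg$-action is $X\mapsto i_+(d\rho(X))=\wt{d\rho}(X)$, by the definition of a Harish-Chandra module together with the chain rule for $d\rho$. This is exactly the hypothesis of Lemma \ref{HC-mod-lem1}, which yields that $C_\II(M)$ is a Harish-Chandra $(\AA_\II,G)$-module. For derived coinvariants the same argument would run through the $G$-equivariant Koszul complex $K(\II,M)$; I expect only degree-$0$ statements are intended.

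\textbf{Step 3: the Fock module.} $M(L_+)$ lies in $\WW(V)-\mod^{L_+}$, so by Lemma \ref{F-HC-mod} it is Harish-Chandra, and Step 2 applies.

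I expect the main obstacle to be the sign and ordering bookkeeping in Step 1. One must confirm that the right-placed lift $v\cdot\ell$, rather than $\ell\cdot v$, is what makes $i_+$ a homomorphism compatible with the semidirect factor $L_+$, and that it is equivariant under translations by $L_+$. I would verify this on a basis $e_i\in L_+$, $e_i^*\in L_-$ in the purely even case first, and then insert Koszul signs.
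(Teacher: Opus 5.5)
Your proposal follows the paper's proof. The $\tilde P(L_+)$-equivariance of $i_+$ gives the $G$-equivariance of $\wt{d\rho}=i_+\circ d\rho$, which feeds Proposition~\ref{G-equiv-prop}; Lemmas~\ref{HC-mod-lem2} and \ref{HC-mod-lem1} then give the Harish-Chandra property, and Lemma~\ref{F-HC-mod} covers $M(L_+)$. Your explicit checks that $i_+$ is a Lie homomorphism and is equivariant, which the paper takes for granted, are fine.

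One correction: you should not replace $\rho$ by its composite with the projection $\tilde P(L_+)\to P(L_+)\sub\Sp(V)$. The translation part of $\tilde P(L_+)$ is needed in two places:
\begin{itemize}
\item for the $G$-equivariance of $\II\sub\OO_S\ot\WW_{\le 1}(V)$;
\item for the identity $[\wt{d\rho}(X),v]=d\rho(X)(v)$ used in the proof of Proposition~\ref{G-equiv-prop}, since for $u\in L_+$ one has $[u,v]=(u,v)$, which is not zero in general.
\end{itemize}
So Proposition~\ref{G-equiv-prop} must be read with $\Sp(V)\ltimes V$ acting on $\WW(V)$, translations acting by $\exp(\ad_u)$. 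That is in fact the action your Step 1 already uses.
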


\begin{proof}
To check the assumptions of Proposition \ref{G-equiv-prop} we need to know that $i_+\circ d\rho$ is $G$-equivariant. But this follows from the fact that $d\rho$ is
$G$-equivariant, and $i_+$ is $\tilde{P}(L_+)$-equivariant. Now the assertion follows from
Lemmas \ref{HC-mod-lem1} and \ref{HC-mod-lem2}. Note that
the assumption in Lemma \ref{HC-mod-lem1} is satisfied by our definition of $\wt{d\rho}$.
\end{proof}


\appendix

\section{The class of the Picard algebroid associated with the universal Lagrangian for the $3$-dimensional Heisenberg algebra}\label{P1-example-sec}

In this section we consider the (even) $2$-dimensional symplectic vector space $V$. Let $H=\C\oplus V$ be the corresponding Heisenberg algebra.
Then the Lagrangian Grassmannian $LG(H)$ of $H$ is simply $\P(H)\setminus p$, the space of $1$-dimensional subspace in $H$, different from the center. 
The projection $\pi:LG(H)\to \P(V)$ identifies it with the total space of $\OO(1)$ over $\P(V)=\P^1$.
Let $\LL\sub \OO\ot H$ (resp., $\LL_0\sub \OO\ot V$) denote the universal Lagrangian over $LG(H)$ (resp., over $\P(H)$). 
We are going to compute the class of the Picard algebroid $\AA_\LL$
and compare it with the pull-back of the class of  $\AA_{\LL_0}$. 

Recall (see \cite{BB}) that a locally trivial Picard algebroid $\AA$ over $X$ has a class $c(\AA)$ in $H^1(X,\Om^{1,cl})$, where $\Om^{1,cl}$ is the sheaf of closed $1$-forms.
The de Rham differential gives a map $d:H^1(X,\OO_X)\to H^1(X,\Om^{1,cl})$.

\begin{prop} Let $X=LG(H)$. Then $d:H^1(X,\OO_X)\to H^1(X,\Om^{1,cl})$ is injective and $c(\AA_\LL)-c(\AA_{\LL_0})=d(e)$, where $e$ is a
generator of the $1$-dimensional space $H^1(\P^1,\OO(-2))$
with respect to the decomposition $H^1(X,\OO_X)=\bigoplus_{i\ge 2} H^1(\P^1,\OO(-i))$.
In particular, $c(\AA_\LL)-c(\AA_{\LL_0})\neq 0$ in $H^1(X_{an},\Om^{1,cl})$ (where we use the classical topology on $X$).
\end{prop}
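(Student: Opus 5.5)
We would compute everything in Čech terms using the standard affine cover and the explicit formulas of Sec.~\ref{2dim-for-sec}. Write $X=LG(H)$, which is the total space of $\OO(1)$ over $\P^1=\P(V)$, with projection $\pi:X\to\P^1$. Since $\pi$ is affine, $H^1(X,\OO_X)=H^1(\P^1,\pi_*\OO_X)=\bigoplus_{k\ge0}H^1(\P^1,\OO(-k))$; here we use the convention that fibre-linear functions are sections of $\OO(-1)$. This gives $\bigoplus_{i\ge2}H^1(\P^1,\OO(-i))$.

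\textbf{Injectivity of $d$.} We use the exact sequence $0\to\C\to\OO_X\to\Om^{1,cl}\to0$. The kernel of $d$ is the image of $H^1(X,\C)$, and $H^1(X,\C)=H^1(\P^1,\C)=0$. Hence the kernel is $0$.

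\textbf{Computing the difference of classes.} We cover $X$ by $U_0$ (where $e^*\notin L$, with coordinates $(x,\la)$) and $U_1$ (where $e\notin L$, with the symmetric coordinates $(y,\mu)$, $L=\langle e^*+ye+\mu\rangle$). On the overlap $y=1/x$ and $\mu=\la/x$, up to the sign conventions fixed by $(e^*,e)=1$.

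Corollary~\ref{Lag-open-conn-cor} gives flat connections $\nabla_0,\nabla_1$ on $\AA_\LL$ over $U_0,U_1$. The class $c(\AA_\LL)$ is then represented by the closed $1$-form $\nabla_0-\nabla_1$ on $U_{01}$. We would compute it by expressing the global sections $\ga(w)$, $w\in\WW_{\le2}(V)$, in both charts. Proposition~\ref{ga-calc-prop} already gives the $U_0$ expressions, for example $\ga(e^2/2)=-\nabla_0(x^2\pa_x+x\la\pa_\la)+\la^2-x$. By the symmetry $e\leftrightarrow e^*$ we get the $U_1$ expressions, for example $\ga((e^*)^2/2)=-\nabla_1(\dots)+\mu^2\mp y$. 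Comparing the two expressions of the same section $\ga(w)$ identifies the $1$-form $\nabla_0-\nabla_1$ on each vector field $\ov\ga(w)$. Since these vector fields span $T$ at every point of $U_{01}$, this determines the form.

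We expect the answer to have the shape $\omega=\omega_0+d(\la^2/x)$, up to constants, where $\omega_0$ is $\pi^*$ of the analogous cocycle for $\LL_0$. The cocycle for $\LL_0$ is the $\lambda$-free part; it arises in the same way from Corollary~\ref{Lag-open-conn-cor} in the even case, with $\ga^{ev}$ lacking the $\la$-terms. Thus $c(\AA_\LL)-\pi^*c(\AA_{\LL_0})=d(e)$ with $e=[\la^2/x]$. The function $\la^2/x$ on $U_{01}$ is quadratic in the fibre coordinate. It lies in the $\OO(-2)$-summand, and it is the standard generator of $H^1(\P^1,\OO(-2))$ (the Čech class $1/(x_0x_1)$ after homogenizing). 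The main obstacle is this bookkeeping: getting the coordinate change and signs right, and checking that the term $-x$ (resp.\ $\mp y$) exactly matches the $\LL_0$ contribution while $\la^2$ produces $\la^2/x$ and not a coboundary. We would double-check this by verifying that $\la^2/x$ does not extend to $U_0$ or $U_1$.

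\textbf{Nonvanishing analytically.} The statement to prove is that $d(e)\ne0$ in $H^1(X_{an},\Om^{1,cl})$. By the analytic version of the same exact sequence, $H^1(X_{an},\Om^{1,cl})\supset H^1(X_{an},\OO)/H^1(X_{an},\C)$, and $H^1(X_{an},\C)=0$. So it suffices that $e\ne0$ in $H^1(X_{an},\OO)$. This holds because restricting to the zero section $\P^1$ and projecting to the $\OO(-2)$-graded piece is well defined analytically, via the $\C^*$-action on the fibres, and by GAGA it recovers $H^1(\P^1,\OO(-2))\ne0$.
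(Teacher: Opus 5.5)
Your plan follows the paper's proof in outline, and the shape you predict is right up to the factor $-\frac12$. The shared steps are: the Čech cocycle $\nabla_1-\nabla_0$ on $U_{01}$ built from the flat connections of Corollary~\ref{Lag-open-conn-cor}; the $\LL_0$-cocycle as its $\la=0$ part; exactness of the difference; $\la^2/x$ as the generator; $H^1(X,\C)=0$; and nonvanishing of $e$ in $H^1(X_{an},\OO)$.

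\textbf{The gap.} The identity that is the whole content of the proposition is only ``expected'', and the input you plan to use would not produce it. The constant printed in Proposition~\ref{ga-calc-prop} for $\ga(\frac{e^2}{2})$ is off by a factor of $2$. In the $S(e^*)$-model, $1\cdot e\cdot e=(\pa_{e^*}-xe^*-\la)^2(1)=x^2(e^*)^2+2x\la e^*+\la^2-x$, so $\ga(\frac{e^2}{2})=-\nabla_0(x^2\pa_x+x\la\pa_\la)+\frac{\la^2-x}{2}$. This is forced: $\ga(\frac{(e^*)^2}{2})=-\nabla_0(\pa_x)$ and $[\frac{e^2}{2},\frac{(e^*)^2}{2}]=-ee^*-\frac12$, so the homomorphism property requires the $\pa_x$-derivative of that constant to be $-\frac12$. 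With the printed constant, your comparison yields $(x-\la^2)\,dy-\la\,d\mu$, which is not closed. With the correct constant, use the paper's chart on $U_1$ ($L=\lan -ye+e^*+\mu\ran$, $y=-1/x$, $\mu=\la/x$). There $\ga(\frac{e^2}{2})=-\nabla_1(\pa_y)$ and $\ga(e)=\nabla_1(\pa_\mu)$ exactly, while $\ga(e)=\nabla_0(\pa_\mu)-\la$ on $U_0$. Hence $(\nabla_1-\nabla_0)(\pa_y)=\frac{x-\la^2}{2}$ and $(\nabla_1-\nabla_0)(\pa_\mu)=-\la$. So $\om=\frac{x+\la^2}{2x^2}dx-\frac{\la}{x}d\la$ and $\om-\frac{dx}{2x}=d(-\frac{\la^2}{2x})$, as in the paper. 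This also shows your $\ga$-comparison is a repackaging of the paper's direct computation, which applies $\nabla_1(\pa_y)-\nabla_0(\pa_y)$ and $\nabla_1(\pa_\mu)-\nabla_0(\pa_\mu)$ to the vacuum vector. All the content lies in rewriting $e\cdot 1$ and $e^2\cdot 1$ in the $e^*$-model. Note also that the swap $e\leftrightarrow e^*$ is anti-symplectic for $(e^*,e)=1$. The $U_1$ formulas must be computed, or transported by $e\mapsto e^*$, $e^*\mapsto -e$; they cannot simply be copied.

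\textbf{Smaller points.} The sequence $0\to\C\to\OO_X\to\Om^{1,cl}\to0$ is exact only in the classical topology; for example, $dx/x$ is closed but not exact on any Zariski open set. So, like the paper, you really prove injectivity of $H^1(X_{an},\OO)\to H^1(X_{an},\Om^{1,cl})$. The Zariski statement follows once $H^1(X,\OO)\to H^1(X_{an},\OO)$ is also injective.

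For that, and for $e\neq0$ in $H^1(X_{an},\OO)$, your weight argument differs from the paper's Lemma~\ref{H1-an-lem}. The paper filters $\pi_*\OO_X$ by order of vanishing along the zero section $Z$ and uses long exact sequences to get injectivity of $H^1(\P^1,\pi_*\OO_X(-2Z))\to H^1(\P^1,\pi_*\OO_X)$. Your route is fine and slightly cleaner, provided ``restricting to the zero section'' means taking the $k$-th fibrewise Taylor coefficient along $Z$; plain restriction kills $\la^2/x$. That coefficient map is a morphism of analytic sheaves $\pi_*\OO_{X_{an}}\to\OO_{\P^1_{an}}(-k)$ retracting the $\OO(-k)$-summand. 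Together with $H^1(X_{an},\OO)\simeq H^1(\P^1_{an},\pi_*\OO_{X_{an}})$ and GAGA, it detects every graded piece, in particular $e$.

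Finally, ``$\la^2/x$ does not extend to $U_0$ or $U_1$'' is not the right test for a non-coboundary. In weight $2$ the coboundaries are $(\C[x]+x^{-2}\C[x^{-1}])\la^2$, and this space does not contain $x^{-1}\la^2$.
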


\begin{proof}
As in Sec.\ \ref{2dim-for-sec} we think of subspaces in $LG(H)$ as pairs $(L,\varphi)$, where $L\sub V$ is a line, and $\varphi:L\to \C$ is a functional.
We use a basis $e,e^*$ of $V$ such that $(e^*,e)=1$. The space $X=LG(H)$ is covered by two affine open subsets, $U_0$ and $U_1$, each isomorphic to
$\A^2$, where $U_0$ (resp., $U_1$) consists of $(L,\varphi)$ such that $e^*\not\in L$ (resp., $e\not\in L$). We have coordinates $(x,\la)$ on $U_0$ (resp., $(y,\mu)$ on $U_1$,
so that $L\sub \C\oplus V$ is spanned by $e+xe^*+\la$ (resp., $-ye+e^*+\mu$). On the intersection $U_{01}$ the coordinate transformation is
$$y=-\frac{1}{x}, \ \ \mu=\frac{\la}{x}.$$

In Sec.\ \ref{2dim-for-sec} we calculated the flat connection $\nabla_0$ on $\AA_\LL|_{U_0}$: 
$$\nabla_0(\pa_x)=\pa_x-(e^*)^2/2, \ \ \nabla_0(\pa_\la)=\pa_\la-e^*.$$
Now we also need a similar connection 
$\nabla_1$ over $U_1$ (coming from the Lagrangian complements 
$\lan e\ran$).
Over $U_1$ we have an identification $M^r(L)=1\cdot \OO(U_1)[e]$, where $e$ acts by multiplication and
$$g\cdot e^*=-\pa_e g+yeg-\mu g.$$
Using this we get 
$$\nabla_1(\pa_y)=\pa_y-e^2/2, \ \ \nabla_1(\pa_\mu)=\pa_\mu+e.$$

Over $U_{01}=U_0\cap U_1$, we want to compute the difference $\nabla_1-\nabla_0$.
For example, we know that
$$\nabla_1(\pa_y)-\nabla_0(\pa_y)=f(x,\la),$$
for some $f(x,\la)\in\OO(U_{01})$. To find this function, it suffices to apply both sides to $1$.
We compute in the model $M^r(L)=S(e^*)$:
$$\nabla_1(\pa_y)(1)=-\frac{1}{2}\cdot (\pa_{e^*}-xe^*-\la)^2(1)=-\frac{1}{2}x^2(e^*)^2-x\la e^*+\frac{x-\la^2}{2},$$
$$\nabla_0(\pa_y)(1)=\nabla_0(x^2\pa_x+x\la\pa_\la)(1)=[x^2(\pa_x-\frac{1}{2}(e^*)^2)+x\la(\pa_\la-e^*)](1)=-\frac{1}{2}x^2(e^*)^2-x\la e^*.$$
This shows that $f(x,\la)=(x-\la^2)/2$.

Similarly,
$$\nabla_1(\pa_\mu)(1)=(\pa_\mu+\pa_e-xe^*-\la)(1)=-xe^*-\la,$$
$$\nabla_0(\pa_\mu)(1)=\nabla_0(x\pa_\la)=(x\pa_\la-xe^*)(1)=-xe^*.$$
Hence,
$$\nabla_1(\pa_\mu)-\nabla_0(\pa_mu)=-\la.$$

Thus, the difference $\nabla_1-\nabla_0$ on $U_{01}$ is given by the closed $1$-form
$$\om=\frac{x-\la^2}{2}dy-\la d\mu=\frac{x+\la^2}{2}\cdot \frac{dx}{x^2}-\frac{\la d\la}{x}.$$
Thus, the class of $\AA_{\LL}$ is given by the Cech $1$-cocycle $c_{01}=\om$. 

Analogous calculation for $\AA_{\LL_0}$ gives 
$$\om_0=\frac{dx}{2x}$$
(obtained by setting $\la=0$ in $\om$).
We have
$$\om-\om_0=\frac{\la^2 dx}{2x^2}-\frac{\la d\la}{x}=d(-\frac{\la^2}{2x}).$$

It is easy to check that the cocycle $f_{01}=\la^2/x$ corresponds to a generator in $H^1(\P^1,\OO(-2))\sub H^1(X,\OO_X)$.
Finally, the exact sequence of sheaves in classical topology,
$$0\to \C_X\to \OO_X\to \Om^{1,cl}\to 0$$
gives rise to a long exact sequence
$$H^1(X,\C)\to H^1(X_{an},\OO)\to H^1(X_{an},\Om^{1,cl})\to\ldots$$
Since, $H^1(X,\C)=0$, we see that the map 
$H^1(X_{an},\OO)\to H^1(X_{an},\Om^{1,cl})$ is injective.

Now the last assertion follows from Lemma \ref{H1-an-lem} below.
\end{proof}

\begin{lemma}\label{H1-an-lem}
The natural composition $H^1(\P^1,\OO(-2))\to H^1(X,\OO)\to H^1(X_{an},\OO)$ is nonzero.
\end{lemma}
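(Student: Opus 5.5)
The plan is to use the geometry of $X=LG(H)$ as the total space of $p:X\to\P^1$, $X=\mathrm{Tot}(\OO(1))$, with the zero section $Z\simeq\P^1$ given by $\la=\mu=0$. Since $p$ is affine, $H^1(X,\OO)=\bigoplus_{i\ge 0}H^1(\P^1,\OO(-i))$, and in the \v{C}ech description on the cover $U_0,U_1$ the class $e$ is represented by $f_{01}=\la^2/x$. The goal is an analytic invariant that detects this class. I will use a fiberwise integral in place of restricting to $Z$, since the restriction of $f_{01}$ to $Z$ vanishes.

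The first step is to work on the analytic space and fix a point $t\in\C$. I will take the compact curve $C_t\sub X$ given by $\la=t\cdot s$, where $s$ is a fixed holomorphic section of $\OO(1)$. Take $s$ to be the section $\la=1$ in the $U_0$-trivialization; in the other chart it becomes $\mu=1/x$, which is holomorphic near $x=\infty$, so $s$ is global. Then $C_t\simeq\P^1$, and restricting $f_{01}$ to $C_t$ gives the cocycle $t^2/x$ on $\P^1$. For $t\neq 0$ this cocycle is nonzero in $H^1(\P^1,\OO)$? No: $1/x$ is a coboundary on $\P^1$, because $H^1(\P^1,\OO)=0$. The restriction approach to curves isomorphic to $\P^1$ therefore fails for degree reasons. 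This forces a different, genuinely non-compact detection.

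The second step is the detection via a residue pairing. By Serre/Dolbeault theory, $H^1(X_{an},\OO)$ pairs with compactly supported closed $(n,n-1)$-forms, which gives $H^1(X_{an},\OO)\times H^1_c(X_{an},\Om^2)\to\C$. To use this I will construct an explicit class in $H^1_c(X,\Om^2)$ that pairs nontrivially with $f_{01}$. Concretely, the pairing of a \v{C}ech cocycle $f_{01}$ with $\eta$ can be computed as $\int_{\pa\text{-boundary}}f_{01}\,\eta$. The simpler route is to use the Dolbeault representative $\bar\pa(\rho f_{01})$, where $\rho$ is a partition of unity, and then to integrate it against $\chi(|\la|^2)\,\bar{\la}^{\,2}\,dx\,d\la$-type test data, with $\chi$ a bump function. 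These are chosen so that the fiber integral over $\la$ extracts exactly the $\la^2$ coefficient. The reduction step is the $\C^*$-equivariance. The fiber-scaling action $\la\mapsto c\la$ decomposes $H^1(X_{an},\OO)$ continuously into weight components. The weight-$2$ projection, defined by averaging over $S^1$, sends the analytic class to a class of the form $\la^2\cdot\sigma$ with $\sigma\in H^1(\P^1_{an},\OO(-2))$. Since this projection commutes with the algebraic one, I get a commutative square in which the weight-$2$ part of $H^1(X_{an},\OO)$ maps isomorphically to $H^1(\P^1_{an},\OO(-2))$. That group equals the algebraic group by GAGA and is $\C\neq0$.

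The main obstacle is making the $S^1$-averaging well defined on $H^1(X_{an},\OO)$, which may be non-Hausdorff or infinite-dimensional. To handle this, I would compute with the Stein cover $U_0,U_1$, where all three sets are Stein. The analytic $H^1$ is then $\OO(U_{01})/(\OO(U_0)+\OO(U_1))$, and $S^1$ acts on each space of functions. Averaging against $c^{-2}$ extracts the $\la^2$ Taylor coefficient, maps $\OO(U_0)$ into $\OO(U_0)$ and likewise for the other sets, and so descends to the quotient. The weight-$2$ part becomes $\la^2\,\OO(\C^*)/(\la^2\OO(\C)+\mu^2\OO(\C))$. In the $x$-coordinate this is $\OO(\C^*_x)/(\OO(\C_x)+x^{-2}\OO(\C_{1/x}))$. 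The class of $1/x$ lies outside this subspace because that subspace consists of Laurent series with no $x^{-1}$ term. Hence $f_{01}\mapsto\la^2/x$ is nonzero, which proves the lemma.
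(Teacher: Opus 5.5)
Your proof is correct, and it takes a genuinely different route from the paper. Only your last paragraph does the work.

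The paper's route:
\begin{itemize}
\item It identifies $H^1(X_{an},\OO)$ with $H^1(\P^1_{an},\pi_*\OO_X)$.
\item It filters $\pi_*\OO_X$ by order of vanishing along the zero section $Z$, with graded pieces $\OO_{\P^1}(-i)$.
\item It notes that the class lies in the $\pi_*\OO_X(-2Z)$ step and has nonzero image in the graded piece $H^1(\P^1,\OO(-2))$, using GAGA.
\item It gets injectivity of $H^1(\pi_*\OO_X(-2Z))\to H^1(\pi_*\OO_X)$ from two long exact sequences, using $H^0(\P^1,\OO(-1))=0$ and that $H^0(\pi_*\OO_X)\to H^0(\OO_{\P^1})$ is onto.
\end{itemize}

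Your route:
\begin{itemize}
\item You apply Leray's theorem to the Stein cover $U_0\simeq U_1\simeq\C^2$, $U_{01}\simeq\C^*\times\C$. This gives $H^1(X_{an},\OO)=\OO(U_{01})/(\OO(U_0)+\OO(U_1))$.
\item You use the $S^1$-averaging $P_2$, which extracts the $\la^2$ Taylor coefficient in the fiber variable. It preserves all three function spaces and commutes with restriction.
\item Applying $P_2$ to a hypothetical relation $\la^2/x\in\OO(U_0)+\OO(U_1)$ forces $1/x\in\OO(\C_x)+x^{-2}\OO(\C_{1/x})$. The $x^{-1}$ Laurent coefficient rules this out.
\end{itemize}

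In effect, $P_2$ is a retraction of sheaves $\pi_*\OO_{X_{an}}\to\OO_{\P^1}(-2)$ that splits the inclusion. So injectivity on $H^1$ is automatic, and you avoid GAGA, the Leray spectral sequence for $\pi$, and the long exact sequences. You also identify the weight-$2$ part explicitly as $\C$, detected by a residue. The paper's filtration argument is coordinate-free and needs no torus action or explicit cover. Both arguments extend verbatim to every $H^1(\P^1,\OO(-i))$.

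For a clean write-up, make three changes:
\begin{itemize}
\item Delete the curves $C_t$. You correctly saw they cannot detect anything, since $H^1(\P^1,\OO)=0$.
\item Delete the Serre-duality/Dolbeault sketch. As written it is not a well-defined pairing: you pair a $(0,1)$-form with a $(2,0)$-form that has no compact support in $x$. It is never used.
\item Add the one-line check that $\la^2/x$ represents the generator: $\OO(-2)\sub\pi_*\OO_X$ has local generators $\la^2$ on $U_0$ and $\mu^2=\la^2/x^2$ on $U_1$. The paper also only asserts this.
\end{itemize}
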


\begin{proof} 
We have a natural identification $H^1(X_{an},\OO)\simeq H^1(\P^1_{an},\pi_*\OO_X)$, and we have a filtration
$$\pi_*\OO_X\supset \pi_*(\OO_X(-Z))\supset \pi_*(\OO_X(-2Z))\supset\ldots,$$
with the associated graded quotients $\pi_*(\OO_X(-iZ))/\pi_*(\OO_X(-(i+1)Z))\simeq \OO_{\P^1}(-i)$.
The image of a nonzero class in $H^1(\P^1,\OO(-2))$ lives in $H^1(\P^1_{an},\pi_*(\OO_X(-2Z)))$ and has a nonzero projection to
$H^1(\P^1,\OO(-2))$ (the class we started from). Hence, it remains to check injectivity of the composition
$$H^1(\P^1,\pi_*(\OO_X(-2Z)))\to H^1(\P^1,\pi_*(\OO_X(-Z)))\to H^1(\P^1,\OO_X).$$
But this follows easily from the long exact sequence of cohomology given the identification of the relevan associated graded quotients with $\OO_{\P^1}(-1)$ and $\OO_{\P^1}$.
\end{proof}

\section{Proof of Proposition \ref{gen-Pf-prop}}
\label{Pf-app}

We use induction on $m$. Considering the action of the element $(e_1,-\phi(e_1),-\la_1)\in L(\phi,\la)$ on $e_1^*\in S(L^\vee)=M(L)$, we see that
$$1=e_1\cdot e_1^*\equiv \la_1\cdot e_1^*+\sum_{i>1}\phi_{i1}e_i^*e_1^*$$
in the space of $L(\phi,\la)$-coinvariants.

For each subset $I\sub [1,n]$, we denote by $\phi^{I}$ (resp., $\la^{I}$) the submatrix of $\phi$ with the rows/columns in $I$ deleted (resp., the subvector of $\la$ with the entries in $I$ deleted).
It is easy to see that the natural empedding
$$S(L^\vee/(e_1^*))\cdot e_1^*\to S(L^\vee)$$
sends $L(\phi^{\{1\}},\la^{\{1\}})\cdot S(L^\vee/(e_1^*))\cdot e_1^*$ to $L(\phi,\la)\cdot S(L^\vee)$ (where we use $e_2^*,\ldots,e_m^*$ as a basis in $L^\vee/(e_1^*)$).
Similarly, multiplying with $e_i^*e_1^*$ on the right allows to reduce to the pair $(\phi^{\{1,i\}},\la^{\{1,i\}})$.
Therefore, 
\begin{equation}\label{Pf-ind-eq}
1\equiv \la_1\cdot \Pf(\phi^{\{1\}},\la^{\{1\}})\cdot e_1^*+\sum_{i>1}\Pf(\phi^{\{1,i\}},\la^{\{1,i\}})]\cdot\phi_{i1}\cdot e_i^*e_1^*
\end{equation}
in the space of $L(\phi,\la)$-coinvariants. 

Let us write the sign in \eqref{Pf-gen-for} as $\eps([1,n],I)$.
Applying the induction assumption, we can rewrite the right-hand side of \eqref{Pf-ind-eq} as
\begin{align*}
&[\sum_{I'\sub [2,n]}\eps([2,n],I')\la_1\la_{I'}\Pf(\phi^{\{1\}\cup I'})\\
&+\sum_{i>1,J\sub [2,n]-i}(-1)^i\eps([2,n]-i,J)\la_J\Pf(\phi^{\{1,i\}\cup J})\phi_{i1}]\cdot e_m^*\ldots e_1^*
\end{align*}
(the sign $(-1)^i$ arises from moving $e_i^*$ through $e_2^*,\ldots,e_{i-1}^*$).
It is easy to see that the first sum is simply $\sum_{1\in I\sub [1,n]}\eps([1,n],I)\la_I\Pf(\phi^I)$.
As for the second sum, one can rewrite it as
$$\sum_{J\sub [2,n]}\la_J\cdot\sum_{i\in [2,n]\setminus J}(-1)^i\eps([2,n]-i,J)\Pf((f^J)^{\{1,i\}})\phi_{i,1}.
$$
Using the formula expanding $\Pf(f^J)$ into Pfaffian minors corresponding to $\{1,i\}$ (one can check that the signs work out),
we can rewrite the right-hand side of \eqref{Pf-ind-eq} as
$$\sum_{1\in I\sub [1,n]}\eps([1,n],I)\la_I\Pf(\phi^I)+\sum_{J\sub [2,n]}\eps([1,n],I)\la_J\Pf(\phi^J),$$
which is the right-hand side of \eqref{Pf-gen-for}.

\end{document}